\documentclass[10pt]{amsart}

\usepackage{enumitem,kantlipsum}

\usepackage{amsmath,amscd}
\usepackage{amssymb}

\usepackage[all]{xy}

\usepackage[T1]{fontenc}

\usepackage{times}

\usepackage[T1]{fontenc}
\usepackage{graphicx}
\usepackage[svgnames]{xcolor}
\usepackage{framed}

\usepackage{tikz}

\author{Liran Shaul}
\address{Department of Mathematics, Ben Gurion University of the Negev, Beer Sheva 84105, Israel}

\email{shlir@post.bgu.ac.il}
%
%
%

\newtheorem{thm}[equation]{Theorem}
\newtheorem*{thm*}{Theorem}
\newtheorem{cor}[equation]{Corollary}
\newtheorem{prop}[equation]{Proposition}
\newtheorem{lem}[equation]{Lemma}

\theoremstyle{definition}
\newtheorem{dfn}[equation]{Definition}
\newtheorem{rem}[equation]{Remark}

\newtheorem{fact}[equation]{}

\newcommand{\iso}{\xrightarrow{\simeq}}
\newcommand{\inj}{\hookrightarrow}

\newcommand{\opn}{\operatorname}
\newcommand{\cat}[1]{\operatorname{\mathsf{#1}}}

\newcommand{\mfrak}[1]{\mathfrak{#1}}

\newcommand{\mrm}[1]{\mathrm{#1}}

\newcommand{\til}[1]{\tilde{#1}}

\renewcommand{\k}{\Bbbk}

\renewcommand{\d}{\mathrm{d}}

\renewcommand{\a}{\mfrak{a}}
\renewcommand{\b}{\mfrak{b}}
\renewcommand{\c}{\mfrak{c}}
\renewcommand{\d}{\mfrak{d}}
\newcommand{\m}{\mfrak{m}}
\newcommand{\n}{\mfrak{n}}
\newcommand{\p}{\mfrak{p}}

\newcommand{\RG}[1]{\mrm{R}\Gamma_{\bar{#1}}}
\newcommand{\LL}[1]{\mrm{L}\Lambda_{\bar{#1}}}

\numberwithin{equation}{section} 

\newcommand{\cdg}{\cat{CDG}}

\newcommand{\ho}{\mrm{Ho}}

\newcommand{\cdgcont}[1][]{\cat{CDG}_{\cat{cont}{#1}}}

\newcommand{\Tau}{\pmb{\tau}}

\newcommand{\amp}{\operatorname{amp}}


\newcommand{\qqed}{\nobreak \ifvmode \relax \else
      \ifdim\lastskip<1.5em \hskip-\lastskip
      \hskip1.5em plus0em minus0.5em \fi \nobreak
      \vrule height0.75em width0.5em depth0.25em\fi}

\thanks{The research was partially supported by the Israel Science Foundation (grant no. 1346/15).}

\begin{document}

\title{Completion and torsion over commutative DG rings}

\begin{abstract}
Let $\cdgcont$ be the category whose objects are pairs $(A,\bar{\a})$,
where $A$ is a commutative DG-algebra and $\bar{\a}\subseteq \mrm{H}^0(A)$ is a finitely generated ideal,
and whose morphisms $f:(A,\bar{\a}) \to (B,\bar{\b})$ are morphisms of DG-algebras $A \to B$,
such that $(\mrm{H}^0(f)(\bar{\a})) \subseteq \bar{\b}$. 
Letting $\ho(\cdgcont)$ be its homotopy category, obtained by inverting adic quasi-isomorphisms,
we construct a functor $\mrm{L}\Lambda:\ho(\cdgcont) \to \ho(\cdgcont)$ which takes a pair $(A,\bar{\a})$ into its non-abelian derived $\bar{\a}$-adic completion. 
We show that this operation has, in a derived sense, 
the usual properties of adic completion of commutative rings, and that if $A = \mrm{H}^0(A)$ is an ordinary noetherian ring, this operation coincides with ordinary adic completion.
As an application, following a question of Buchweitz and Flenner, we show that if $\k$ is a commutative ring, and $A$ is a commutative $\k$-algebra which is $\a$-adically complete with respect to a finitely generated ideal $\a\subseteq A$,
then the derived Hochschild cohomology modules $\opn{Ext}^n_{A\otimes^{\mrm{L}}_{\k} A} (A,A)$ and the derived complete Hochschild cohomology modules $\opn{Ext}^n_{A\widehat{\otimes}^{\mrm{L}}_{\k} A} (A,A)$ coincide, without assuming any finiteness or noetherian conditions on $\k, A$ or on the map $\k \to A$.
\end{abstract}

\maketitle

\setcounter{tocdepth}{1}
\tableofcontents

\setcounter{section}{-1}

\section{Introduction}

All rings in this paper are commutative and unital.
Adic completion is a fundamental operation in commutative algebra.
In their paper \cite{BF}, Buchweitz and Flenner considered the Hochschild cohomology of adically complete noetherian algebras. They observed that in the presence of an adic topology, 
there are two versions of Hochschild cohomology: the classical Hochschild cohomology, and the complete (analytic in their terminology) Hochschild cohomology,
obtained by performing the adic completion operation on the bar resolution (see \cite[Section 3]{BF} for details). Given a flat local homomorphism $(A,\m) \to (B,\n)$ between noetherian local rings, such that the induced map $A/\m \to B/\n$ is an isomorphism, they proved that the Hochschild cohomology of $B$ over $A$ is isomorphic to $\opn{Ext}^n_{B\otimes_A B}(B,B)$ (even if $B$ is not projective over $A$), and asked if it coincides with the completed Hochschild cohomology of $B$ over $A$, which they have shown, is isomorphic to $\opn{Ext}^n_{B\widehat{\otimes}_A B}(B,B)$. Here, $B\widehat{\otimes}_A B$ is the $\b^e$-adic completion of $B\otimes_A B$, where $\b^e = \b\otimes_A 1 + 1\otimes_A \b \subseteq B\otimes_A B$.

In \cite[Corollary 4.3]{Sh1}, we answered this question positively, and, more generally, proved:
\begin{thm*}
Let $\k$ be a commutative ring, let $A$ be a commutative $\k$-algebra,
let $\a\subseteq A$ be a finitely generated ideal, and suppose that $A$ is $\a$-adically complete. Let $M$ be an $\a$-adically complete $A$-module. Assume the following:
\begin{enumerate}
\item The map $\k \to A$ is flat.
\item The ring $A$ is noetherian.
\item The composition $\k \to A \to A/\a$ is essentially of finite type.
\end{enumerate}
Then for any $n \in \mathbb{Z}$, there is an $A$-linear isomorphism
\begin{equation}\label{eqn:hoc-iso}
\opn{Ext}^n_{A\otimes_{\k} A}(A,M) \cong \opn{Ext}^n_{A\widehat{\otimes}_{\k} A}(A,M).
\end{equation}
\end{thm*}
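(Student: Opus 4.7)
The plan is to invoke the non-abelian derived adic completion functor $\mrm{L}\Lambda$ constructed in this paper, combined with a derived base-change argument. Set $B := A\otimes_\k A$ and $\b := \a\otimes_\k 1 + 1\otimes_\k \a$, a finitely generated ideal of $B$. By flatness of $\k \to A$ (hypothesis (1)), $B = A\otimes^L_\k A$. The modules $A$ and $M$ carry natural $B$-actions whose $\b$-adic topology agrees with the $\a$-adic topology, so by assumption both are $\b$-adically complete.

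The first and hardest step is to identify $\LL{\b}(B) \simeq \hat B := A\widehat{\otimes}_\k A$ in $\hcdg$. Observe that $B/\b = A/\a \otimes_\k A/\a$ is noetherian by hypothesis (3). Combined with the finite generation of $\b$ and the noetherianness of $A$ (hypothesis (2)), a Koszul-style reduction using the machinery of the paper should show that the higher derived $\b$-adic completion functors on $B$ vanish, giving $\LL{\b}(B) \simeq \hat B$ concentrated in degree zero. Since $A$ and $M$ are already $\b$-adically complete, an analogous (and easier) computation should yield $\hat B \otimes^L_B A \simeq A$ and $\hat B \otimes^L_B M \simeq M$ in $\msf{D}(B)$.

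With these ingredients, the conclusion follows from the derived tensor–Hom adjunction:
$$\mrm{RHom}_B(A, M) \simeq \mrm{RHom}_{\hat B}(\hat B \otimes^L_B A,\, M) \simeq \mrm{RHom}_{\hat B}(A,\, M),$$
and taking $n$th cohomology yields (\ref{eqn:hoc-iso}). The main obstacle is the identification $\LL{\b}(B) \simeq \hat B$: since $B = A\otimes_\k A$ is typically non-noetherian, standard comparisons between derived and classical completion do not apply directly, and one must exploit hypothesis (3) together with the DG machinery of the paper to reduce the calculation to one over the noetherian ring $A$.
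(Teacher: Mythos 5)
Your plan correctly identifies the two main ingredients -- weak proregularity of $\b := \a^e$ in $B := A\otimes_\k A$ (which follows from hypotheses (2) and (3), since $A/\a\otimes_\k A/\a$ is essentially of finite type over the noetherian ring $A/\a$), and the resulting identification $\mrm{L}\Lambda_{\b}(B) \simeq \hat B$ in degree zero. However, the final step contains a genuine gap: the base-change identity $\hat B \otimes^L_B A \simeq A$ used in the hom-tensor adjunction is \emph{not} a consequence of the hypotheses, and it is in fact false in general. For a concrete example, take $\k = \mathbb{Z}$, $A = \mathbb{Z}_p$, $\a = (p)$. All hypotheses (1)--(3) hold. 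One computes $B/\b^n \cong \mathbb{Z}/p^n$, so $\hat B \cong \mathbb{Z}_p \cong A$ compatibly with the map from $B$, and hence $\hat B \otimes^L_B A \cong A\otimes^L_{A\otimes_\k A} A$ is the derived Hochschild homology of $\mathbb{Z}_p$ over $\mathbb{Z}$, which has $\mrm{H}^{-1} \cong \Omega^1_{\mathbb{Z}_p/\mathbb{Z}} \neq 0$. So your adjunction chain collapses at the second isomorphism. (The theorem is still true in this example because $\opn{Hom}_{\mathbb{Z}_p}(\Omega^1_{\mathbb{Z}_p/\mathbb{Z}}, \mathbb{Z}_p) = 0$ by $p$-divisibility of $\Omega^1$, but this escape only works after applying $\mrm{RHom}(-,M)$, not at the level of the derived tensor.)

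The argument in \cite[Corollary 4.3]{Sh1} (this is the reference the paper gives for the statement; the paper itself only proves the generalization in Theorem \ref{thm:adicHoc}) avoids this by never invoking $\hat B \otimes^L_B A \simeq A$. Instead, one sandwiches the derived completion functor: since $M$ is $\b$-adically complete, $M \cong \mrm{L}\Lambda_{\b}(M)$ and $M \cong \mrm{L}\widehat{\Lambda}_{\b}(M)$, and one then appeals to the Greenlees--May type comparison $\mrm{RHom}_{\hat B}(A, \mrm{L}\widehat{\Lambda}_{\b}(N)) \cong \mrm{RHom}_B(A, \mrm{L}\Lambda_{\b}(N))$ (this is \cite[Corollary 3.12]{Sh1}; the analogue in the present paper is Proposition \ref{prop:ExtCompare}). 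That identity is proved using the telescope complex $T = \opn{Tel}(B^0;\mathbf{b})$: one has $\mrm{L}\widehat{\Lambda}_{\b}(N) \cong \mrm{RHom}_B(T\otimes_B \hat B, N)$, and repeated hom-tensor adjunction together with associativity $A\otimes^L_{\hat B}(T\otimes_B\hat B) \cong A\otimes^L_B T$ gives the comparison without ever needing to control $\hat B\otimes^L_B A$. Note also that your step $\mrm{L}\Lambda_{\b}(B)\simeq\hat B$, while correct, is not actually used anywhere in your final adjunction computation; the logical weight in your writeup lands entirely on the unjustified base-change claim.
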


Our proof of this result in \cite{Sh1} depended on all 3 conditions above.
One of the main results of this paper is a generalization of this result, without assuming any of the conditions (1)-(3). As a first step towards dropping this assumption, we drop the flatness assumption on the map $\k \to A$. But then, for such a homological statement to have any hope of remaining true, we must derive the enveloping algebra $A\otimes_{\k} A$,
and instead consider the derived enveloping algebra $A\otimes^{\mrm{L}}_{\k} A$.
This is no longer a commutative ring, but instead, it is a derived commutative ring. In this paper, we model derived commutative rings using commutative DG-algebras.
Using the derived enveloping algebra, the left hand side of (\ref{eqn:hoc-iso}) becomes $\opn{Ext}^n_{A\otimes^{\mrm{L}}_{\k} A}(A,M)$. These $A$-modules are called the derived Hochschild cohomology (also known as Shukla cohomology) modules of $A$ with respect to $M$.
Regarding the right hand side of (\ref{eqn:hoc-iso}), we run into a problem: it is not clear how to perform adic completion on  $A\otimes^{\mrm{L}}_{\k} A$. The majority of this paper is devoted to tackling this problem. Thus, we will explain how to perform the adic completion operation on a non-positive commutative DG-algebra $B$, with respect to a finitely generated ideal $\bar{\b} \subseteq \mrm{H}^0(B)$ (the latter is a commutative ring). We will then derive (in a homotopical sense) this non-abelian operation, and obtain a functor we call the derived adic completion functor. Applying it to the derived enveloping algebra, we will show:
\begin{thm}\label{thm:intro}
Let $\k$ be a commutative ring,
let $A$ be a commutative $\k$-algebra,
let $\a \subseteq A$ be a finitely generated ideal,
and suppose that $A$ is $\a$-adically complete.
Let $M$ be an $\a$-adically complete $A$-module.
Then for any $n \in \mathbb{Z}$, there is an $A$-linear isomorphism
\[
\opn{Ext}^n_{A\otimes^{\mrm{L}}_{\k} A}(A,M) \cong
\opn{Ext}^n_{A\widehat{\otimes}^{\mrm{L}}_{\k} A}(A,M)
\]
\end{thm}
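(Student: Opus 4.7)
The plan is to model both $A \otimes^{\mrm{L}}_{\k} A$ and $A \widehat{\otimes}^{\mrm{L}}_{\k} A$ by commutative DG-algebras using the theory developed in the paper, and then deduce the Ext-comparison from a change-of-rings argument along the derived completion map. First I would choose a K-flat, semi-free commutative DG-resolution $\til{A} \to A$ of $A$ over $\k$; then
\[
B := \til{A} \otimes_{\k} \til{A}
\]
is a non-positive commutative DG-algebra representing $A \otimes^{\mrm{L}}_{\k} A$, with $\mrm{H}^0(B) = A \otimes_{\k} A$ and finitely generated ideal $\bar{\b} := \bar{\a} \otimes 1 + 1 \otimes \bar{\a}$. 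By the main construction of the paper, the object $\widehat{B} := \mrm{L}\Lambda_{\bar{\b}}(B) \in \ho(\cdgcont)$ is then a model for $A\widehat{\otimes}^{\mrm{L}}_{\k} A$, so the theorem is reduced to producing, for every $n \in \mathbb{Z}$, an isomorphism
\[
\opn{Ext}^n_B(A,M) \iso \opn{Ext}^n_{\widehat{B}}(A,M).
\]

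The multiplication $\mu: B \to \til{A} \to A$ is a morphism $(B,\bar{\b}) \to (A,\bar{\a})$ in $\cdgcont$ (since $\mu$ sends $\bar{\b}$ into $\bar{\a}$), through which $A$ and $M$ acquire DG-module structures over $B$. Because $A$ and $M$ are classically $\bar{\a}$-adically complete and $\bar{\a}$ is finitely generated, they should be derived $\bar{\b}$-complete as $B$-modules; equivalently, in $\ho(\cdgcont)$ the morphism $(B,\bar{\b}) \to (A,\bar{\a})$ factors canonically through $(\widehat{B},\bar{\b})$, and $M$ inherits a canonical $\widehat{B}$-module structure. Then the standard $\otimes$-$\opn{Hom}$ adjunction along $B \to \widehat{B}$ yields
\[
\mrm{R}\opn{Hom}_B(A,M) \simeq \mrm{R}\opn{Hom}_{\widehat{B}}\bigl(A \otimes^{\mrm{L}}_B \widehat{B},\, M\bigr),
\]
and derived-completeness of $A$ as a $B$-module, combined with the identification of the base change $-\otimes^{\mrm{L}}_B \widehat{B}$ with $\mrm{L}\Lambda_{\bar{\b}}$ on the module level, gives $A \otimes^{\mrm{L}}_B \widehat{B} \simeq \mrm{L}\Lambda_{\bar{\b}}(A) \simeq A$ in the derived category of $\widehat{B}$, closing the argument.

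The hard part will be justifying the two module-theoretic statements in Step 2: that classical $\bar{\a}$-adic completeness of $A$ and $M$ as $A$-modules upgrades to derived $\bar{\b}$-adic completeness over the CDG-algebra $B$, and that module-level derived completion is realized by extension of scalars along $B \to \widehat{B}$. Both rest on properties of $\mrm{L}\Lambda$ that should be established in the main body of the paper, namely idempotence on $\ho(\cdgcont)$, agreement of derived with classical completion for modules on which the latter is already exact, and compatibility of DG-algebra completion with module completion under base change. Once these are in place, the deduction of Theorem \ref{thm:intro} is essentially formal, and the absence of noetherian or flatness hypotheses on $\k \to A$ is absorbed into the initial CDG-resolution step, which is precisely the mechanism by which the present result strengthens \cite{Sh1}.
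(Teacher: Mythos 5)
Your overall setup (choosing a $\k$-semi-free model $B = \til A \otimes_\k \til A$ for $A\otimes^{\mrm L}_\k A$, passing to $\widehat B = \mrm{L}\Lambda(B,\a^e)$ as the model for $A\widehat{\otimes}^{\mrm L}_\k A$, and then trying to compare $\mrm R\opn{Hom}_B(A,M)$ with $\mrm R\opn{Hom}_{\widehat B}(A,M)$ by a change-of-rings argument along $B\to\widehat B$) is the same starting point as the paper's. However, the change-of-rings step you use is the wrong one, and the key equality you then need is false.

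You invoke the adjunction between extension of scalars $\mrm Lf^* = -\otimes^{\mrm L}_B\widehat B$ and restriction $\mrm Rf_*$, reducing to showing $A\otimes^{\mrm L}_B\widehat B\simeq A$, and you justify this by ``identifying the base change $-\otimes^{\mrm L}_B\widehat B$ with $\mrm L\Lambda_{\bar\b}$ on the module level.'' That identification does not hold: as the paper develops at length (Proposition \ref{prop:form}, equations (\ref{eqn:gammaToKoszul})--(\ref{eqn:lambdaToTelescope})), derived completion is computed as $\mrm R\opn{Hom}_{B^0}(\opn{Tel}(B^0;\mathbf b),-)$, i.e.\ as a right-adjoint-type construction, \emph{not} as tensoring with the completed ring. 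Already over $B=\mathbb Z$, $\b=(p)$, one has $\mathbb Q\otimes_{\mathbb Z}\mathbb Z_p=\mathbb Q_p\ne 0 = \mrm L\Lambda_{(p)}(\mathbb Q)$. Even when $A$ is a cyclic $B$-module that is $\bar\b$-adically complete, $A\otimes^{\mrm L}_B\widehat B$ need not recover $A$: for $B=\mathbb Z_p[x_1,x_2,\dots]$, $\b=(p)$, $A=B/(x_1,x_2,\dots)=\mathbb Z_p$, the element $\sum_i p^i x_i\in\widehat B$ does not lie in $(x_1,x_2,\dots)\widehat B$, so $A\otimes_B\widehat B\ne A$ despite $A$ being complete. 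So derived completeness of $A$ does not give the needed base-change isomorphism, and your argument collapses at this point.

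The paper instead uses the \emph{other} adjunction along $B\to\widehat B$: restriction $\mrm Rf_*$ is left adjoint to coinduction $\mrm Rf^\flat = \mrm R\opn{Hom}_B(\widehat B,-)$, giving
\[
\mrm R\opn{Hom}_B(A,M)\cong \mrm R\opn{Hom}_{\widehat B}\bigl(A,\ \mrm R\opn{Hom}_B(\widehat B,M)\bigr),
\]
and the substantive content is that for $M$ derived $\bar\b$-adically complete, $\mrm R\opn{Hom}_B(\widehat B,M)\cong M$ in $\cat D(\widehat B)$. That, in turn, is exactly what Lemma \ref{lem:forAdicHoc} (feeding into Lemma \ref{lem:RWLW} and Proposition \ref{prop:wideis}) delivers; note that it requires the completeness of $M$ (not of $A$), and a non-trivial reduction to a weakly proregular resolution over a noetherian base. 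So the proof can indeed be made ``essentially formal'' once the right technical input is in place, but the adjunction must go through coinduction, not extension of scalars; your proposal has this backwards, and the hole cannot be patched while keeping the $\otimes^{\mrm L}_B\widehat B$ step.
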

This is contained in Theorem \ref{thm:adicHoc} below.
It is worth noting that even if $A$ is flat over $\k$, so that $A\otimes^{\mrm{L}}_{\k} A = A\otimes_{\k} A$, the statement of this theorem contains a non-trivial new construction. This is because, in view of the fact that we make no finiteness assumptions on $\k, A$,
the ordinary adic completion operation is ill-behaved from a homological perspective.
Thus, even in the case where $A\otimes^{\mrm{L}}_{\k} A$ is an ordinary commutative ring, 
we do not perform on it the usual adic completion operation, but instead its non-abelian derived functor. In particular, the derived adic completion of a non-noetherian commutative ring need not be a ring. Instead, it is a derived ring which may have higher cohomologies. Theorem \ref{thm:intro} suggests that this non abelian derived functor might have better formal properties than the usual adic completion operation. This might suggest that the fact that many results in the commutative algebra of adic rings were only proven under a noetherian assumption is due to the fact that in non-noetherian contexts, the derived completion operation is the correct operation needed to be considered.

In Section 1 below we gather some preliminaries from homological and homotopical algebra.
In Section 2 we extended the theory of local cohomology and derived completion to the derived category of DG-modules over a commutative DG-ring. In addition to being an interesting theory on its own, this theory is crucially used in the remaining of this paper. Section 3 introduces a non-abelian functor called adic completion on the category of pairs $(A,\bar{\a})$, where $A$ is a commutative DG-ring, and $\bar{\a} \subseteq \mrm{H}^0(A)$ is a finitely generated ideal. In Section 4 we prove that this non-abelian functor has a non-abelian derived functor (in the sense of homotopy theory), called the derived adic completion functor. We study its basic properties, and show it coincide with the usual adic completion operation on noetherian rings and on maps between noetherian rings.
The short Section 5 studies some interactions between the constructions of Section 2 and Section 4. Finally, in Section 6 we prove Theorem \ref{thm:intro} and discuss further applications of the theory developed in this paper.

\subsection*{}

We end this introduction by discussing some related works.
In \cite[Section 9]{IM} the authors describes a theory of local cohomology over a commutative,
not necessarily non-positive DG-ring. 
Under a noetherian assumption which holds in the applications they have for that theory in their paper,
their local cohomology coincides with ours.
The recent paper \cite{BHV} describes a general framework to describe isomorphisms generalizing the Greenlees-May duality.
Presumably, our version of it for commutative DG-rings (Proposition \ref{prop:GM}) falls under this framework.
Other related papers in this context are \cite{DG,DGI}, 
which describes a similar formal theory,
which, presumably could also be used to construct the theory of Section 2,
and the recent \cite{Po} which further extends the MGM equivalence.
Predecessors for the results of this section include \cite{GM} which was the first serious investigation of derived completion in an abelian setting,
and \cite{AJL1,AJL2} which globalized these results and extended them to the framework of derived categories on schemes and formal schemes.

Regarding Section 4, the only similar construction we are aware of is \cite[Section 4]{Lu},
where Lurie constructs derived completion in a spectral setting. 
Lurie's construction is easy to carry out,
but it seems difficult to work with it in commutative algebra,
nor any functorial properties of this construction are discussed there.
In contrast, our construction is quite difficult to carry out (it takes the majority of Section 4), 
but it is very easy to work with in commutative algebra:
we work in a strictly (graded) commutative setting, 
and to compute our derived completion, we identify a large class of commutative DG-rings (called weakly proregular DG-rings)
which serve as resolutions in this context,
on which derived completion is naturally isomorphic to ordinary adic completion.
Moreover, the fact that our construction is functorial is in some sense the main point of this paper, 
as it shows that there are interesting natural morphisms between derived completions of DG-rings and ordinary adically complete noetherian rings (see Section 6).

Finally, regarding the results of Sections 5 and 6,
as far as we know they are new, and never been carried out before in any similar non-abelian derived framework.

\section{Preliminaries}\label{sec:prel}

We will freely use the language of derived categories over rings and DG-rings,
including resolutions of unbounded complexes (\cite{Sp}) and unbounded DG-modules (\cite{Ke}).

\subsection{The category $\cdg_{\k}$ and its homotopy category}

We begin by recalling some basic notions from the theory of commutative DG-rings. 
We follow the definitions and notations of \cite{Ye2,Ye3}
(this includes using the term "DG-ring" instead of the somewhat more common "DG-algebra", 
see \cite[Remark 1.4]{Ye3} for a justification).
A commutative DG-ring is a $\mathbb{Z}$-graded ring
\[
A = \bigoplus_{n=-\infty}^{\infty} A^n
\]
and a $\mathbb{Z}$-linear differential $d:A \to A$ of degree $+1$,
such that $d(a\cdot b) = d(a)\cdot b + (-1)^i\cdot a \cdot d(b)$,
for all $a \in A^i$, $b \in A^j$,
and such that $b \cdot a = (-1)^{i\cdot j}\cdot a \cdot b$,
and $a \cdot a = 0$ if $i$ is an odd integer.
We shall further assume that $A$ is non-positive,
i.e, that $A^i = 0$ for all $i>0$.
For a DG-ring $A$, following \cite[Definition 1.3(2)]{Ye2}, 
we will sometimes denote $\mrm{H}^0(A)$ by $\bar{A}$.
Accordingly, ideals of the commutative ring $\bar{A}$ will be denoted by $\bar{\a}$.
The natural map $A \to \bar{A}$ will be denoted by $\pi_A$.
We denote by $\opn{DGMod}(A)$ the category of (unbounded) DG-modules over $A$,
and by $\cat{D}(A)$ its derived category. 
In particular, if $A = A^0$ is a ring, 
$\opn{DGMod}(A)$ is simply the category of unbounded complexes of $A$-modules.

Fixing a commutative DG-ring $\k$,
we denote by $\cdg_{\k}$ be the category of commutative non-positive DG-rings over $\k$.
A map in $\cdg_{\k}$ is a morphism of commutative DG-rings which is $\k$-linear.
If $\k = \mathbb{Z}$, the initial object of the category of commutative DG-rings,
we simply denote it by $\cdg$.
A map $A \to B$ of commutative DG-rings is called semi-free if it induces an isomorphism 
$B \cong A\otimes_{\mathbb{Z}} \mathbb{Z}[X]$ of graded commutative rings,
where $X$ is a graded set of variables. 
In this case, $B$ is K-projective over $A$. 
According to \cite[Theorem 3.21]{Ye3}, any map of commutative DG-rings $A \to B$
can be factored as $A \to \til{B} \to B$, such that $A \to \til{B}$ is semi-free, and $\til{B} \to B$ is a surjective quasi-isomorphism.

If $\k$ is a ring, and moreover it contains $\mathbb{Q}$, there is a model structure on $\cdg_{\k}$, 
in which the weak equivalences are the quasi-isomorphisms, the fibrations are the surjections, 
and the generating cofibrations are the semi-free maps. 
It is very convenient to do homotopy theory with this model structure. 
Unfortunately, if $\k$ does not contain $\mathbb{Q}$, 
there is no such model structure on $\cdg_{\k}$.
In general, according to \cite[Theorem 9.7]{St}, 
for any commutative ring $\k$, the category
$\cdg_{\k}$ has another model structure, with the expected weak equivalences and cofibrations.
However, it is not known what are the fibrations in this model structure.
In any case, since such a model structure exist,
we deduce immediately that the category $\cdg_{\k}$,
with weak equivalences being quasi-isomorphisms is a homotopical category,
in the sense of \cite[Section 33]{DHKS}. 
In particular, we may form its homotopy category $\ho(\cdg_{\k})$,
obtained by formally inverting quasi-isomorphisms, 
and deduce that it is locally small.

Despite the fact that quasi-isomorphisms, surjections and semi-free maps do not define a model structure on $\cdg_{\k}$ in general,
we shall now show that many features of model category theory remain true for these three classes of maps.
We begin by borrowing from Quillen's model category theory the notion of a left homotopy.

\begin{dfn}\label{def:left-homotopy}
Let $\k$ be a commutative DG-ring,
and let $f,g : A \to B$ be two maps in $\cdg_{\k}$. We say that $f,g$ are left homotopic over $\k$ if:
\begin{enumerate}
\item There is factorization
$A\otimes_{\k} A \xrightarrow{\alpha} C \xrightarrow{\beta} A$ in $\cdg_{\k}$ of the multiplication map $A\otimes_{\k} A \to A$, such that the map $\beta: C \to A$ is a quasi-isomorphism, and
\item Letting $\sigma_1,\sigma_2:A \to A\otimes_{\k} A$ be the two natural maps $A \to A\otimes_{\k} A$,
there is a map $h:C \to B$ in $\cdg_{\k}$, such that $h \circ \alpha \circ \sigma_1 = f$ and  $h \circ \alpha \circ \sigma_2 =g$.
\end{enumerate}
\end{dfn}

The following implication is standard in model category theory.

\begin{prop}\label{prop:homot-become-equal}
Let $\k$ be a commutative DG-ring,
denote by $Q_{\k}$ the localization functor $\cdg_{\k} \to \ho(\cdg_{\k})$,
and let $f,g:A \to B$ be left homotopic maps over $\k$.
Then 
\[
Q_{\k}(f) = Q_{\k}(g).
\]
\end{prop}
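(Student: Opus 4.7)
The plan is to exploit the two structural facts packaged into the cylinder object $C$: that $\beta$ is a quasi-isomorphism, so it becomes invertible in the localization, and that the two composites $\beta \circ \alpha \circ \sigma_i$ are both equal to the identity of $A$.

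First I would verify the latter identity at the level of $\cdg_{\k}$. By construction $\beta \circ \alpha$ is the multiplication (diagonal) map $A \otimes_{\k} A \to A$, and the two natural maps $\sigma_1, \sigma_2 : A \to A \otimes_{\k} A$ are $a \mapsto a \otimes 1$ and $a \mapsto 1 \otimes a$; in either case the composite with multiplication is $\mathrm{id}_A$. Applying $Q_{\k}$ therefore gives
\[
Q_{\k}(\beta) \circ Q_{\k}(\alpha \circ \sigma_1) \;=\; \mathrm{id}_{Q_{\k}(A)} \;=\; Q_{\k}(\beta) \circ Q_{\k}(\alpha \circ \sigma_2).
\]

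Next, since $\beta$ is a quasi-isomorphism, $Q_{\k}(\beta)$ is invertible in $\ho(\cdg_{\k})$ by the very definition of the localization. Cancelling $Q_{\k}(\beta)$ on the left yields
\[
Q_{\k}(\alpha \circ \sigma_1) \;=\; Q_{\k}(\beta)^{-1} \;=\; Q_{\k}(\alpha \circ \sigma_2).
\]

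Finally, composing with $Q_{\k}(h)$ on the left and using $h \circ \alpha \circ \sigma_1 = f$ and $h \circ \alpha \circ \sigma_2 = g$ gives
\[
Q_{\k}(f) \;=\; Q_{\k}(h) \circ Q_{\k}(\alpha \circ \sigma_1) \;=\; Q_{\k}(h) \circ Q_{\k}(\alpha \circ \sigma_2) \;=\; Q_{\k}(g),
\]
as required. There is no real obstacle here; the argument is purely formal and uses only that $Q_{\k}$ is a functor sending quasi-isomorphisms to isomorphisms, which is exactly how $\ho(\cdg_{\k})$ is defined. No model-category machinery beyond the existence of the localization is needed.
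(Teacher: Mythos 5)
Your proof is correct and follows essentially the same route as the paper's: observe that both $\beta \circ \alpha \circ \sigma_i$ equal $1_A$, apply $Q_{\k}$, cancel the invertible $Q_{\k}(\beta)$, and post-compose with $Q_{\k}(h)$. You spell out the verification that $\sigma_1,\sigma_2$ followed by the diagonal map give the identity, which the paper leaves implicit, but the argument is otherwise identical.
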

\begin{proof}
Following the notation of Definition \ref{def:left-homotopy},
we have that 
\[
Q_{\k}(1_A) = Q_{\k}(\beta \circ \alpha \circ \sigma_1) = Q_{\k}(\beta \circ \alpha \circ \sigma_2)
\]
But $Q_{\k}(\beta)$ is invertible, so we deduce that 
\[
Q_{\k}(\alpha \circ \sigma_1) = Q_{\k}(\alpha \circ \sigma_2),
\]
which implies that
\[
Q_{\k}(f) =  Q_{\k}(h \circ \alpha \circ \sigma_1) = Q_{\k}(h \circ \alpha \circ \sigma_2) = Q_{\k}(g).
\]
\end{proof}

The next result is an analogue of \cite[Proposition 7.6.13]{Hi} in this context.
\begin{prop}\label{prop:lift-homot}
Let $\k$ be a commutative DG-ring, 
and let
\[
\xymatrixcolsep{4pc}
\xymatrix{
A \ar[r]\ar[d]_{f} & B\ar[d]^{g}\\
C \ar[r] & D
}
\]
be a commutative diagram in $\cdg_{\k}$, such that $f$ is semi-free and $g$ is a surjective quasi-isomorphism.
\begin{enumerate}
\item There is a map $h_1:C \to B$ in $\cdg_{\k}$ making the diagram
\begin{equation}\label{eqn:hiscom}
\xymatrixcolsep{4pc}
\xymatrix{
A \ar[r]\ar[d]_{f} & B\ar[d]^{g}\\
C \ar[r]\ar@{.>}[ru]_{h_1} & D
}
\end{equation}
commutative.
\item If $h_2:C \to B$ is another map in $\cdg_{\k}$ making the diagram (\ref{eqn:hiscom}) commutative, then
$h_1,h_2$ are left homotopic over $A$.
\end{enumerate}
\end{prop}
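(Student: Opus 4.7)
The plan is to prove (1) by transfinite induction on a semi-free presentation of $C$ over $A$, and to prove (2) by producing a cylinder object for $C$ in $\cdg_A$ via the factorization theorem \cite[Theorem 3.21]{Ye3} and applying (1) to lift the induced map out of it.

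For (1), I would use the semi-free structure to write $C \cong A \otimes_{\mathbb{Z}} \mathbb{Z}[X]$ as graded commutative rings, well-ordering the graded set $X$ so that for each $x \in X$ the differential $dx$ lies in the sub-DG-ring generated by $A$ and the earlier variables. Putting $h_1|_A$ equal to the top map $A \to B$, the Leibniz rule and multiplicativity reduce the construction of $h_1$ to choosing, for each $x \in X$, an element $h_1(x) \in B^{|x|}$ with $g(h_1(x))$ equal to the image of $x$ in $D$ and $d(h_1(x)) = h_1(dx)$. Proceeding inductively along $X$: surjectivity of $g$ supplies some $b_0 \in B^{|x|}$ mapping to the image of $x$, and then $c := d(b_0) - h_1(dx)$ is a cocycle in $\ker(g)$. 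Because $g$ is a surjective quasi-isomorphism, the short exact sequence $0 \to \ker(g) \to B \to D \to 0$ forces $\mrm{H}^*(\ker(g)) = 0$, so $c = d(b')$ for some $b' \in \ker(g)^{|x|}$, and $h_1(x) := b_0 - b'$ has the required differential and image.

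For (2), I would apply \cite[Theorem 3.21]{Ye3} to factor the multiplication $\mu: C \otimes_A C \to C$ in $\cdg_{\k}$ as $C \otimes_A C \xrightarrow{\alpha} E \xrightarrow{\beta} C$ with $\alpha$ semi-free and $\beta$ a surjective quasi-isomorphism, providing the cylinder object. Since $h_1$ and $h_2$ agree on $A$, they combine into a map $(h_1, h_2): C \otimes_A C \to B$. In the square with top $(h_1, h_2)$, left $\alpha$, right $g$, and bottom $(C \to D) \circ \beta$, both paths around the square equal $C \otimes_A C \xrightarrow{\mu} C \to D$, since $g \circ h_i$ is the given map $C \to D$ for each $i$; hence the square commutes. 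Applying (1) yields a lift $h: E \to B$, and the triple $(\alpha, \beta, h)$ is the required left homotopy data.

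The main obstacle will be the inductive step of (1), where one must coordinate surjectivity of $g$ (to produce the initial preimage $b_0$) with the acyclicity of its kernel (to correct $b_0$ so its differential matches $h_1(dx)$ on the nose). Once (1) is established, (2) follows essentially formally from the Yekutieli factorization.
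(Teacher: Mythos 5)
Your proof is correct and takes essentially the same route as the paper: part (2) is identical (factor the multiplication $C\otimes_A C\to C$ via Yekutieli's factorization theorem, combine $h_1,h_2$ into a map out of $C\otimes_A C$ using that they agree on $A$, and lift using part (1)). The only difference is that for part (1) you spell out the standard transfinite-induction lifting argument, whereas the paper simply cites \cite[Theorem 3.22(i)]{Ye3}; your inductive step is the right one, using surjectivity of $g$ for the initial choice and acyclicity of $\ker(g)$ for the correction.
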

\begin{proof}
The existence (1) is \cite[Theorem 3.22(i)]{Ye3}.
Assume $h_1,h_2:C \to B$ are two such maps.
Let $C\otimes_A C \xrightarrow{i} E \xrightarrow{j} C$ be a factorization of the multiplication map $C\otimes_A C \to C$ such that $i$ is semi-free and $j$ is a surjective quasi-isomorphism. Because $h_1 \circ f = h_2\circ f$, the two maps $h_1,h_2$ induce a map $h_1\otimes h_2: C\otimes_A C \to B$.
We obtain a commutative diagram
\[
\xymatrix{
C\otimes_A C \ar[d]_{i}\ar[rr]^{h_1\otimes h_2} & & B\ar[d]^{g}\\
E \ar[r]_{j} & C \ar[r] & D
}
\]
in $\cdg_A$, in which $i$ is semi-free, and $g$ is a surjective quasi-isomorphism.
Hence, by (1), there is a map $\phi: E \to B$, making the diagram
\[
\xymatrixcolsep{4pc}
\xymatrix{
C\otimes_A C \ar[d]_{i}\ar[rr]^{h_1\otimes h_2} & & B\ar[d]^{g}\\
E \ar[r]_{j} \ar[rru]^{\phi} & C \ar[r] & D
}
\]
commutative, so we get a diagram
\[
\xymatrixcolsep{4pc}
\xymatrix
{
C \ar@<-.5ex>[r] \ar@<.5ex>[r]  & C\otimes_{A} C \ar[r]^{i}  & E \ar[r]^{j}\ar[d]^{\phi} & C\\
            &                                     & B                   &       
}
\]
proving the claim.
\end{proof}
 
 We record some immediate corollaries of this proposition that will be used in the sequel.
 \begin{cor}\label{cor:lift-hom}
 Let $\k$ be a commutative DG-ring.
 \begin{enumerate}
\item Let $X \in \cdg_{\k}$, 
 and let $a:A \to X$ and $b:B\to X$ be two quasi-isomorphisms in $\cdg_{\k}$,
 such that $A,B$ are semi-free over $\k$,
 and such that $b$ is surjective.
 Then there is a $\k$-linear quasi-isomorphism $h:A \to B$, 
 unique up to a left homotopy over $\k$,
 making the diagram
 \[
 \xymatrix{
 A\ar[rd]_a \ar@{.>}[rr]^h & & B\ar[ld]^b\\
                               &X& 
 }
 \]
 commutative.
 \item Let $f:X \to Y$, $a:A \to X$ and $b:B \to Y$ be maps in $\cdg_{\k}$,
 and assume that $a,b$ are quasi-isomorphisms, that $A, B$ are semi-free over $\k$, and that $b$ is surjective.
 Then there is a map $\til{f}:A \to B$ in $\cdg_{\k}$, 
 unique up to a left homotopy over $\k$,
 making the diagram
\[
\xymatrix{
A \ar@{.>}[r]^{\til{f}}\ar[d]_a & B\ar[d]^b\\
X \ar[r]_f & Y
}
\]
commutative.
 \end{enumerate}
 \end{cor}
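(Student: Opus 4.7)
The plan is to derive both parts of the corollary as direct applications of Proposition \ref{prop:lift-homot}, by setting up suitable commutative squares in $\cdg_{\k}$ whose left vertical arrows come from the semi-free structure maps $\k \to A$ supplied by the hypothesis, and whose right vertical arrows are the given surjective quasi-isomorphisms.

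For part (1), I would consider the commutative square
\[
\xymatrix{
\k \ar[r]\ar[d] & B\ar[d]^{b}\\
A \ar[r]_{a} & X
}
\]
in $\cdg_{\k}$, where the left vertical arrow is the structure map of $A$, which is semi-free by assumption, and the right vertical arrow $b$ is a surjective quasi-isomorphism. Proposition \ref{prop:lift-homot}(1) furnishes a $\k$-linear map $h:A \to B$ with $b\circ h = a$; since $a$ and $b$ are both quasi-isomorphisms, the two-out-of-three property forces $h$ to be a quasi-isomorphism as well. If $h_1,h_2:A \to B$ are two such lifts, then applying Proposition \ref{prop:lift-homot}(2) to the same square (with $\k$ as the top-left object) shows that $h_1$ and $h_2$ are left homotopic over $\k$.

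For part (2), I would simply replace the bottom row of the previous square by $f\circ a:A \to Y$, obtaining
\[
\xymatrix{
\k \ar[r]\ar[d] & B\ar[d]^{b}\\
A \ar[r]_{f\circ a} & Y
}
\]
which again has a semi-free left vertical arrow and a surjective quasi-isomorphic right vertical arrow. Proposition \ref{prop:lift-homot}(1) yields the desired lift $\til{f}:A \to B$ with $b\circ \til{f} = f\circ a$, and Proposition \ref{prop:lift-homot}(2) provides its uniqueness up to left homotopy over $\k$.

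Since both parts reduce to already established lifting and homotopy statements, I do not anticipate any substantial obstacle; the only subtle point is to take the top-left corner of the lifting square to be $\k$ itself, so that the left homotopy produced by Proposition \ref{prop:lift-homot}(2) is automatically a homotopy over $\k$ rather than over some larger DG-ring.
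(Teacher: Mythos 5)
Your proposal is correct and follows exactly the same route as the paper: both parts reduce to Proposition \ref{prop:lift-homot} applied to the squares with $\k$ in the top-left corner and with $a$ (resp. $f\circ a$) along the bottom. Your added remarks — the two-out-of-three argument for $h$ being a quasi-isomorphism, and the observation that the homotopy is over $\k$ because the top-left object is $\k$ — are just the details the paper leaves implicit.
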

 \begin{proof}
These follow from applying the proposition to the commutative diagrams
 \[
 \xymatrix{
 \k \ar[r]\ar[d] & B\ar[d]^b &  \k \ar[r]\ar[d] & B\ar[d]^b\\
 A \ar[r]_a & X &  A \ar[r]_{f \circ a} & Y
 }
 \]
 \end{proof}

\subsection{Standard functors over $\ho(\cdg)$}

For the next constructions, we first make some set theoretic remarks.
Fix a Grothendieck universe $\mathcal{V}$,
and assume that $\cdg \in \mathcal{V}$,
and that for any $A \in \cdg$,
we have that
$\opn{DGMod}(A), \cat{D}(A) \in \mathcal{V}$.
Let $\cat{Cat}$ be the category of $\mathcal{V}$-small categories,
and fix a Grothendieck universe $\mathcal{U}$,
such that $\mathcal{V} \subseteq \mathcal{U}$,
and such that $\cat{Cat} \in \mathcal{U}$.
We give $\cat{Cat}$ the canonical model structure (see \cite{Re}).
Recall that the weak equivalences in this structure are exactly the equivalences of categories.
The homotopy category $\ho(\cat{Cat})$ has the same objects,
but its morphisms are natural equivalence classes of morphisms.
Any map $f:A \to B$ of commutative DG-rings give rise to these functors:
\[
- \otimes^{\mrm{L}}_A B, \quad \mrm{R}\opn{Hom}_A(B,-) : \cat{D}(A) \to \cat{D}(B)
\]
and the forgetful functor $\cat{D}(B) \to \cat{D}(A)$.
As is well known, 
if $f$ is a quasi-isomorphism then each of these three functors is an equivalence of categories.
We now use this fact to extend them to $\ho(\cdg)$.

Define functors 
\begin{equation}\label{eqn:Rf}
\mrm{L}(-)^* : \cdg \to \ho(\cat{Cat}), \quad \mrm{R}(-)_* : \cdg^{\opn{op}} \to \ho(\cat{Cat}), \quad \mrm{R}(-)^{\flat} : \cdg \to \ho(\cat{Cat}).
\end{equation}
as follows:
on objects, each of $\mrm{L}(-)^{*}$, $\mrm{R}(-)_{*}$ and $\mrm{R}(-)^{\flat}$ map $A \in \cdg$ to the category $\cat{D}(A) \in \ho(\cat{Cat})$.
Given $f:A \to B$ in $\cdg$, $\mrm{L}f^* : \cat{D}(A) \to \cat{D}(B)$ is given by $-\otimes^{\mrm{L}}_A B$, $\mrm{R}f_*:\cat{D}(B) \to \cat{D}(A)$ is the forgetful functor,
and $\mrm{R}(-)^{\flat}: \cat{D}(A) \to \cat{D}(B)$ is given by $\mrm{R}\opn{Hom}_A(B,-)$.
Since $\ho(\cat{Cat})$ identifies naturally equivalent functors, 
it is clear that $\mrm{L}(-)^*$, $\mrm{R}(-)_*$ and $\mrm{R}(-)^{\flat}$ are functors.
Moreover, they all map quasi-isomorphisms to isomorphisms,
so they induce functors
\[
\mrm{L}(-)^* : \ho(\cdg) \to \ho(\cat{Cat}),
\]
\[
\mrm{R}(-)_* : \ho(\cdg)^{\opn{op}} \to \ho(\cat{Cat}).
\]
and
\[
\mrm{R}(-)^{\flat} : \ho(\cdg) \to \ho(\cat{Cat}).
\]
In particular, using either of these functors,
we see that the operation 
\[
\ho(\cdg) \to \ho(\cat{Cat}), \quad A \mapsto \cat{D}(A)
\]
is well defined,
so that we may speak about the derived category of an object of $\ho(\cdg)$.

\subsection{Derived tensor products}\label{sec:dertenprod}

Fix a commutative DG-ring $\k$.
There is a functor 
\[
\cdg_{\k} \times \cdg_{\k} \to \cdg_{\k},
\]
given by $(A,B) \mapsto A\otimes_{\k} B$.
In this section we will construct its non-abelian derived functor.
We begin by constructing an analogue of the fibrant cofibrant replacement functor in this context.

\begin{prop}\label{prop:SF}
Let $\k$ be a commutative DG-ring,
and let $Q_{\k}: \cdg_{\k} \to \ho(\cdg_{\k})$ be the localization functor.
There is a functor $\opn{SF}_{\k}: \cdg_{\k} \to \ho(\cdg_{\k})$,
and a natural transformation $\opn{SF}_{\k} \to Q_{\k}$,
such that for any $A \in \cdg_{\k}$, $\opn{SF}_{\k}(A)$ is semi-free over $\k$, 
and the map $\opn{SF}_{\k}(A) \to A$ is equal to $Q_{\k}(i_A)$, where $i_A : \opn{SF}_{\k}(A) \to A$ is a surjective quasi-isomorphism.
\end{prop}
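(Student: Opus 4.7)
The plan is to construct $\opn{SF}_{\k}$ by choosing, once and for all, a surjective semi-free replacement for every object, and then transporting morphisms using the lifting results just established. First, for every $A \in \cdg_{\k}$, apply the factorization of \cite[Theorem 3.21]{Ye3} (as cited earlier in the paper) to the structure map $\k \to A$ to obtain a factorization $\k \to \opn{SF}_{\k}(A) \xrightarrow{i_A} A$ in $\cdg_{\k}$ in which $\opn{SF}_{\k}(A)$ is semi-free over $\k$ and $i_A$ is a surjective quasi-isomorphism. This defines $\opn{SF}_{\k}$ on objects; the choice is legitimate because all the data live inside the Grothendieck universe $\mcal{V}$ fixed in the preceding subsection.

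Next I would define $\opn{SF}_{\k}$ on morphisms. Given $f:A \to B$ in $\cdg_{\k}$, Corollary \ref{cor:lift-hom}(2), applied to the square with verticals $i_A, i_B$ and bottom arrow $f$, produces a morphism $\til{f}: \opn{SF}_{\k}(A) \to \opn{SF}_{\k}(B)$ in $\cdg_{\k}$ satisfying $i_B \circ \til{f} = f \circ i_A$, unique up to left homotopy over $\k$. Set
\[
\opn{SF}_{\k}(f) \;:=\; Q_{\k}(\til{f}) \;\in\; \opn{Hom}_{\ho(\cdg_{\k})}\bigl(\opn{SF}_{\k}(A), \opn{SF}_{\k}(B)\bigr).
\]
By Proposition \ref{prop:homot-become-equal} the right-hand side is independent of the chosen lift $\til{f}$, so $\opn{SF}_{\k}(f)$ is well defined.

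To verify functoriality, note that for composable $f:A \to B$ and $g:B \to C$, both $\widetilde{g \circ f}$ and $\til{g} \circ \til{f}$ fill the lifting square for $g \circ f$ against $i_C$; by the uniqueness clause of Corollary \ref{cor:lift-hom}(2) they are left homotopic over $\k$, and Proposition \ref{prop:homot-become-equal} then gives $\opn{SF}_{\k}(g \circ f) = \opn{SF}_{\k}(g) \circ \opn{SF}_{\k}(f)$. The identical argument, comparing $\widetilde{\opn{id}_A}$ with $\opn{id}_{\opn{SF}_{\k}(A)}$, gives $\opn{SF}_{\k}(\opn{id}_A) = \opn{id}_{\opn{SF}_{\k}(A)}$. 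Finally, the family $\eta_A := Q_{\k}(i_A)$ assembles into a natural transformation $\opn{SF}_{\k} \to Q_{\k}$, because for every $f:A \to B$ the strict equality $i_B \circ \til{f} = f \circ i_A$ in $\cdg_{\k}$ descends to the naturality square in $\ho(\cdg_{\k})$.

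The only real obstacle is the absence of a full model structure producing a strict functorial resolution; this is precisely compensated by the uniqueness-up-to-left-homotopy in Corollary \ref{cor:lift-hom}(2) together with Proposition \ref{prop:homot-become-equal}, which together convert coherence up to homotopy into literal functoriality after passing through $Q_{\k}$. No further homotopical machinery is needed.
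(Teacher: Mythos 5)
Your proof is correct and follows essentially the same approach as the paper: fix surjective semi-free replacements on objects, lift morphisms against surjective quasi-isomorphisms, and use uniqueness-up-to-left-homotopy plus Proposition \ref{prop:homot-become-equal} to get strict functoriality in $\ho(\cdg_{\k})$. The only cosmetic differences are that you route the lifting through Corollary \ref{cor:lift-hom}(2) rather than invoking Proposition \ref{prop:lift-homot} directly, and you spell out the well-definedness of $\opn{SF}_{\k}(f)$ and the identity axiom, which the paper leaves implicit.
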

\begin{proof}
For any $A \in \cdg_{\k}$, factor $\k \to A$ as $\k \to \til{A} \xrightarrow{i_A} A$, 
with $\k \to \til{A}$ being semi-free, and $\til{A} \to A$ being a surjective quasi-isomorphism.
Given a map $f:A \to B$ in $\cdg_{\k}$, by Proposition \ref{prop:lift-homot}(1), 
there is a map $i_f:\til{A} \to \til{B}$ in $\cdg_{\k}$ such that $f \circ i_A = i_B \circ i_f$.
Let $\opn{SF}(A) := \til{A}$, and $\opn{SF}(f) := Q_{\k}(i_f)$.
If $g:B \to C$ is another map, then there are two commutative diagrams
\[
\xymatrixcolsep{5pc}
\xymatrixrowsep{5pc}
\xymatrix{
\k \ar[r]\ar[d] & \til{C}\ar[d] & \k \ar[r]\ar[d] & \til{C}\ar[d]\\
\til{A} \ar[ru]_{\opn{SF}(g\circ f)} \ar[r] & C & \til{A} \ar[ru]_{i_g \circ i_f} \ar[r] & C
}
\]
in which the left vertical map is semi-free and the right vertical map is a surjective quasi-isomorphism, 
so by Proposition \ref{prop:lift-homot}(2), we see that $\opn{SF}(g \circ f) = \opn{SF}(g)  \circ \opn{SF}(f)$,
which implies that indeed $\opn{SF}$ is a functor, and the collection of maps $i_A$ is a natural transformation.
\end{proof}

Using this functor, we may define the derived tensor product functor
\[
- \otimes^{\mrm{L}}_{\k} - : \ho(\cdg_{\k} \times \cdg_{\k}) = \ho(\cdg_{\k}) \times \ho(\cdg_{\k}) \to \ho(\cdg_{\k})
\]
by letting
\[
- \otimes^{\mrm{L}}_{\k} - := \opn{SF}_{\k}(-) \otimes_{\k} \opn{SF}_{\k}(-).
\]
This is well defined since $\opn{SF}_{\k}$ preserves quasi-isomorphisms,
and since for any $A\in \cdg_{\k}$, we have that $\opn{SF}_{\k}(A)$ is K-flat over $\k$.
K-flatness also ensures that there is a derived tensor product functor
\[
\cat{D}(A) \times \cat{D}(B) \to \cat{D}(A\otimes^{\mrm{L}}_{\k} B).
\]
In the special case where $A = B$, we can say more:
We may consider the functor 
\[
\Delta: \cdg_{\k} \to \cdg_{\k}, \quad A \mapsto A\otimes_{\k} A.
\]
This functor comes equipped with a natural transformation $\Delta \to 1_{\cdg_{\k}}$, 
given by the multiplication map $A\otimes_{\k} A \to A$.
Again, we may form the derived functor 
\[
\mrm{L}\Delta: \ho(\cdg_{\k}) \to \ho(\cdg_{\k}),
\]
and deduce that there is a natural transformation $\mrm{L}\Delta \to 1_{\ho(\cdg_{\k})}$.
Hence, for any $A \in \cdg_{\k}$, 
there is a natural map 
\begin{equation}\label{eqn:diag-map}
\Delta_A: A\otimes^{\mrm{L}}_{\k} A \to A
\end{equation}
in $\ho(\cdg_{\k})$,
given by the composition 
\[
A\otimes^{\mrm{L}}_{\k} A = \opn{SF}_{\k}(A) \otimes_{\k} \opn{SF}_{\k}(A) \to \opn{SF}_{\k}(A) \to A.
\]
The functor $(\Delta_A)^{\flat} : \cat{D}(A\otimes^{\mrm{L}}_{\k} A) \to \cat{D}(A)$ is called Shukla cohomology, 
or derived Hochschild cohomology.
See \cite{AILN,Shu,Ye3} for a study of this functor.

\subsection{Completion and torsion over commutative rings, and weakly proregular ideals}

Given a commutative ring $A$,
and a finitely generated ideal $\a\subseteq A$,
the $\a$-torsion and $\a$-adic completion functors are given by
\[
\Gamma_{\a}(-) := \varinjlim_n \opn{Hom}_A(A/\a^n,-), \quad \Lambda_{\a}(-) := \varprojlim_n A/\a^n \otimes_A -.
\]
These are both additive functors
\[
\opn{Mod}(A) \to \opn{Mod}(A),
\]
so they have derived functors
\[
\mrm{R}\Gamma_{\a}, \mrm{L}\Lambda_{\a}: \cat{D}(A) \to \cat{D}(A).
\]
It is clear that the functor $\Gamma_{\a}$ is idempotent. 
By \cite[Corollary 3.6]{Ye1}, $\Lambda_{\a}$ is also idempotent.
We denote by $\sqrt{\a}$ the radical of the ideal $\a$.
It is also an ideal, but it need not be finitely generated in general.
There are equalities
\begin{equation}\label{eqn:equal-of-sqrt}
\Gamma_{\a}(-) = \Gamma_{\sqrt{\a}}(-), \quad  \Lambda_{\a}(-) = \Lambda_{\sqrt{\a}}(-)
\end{equation}
of functors 
\[
\opn{Mod}(A) \to \opn{Mod}(A).
\]
Given an element $a \in A$,
the infinite dual Koszul complex associated to it, 
$\opn{K}^{\vee}_{\infty}(A;a)$ is the complex
\[
0 \to A \to A[a^{-1}] \to 0
\]
concentrated in degrees $0,1$.
Given a finite sequence $\mathbf{a} = (a_1,\dots,a_n)$ in $A$,
we let 
\[
\opn{K}^{\vee}_{\infty}(A;\mathbf{a}) := \opn{K}^{\vee}_{\infty}(A;a_1) \otimes_A \dots \otimes_A \opn{K}^{\vee}_{\infty}(A;a_n).
\]
This is a bounded complex of flat $A$-modules,
so in particular it is K-flat.
By \cite{AJL1, PSY1,Sc}, 
the sequence $\mathbf{a}$ is called \textbf{weakly proregular} if for any injective $A$-module $I$ and any $k>0$,
we have that
\[
H^k( \opn{K}^{\vee}_{\infty}(A;\mathbf{a})\otimes_A I) = 0.
\]
As shown in \cite[Proposition 1.2]{Sc},
this notion depends only on the ideal generated by $\mathbf{a}$.
Hence, we say that a finitely generated ideal is weakly proregular if some (equivalently, any) 
finite sequence that generates it is weakly proregular.
It follows (\cite[Lemma 3.1.1]{AJL1}, \cite[Theorem 1.1]{Sc}, \cite[Corollary 4.26]{PSY1})
that $\mathbf{a}$ and $\a$ are weakly proregular if and only if
there is an isomorphism
\[
\mrm{R}\Gamma_{\a}(-) \cong -\otimes_A \opn{K}^{\vee}_{\infty}(A;\mathbf{a})
\]
of functors $\cat{D}(A) \to \cat{D}(A)$.
According to \cite[Section 5]{PSY1},
the complex $\opn{K}^{\vee}_{\infty}(A;\mathbf{a})$ has an explicit bounded free resolution $\opn{Tel}(A;\mathbf{a})$,
called the telescope complex. 
In particular, $\opn{Tel}(A;\mathbf{a})$ is K-projective.
By \cite[Theorem 1.1]{Sc}, \cite[Corollary 5.25]{PSY1}, 
$\mathbf{a}$ and $\a$ are weakly proregular if and only if
there is an isomorphism
\[
\mrm{L}\Lambda_{\a}(-) \cong\opn{Hom}_A(\opn{Tel}(A;\mathbf{a}),-)
\]
of functors $\cat{D}(A) \to \cat{D}(A)$.
The telescope compelx satisfies the following base change property:
given a ring homomorphism $f:A \to B$,
letting $\mathbf{b}:= f(\mathbf{a})$,
there is an isomorphism
\begin{equation}\label{eqn:TelBaseChange}
\opn{Tel}(A;\mathbf{a}) \otimes_A B \cong \opn{Tel}(B;\mathbf{b})
\end{equation}
of complexes of $B$-modules.

The $A$-module $\Lambda_{\a}(A)$ has the structure of a commutative ring,
sometimes denoted by  $\widehat{A}$.
The map $A \to \widehat{A}$ is a map of commutative rings,
and if $A$ is noetherian, it is flat.

If $A$ is non-noetherian, this map can fail to be flat (even if $\a$ is weakly proregular).
For any $M \in \opn{Mod}(A)$,
the $A$-modules $\Gamma_{\a}(M), \Lambda_{\a}(M)$ have the structure of $\widehat{A}$-modules,
so one obtains additive functors
\[
\widehat{\Gamma}_{\a}, \widehat{\Lambda}_{\a}: \opn{Mod}(A) \to \opn{Mod}(\widehat{A}),
\]
and hence derived functors
\[
\mrm{R}\widehat{\Gamma}_{\a}, \mrm{L}\widehat{\Lambda}_{\a}: \cat{D}(A) \to \cat{D}(\widehat{A}).
\]
If $Q : \cat{D}(\widehat{A}) \to \cat{D}(A)$ is the forgetful functor,
there are natural isomorphisms
\[
Q \circ \mrm{R}\widehat{\Gamma}_{\a}\cong \mrm{R}\Gamma_{\a} ,\quad Q \circ \mrm{L}\widehat{\Lambda}_{\a} \cong \mrm{L}\Lambda_{\a}.
\]
According to \cite[Theorems 3.2, 3.6]{Sh1}, 
if $\a$ and $\mathbf{a}$ are weakly proregular then there are isomorphisms
\[
\mrm{R}\widehat{\Gamma}_{\a}(M) \cong (M\otimes_A \opn{K}^{\vee}_{\infty}(A;\mathbf{a})) \otimes^{\mrm{L}}_A \widehat{A}
\]
and
\[
\mrm{L}\widehat{\Lambda}_{\a}(M) \cong\mrm{R}\opn{Hom}_A(\opn{Tel}(A;\mathbf{a})\otimes_A \widehat{A},M)
\]
of functors $\cat{D}(A) \to \cat{D}(\widehat{A})$.
See \cite{Sh0,Sh1,Sh3,SW} for more information about these functors.

Let $A,B$ be commutative rings, 
and let $\a\subseteq A$, $\b\subseteq B$ be finitely generated ideals.
Given a map $f:A \to B$,
if $f(\a) \subseteq \b$ then $f$ is continuous with respect to the adic topologies generated by these ideals.
Of particular importance are the continuous maps $f$ such that $f(\a) \cdot B = \b$.
Such a map $f$ is called an adic map. 

\subsection{The category $\cdgcont$ and its homotopy category}\label{sec:cdgadcont}

Fix a commutative ring $\k$.
By an ideal of definition for an adic topology on a commutative DG-ring $A$,
we shall simply mean a finitely generated ideal 
$\bar{\a} \subseteq \bar{A} = \mrm{H}^0(A)$.

We define the category $\cdgcont[/\k]$ as follows: 
objects of this category are pairs $(A,\bar{\a})$,
where $A$ is a commutative DG-ring over $\k$,
and $\bar{\a}$ is an ideal of definition for an adic topology on $A$.
A morphism $f:(A,\bar{\a}) \to (B,\bar{\b})$ in $\cdgcont[/\k]$ is a $\k$-linear morphism of commutative DG-rings 
$f:A \to B$, such that $\mrm{H}^0(f)(\bar{\a}) \subseteq \bar{\b}$.

We do not know if $\cdgcont[/\k]$ has a natural model structure.
However, it does carry the structure of a homotopical category, 
in the sense of \cite[Section 33]{DHKS}. 
Weak equivalences in $\cdgcont[/\k]$ are by definition morphisms
$f:(A,\bar{\a}) \to (B,\bar{\b})$,
such that $f:A \to B$ is a quasi-isomorphism,
and $\mrm{H}^0(f)$ maps $\bar{\a}$ bijectively into $\bar{\b}$.
In other words, a weak equivalence is a quasi-isomorphism whose $\mrm{H}^0$ is an adic map.
Since quasi-isomorphisms are the weak equivalences of the homotopical category $\cdg_{\k}$,
it is clear that weak equivalences in $\cdgcont[/\k]$ satisfy the two out of three property and the two out of six property 
(in the sense of \cite[Section 33.1]{DHKS}).
Hence, $\cdgcont[/\k]$ is a homotopical category.

Every homotopical category has a homotopy category (\cite[Section 33.8]{DHKS}).
We denote the homotopy category of $\cdgcont[/\k]$ by $\ho(\cdgcont[/\k])$.
It is obtained from $\cdgcont[/\k]$ by formally inverting the weak equivalences.
As the forgetful functor 
\[
\cdgcont[/\k] \to \cdg_{\k}, \quad (A,\bar{\a})  \mapsto A
\]
clearly preserves weak equivalences,
it induces a forgetful functor
\[
\ho(\cdgcont[/\k]) \to \ho(\cdg_{\k}). 
\]
We will frequently apply this forgetful functor implicitly.

Note that because of the definition of weak equivalences in $\cdgcont[/\k]$,
the functor $\opn{SF}_{\k}$ and the natural transformation $\opn{SF}_{\k} \to Q_{\k}$ from Proposition \ref{prop:SF} may be lifted to a functor
\begin{equation}\label{eqn:liftedSF}
\opn{SF}_{\k}:\cdgcont[/\k] \to \ho(\cdgcont[/\k])
\end{equation}
and a natural transformation $\opn{SF}_{\k} \to Q_{\k}$,
where now $Q_{\k}$ is the localization functor of $\cdgcont[/\k]$.
The lifted functor is given by $\opn{SF}_{\k}(A,\bar{\a}) = (\opn{SF}_{\k}(A),\bar{\a}')$, where $\bar{\a}'$ is the preimage of $\bar{\a}$ under the isomorphism $\mrm{H}^0(\opn{SF}_{\k}(A)) \to \mrm{H}^0(A)$.

\section{Derived completion and derived torsion of DG-modules}\label{sec:RL}

Let $A$ be a commutative DG-ring, and let $\bar{\a}\subseteq \bar{A} = \mrm{H}^0(A)$ be a finitely generated ideal.
In this section we will associate to $(A,\bar{\a})$ a pair of functors
\[
\RG{\a}, \LL{\a}: \cat{D}(A) \to \cat{D}(A)
\]
which, in case $A$ is a noetherian ring (or a bit more generally, $A$ is a ring and $\bar{\a}$ is a weakly proregular ideal), will coincide with the ordinary 
derived $\bar{\a}$-torsion and derived $\bar{\a}$-adic completion functors.
This section is a chapter in ordinary homological algebra, 
and no homotopical methods will be used in it.

There is a short abstract approach to define these functors:
denote by $\cat{D}_{\bar{\a}-\opn{tor}}(A)$ the full subcategory of $\cat{D}(A)$ whose objects are DG $A$-modules $M$,
such that for all $n \in \mathbb{Z}$, the $\bar{A}$-module $\mrm{H}^n(M)$ is $\bar{\a}$-torsion. Since the subcategory of $\opn{Mod}(\mrm{H}^0(A))$ consisting of $\bar{\a}$-torsion modules is a thick abelian subcategory of $\opn{Mod}(\mrm{H}^0(A))$,
it follows that $\cat{D}_{\bar{\a}-\opn{tor}}(A)$ is a triangulated subcategory of $\cat{D}(A)$. Elements of this category are called cohomologically $\bar{\a}$-torsion DG-modules. By \cite[Theorem 8.4.4]{Ne}, the inclusion functor
\[
Q: \cat{D}_{\bar{\a}-\opn{tor}}(A) \to \cat{D}(A)
\]
has a right adjoint 
\[
G: \cat{D}(A) \to \cat{D}_{\bar{\a}-\opn{tor}}(A),
\]
which is also triangulated. One may set $\RG{\a} := Q \circ G$.
Then, similarly, the functor $\RG{\a}$ has a left adjoint, 
denoted by $\LL{\a}$. Below, we will take a different approach define these functors,
which will allow us to explicitly compute them. The approach is based on the following, the key technical definition of this paper:

\begin{dfn}\label{def:wpr}
\leavevmode
\begin{enumerate}
\item A weakly proregular DG-ring is a pair $(A,\mathbf{a})$,
such that $A$ is a DG-ring which is K-flat over $A^0$,
and $\mathbf{a}$ is a finite sequence of elements in $A^0$ which is weakly proregular.
\item Given a commutative DG-ring $A$ and a finitely generated ideal $\bar{\a}\subseteq \mrm{H}^0(A)$,
a weakly proregular resolution of $(A,\bar{\a})$ is a triple $(f,B,\mathbf{b})$,
such that $(B,\mathbf{b})$ is a weakly proregular DG-ring,
$f:B \to A$ is a quasi-isomorphism of DG-rings,
and such that
\[
\pi_A(f(\mathbf{b}))\cdot \mrm{H}^0(A) = \bar{\a}.
\]
\item Given a weakly proregular DG-ring $(A,\mathbf{a})$,
we define its completion $\widehat{A}$ to be the $(\mathbf{a})$-adic completion of $A$.
\end{enumerate}
\end{dfn}

\begin{prop}\label{prop:res-exists}
Let $A$ be a commutative DG-ring, and let $\bar{\a}\subseteq \bar{A}$ be a finitely generated ideal.
Then $(A,\bar{\a})$ has a weakly proregular resolution $(f,B,\mathbf{b})$.
Moreover, for any finite sequence of elements $\mathbf{a} = (a_1,\dots,a_n)$ in $A^0$,
whose image in $\bar{A}$ generates $\bar{\a}$, the pair $(A,\bar{\a})$ has a weakly proregular resolution $(f,B,\mathbf{b})$ such that $f(\mathbf{b}) = \mathbf{a}$.
\end{prop}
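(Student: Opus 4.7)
The plan is to build $B$ as a semi-free extension of the polynomial ring $R = \mathbb{Z}[t_1,\dots,t_n]$ along the map sending $t_i$ to $a_i$. Once the ``moreover'' clause is proved, the first statement follows at once by choosing any finite sequence in $A^0$ whose image in $\bar{A}$ generates $\bar{\a}$. So fix such $\mathbf{a}=(a_1,\dots,a_n)$, view $R$ as a commutative DG-ring concentrated in degree zero, and form the DG-ring map $\phi : R \to A$ sending $t_i \mapsto a_i$. By \cite[Theorem 3.21]{Ye3} (recalled in the preliminaries), factor $\phi$ as
\[
R \longrightarrow B \xrightarrow{f} A
\]
with the first arrow semi-free and $f$ a surjective quasi-isomorphism. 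Set $\mathbf{b}:=(t_1,\dots,t_n)\subseteq B^0$, so that $f(\mathbf{b})=\mathbf{a}$ by construction, whence $\pi_A(f(\mathbf{b}))\cdot\bar{A}=\bar{\a}$.

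It remains to verify that $(B,\mathbf{b})$ is a weakly proregular DG-ring. Semi-freeness of $R\to B$ means $B\cong R[X]$ as a graded-commutative ring for some graded set $X$, and because $\cdg$ consists of non-positive DG-rings, $X$ is concentrated in non-positive degrees. The degree-zero component is therefore $B^0 = R[X^0]$, a polynomial ring over $R$ adjoining the degree-zero variables, so $\mathbf{b}=(t_1,\dots,t_n)$ remains a regular sequence in $B^0$. Regular sequences are proregular and proregular sequences are weakly proregular (cf.\ \cite{PSY1}), so $\mathbf{b}$ is weakly proregular in $B^0$. For K-flatness of $B$ over $B^0$, note that as a graded $B^0$-module, $B$ is free on the monomials in the strictly negative-degree variables $X^{<0}$, so each $B^i$ is a free $B^0$-module; since $B$ is bounded above in cohomological degree, the standard truncation and direct-limit argument shows that $B$ is K-projective, and in particular K-flat, over $B^0$.

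The only non-formal input is the classical fact that regular sequences in any commutative ring are weakly proregular, which is where I would confirm references most carefully; everything else is a direct combination of the definition of a semi-free extension with the surjective semi-free factorization recalled in the preliminaries. The main potential obstacle I see is not in proving the statement but in producing the ``right'' resolution for later sections: the construction above commits to a specific $B^0$ isomorphic to a polynomial ring extension of $R$, and one should expect to want functoriality or compatibilities with maps of pairs $(A,\bar{\a})\to(A',\bar{\a}')$ later on, but those considerations belong to subsequent propositions rather than this existence result.
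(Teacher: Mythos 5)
Your proposal is correct and takes essentially the same approach as the paper: both proofs factor $\mathbb{Z}[t_1,\dots,t_n]\to A$ (sending $t_i\mapsto a_i$) into a semi-free map followed by a surjective quasi-isomorphism and take $\mathbf{b}$ to be the images of the polynomial generators. The only difference is in how the weak-proregularity of $(\mathbf{b})\subseteq B^0$ is justified: the paper defers to Lemma~\ref{lem:sfr-is-wpr} (which invokes Proposition~\ref{prop:semi-free-all-wpr}: reduce to a noetherian sub-polynomial-ring and use flat base change), while you observe directly that $(t_1,\dots,t_n)$ is a regular sequence in the polynomial ring $B^0$ and regular sequences are weakly proregular. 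Your shortcut is valid for this specific resolution, and you also spell out the K-flatness of $B$ over $B^0$ (bounded-above complex of free $B^0$-modules) which the paper merely asserts inside Lemma~\ref{lem:sfr-is-wpr}; note, however, that the more general Proposition~\ref{prop:semi-free-all-wpr} is still needed later, since Lemma~\ref{lem:sfr-is-wpr} must handle arbitrary lifts $\mathbf{x}\in\opn{Lift}(A,\bar{\a})$, not just ones coming from polynomial variables.
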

\begin{proof}
Let $\mathbf{a} = (a_1,\dots,a_n)$ be a finite sequence of elements in $A^0$
such that $\mathbf{a} \cdot \bar{A} = \bar{\a}$.
Let $\mathbb{Z}[x_1,\dots,x_n] \to A$ be the map $x_i \mapsto a_i$,
and let 
\[
\mathbb{Z}[x_1,\dots,x_n] \to B \xrightarrow{f} A
\]
be a semi-free resolution of this map.
Letting $\mathbf{b}$ be the image of $(x_1,\dots,x_n)$ in $B^0$,
it is clear that $f(\mathbf{b}) = \mathbf{a}$,
and one may verify (see Lemma \ref{lem:sfr-is-wpr} below) that $(f,B,\mathbf{b})$ is a weakly proregular resolution of $(A,\bar{\a})$. 
\end{proof}

We will now associate to a weakly proregular resolution $P$ of $(A,\bar{\a})$ 
derived torsion and derived completion functors, 
which we will temporarily denote by
$\RG{\a}^P(-), \LL{\a}^P(-)$. 
This notation is temporary, 
as it will be shown below that these functors are independent of the chosen weakly proregular resolution, and are infact naturally isomorphic to the functors $\RG{\a}$ and $\LL{\a}$ introduced above.

\begin{dfn}\label{dfn:RGP}
Let $A$ be a commutative DG-ring, let $\bar{\a}\subseteq \bar{A} = \mrm{H}^0(A)$ be a finitely generated ideal,
and let $P = (f,B,\mathbf{b})$ be a weakly proregular resolution of $(A,\bar{\a})$.
Let $\b$ be the ideal in $B^0$ generated by $\mathbf{b}$.
We define the derived torsion and derived completion functors associated to $P$ as follows:
the operations 
\[
\Gamma_{\b}(-) := \varinjlim_n \opn{Hom}_{B^0}(B^0/{\b}^n,-), \quad \Lambda_{\b}(-) := \varprojlim_n B^0/{\b}^n \otimes_{B^0} -
\]
define additive functors $\opn{DGMod}(B) \to \opn{DGMod}(B)$.
Let
\[
\mrm{R}\Gamma_{\b}, \mrm{L}\Lambda_{\b} : \cat{D}(B) \to \cat{D}(B)
\]
be their derived functors, and set
\[
\RG{\a}^P(-) := \mrm{L}f^*(\mrm{R}\Gamma_{\b}(\mrm{R}f_*(-))) \quad \LL{\a}^P(-) := \mrm{R}f^{\flat}(\mrm{L}\Lambda_{\b}(\mrm{R}f_*(-))).
\]
These are triangulated functors $\cat{D}(A) \to \cat{D}(A)$.
\end{dfn}

The next proposition provides explicit formulas for computing $\RG{\a}^P, \LL{\a}^P$.
\begin{prop}\label{prop:form}
Let $A$ be a commutative DG-ring, let $\bar{\a}\subseteq \bar{A}$ be a finitely generated ideal,
and let $P = (f,B,\mathbf{b})$ be a weakly proregular resolution of $(A,\bar{\a})$.
Let $\b$ be the ideal in $B^0$ generated by $\mathbf{b}$.
Then there are isomorphisms
\[
\mrm{R}\Gamma_{\b}(-) \cong \opn{Tel}(B^0;\mathbf{b})\otimes_{B^0} -
\]
and
\[
\mrm{L}\Lambda_{\b}(-) \cong \opn{Hom}_{B^0}(\opn{Tel}(B^0;\mathbf{b}),-)
\]
of functors $\cat{D}(B) \to \cat{D}(B)$. 
Hence, letting $\mathbf{a} = f(\mathbf{b})$, there are isomorphisms
\[
\RG{\a}^P (-) \cong \opn{Tel}(A^0;\mathbf{a})\otimes_{A^0} - 
\]
and
\[
\LL{\a}^P(-) \cong \opn{Hom}_{A^0}(\opn{Tel}(A^0;\mathbf{a}),-)
\]
of functors $\cat{D}(A) \to \cat{D}(A)$.
\end{prop}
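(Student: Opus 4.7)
The plan is to reduce the DG-module statements over $B$ to the ring-level statements over $B^0$, which are already available since $\mathbf{b}$ is weakly proregular in $B^0$; then use the quasi-isomorphism $f \colon B \to A$ to transfer everything to $\cat{D}(A)$ via the base change property (\ref{eqn:TelBaseChange}).

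The key technical observation, which I would establish first, is that because $B$ is K-flat over $B^0$, restriction of scalars along $B^0 \hookrightarrow B$ sends K-injective DG-modules over $B$ to K-injective complexes over $B^0$, and sends K-flat DG-modules over $B$ to K-flat complexes over $B^0$. Both statements follow by the adjunction/projection identity $N \otimes_{B^0} B$: if $N$ is acyclic over $B^0$, then $N \otimes_{B^0} B$ is acyclic over $B$ by K-flatness of $B/B^0$, and one then tests K-injectivity (resp.\ K-flatness) against $N \otimes_{B^0} B$ inside $\cat{D}(B)$.

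With this in hand, for any $M \in \cat{D}(B)$, pick a K-injective resolution $M \to I$ in $\opn{DGMod}(B)$. Then $\mrm{R}\Gamma_{\b}(M) \simeq \Gamma_{\b}(I)$, and since $I$ is also K-injective over $B^0$, the known ring-level formula gives $\Gamma_{\b}(I) \simeq \opn{Tel}(B^0;\mathbf{b}) \otimes_{B^0} I$. But $\opn{Tel}(B^0;\mathbf{b})$ is a bounded complex of flat (hence K-flat) $B^0$-modules, so $\opn{Tel}(B^0;\mathbf{b}) \otimes_{B^0} -$ preserves quasi-isomorphisms, and the quasi-isomorphism $M \to I$ yields $\opn{Tel}(B^0;\mathbf{b}) \otimes_{B^0} M \simeq \opn{Tel}(B^0;\mathbf{b}) \otimes_{B^0} I$ in $\cat{D}(B)$. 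Composing gives the first isomorphism. The completion formula is entirely dual: take a K-flat resolution $P \to M$ in $\opn{DGMod}(B)$, which is K-flat over $B^0$; apply $\Lambda_{\b}$, use the ring-level formula, and then use that $\opn{Hom}_{B^0}(\opn{Tel}(B^0;\mathbf{b}), -)$ preserves quasi-isomorphisms (because $\opn{Tel}$ is K-projective over $B^0$) to go from $P$ back to $M$.

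For the statements involving $A^0$ and $\mathbf{a}=f(\mathbf{b})$, unwind the definition
$\RG{\a}^P(M) = \mrm{L}f^*(\mrm{R}\Gamma_{\b}(\mrm{R}f_*(M)))$: the middle factor equals $\opn{Tel}(B^0;\mathbf{b}) \otimes_{B^0} M$ by what was just proved, the base change isomorphism (\ref{eqn:TelBaseChange}) identifies $\opn{Tel}(B^0;\mathbf{b}) \otimes_{B^0} A^0 \cong \opn{Tel}(A^0;\mathbf{a})$, and since $f$ is a quasi-isomorphism, $\mrm{L}f^*$ and $\mrm{R}f_*$ are mutually inverse equivalences, so applying $\mrm{L}f^*$ simply transports the $A$-complex $\opn{Tel}(A^0;\mathbf{a}) \otimes_{A^0} M$ back into $\cat{D}(A)$. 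The completion case is entirely parallel. The main obstacle is the technical verification in the second paragraph — the compatibility of K-injectivity and K-flatness under restriction of scalars along $B^0 \to B$ — without which the ring-level formulas for $B^0$ cannot be invoked on resolutions arising from $\opn{DGMod}(B)$; everything after that is bookkeeping with base change and the equivalence induced by $f$.
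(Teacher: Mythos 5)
Your approach matches the paper's: you reduce to the ring level over $B^0$, verify that K-injectivity and K-flatness are preserved under restriction along the K-flat extension $B^0 \to B$, and invoke the base change property of the telescope complex to pass to $A$. However, there is one omission that matters. When you invoke ``the known ring-level formula'' to get $\Gamma_{\b}(I) \simeq \opn{Tel}(B^0;\mathbf{b}) \otimes_{B^0} I$, that formula a priori produces a $B^0$-linear quasi-isomorphism, i.e.\ an isomorphism in $\cat{D}(B^0)$; but the proposition asserts an isomorphism of endofunctors of $\cat{D}(B)$. The paper addresses this explicitly: the natural map $v_{\mathbf{b},-}$ from \cite[Equation (4.19)]{PSY1} is defined on $\opn{DGMod}(B^0)$, and one then checks that, because $I$ carries a DG $B$-module structure, naturality forces $v_{\mathbf{b},\opn{Rest}_{B/B^0}(I)}$ to commute with multiplication by every element of $B$; that is, the comparison map $\Gamma_{\b}(I) \to \opn{K}^{\vee}_{\infty}(B^0;\mathbf{b}) \otimes_{B^0} I$ is $B$-linear, and likewise for the completion map $\opn{Hom}_{B^0}(\opn{Tel}(B^0;\mathbf{b}),P) \to \Lambda_{\b}(P)$ from \cite[Definition 5.16]{PSY1}. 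Without this verification your chain of quasi-isomorphisms only lives in $\cat{D}(B^0)$, and the intended conclusion in $\cat{D}(B)$ does not follow. Once it is supplied, the rest of your argument --- the preservation of K-injectivity/K-flatness under restriction, the transfer to $A$ via the equivalences $\mrm{L}f^*$ and $\mrm{R}f_*$, and the base change $\opn{Tel}(B^0;\mathbf{b}) \otimes_{B^0} A^0 \cong \opn{Tel}(A^0;\mathbf{a})$ --- is correct and coincides with the paper's proof.
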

\begin{proof}
Let $M \in \cat{D}(B)$, let $M \iso I$ be a K-injective resolution over $B$,
and let $P \iso M$ be a K-projective resolution over $B$.
By definition, we have that
\[
\mrm{R}\Gamma_{\b}(M) = \Gamma_{\b}(I)
\]
and
\[
\mrm{L}\Lambda_{\b}(M) = \Lambda_{\b}(P).
\]
Let $\opn{Rest}_{B/B^0} : \cat{D}(B) \to \cat{D}(B^0)$ be the forgetful functor.
Because the map $B^0 \to B$ is K-flat, 
it follows that $\opn{Rest}_{B/B^0}(I)$ is K-injective over $B^0$,
and that $\opn{Rest}_{B/B^0}(P)$ is K-flat over $B^0$.

According to \cite[Equation (4.19)]{PSY1}, 
there is a functorial $B^0$-linear homomorphism
\[
v_{\mathbf{b},\opn{Rest}_{B/B^0}(I)} : \Gamma_{\b}(\opn{Rest}_{B/B^0}(I)) \to \opn{K}^{\vee}_{\infty}(B^0;\mathbf{b}) \otimes_{B^0} \opn{Rest}_{B/B^0}(I),
\]
and since $I$ has a $B$-linear structure, it is easy to check that this map is $B$-linear, that is,
there is a $B$-linear map
\begin{equation}\label{eqn:gammaToKoszul}
v_{\mathbf{b},I} : \Gamma_{\b}(I) \to \opn{K}^{\vee}_{\infty}(B^0;\mathbf{b}) \otimes_{B^0} I
\end{equation}
such that 
\[
\opn{Rest}_{B/B^0} ( v_{\mathbf{b},I} ) = v_{\mathbf{b},\opn{Rest}_{B/B^0}(I)}.
\]
Hence, to check that $v_{\mathbf{b},I}$ is a quasi-isomorphism, it is enough to show that
$v_{\mathbf{b},\opn{Rest}_{B/B^0}(I)}$ is a quasi-isomorphism, and this follows from \cite[Corollary 4.25]{PSY1}.
Thus, there are functorial isomorphisms
\[
\mrm{R}\Gamma_{\b}(M) \cong \opn{K}^{\vee}_{\infty}(B^0;\mathbf{b}) \otimes_{B^0} I \cong
\opn{K}^{\vee}_{\infty}(B^0;\mathbf{b}) \otimes_{B^0} M \cong \opn{Tel}(B^0;\mathbf{b})\otimes_{B^0} M,
\]
proving the first claim.
The second statement is proved similarly, using \cite[Definition 5.16]{PSY1} and \cite[Corollary 5.23]{PSY1}.
In particular, for any K-flat DG-module $P$, there is a functorial $B$-linear isomorphism
\begin{equation}\label{eqn:lambdaToTelescope}
\opn{tel}_{\mathbf{b},P} : \opn{Hom}_{B^0}(\opn{Tel}(B^0;\mathbf{b}),P) \to \Lambda_{\b}(P).
\end{equation}

The statements on $\RG{\a}^P$ and $\LL{\a}^P$ now follow from this and from the base change property of the telescope complex (\ref{eqn:TelBaseChange}).
\end{proof}

\begin{prop}\label{prop:LR}
Let $A$ be a commutative DG-ring, let $\bar{\a}\subseteq \bar{A}$ be a finitely generated ideal,
and let $P = (f,B,\mathbf{b})$ be a weakly proregular resolution of $(A,\bar{\a})$.
Then there is an isomorphism
\[
\LL{\a}^P(\RG{\a}^P(-)) \cong \LL{\a}^P(-)
\]
of functors $\cat{D}(A) \to \cat{D}(A)$.
\end{prop}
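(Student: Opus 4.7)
The plan is to reduce the claim to the classical Greenlees--May / MGM equivalence on the ordinary commutative ring $B^0$, where the sequence $\mathbf{b}$ is weakly proregular by hypothesis. The bridge is the explicit formulas provided by Proposition \ref{prop:form}.

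First I would use those explicit formulas. Setting $\mathbf{a} := f(\mathbf{b})$ and $T := \opn{Tel}(A^0;\mathbf{a})$, they give
\[
\LL{\a}^P\bigl(\RG{\a}^P(M)\bigr) \cong \opn{Hom}_{A^0}\bigl(T,\, T \otimes_{A^0} M\bigr),
\]
functorially in $M \in \cat{D}(A)$. By the base change property (\ref{eqn:TelBaseChange}), $T \cong \opn{Tel}(B^0;\mathbf{b}) \otimes_{B^0} A^0$, and applying tensor--hom adjunction along $B^0 \to A^0$ rewrites this expression as
\[
\opn{Hom}_{B^0}\bigl(\opn{Tel}(B^0;\mathbf{b}),\, \opn{Tel}(B^0;\mathbf{b}) \otimes_{B^0} M\bigr).
\]
The computation is thereby reduced to $B^0$-linear operations on the underlying complex of $M$.

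Second, I would invoke the classical identity $\mrm{L}\Lambda_{(\mathbf{b})} \circ \mrm{R}\Gamma_{(\mathbf{b})} \cong \mrm{L}\Lambda_{(\mathbf{b})}$ as endofunctors of $\cat{D}(B^0)$, which holds because $\mathbf{b}$ is weakly proregular in $B^0$ (the standard MGM result; see \cite{PSY1, Sc, AJL1}). Translated through the explicit formulas, this identity asserts exactly
\[
\opn{Hom}_{B^0}\bigl(\opn{Tel}(B^0;\mathbf{b}),\, \opn{Tel}(B^0;\mathbf{b}) \otimes_{B^0} N\bigr) \cong \opn{Hom}_{B^0}\bigl(\opn{Tel}(B^0;\mathbf{b}),\, N\bigr)
\]
for every $N \in \cat{D}(B^0)$. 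Applying it to $N = M$ (viewed via the forgetful functor $\cat{D}(A) \to \cat{D}(B^0)$), and then reversing the base change and tensor--hom adjunction, the right-hand side becomes $\opn{Hom}_{A^0}(T, M) \cong \LL{\a}^P(M)$, which yields the desired functorial isomorphism.

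The principal point requiring care is that the whole chain of isomorphisms must produce an identification of functors $\cat{D}(A) \to \cat{D}(A)$, not merely one between the underlying complexes. This is granted because all the operations performed --- $\opn{Tel}(B^0;\mathbf{b}) \otimes_{B^0} -$, $\opn{Hom}_{B^0}(\opn{Tel}(B^0;\mathbf{b}), -)$, and the base change and adjunction moves --- are $B^0$-linear and act only on the argument slot, so they are transparent to the $B$-action (hence the $A$-action) that $M$ carries. Combined with the observation that the forgetful functor $\cat{D}(A) \to \cat{D}(A^0)$ detects quasi-isomorphisms, the classical MGM isomorphism in $\cat{D}(B^0)$ lifts canonically to an isomorphism in $\cat{D}(A)$, which is what the proposition asserts.
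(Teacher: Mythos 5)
Your proposal is correct and follows essentially the same route as the paper: compute both sides via the explicit telescope formulas of Proposition~\ref{prop:form}, and then invoke the classical MGM identity for the weakly proregular sequence $\mathbf{b}$ in the ordinary commutative ring $B^0$ to identify $\opn{Hom}_{B^0}(T_B, T_B\otimes_{B^0} -)$ with $\opn{Hom}_{B^0}(T_B,-)$. The paper works directly in $\cat{D}(B)$ after unwinding $\mrm{R}f_*\circ\mrm{L}f^*\cong 1$, builds the comparison map from $u_{\mathbf{b}}:\opn{Tel}(B^0;\mathbf{b})\to B^0$, and cites \cite[Lemma 7]{PSY1E} for the quasi-isomorphism; you instead pass through $A^0$, base-change to $B^0$ via (\ref{eqn:TelBaseChange}) and tensor--hom adjunction, and cite the general MGM literature. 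These are cosmetically different but mathematically the same argument, and your closing remark on $B^0$-linear naturality lifting the quasi-isomorphism to $\cat{D}(A)$ is exactly the point the paper relies on implicitly. The only small improvement would be to pin down the precise reference for the quasi-isomorphism (the paper uses \cite[Lemma 7]{PSY1E}), since some statements of MGM in the literature are phrased at the level of derived categories of modules rather than as an explicit telescope-level natural quasi-isomorphism, and the latter is what is needed to preserve the $A$-structure.
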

\begin{proof}
Let $M \in \cat{D}(A)$,
and let $\b$ be the ideal in $B^0$ generated by $\mathbf{b}$.
By definition, we have natural isomorphisms
\[
\begin{aligned}
\LL{\a}^P(\RG{\a}^P(M)) =\nonumber\\
\mrm{R}f^{\flat} \circ \mrm{L}\Lambda_{\b} \circ \mrm{R}f_* \circ \mrm{L}f^* \circ \mrm{R}\Gamma_{\b} \circ \mrm{R}f_*(M) \cong\nonumber\\
\mrm{R}f^{\flat} \circ \mrm{L}\Lambda_{\b} \circ \mrm{R}\Gamma_{\b} \circ \mrm{R}f_*(M),\nonumber
\end{aligned}
\]
and using Proposition \ref{prop:form}, we have an isomorphism of functors
\[
\mrm{R}f^{\flat} \circ \mrm{L}\Lambda_{\b} \circ \mrm{R}\Gamma_{\b} \circ \mrm{R}f_*(M) \cong
\mrm{R}f^{\flat} \circ \opn{Hom}_{B^0}(\opn{Tel}(B^0;\mathbf{b}), \opn{Tel}(B^0;\mathbf{b})\otimes_{B^0} \mrm{R}f_*(M)).
\]
According to \cite[Equation (5.6)]{PSY1}, 
there is a $B^0$-linear homomorphism
\[
u_{\mathbf{b}} : \opn{Tel}(B^0;\mathbf{b}) \to B^0.
\]
Setting $T:= \opn{Tel}(B^0;\mathbf{b})$ and $N:=\mrm{R}f_*(M)$, this map give rise to a $B$-linear homomorphism
\[
\opn{Hom}(1_T,\opn{Hom}(u_{\mathbf{b}},1_N)) : \opn{Hom}_{B^0} ( T, N) \to \opn{Hom}_{B^0} (T, \opn{Hom}_{B^0} (T, N)).
\]
According to \cite[Lemma 7]{PSY1E}, $\opn{Hom}(1_T,\opn{Hom}(u_{\mathbf{b}},1_N))$ is a quasi-isomorphism,
so we deduce that there is a natural isomorphism $\mrm{L}\Lambda_{\b}(\mrm{R}\Gamma_{\b}(M)) \cong \mrm{L}\Lambda_{\b}(M)$ in $\cat{D}(B)$, 
and clearly this is enough to establish the claim.
\end{proof}

\begin{rem}
Let $A$ be a commutative DG-ring, let $\bar{\a}\subseteq \bar{A}$ be a finitely generated ideal, 
and let $P = (f,B,\mathbf{b})$ be a weakly proregular resolution of $(A,\bar{\a})$.
For any $M \in \cat{D}(A)$, there is a natural map 
\[
\sigma^P_M : \RG{\a}^P(M) \to M
\]
in $\cat{D}(A)$ defined as follows:
Let $N := \mrm{R}f_*(M) \in \cat{D}(B)$, and let $N \iso I$ be a K-injective resolution over $B$.
We have a sequence of natural morphisms:
\[
\mrm{R}\Gamma_{\b} ( N ) = \Gamma_{\b}(I) \to I \cong N
\]
in $\cat{D}(B)$. Let $\sigma^{\b}_N$ be the composition of these maps,
and define $\sigma^P_M$ to be the composition of $\mrm{L}f^*(\sigma^{\b}_{N})$ 
with the natural isomorphism $\mrm{L}f^*(N) \cong M$.
\end{rem}

\begin{prop}\label{prop:tor-properties}
Let $A$ be a commutative DG-ring, 
let $\bar{\a}\subseteq \bar{A}$ be a finitely generated ideal,
and let $P = (f,B,\mathbf{b})$
be a weakly proregular resolution of $(A,\bar{\a})$.
Then the following holds:
\begin{enumerate}
\item For any $M \in \cat{D}(A)$,
we have that
\[
\RG{\a}^P(M) \in \cat{D}_{\bar{\a}-\opn{tor}}(A).
\]
\item For any $N \in \cat{D}_{\bar{\a}-\opn{tor}}(A)$,
the map 
\[
\sigma^P_N : \RG{\a}^P(N) \to N
\]
is an isomorphism in $\cat{D}(A)$.
\item For any $M \in \cat{D}(A)$,
the map 
\[
\sigma^P_{\RG{\a}^P(M)} : \RG{\a}^P( \RG{\a}^P(M) ) \to \RG{\a}^P(M)
\]
is an isomorphism in $\cat{D}(A)$.
\end{enumerate}
\end{prop}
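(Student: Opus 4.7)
The plan is to use the explicit formulas from Proposition \ref{prop:form} to reduce all three claims to classical facts about the infinite dual Koszul complex and the telescope complex over the ordinary ring $A^0$. Writing $\mathbf{a} = f(\mathbf{b})$, I identify $\RG{\a}^P(M)$ with $\opn{Tel}(A^0;\mathbf{a})\otimes_{A^0} M$, which is quasi-isomorphic to $\opn{K}^{\vee}_{\infty}(A^0;\mathbf{a})\otimes_{A^0} M$. Under this identification, the map $\sigma^P_M$ becomes the map induced by the canonical augmentation $\opn{K}^{\vee}_{\infty}(A^0;\mathbf{a})\to A^0$, and (1)--(3) become concrete statements about this augmentation.

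For (1), I would argue that for each element $a_i$ of the sequence, localizing the two-term complex $\opn{K}^{\vee}_{\infty}(A^0;a_i)$ at $a_i$ gives an acyclic complex, and hence the cohomology of $\opn{K}^{\vee}_{\infty}(A^0;\mathbf{a})\otimes_{A^0} M$ is $a_i$-torsion as an $A^0$-module for every $i$. Since the $A^0$-action on cohomology factors through $\pi_A\colon A^0 \to \bar{A}$, and since $\bar{\a}$ is generated by the images $\pi_A(a_i)$, this forces $\mrm{H}^n(\RG{\a}^P(M))$ to be $\bar{\a}$-torsion as an $\bar{A}$-module, which is exactly $\RG{\a}^P(M) \in \cat{D}_{\bar{\a}-\opn{tor}}(A)$.

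For (2), I would transfer the problem to the ordinary ring $B^0$ via $\mrm{R}f_*$. Since $f$ is a quasi-isomorphism, $\mrm{L}f^*\circ \mrm{R}f_*$ is naturally isomorphic to the identity on $\cat{D}(A)$, so it suffices to show that $\mrm{R}\Gamma_{\b}(\mrm{R}f_*(N)) \to \mrm{R}f_*(N)$ is a quasi-isomorphism in $\cat{D}(B)$, equivalently in $\cat{D}(B^0)$. For $N \in \cat{D}_{\bar{\a}-\opn{tor}}(A)$ the cohomology groups $\mrm{H}^n(\mrm{R}f_*(N)) = \mrm{H}^n(N)$ are $\bar{\a}$-torsion $\bar{A}$-modules, and the defining condition $\pi_A(f(\mathbf{b}))\cdot \bar{A} = \bar{\a}$ of a weakly proregular resolution, together with functoriality via $\mrm{H}^0(f)\colon \bar{B}\to \bar{A}$, forces each such cohomology to be $\mathbf{b}$-torsion as a $B^0$-module. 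The classical weakly proregular result (for instance \cite[Corollary 4.26]{PSY1} in combination with the telescope formula) then guarantees that $\opn{Tel}(B^0;\mathbf{b})\otimes_{B^0} \mrm{R}f_*(N)\to \mrm{R}f_*(N)$ is a quasi-isomorphism, and unwinding Proposition \ref{prop:form} gives that $\sigma^P_N$ is an isomorphism.

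Finally, part (3) is immediate by combining (1) and (2): from (1) the object $\RG{\a}^P(M)$ lies in $\cat{D}_{\bar{\a}-\opn{tor}}(A)$, and part (2) applied to $N = \RG{\a}^P(M)$ yields the claim. The real technical point here is (2); the main obstacle is the bookkeeping showing that cohomological $\bar{\a}$-torsion over $A$ translates into $\mathbf{b}$-torsion of the $B^0$-module cohomology of $\mrm{R}f_*(N)$, and that after transferring the map $\sigma^P_N$ through the natural isomorphism $\mrm{L}f^*\circ \mrm{R}f_* \cong \mrm{Id}_{\cat{D}(A)}$, it is indeed the classical augmentation map over $B^0$ to which the weakly proregular hypothesis applies.
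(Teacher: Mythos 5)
Your proof is correct and follows essentially the same strategy as the paper: reduce via the quasi-isomorphism $f$ (equivalently the explicit telescope formula of Proposition \ref{prop:form}) to the ordinary ring $B^0$, then invoke the classical weakly proregular facts. One small remark: the fact that a complex with $\mathbf{b}$-torsion cohomology has $\mrm{R}\Gamma_{\b}$ quasi-isomorphic to itself is \cite[Corollary 4.32]{PSY1} (which is what the paper cites), whereas \cite[Corollary 4.26]{PSY1} is the Koszul-complex identification of $\mrm{R}\Gamma_{\b}$; you need the former to close part (2).
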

\begin{proof}
Let $\b = \mathbf{b} \cdot B^0$,
and let $\bar{\b}$ be the ideal in $\mrm{H}^0(B)$ generated by the image of $\b$.
It is clear that the equivalence of categories
\[
\mrm{R}f_* : \cat{D}(A) \to \cat{D}(B)
\]
restricts to an equivalence of categories
\[
\mrm{R}f_* : \cat{D}_{\bar{\a}-\opn{tor}}(A) \to \cat{D}_{\bar{\b}-\opn{tor}}(B)
\]
with a quasi-inverse
\[
\mrm{L}f^* : \cat{D}_{\bar{\b}-\opn{tor}}(B) \to \cat{D}_{\bar{\a}-\opn{tor}}(A).
\]
It follows that in order to prove (1), (2), we may assume without loss of generality that $A = B$.
Under this assumption, let us set $\a = \b$.
Let $M \in \cat{D}(A)$, and let $M \iso I$ be a K-injective resolution over $A$. 
Then 
\[
\RG{\a}^P(M) = \Gamma_{\a}(I).
\]
It follows that for all $n$, 
the $A^0$-module $\mrm{H}^n(\Gamma_{\a}(I))$ is $\a$-torsion.
But the $A^0$-action on $\mrm{H}^n(\Gamma_{\a}(I))$ factors through $\bar{A}$,
and $\a \cdot \bar{A} = \bar{\a}$, so we deduce that
\[
\RG{\a}^P(M) \in \cat{D}_{\bar{\a}-\opn{tor}}(A).
\]
This proves (1).
Now, let $N \in \cat{D}_{\bar{\a}-\opn{tor}}(A)$, 
and let $N \iso I$ be a K-injective resolution over $A$.
Note that 
\[
\opn{Rest}_{A/A^0}(I) \in \cat{D}_{\a-\opn{tor}}(A^0).
\]
It follows from \cite[Corollary 4.32]{PSY1} that the natural $A^0$-linear map
\[
\Gamma_{\a}(\opn{Rest}_{A/A^0}(I)) \inj \opn{Rest}_{A/A^0}(I)
\]
is a quasi-isomorphism, so the $A$-linear map $\Gamma_{\a}(I) \inj I$ is also a quasi-isomorphism,
and this implies that $\sigma^P_N$ is an isomorphism in $\cat{D}(A)$.
This establishes (2). 
Now (3) clearly follows from (1), (2).
\end{proof}

Dually to the above,
there is a natural map $\tau^P_M: M \to \mrm{L}\Lambda^P_{\bar{\a}}(M)$.
We denote by $\cat{D}(A)^P_{\opn{\bar{\a}-com}}$ the full subcategory of $\cat{D}(A)$ on which $\tau^P_M$ is an isomorphism.
The next result is proven similarly to Proposition \ref{prop:tor-properties}.

\begin{prop}\label{prop:com-properties}
Let $A$ be a commutative DG-ring, 
let $\bar{\a}\subseteq \bar{A}$ be a finitely generated ideal,
and let $P = (f,B,\mathbf{b})$
be a weakly proregular resolution of $(A,\bar{\a})$.
Then for any $M \in \cat{D}(A)$,
we have that 
\[
\LL{\a}^P(M)  \in \cat{D}(A)^P_{\opn{\bar{\a}-com}}.
\]
\end{prop}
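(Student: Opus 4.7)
The plan is to imitate the proof of Proposition~\ref{prop:tor-properties}, working with K-flat resolutions in place of K-injective ones and derived completion in place of derived torsion. First I would use that $f: B \to A$ is a quasi-isomorphism to obtain the mutually quasi-inverse equivalences $\mrm{L}f^*, \mrm{R}f_*, \mrm{R}f^{\flat}$ between $\cat{D}(B)$ and $\cat{D}(A)$. Setting $\b = \mathbf{b}\cdot B^0$ and $\bar{\b} = \b \cdot \mrm{H}^0(B)$, one checks, by unwinding the definition of $\tau^P$ as the dual of $\sigma^P$, that $\tau^P_M$ is obtained by applying $\mrm{R}f^{\flat}$ to the canonical map $\mrm{R}f_*(M) \to \mrm{L}\Lambda_{\b}(\mrm{R}f_*(M))$ in $\cat{D}(B)$. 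Consequently $\mrm{R}f_*$ restricts to an equivalence between $\cat{D}(A)^P_{\opn{\bar{\a}-com}}$ and the subcategory of $\cat{D}(B)$ on which the natural map to $\mrm{L}\Lambda_{\b}$ is an isomorphism. This reduces the claim to the case $A = B$, $f = \opn{id}_B$, where it becomes the statement that the derived completion functor $\mrm{L}\Lambda_{\b}: \cat{D}(B) \to \cat{D}(B)$ is idempotent.

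Next, I would use Proposition~\ref{prop:form} to identify $\mrm{L}\Lambda_{\b}(M) \cong \opn{Hom}_{B^0}(T, M)$, where $T = \opn{Tel}(B^0; \mathbf{b})$. Iterating,
\[
\mrm{L}\Lambda_{\b}(\mrm{L}\Lambda_{\b}(M)) \cong \opn{Hom}_{B^0}\bigl(T, \opn{Hom}_{B^0}(T, M)\bigr),
\]
and the natural transformation $\tau$ between these corresponds to $\opn{Hom}(1_T, \opn{Hom}(u_{\mathbf{b}}, 1_M))$, where $u_{\mathbf{b}}: T \to B^0$ is the canonical map of \cite[Equation (5.6)]{PSY1}. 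By \cite[Lemma 7]{PSY1E}, which was already invoked in the proof of Proposition~\ref{prop:LR}, this map is a quasi-isomorphism; the weak proregularity of $\mathbf{b}$ is precisely what makes this work. Idempotence of $\mrm{L}\Lambda_{\b}$ follows, and translating back through $\mrm{R}f^{\flat}$ gives the desired conclusion.

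The main obstacle I expect is the bookkeeping in the first step: since $\tau^P$ is only defined implicitly, one must patiently verify that it is compatible with the equivalences induced by $f$ in the manner described, and that under Proposition~\ref{prop:form} it reduces to precisely the telescope map $\opn{Hom}(1_T, \opn{Hom}(u_{\mathbf{b}}, 1_M))$ (as opposed to some other map differing by a sign or a canonical isomorphism). Once this identification is in place, the remainder of the proof is a formal application of tools already established in Section~\ref{sec:RL} together with the basic properties of the telescope complex.
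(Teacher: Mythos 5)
Your approach is correct and matches what the paper intends (the paper gives no proof, merely remarking that it is ``proven similarly to Proposition~\ref{prop:tor-properties}''). The reduction to $A=B$ via the equivalences $\mrm{L}f^*, \mrm{R}f_*, \mrm{R}f^{\flat}$, the passage to the telescope complex via Proposition~\ref{prop:form}, and the reliance on weak proregularity to get idempotence are exactly the intended ingredients.

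One small wrinkle is worth pinning down: the map you need to show is a quasi-isomorphism is $\tau^P_{\LL{\a}^P(M)}$, which after the telescope identification is $\opn{Hom}(u_{\mathbf{b}}, 1_{\opn{Hom}_{B^0}(T,M)})$ (insertion of $u_{\mathbf{b}}$ in the \emph{outer} $T$), whereas \cite[Lemma 7]{PSY1E} as used in Proposition~\ref{prop:LR} addresses $\opn{Hom}(1_T, \opn{Hom}(u_{\mathbf{b}}, 1_M))$ (insertion in the \emph{inner} $T$), which is $\LL{\a}^P(\tau^P_M)$ rather than $\tau^P_{\LL{\a}^P(M)}$. These are a priori different morphisms $\opn{Hom}_{B^0}(T,M) \to \opn{Hom}_{B^0}(T, \opn{Hom}_{B^0}(T,M))$. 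However, under adjunction $\opn{Hom}_{B^0}(T,\opn{Hom}_{B^0}(T,M)) \cong \opn{Hom}_{B^0}(T\otimes_{B^0} T, M)$, the two maps correspond to precomposition with $u_{\mathbf{b}}\otimes 1_T$ and $1_T\otimes u_{\mathbf{b}}$ respectively, which differ by the symmetry isomorphism of $T\otimes_{B^0} T$; so one is a quasi-isomorphism if and only if the other is, and your argument goes through. Alternatively, you could sidestep this bookkeeping entirely by citing the idempotence of $\mrm{L}\Lambda_{\b}$ directly, i.e.\ \cite[Proposition 7.10]{PSY1}, which is the reference the paper itself uses for this purpose in Lemma~\ref{lem:generalmnlemma}.
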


The next result generalizes the Greenlees-May duality: 

\begin{prop}\label{prop:GM}
Let $A$ be a commutative DG-ring, let $\bar{\a}\subseteq \bar{A}$ be a finitely generated ideal,
and let $P = (f,B,\mathbf{b})$ be a weakly proregular resolution of $(A,\bar{\a})$.
Then for any $M, N \in \cat{D}(A)$,
there are bifunctorial isomorphisms
\[
\mrm{R}\opn{Hom}_A(\RG{\a}^P(M), N) \cong 
\mrm{R}\opn{Hom}_A(M, \LL{\a}^P(N)) \cong
\mrm{R}\opn{Hom}_A(\RG{\a}^P(M),\RG{\a}^P(N)).
\]
in $\cat{D}(A)$.
\end{prop}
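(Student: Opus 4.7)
The plan is to deduce both isomorphisms from the derived Hom--tensor adjunction for the telescope complex over $B^0$. Set $T := \opn{Tel}(B^0; \mathbf{b})$; this is a bounded complex of free $B^0$-modules, hence K-projective and K-flat over $B^0$, and by Proposition \ref{prop:form} there are functorial isomorphisms $\mrm{R}\Gamma_{\b}(-) \cong T \otimes_{B^0} -$ and $\mrm{L}\Lambda_{\b}(-) \cong \opn{Hom}_{B^0}(T, -)$ as endofunctors of $\cat{D}(B)$.

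For the first isomorphism, I would transport both sides to $\cat{D}(B)$ using the adjunctions attached to $f : B \to A$. Applying $\mrm{L}f^* \dashv \mrm{R}f_*$ to the left hand side and $\mrm{R}f_* \dashv \mrm{R}f^{\flat}$ to the right hand side yields
\[
\mrm{R}\opn{Hom}_A(\RG{\a}^P(M), N) \cong \mrm{R}\opn{Hom}_B\bigl(\mrm{R}\Gamma_{\b}(\mrm{R}f_* M), \mrm{R}f_* N\bigr),
\]
\[
\mrm{R}\opn{Hom}_A(M, \LL{\a}^P(N)) \cong \mrm{R}\opn{Hom}_B\bigl(\mrm{R}f_* M, \mrm{L}\Lambda_{\b}(\mrm{R}f_* N)\bigr).
\]
Setting $X := \mrm{R}f_* M$ and $Y := \mrm{R}f_* N$, it then suffices to exhibit a functorial isomorphism
\[
\mrm{R}\opn{Hom}_B(T \otimes_{B^0} X, Y) \cong \mrm{R}\opn{Hom}_B\bigl(X, \opn{Hom}_{B^0}(T, Y)\bigr).
\]
To this end I would choose a K-projective resolution $X' \iso X$ over $B$ and a K-injective resolution $Y \iso I$ over $B$. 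The K-flatness of $B$ over $B^0$ (built into the definition of a weakly proregular DG-ring) makes $I$ K-injective over $B^0$ as well, so $\opn{Hom}_{B^0}(T, I)$ already represents $\mrm{L}\Lambda_{\b}(Y)$; K-projectivity of $T$ over $B^0$ ensures $T \otimes_{B^0} X'$ represents $\mrm{R}\Gamma_{\b}(X)$ and is itself K-projective over $B$. The sought isomorphism then reduces to the underived Hom--tensor adjunction $\opn{Hom}_B(T \otimes_{B^0} X', I) \cong \opn{Hom}_B(X', \opn{Hom}_{B^0}(T, I))$, whose verification is a direct check of the bimodule relations.

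For the second isomorphism, I would apply the first isomorphism with $N$ replaced by $\RG{\a}^P(N)$:
\[
\mrm{R}\opn{Hom}_A\bigl(\RG{\a}^P(M), \RG{\a}^P(N)\bigr) \cong \mrm{R}\opn{Hom}_A\bigl(M, \LL{\a}^P(\RG{\a}^P(N))\bigr),
\]
and then invoke Proposition \ref{prop:LR} to rewrite $\LL{\a}^P(\RG{\a}^P(N)) \cong \LL{\a}^P(N)$.

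The main obstacle I anticipate is the clean setup of the derived Hom--tensor adjunction, in particular verifying that moving between $B$-linear and $B^0$-linear Homs passes through the derivation step. This is handled by the compatibility between $B^0$-level and $B$-level resolutions built into Definition \ref{def:wpr}: K-flatness of $B$ over $B^0$ transports $B$-injective resolutions to $B^0$-injective ones, and K-projectivity of $T$ over $B^0$ ensures $T \otimes_{B^0} -$ preserves K-projectivity over $B$, so that both sides of the key adjunction are computed on the nose by the underived formulas.
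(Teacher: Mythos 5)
Your proposal is correct and follows essentially the same route as the paper's own proof: both reduce to the $B$-level derived Hom--tensor adjunction with the telescope complex $\opn{Tel}(B^0;\mathbf{b})$ after transporting across the quasi-isomorphism $f$, and both obtain the second isomorphism from the first via Proposition~\ref{prop:LR}. The only stylistic difference is that you unfold the key adjunction explicitly in terms of K-projective/K-injective resolutions, whereas the paper cites a base-change compatibility of $\mrm{R}\opn{Hom}$ with $\mrm{L}f^*$ (\cite[Proposition 2.5]{Sh2}) to move between $\cat{D}(A)$ and $\cat{D}(B)$ — a point you leave slightly implicit but which is automatic since $f$ is a quasi-isomorphism.
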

\begin{proof}
Let $M' = \mrm{R}f_*(M)$,
and let $N' = \mrm{R}f_*(N)$,
and let $\b := \mathbf{b} \cdot B^0$.
By definition, 
we have that
\[
\mrm{R}\opn{Hom}_A(\RG{\a}^P(M), N) \cong 
\mrm{R}\opn{Hom}_A ( \mrm{L}f^*(\mrm{R}\Gamma_{\b}(M')), \mrm{L}f^*(N'))
\]
By \cite[Proposition 2.5]{Sh2},
there is a bifunctorial isomorphism
\[
\mrm{R}\opn{Hom}_A ( \mrm{L}f^*(\mrm{R}\Gamma_{\b}(M')), \mrm{L}f^*(N'))
\cong 
\mrm{L}f^* \mrm{R}\opn{Hom}_B( \mrm{R}\Gamma_{\b}(M') , N').
\]
Using Proposition \ref{prop:form},
we have that
\[
\mrm{R}\opn{Hom}_B( \mrm{R}\Gamma_{\b}(M') , N') \cong
\mrm{R}\opn{Hom}_B(\opn{Tel}(B^0;\mathbf{b})\otimes_{B^0} M',  N'),
\]
and using adjunction and Proposition \ref{prop:form} again,
we see that
\begin{eqnarray}
\mrm{R}\opn{Hom}_B(\opn{Tel}(B^0;\mathbf{b})\otimes_{B^0} M',  N') \cong\nonumber\\
\mrm{R}\opn{Hom}_B(M', \opn{Hom}_{B^0}(\opn{Tel}(B^0;\mathbf{b}), N')) \cong\nonumber\\
\mrm{R}\opn{Hom}_B(M', \mrm{L}\Lambda_{\b}(N')).\nonumber
\end{eqnarray}
Combining this chain of natural isomorphisms with an application of \cite[Proposition 2.5]{Sh2} again,
we see that
\[
\mrm{R}\opn{Hom}_A(\RG{\a}^P(M), N) \cong 
\mrm{R}\opn{Hom}_A(M, \LL{\a}^P(N)).
\]
For the second claim, 
note that by Proposition \ref{prop:LR},
we have that
\[
\mrm{R}\opn{Hom}_B(M', \mrm{L}\Lambda_{\b}(N')) \cong
\mrm{R}\opn{Hom}_B(M', \mrm{L}\Lambda_{\b}(\mrm{R}\Gamma_{\b}(N'))).
\]
Using Proposition \ref{prop:form} and adjunction,
we see that
\begin{eqnarray}
\mrm{R}\opn{Hom}_B(M', \mrm{L}\Lambda_{\b}(\mrm{R}\Gamma_{\b}(N'))) \cong\nonumber\\
\mrm{R}\opn{Hom}_B(M', \opn{Hom}_{B^0}(\opn{Tel}(B^0;\mathbf{b}),\mrm{R}\Gamma_{\b}(N'))) \cong\nonumber\\
\mrm{R}\opn{Hom}_B(M' \otimes_{B^0} \opn{Tel}(B^0;\mathbf{b}), \mrm{R}\Gamma_{\b}(N')) \cong\nonumber\\
\mrm{R}\opn{Hom}_B(\mrm{R}\Gamma_{\b}(M'), \mrm{R}\Gamma_{\b}(N')).\nonumber
\end{eqnarray}
so the second result also follows from an application of \cite[Proposition 2.5]{Sh2}.
\end{proof}

Using Proposition \ref{prop:tor-properties}(1), 
we see that $\RG{\a}^P$ defines a functor 
\[
\cat{D}(A) \to \cat{D}_{\bar{\a}-\opn{tor}}(A).
\]
Here is the main result of this section.

\begin{thm}\label{thm:uniq}
Let $A$ be a commutative DG-ring, 
and let $\bar{\a}\subseteq \bar{A}$ be a finitely generated ideal.
\begin{enumerate}
\item For any weakly proregular resolution $P = (f,B,\mathbf{b})$ of $(A,\bar{\a})$,
the functor
\[
\RG{\a}^P : \cat{D}(A) \to \cat{D}_{\bar{\a}-\opn{tor}}(A).
\]
is right adjoint to the inclusion functor $\cat{D}_{\bar{\a}-\opn{tor}}(A) \inj \cat{D}(A)$.
\item For any weakly proregular resolution $P = (f,B,\b)$ of $(A,\bar{\a})$,
the functor 
\[
\RG{\a}^P : \cat{D}(A) \to \cat{D}(A)
\]
is left adjoint to the functor
\[
\LL{\a}^P : \cat{D}(A) \to \cat{D}(A)
\]
\end{enumerate}
\end{thm}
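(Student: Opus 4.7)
The plan is to derive both parts formally from Proposition \ref{prop:GM} and Proposition \ref{prop:tor-properties}; no new technical input should be required. In each case, the argument is to take $\mrm{H}^0$ of an appropriate bifunctorial isomorphism already supplied by Greenlees--May duality.

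Part (2) should be essentially immediate. The first isomorphism of Proposition \ref{prop:GM},
\[
\mrm{R}\opn{Hom}_A(\RG{\a}^P(M), N) \cong \mrm{R}\opn{Hom}_A(M, \LL{\a}^P(N)),
\]
is already the derived form of the desired adjunction. Applying $\mrm{H}^0$ will produce the adjunction on hom-sets, with naturality in both variables inherited from the stated bifunctoriality.

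For part (1), the plan is to combine the second isomorphism of Proposition \ref{prop:GM} with the idempotency statement of Proposition \ref{prop:tor-properties}(2). After renaming variables so that the torsion object sits in the first slot, Proposition \ref{prop:GM} gives
\[
\mrm{R}\opn{Hom}_A(\RG{\a}^P(N), M) \cong \mrm{R}\opn{Hom}_A(\RG{\a}^P(N), \RG{\a}^P(M)).
\]
Now I will specialize to $N \in \cat{D}_{\bar{\a}-\opn{tor}}(A)$: Proposition \ref{prop:tor-properties}(2) tells us $\sigma^P_N: \RG{\a}^P(N) \to N$ is an isomorphism in $\cat{D}(A)$, so both occurrences of $\RG{\a}^P(N)$ can be replaced by $N$ via $\sigma^P_N$, producing
\[
\mrm{R}\opn{Hom}_A(N, M) \cong \mrm{R}\opn{Hom}_A(N, \RG{\a}^P(M)).
\]
Since $\cat{D}_{\bar{\a}-\opn{tor}}(A)$ is a full triangulated subcategory of $\cat{D}(A)$, and $\RG{\a}^P(M)$ lies in it by Proposition \ref{prop:tor-properties}(1), taking $\mrm{H}^0$ will deliver the claimed adjunction between the inclusion functor and $\RG{\a}^P$.

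I do not anticipate any genuinely hard step: all the substantive work has already been absorbed into Proposition \ref{prop:GM} and Proposition \ref{prop:tor-properties}. The only point requiring care is naturality in the $N$ variable in part (1), which follows from the fact that $\sigma^P$ is a natural transformation, together with bifunctoriality of the Greenlees--May isomorphism. Beyond tracking these naturalities, the argument will be entirely formal.
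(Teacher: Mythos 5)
Your proposal is correct and follows essentially the same route as the paper: part (2) is immediate from the first Greenlees--May isomorphism of Proposition \ref{prop:GM}, and part (1) combines the isomorphism $\mrm{R}\opn{Hom}_A(\RG{\a}^P(-),-) \cong \mrm{R}\opn{Hom}_A(\RG{\a}^P(-),\RG{\a}^P(-))$ from Proposition \ref{prop:GM} with the fact from Proposition \ref{prop:tor-properties}(2) that $\sigma^P$ is an isomorphism on torsion objects, then takes $\mrm{H}^0$ and invokes fullness of $\cat{D}_{\bar{\a}-\opn{tor}}(A)$. The only cosmetic differences are that the paper applies $\sigma^P$ to the two occurrences in separate steps and works at the level of $\opn{Hom}$-sets rather than derived Homs throughout, and that what you call ``the second isomorphism'' of Proposition \ref{prop:GM} is more precisely the composite identifying the first and third terms of the displayed chain; the isomorphism you actually use is the correct one.
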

\begin{proof}
Denote by $F(-)$ the inclusion functor  $\cat{D}_{\bar{\a}-\opn{tor}}(A) \inj \cat{D}(A)$.
Let $M \in \cat{D}_{\bar{\a}-\opn{tor}}(A)$,
and let $N \in \cat{D}(A)$.
By Proposition \ref{prop:tor-properties}(2),
we have a natural isomorphism
\[
\opn{Hom}_{\cat{D}(A)} ( F(M), N) \cong
\opn{Hom}_{\cat{D}(A)} ( \RG{\a}^P(M) , N) = 
\mrm{H}^0 \mrm{R}\opn{Hom}_A ( \RG{\a}^P(M) , N),
\]
and by Proposition \ref{prop:GM}, 
there is a natural isomorphism
\[
\mrm{H}^0 \mrm{R}\opn{Hom}_A ( \RG{\a}^P(M) , N) \cong
\mrm{H}^0 \mrm{R}\opn{Hom}_A ( \RG{\a}^P(M) , \RG{\a}^P(N)).
\]
Using Proposition \ref{prop:tor-properties}(2) again,
we see that there is a natural isomorphism
\begin{eqnarray}
\mrm{H}^0 \left(\mrm{R}\opn{Hom}_A ( \RG{\a}^P(M) , \RG{\a}^P(N)) \right) \cong\nonumber\\
\mrm{H}^0 \left( \mrm{R}\opn{Hom}_A ( M , \RG{\a}^P(N)) \right) =\nonumber\\
\opn{Hom}_{\cat{D}(A)} ( M , \RG{\a}^P(N)).\nonumber
\end{eqnarray}
Since $M, \RG{\a}^P(N) \in \cat{D}_{\bar{\a}-\opn{tor}}(A)$,
and since $\cat{D}_{\bar{\a}-\opn{tor}}(A)$ is a full subcategory of $\cat{D}(A)$, 
we have that
\[
\opn{Hom}_{\cat{D}(A)} ( M , \RG{\a}^P(N)) = 
\opn{Hom}_{\cat{D}_{\bar{\a}-\opn{tor}}(A)} ( M , \RG{\a}^P(N)).
\]
Thus, we see that there is a natural isomorphism
\[
\opn{Hom}_{\cat{D}(A)} ( F(M), N) \cong \opn{Hom}_{\cat{D}_{\bar{\a}-\opn{tor}}(A)} ( M , \RG{\a}^P(N)),
\]
and this proves (1). 
The second statement follows immediately from Proposition \ref{prop:GM}.
\end{proof}

\begin{cor}\label{cor:uniq}
Let $A$ be a commutative DG-ring, 
and let $\bar{\a}\subseteq \bar{A}$ be a finitely generated ideal.
Let $P,Q$ be weakly proregular resolutions of $(A,\bar{\a})$.
\begin{enumerate}
\item Let
\[
\varphi^P : \opn{Hom}_{\cat{D}(A)} ( M, N) \iso \opn{Hom}_{\cat{D}_{\bar{\a}-\opn{tor}}(A)} ( M , \RG{\a}^P(N))
\]
and
\[
\varphi^Q : \opn{Hom}_{\cat{D}(A)} ( M, N) \iso \opn{Hom}_{\cat{D}_{\bar{\a}-\opn{tor}}(A)} ( M , \RG{\a}^Q(N))
\]
be the adjunctions constructed in Theorem \ref{thm:uniq}(1). \\
Then there is \textbf{a unique isomorphism of functors} $\alpha_{P,Q} : \RG{\a}^P \iso \RG{\a}^Q$ such that 
\[
\varphi^Q = \opn{Hom}_{\cat{D}(A)} ( 1, \alpha_{P,Q} ) \circ \varphi^P.
\]
\item Let
\[
\psi^P : \opn{Hom}_{\cat{D}(A)} ( \RG{\a}^P(M), N) \iso \opn{Hom}_{\cat{D}(A)} ( M, \LL{\a}^P(N)) 
\]
and
\[
\psi^Q : \opn{Hom}_{\cat{D}(A)} ( \RG{\a}^Q(M), N) \iso \opn{Hom}_{\cat{D}(A)} ( M, \LL{\a}^Q(N)) 
\]
be be the adjunctions constructed in Theorem \ref{thm:uniq}(2). \\
Then there is \textbf{a unique isomorphism of functors} $\beta_{P,Q} : \LL{\a}^P \iso \LL{\a}^Q$ such that
\[
\psi^Q = \opn{Hom}_{\cat{D}(A)} (1,\beta_{P,Q}) \circ \psi^P \circ \opn{Hom}_{\cat{D}(A)} (\alpha_{P,Q}, 1).
\]
\end{enumerate}
\end{cor}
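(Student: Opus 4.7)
The plan is to invoke the standard fact that adjoint functors are unique up to a unique natural isomorphism compatible with the adjunction bijections. Both assertions are instances of this principle, applied to the adjunctions already furnished by Theorem \ref{thm:uniq}.

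For part (1), Theorem \ref{thm:uniq}(1) identifies both $\RG{\a}^P$ and $\RG{\a}^Q$ as right adjoints to the inclusion $F\colon \cat{D}_{\bar{\a}-\opn{tor}}(A) \inj \cat{D}(A)$, with adjunction bijections $\varphi^P$ and $\varphi^Q$. Fixing $N \in \cat{D}(A)$ and letting $M \in \cat{D}_{\bar{\a}-\opn{tor}}(A)$ vary, the composite
\[
\Theta^N_M := \varphi^Q_{M,N} \circ (\varphi^P_{M,N})^{-1} \colon \opn{Hom}_{\cat{D}_{\bar{\a}-\opn{tor}}(A)}(M, \RG{\a}^P(N)) \iso \opn{Hom}_{\cat{D}_{\bar{\a}-\opn{tor}}(A)}(M, \RG{\a}^Q(N))
\]
is natural in $M$, because $\varphi^P$ and $\varphi^Q$ are. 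Since $\RG{\a}^P(N), \RG{\a}^Q(N) \in \cat{D}_{\bar{\a}-\opn{tor}}(A)$ by Proposition \ref{prop:tor-properties}(1), the Yoneda lemma applied inside $\cat{D}_{\bar{\a}-\opn{tor}}(A)$ produces a unique morphism $\alpha_{P,Q,N}\colon \RG{\a}^P(N) \iso \RG{\a}^Q(N)$ inducing $\Theta^N$; running the same argument with $P$ and $Q$ swapped shows it is an isomorphism. Naturality in $N$ of the family $\alpha_{P,Q,N}$ follows from naturality of $\varphi^P$ and $\varphi^Q$ in $N$, and the required identity $\varphi^Q = \opn{Hom}(1, \alpha_{P,Q}) \circ \varphi^P$ is built into the construction; uniqueness of $\alpha_{P,Q}$ is again Yoneda.

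For part (2), I would use $\alpha_{P,Q}$ to compare the two adjunctions of Theorem \ref{thm:uniq}(2). Fix $N \in \cat{D}(A)$ and consider, for $M \in \cat{D}(A)$, the composite natural bijection
\[
\Psi^N_M := \psi^Q_{M,N} \circ \opn{Hom}(\alpha_{P,Q,M}, 1_N) \circ (\psi^P_{M,N})^{-1} \colon \opn{Hom}(M, \LL{\a}^P(N)) \iso \opn{Hom}(M, \LL{\a}^Q(N)).
\]
Naturality in $M$ follows from the naturality of $\psi^P$, $\psi^Q$ and $\alpha_{P,Q}$. Yoneda on $\cat{D}(A)$ then yields a unique isomorphism $\beta_{P,Q,N}\colon \LL{\a}^P(N) \iso \LL{\a}^Q(N)$ inducing $\Psi^N$, and naturality in $N$ is immediate from that of its ingredients. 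The defining identity $\psi^Q = \opn{Hom}(1, \beta_{P,Q}) \circ \psi^P \circ \opn{Hom}(\alpha_{P,Q}, 1)$ is forced by the construction.

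The substantive content has already been absorbed into Theorem \ref{thm:uniq}, and the present corollary merely repackages the uniqueness half of the uniqueness-of-adjoints theorem. The only thing that requires care is bookkeeping of variances and the propagation of naturality in both arguments; there is no essential mathematical obstacle beyond that.
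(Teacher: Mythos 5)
Your approach is exactly the paper's: the corollary is an instance of the uniqueness of adjoint functors, and the paper's own proof consists of a single citation to Kan's uniqueness theorem. You have simply unwound the Yoneda argument that underlies that theorem, which is fine.

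One variance slip in part (2): as written, the composite
\[
\psi^Q_{M,N} \circ \opn{Hom}(\alpha_{P,Q,M}, 1_N) \circ (\psi^P_{M,N})^{-1}
\]
does not typecheck. Since $\alpha_{P,Q,M}\colon \RG{\a}^P(M) \to \RG{\a}^Q(M)$, the map $\opn{Hom}(\alpha_{P,Q,M},1_N)$ goes from $\opn{Hom}(\RG{\a}^Q(M),N)$ to $\opn{Hom}(\RG{\a}^P(M),N)$, whereas $(\psi^P_{M,N})^{-1}$ lands in $\opn{Hom}(\RG{\a}^P(M),N)$; the middle arrow must therefore be $\opn{Hom}(\alpha_{P,Q,M},1_N)^{-1} = \opn{Hom}(\alpha_{P,Q,M}^{-1},1_N)$. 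With that inverse inserted, transposing the resulting identity recovers exactly the defining equation $\psi^Q = \opn{Hom}(1,\beta_{P,Q}) \circ \psi^P \circ \opn{Hom}(\alpha_{P,Q},1)$ in the statement. You explicitly warned that the only delicate point is ``bookkeeping of variances,'' and this is indeed the one place it tripped you up; everything else is correct.
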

\begin{proof}
Both statements follow from Theorem \ref{thm:uniq} and the uniqueness of adjoint functors (\cite[Theorem 3.2]{Ka}).
\end{proof}

\begin{rem}
Let $A$ be a commutative DG-ring, 
and let $\bar{\a}\subseteq \bar{A}$ be a finitely generated ideal.
By Proposition \ref{prop:res-exists}, any such pair $(A,\bar{\a})$
has a weakly proregular resolution. 
We associate to $(A,\bar{\a})$ functors
\[
\RG{\a}, \LL{\a}: \cat{D}(A) \to \cat{D}(A)
\]
defined by choosing some weakly proregular resolution $P$ of $(A,\bar{\a})$,
and declaring 
\[
\RG{\a} := \RG{\a}^P, \quad \LL{\a} := \LL{\a}^P. 
\]
By Corollary \ref{cor:uniq}
up to a unique natural isomorphism respecting the structure,
these are independent of the chosen weakly proregular resolution, 
so this notation makes sense.
Similarly, we let 
\[
\cat{D}(A)_{\opn{\bar{\a}-com}} := \cat{D}(A)^P_{\opn{\bar{\a}-com}}.
\]
\end{rem}

It follows from the above that the functor $\cat{D}(A) \to \cat{D}_{\bar{\a}-\opn{tor}}(A)$
which is right adjoint to the inclusion functor is a (right) Bousfield localization. 
See \cite[Section 4.9]{Kr} for details about this notion in the context of triangulated categories.

\begin{cor}\label{cor:sqrt}
Let $A$ be a commutative DG-ring, 
and let $\bar{\a} , \bar{\b} \subseteq \bar{A}$ be finitely generated ideals 
such that $\sqrt{\bar{\a}} = \sqrt{\bar{\b}}$.
Then there are isomorphisms
\[
\mrm{R}\Gamma_{\bar{\a}}(-) \cong \mrm{R}\Gamma_{\bar{\b}}(-)
\]
and
\[
\mrm{L}\Lambda_{\bar{\a}}(-) \cong \mrm{L}\Lambda_{\bar{\b}}(-)
\]
of functors $\cat{D}(A) \to \cat{D}(A)$.
\end{cor}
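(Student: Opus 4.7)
The plan is to reduce the claim to the uniqueness of adjoint functors. The crucial observation is that under the hypothesis $\sqrt{\bar{\a}} = \sqrt{\bar{\b}}$, the two triangulated subcategories $\cat{D}_{\bar{\a}-\opn{tor}}(A)$ and $\cat{D}_{\bar{\b}-\opn{tor}}(A)$ of $\cat{D}(A)$ are literally equal (as subcategories, not just equivalent), and then Theorem \ref{thm:uniq} does the rest.

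First I would verify the set-theoretic reduction: since $\bar{\a}$ is finitely generated, say by $x_1,\dots,x_n$, and since each $x_i$ lies in $\sqrt{\bar{\b}}$, there exists some large $N$ with $\bar{\a}^N \subseteq \bar{\b}$, and symmetrically some $M$ with $\bar{\b}^M \subseteq \bar{\a}$. It follows that for any $\bar{A}$-module $L$, the submodules $\Gamma_{\bar{\a}}(L)$ and $\Gamma_{\bar{\b}}(L)$ coincide. Applied to each cohomology module $\mrm{H}^n(M)$ of a DG-module $M \in \cat{D}(A)$, this yields the equality of subcategories
\[
\cat{D}_{\bar{\a}-\opn{tor}}(A) = \cat{D}_{\bar{\b}-\opn{tor}}(A).
\]

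Next, by Theorem \ref{thm:uniq}(1) applied to a weakly proregular resolution of $(A,\bar{\a})$ and to one of $(A,\bar{\b})$, both functors $\RG{\a}$ and $\RG{\b}$ are right adjoint to the inclusion of this common subcategory into $\cat{D}(A)$. The uniqueness of adjoint functors (used already in Corollary \ref{cor:uniq}) then yields a canonical natural isomorphism $\RG{\a} \cong \RG{\b}$ of functors $\cat{D}(A) \to \cat{D}(A)$.

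Finally, for the completion functors, I would invoke Theorem \ref{thm:uniq}(2): $\LL{\a}$ is right adjoint to $\RG{\a}$, and $\LL{\b}$ is right adjoint to $\RG{\b}$. Since $\RG{\a} \cong \RG{\b}$ by the previous step, a second application of the uniqueness of adjoints gives $\LL{\a} \cong \LL{\b}$. There is essentially no obstacle here once the equality of torsion subcategories is observed; the potentially subtle point, which must not be skipped, is the use of finite generation of $\bar{\a}$ (and $\bar{\b}$) to convert containment of radicals into containment of powers, as this is exactly what makes the two notions of torsion agree on the nose rather than merely on finitely generated modules.
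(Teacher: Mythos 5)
Your proof is correct and follows essentially the same strategy as the paper: identify the two torsion subcategories, then conclude via Theorem \ref{thm:uniq} and uniqueness of adjoints. The only difference is that you spell out the finite-generation argument directly (mutual power containments $\bar{\a}^N \subseteq \bar{\b}$ and $\bar{\b}^M \subseteq \bar{\a}$), whereas the paper routes the equality of torsion subcategories through $\sqrt{\bar{\a}}$-torsion; your version is a touch more careful since it never has to deal with the (possibly non-finitely-generated) radical ideal itself.
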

\begin{proof}
The second claim follows from the first, 
and the first claim follows from the fact that 
\[
\cat{D}_{\bar{\a}-\opn{tor}}(A) = \cat{D}_{\sqrt{\bar{\a}}-\opn{tor}}(A) = \cat{D}_{\bar{\b}-\opn{tor}}(A) 
\]
since a $\mrm{H}^0(A)$-module is $\bar{\a}$-torsion if and only if it is $\sqrt{\bar{\a}}$-torsion.
\end{proof}

\begin{prop}\label{prop:LLOfHom}
Let $f:(A,\bar{\a}) \to (B,\bar{\b})$ be a map in $\cdgcont$,
such that 
\[
\mrm{H}^0(f)(\bar{\a})\cdot \mrm{H}^0(B) = \bar{\b}.
\]
Then there is an isomorphism
\[
\mrm{L}\Lambda_{\bar{\b}}(f^{\flat}(-)) \cong f^{\flat}(\mrm{L}\Lambda_{\bar{\a}}(-))
\]
of functors $\cat{D}(A) \to \cat{D}(B)$.
\end{prop}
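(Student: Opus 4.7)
The plan is to exhibit both functors $\LL{\b}(f^{\flat}(-))$ and $f^{\flat}(\LL{\a}(-))$ as naturally isomorphic to a common explicit expression, by constructing compatible weakly proregular resolutions of $(A,\bar{\a})$ and $(B,\bar{\b})$, computing each side via the telescope formulas of Proposition \ref{prop:form}, and matching them using Hom-tensor adjunction.

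First, by Proposition \ref{prop:res-exists} I fix a weakly proregular resolution $(g,\til{A},\mathbf{a})$ of $(A,\bar{\a})$, and then factor the composition $\til{A}\xrightarrow{g} A\xrightarrow{f} B$ as $\til{A}\xrightarrow{\til{f}}\til{B}\xrightarrow{h} B$, with $\til{A}\to\til{B}$ semi-free (so $\til{B}$ remains semi-free over $\mathbb{Z}$) and $h$ a surjective quasi-isomorphism. Set $\mathbf{b}:=\til{f}(\mathbf{a})\in\til{B}^{0}$. Because $\til{B}^{0}$ is obtained from $\til{A}^{0}$ by adjoining degree-zero polynomial variables, the sequence $\mathbf{b}$ is the image of $\mathbf{a}$ under a flat ring homomorphism, hence remains weakly proregular, while the hypothesis $\mrm{H}^{0}(f)(\bar{\a})\cdot\mrm{H}^{0}(B)=\bar{\b}$ ensures that the image of $\mathbf{b}$ in $\mrm{H}^{0}(B)$ generates $\bar{\b}$. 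Thus $(h,\til{B},\mathbf{b})$ is a weakly proregular resolution of $(B,\bar{\b})$, and $\til{f}(\mathbf{a})=\mathbf{b}$ and $f(g(\mathbf{a}))=h(\mathbf{b})$ hold strictly.

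Now set $\mathbf{a}_{A}:=g(\mathbf{a})\in A^{0}$ and $\mathbf{b}_{B}:=h(\mathbf{b})\in B^{0}$, so $f(\mathbf{a}_{A})=\mathbf{b}_{B}$, and write $T:=\opn{Tel}(A^{0};\mathbf{a}_{A})$. The base change formula (\ref{eqn:TelBaseChange}) gives $\opn{Tel}(B^{0};\mathbf{b}_{B})\cong T\otimes_{A^{0}} B^{0}$, while Proposition \ref{prop:form} supplies natural isomorphisms $\LL{\a}(N)\cong \opn{Hom}_{A^{0}}(T,N)$ on $\cat{D}(A)$ and $\LL{\b}(M)\cong \opn{Hom}_{B^{0}}(T\otimes_{A^{0}}B^{0},M)$ on $\cat{D}(B)$. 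Since $T$ is a bounded complex of finite free $A^{0}$-modules (in particular K-projective over $A^{0}$), the Hom-tensor adjunction yields
\[
\LL{\b}(f^{\flat}(N)) \cong \opn{Hom}_{B^{0}}(T\otimes_{A^{0}}B^{0},f^{\flat}(N)) \cong \opn{Hom}_{A^{0}}(T,f^{\flat}(N)),
\]
and K-projectivity of $T$ over $A^{0}$ ensures that $\opn{Hom}_{A^{0}}(T,I)$ is K-injective over $A$ whenever $I$ is, licensing the Hom-Hom swap
\[
f^{\flat}(\LL{\a}(N)) \cong \mrm{R}\opn{Hom}_{A}(B,\opn{Hom}_{A^{0}}(T,N)) \cong \opn{Hom}_{A^{0}}(T,\mrm{R}\opn{Hom}_{A}(B,N)) = \opn{Hom}_{A^{0}}(T,f^{\flat}(N)).
\]
Comparing the two displays yields the required natural isomorphism.

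The main technical hurdle is the first step: making the construction of the two resolutions strictly compatible so that the base change isomorphism (\ref{eqn:TelBaseChange}) applies on the nose, and verifying that $\mathbf{b}$ inherits weak proregularity from $\mathbf{a}$. Once these are settled, the remainder is a formal chain of canonical Hom-tensor and Hom-Hom adjunction isomorphisms, each natural in $N$. Independence of the final isomorphism from the choice of $(g,\til{A},\mathbf{a})$, and hence its well-definedness at the level of the intrinsic functors $\LL{\a}$ and $\LL{\b}$, is guaranteed by Corollary \ref{cor:uniq}.
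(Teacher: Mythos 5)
Your proof is correct and uses essentially the same strategy as the paper's: compatible weakly proregular resolutions, telescope base change, and a Hom-tensor plus Hom-Hom swap that reduce both sides to $\opn{Hom}_{A^0}(T,f^{\flat}(-))$. The only cosmetic difference is that you carry out the adjunction manipulations directly over $A^0$ and $B^0$ using the second half of Proposition \ref{prop:form}, whereas the paper performs the identical manipulations one level up over the resolution rings $P^0$ and $Q^0$ (i.e.\ in $\cat{D}(P)$ and $\cat{D}(Q)$) and only descends to $\cat{D}(A)$, $\cat{D}(B)$ at the very end.
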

\begin{proof}
Choose weakly proregular resolutions $P = (g,P_A,\mathbf{a})$ and $Q = (h,Q,\mathbf{b})$ 
of $(A,\bar{\a})$ and $(B,\bar{\b})$ respectively, 
such that there is a commutative diagram
\[
\xymatrix{
A \ar[r]^f & B\\
P \ar[u]^g \ar[r]_{\alpha} & Q\ar[u]_h
}
\]
in $\cdg$,
and such that $\alpha(\mathbf{a}) = \mathbf{b}$.
Then, by definition, we have:
\[
\mrm{L}\Lambda_{\bar{\b}}(f^{\flat}(M)) = \mrm{L}\Lambda^Q_{\bar{\b}}(\mrm{R}\opn{Hom}_A(B,M))
\cong \mrm{R}\opn{Hom}_Q(B,\mrm{L}\Lambda_{(\mathbf{b})} ( \mrm{R}h_*(\mrm{R}\opn{Hom}_A(B,M) ) )
\]
Note that
\[
\mrm{R}h_* (\mrm{R}\opn{Hom}_A(B,M)) \cong \mrm{R}\opn{Hom}_P(Q,M),
\]
and that
\[
\mrm{L}\Lambda_{(\mathbf{b})} ( \mrm{R}\opn{Hom}_P(Q,M) ) \cong
\opn{Hom}_{Q^0}(\opn{Tel}(Q^0;\mathbf{b}),\mrm{R}\opn{Hom}_P(Q,M)).
\]
Since there is an isomorphism
$\opn{Tel}(Q^0;\mathbf{b}) \cong \opn{Tel}(P^0;\mathbf{a}) \otimes_{P^0} Q^0$,
using adjunction we have a natural isomorphism
\begin{eqnarray}
\opn{Hom}_{Q^0}(\opn{Tel}(Q^0;\mathbf{b}),\mrm{R}\opn{Hom}_P(Q,M)) \cong\nonumber\\
\mrm{R}\opn{Hom}_P(Q,\opn{Hom}_{P^0}(\opn{Tel}(P^0;\mathbf{a}),M)) \cong\nonumber\\
\mrm{R}\opn{Hom}_P(Q,\mrm{L}\Lambda_{(\mathbf{a})}(M)).\nonumber
\end{eqnarray}
Hence,
\begin{eqnarray}
\mrm{L}\Lambda_{\bar{\b}}(f^{\flat}(M))  
\cong \mrm{R}\opn{Hom}_Q(B,\mrm{R}\opn{Hom}_P(Q,\mrm{L}\Lambda_{(\mathbf{a})}(M)))\cong\nonumber\\
\mrm{R}\opn{Hom}_P(B,\mrm{L}\Lambda_{(\mathbf{a})}(M)) \cong \mrm{R}\opn{Hom}_A(B,\mrm{L}\Lambda_{\bar{\a}}(M)).\nonumber
\end{eqnarray}
\end{proof}

\section{The adic completion functor over commutative DG-rings}

The aim of this section is to extend the adic completion functor from the category of commutative rings to the category of commutative non-positive DG-rings.
We will restrict attnetion only to finitely generated ideals,
although everything in this section holds also for arbitrary ideals. 
The reason for the restriction is that in the next section we will derive this functor,
and that construction only works under the finitely generated assumption.

Fix a commutative noetherian ring $\k$.
We wish to associate to a commutative DG-ring $A$ over $\k$ and a finitely generated ideal $\bar{\a} \subseteq \mrm{H}^0(A)$ a commutative DG-ring over $\k$ which will be called the $\bar{\a}$-adic completion of $A$. To do this, we will lift $\bar{\a}$ to an ideal $\a \subseteq A^0$, such that $\a \cdot \mrm{H}^0(A) = \bar{\a}$,
and then take the $\a$-adic completion of $A$. This can always be done.
However, there is no way to make a canonical choice of such a lift under the assumption that the lift is a finitely generated ideal (since it is possible that the biggest lift, $\pi_A^{-1}(\bar{\a})$ is not a finitely generated ideal). Instead of making a choice of a lift, we consider all liftings, show that completions with respect to all liftings form a directed system, and take its colimit. We will then prove that this operation form a functor $\Lambda: \cdgcont[/\k] \to \cdgcont[/\k]$.

Before performing the construction, we discuss some basic facts about adic completion.

\begin{fact}\label{rem:comp-of-dg}
Let $A$ be a commutative ring,
let $\a\subseteq A$ be a finitely generated ideal,
let $B$ be a commutative DG-ring,
and let $A \to B$ be a map of DG-rings.
For every $n$,
the tensor product $A/\a^n \otimes_A B$ is a commutative DG-ring,
and moreover, these form an inverse system of commutative DG-rings.
Hence, the inverse limit $\varprojlim A/\a^n \otimes_A B$
which is simply $\Lambda_{\a}(B)$,
is also a commutative DG-ring.
If $C$ is another commutative DG-ring,
and $B \to C$ is an $A$-linear map,
the fact that the maps $A/\a^n \otimes_A B \to A/\a^n \otimes_A C$
are maps of commutative DG-rings implies that the map 
\begin{equation}\label{eqn:LambdaDirect}
\Lambda_{\a}(B) \to \Lambda_{\a}(C)
\end{equation}
is also a map of commutative DG-rings.
\end{fact}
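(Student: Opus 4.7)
The plan is to assemble each assertion from standard facts about graded commutative rings, $A$-bilinear maps, and limits. First I would observe that since $A/\a^n$ sits in degree zero, the tensor product $A/\a^n \otimes_A B$ inherits the $\mathbb{Z}$-grading of $B$ in the obvious way: its degree $i$ part is $A/\a^n \otimes_A B^i$. Since the multiplication and the differential of $B$ are both $A$-linear, they induce a multiplication and a differential on $A/\a^n \otimes_A B$. Graded commutativity, the graded Leibniz rule, the identity $a \cdot a = 0$ in odd degrees, and non-positivity all descend directly from the corresponding properties of $B$. This gives the first assertion, that $A/\a^n \otimes_A B$ is a commutative non-positive DG-ring.

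For the inverse system, I would note that the ring surjections $A/\a^{n+1} \twoheadrightarrow A/\a^n$ become DG-ring maps after tensoring with $B$ over $A$, since the multiplications and differentials on the $A/\a^n \otimes_A B$ are induced from those on $B$ in an obviously compatible way. To form the inverse limit, I would compute degree-wise: the forgetful functor from commutative non-positive DG-rings to graded abelian groups creates limits, so the underlying graded module of $\varprojlim_n A/\a^n \otimes_A B = \Lambda_{\a}(B)$ is simply $\bigoplus_i \Lambda_{\a}(B^i)$. The multiplication and differential on the inverse limit are then assembled from the compatible multiplications and differentials on the $A/\a^n \otimes_A B$, and graded commutativity, the Leibniz rule, the vanishing of squares in odd degrees, and non-positivity all pass to inverse limits. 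This equips $\Lambda_{\a}(B)$ with the structure of a commutative DG-ring.

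The functoriality statement (\ref{eqn:LambdaDirect}) is then immediate: an $A$-linear map of DG-rings $B \to C$ induces DG-ring maps $A/\a^n \otimes_A B \to A/\a^n \otimes_A C$ compatible with the transition maps of the two inverse systems, so passing to inverse limits produces the required DG-ring map $\Lambda_{\a}(B) \to \Lambda_{\a}(C)$. I do not foresee any real obstacle here; the whole statement is an instance of the general principle that the forgetful functor from commutative non-positive DG-rings to graded abelian groups creates the relevant tensor products with degree-zero rings and the relevant inverse limits, so the algebraic structure automatically propagates through the construction.
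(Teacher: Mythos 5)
Your proof is correct, and it supplies an argument for a statement the paper records as a bare ``fact'' without proof, so there is no paper argument to compare against; your route is also the natural one. The one place where something could have gone wrong is precisely the place you address explicitly: since a non-positive DG-ring is an infinite direct sum of its graded pieces, $\varprojlim$ does not in general commute with $\bigoplus$, so it matters that the inverse limit here is taken in the category of commutative DG-rings (equivalently of graded objects), where it is computed degree-wise and hence agrees with the term-wise completion $\bigoplus_i \Lambda_{\a}(B^i)$; your invocation of the forgetful functor creating limits handles this cleanly. One tiny wording quibble: the multiplication on $B$ is $A$-\emph{bilinear} rather than $A$-linear, which is what is actually used to induce the product on $A/\a^n\otimes_A B$ (equivalently, one can just observe $A/\a^n\otimes_A B \cong B/\a^n B$ as a quotient DG-ring of $B$, from which all the axioms descend immediately).
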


\begin{fact}\label{rem:lambdamn}
Given a commutative ring $A$,
and given finitely generated ideals $\a\subseteq \b \subseteq A$,
note that there is a morphism of functors
\begin{equation}\label{eqn:LambdaAtoB}
\Lambda_{\a,\b} : \Lambda_{\a}(-) \to \Lambda_{\b}(-)
\end{equation}
which is induced by the maps $A/{\a}^n \to A/{\b}^n$.
Since for each $n$, the diagram
\[
\xymatrix{
A \ar[r]\ar[d] & A/{\a}^n\ar[ld]\\
A/{\b}^n &
}
\]
is commutative, the diagram
\begin{equation}\label{eqn:AtoLamAdiag}
\xymatrix{
A \ar[r]\ar[d] & \Lambda_{\a}(A)\ar[ld]^{\Lambda_{\a,\b}}\\
\Lambda_{\b}(A) &
}
\end{equation}
is also commutative. If $\c \subseteq A$ is another finitely generated ideal, 
and $\b\subseteq \c$, there is an equality 
\begin{equation}\label{eqn:lambdaABC}
\Lambda_{\a,\c} = \Lambda_{\b,\c} \circ \Lambda_{\a,\b}
\end{equation}
\end{fact}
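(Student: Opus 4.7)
The plan is to build $\Lambda_{\a,\b}$ level by level in the inverse system defining adic completion and then pass to $\varprojlim$, after which both displayed compatibilities follow from the uniqueness clause in the universal property of the inverse limit. All three assertions of the Fact are then routine diagram chases.

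First, since $\a \subseteq \b$ one has $\a^n \subseteq \b^n$ for every $n$, hence a canonical surjection of rings $\pi_n : A/\a^n \twoheadrightarrow A/\b^n$. Tensoring over $A$ with an $A$-module $M$ yields an $A$-linear map $\pi_n \otimes 1_M : A/\a^n \otimes_A M \to A/\b^n \otimes_A M$, natural in $M$. For $n \leq m$, the square with horizontal arrows $\pi_m \otimes 1_M$ and $\pi_n \otimes 1_M$ and vertical arrows the structure maps of the two inverse systems commutes, because the underlying square of quotient maps of $A$ does. Taking $\varprojlim_n$ therefore yields a natural transformation $\Lambda_{\a,\b} : \Lambda_\a \to \Lambda_\b$, which is the morphism (\ref{eqn:LambdaAtoB}).

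For the triangle (\ref{eqn:AtoLamAdiag}), recall that $A \to \Lambda_\a(A)$ is the unique ring map induced by the compatible family $\{A \to A/\a^n\}_n$. By the construction of $\Lambda_{\a,\b}$ above, the composite $A \to \Lambda_\a(A) \to \Lambda_\b(A)$ is the map into $\varprojlim_n A/\b^n$ induced by the family $\{A \to A/\a^n \xrightarrow{\pi_n} A/\b^n\}_n$. But each such composite equals the canonical surjection $A \to A/\b^n$, so by the uniqueness clause in the universal property of $\varprojlim$ the composite coincides with the canonical map $A \to \Lambda_\b(A)$.

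Finally, for (\ref{eqn:lambdaABC}), the composite $\Lambda_{\b,\c} \circ \Lambda_{\a,\b}$ applied to $M$ is the $\varprojlim_n$ of the maps $A/\a^n \otimes_A M \to A/\b^n \otimes_A M \to A/\c^n \otimes_A M$, which agree level-wise with the maps induced by the composed quotient $A/\a^n \twoheadrightarrow A/\c^n$, that is, with the defining maps of $\Lambda_{\a,\c}$. Hence the two transformations $\Lambda_\a \to \Lambda_\c$ coincide. The whole Fact is a routine consequence of the universal property of $\varprojlim$, with no genuine obstacle to overcome; the only subtlety is applying the uniqueness clause twice.
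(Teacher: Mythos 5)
Your proof is correct and takes essentially the same route as the paper: the paper records only the commutativity of the finite-level diagrams and invokes passage to the inverse limit, which is precisely what you spell out via the uniqueness clause in the universal property of $\varprojlim$. Your account is a reasonable expansion of the paper's terse remark, with no substantive difference in approach.
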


\begin{fact}\label{fact:diagMixAB}
If $f:A \to B$ is a map of commutative DG-rings,
$\a \subseteq \b \subseteq A^0$ are finitely generated ideals,
and $\c = \a \cdot B^0 \subseteq \d = \b \cdot B^0 \subseteq B^0$,
then the fact that for each $n \in \mathbb{N}$ the diagram
\[
\xymatrix{
A^0/{\a}^n \ar[r]\ar[d] & A^0/{\b}^n \ar[d]\\
A^0/{\a}^n \otimes_{A^0} B^0 \ar[r] & A^0/{\b}^n \otimes_{A^0} B^0
}
\]
is commutative implies that the diagram
\[
\xymatrix{
\Lambda_{\a}(A) \ar[r]^{\Lambda_{\a,\b}}\ar[d]_{\Lambda_{\a}(f)} & \Lambda_{\b}(A)\ar[d]^{\Lambda_{\b}(f)} \\
\Lambda_{\c}(B) \ar[r]_{\Lambda_{\b,\d}} & \Lambda_{\d}(B)
}
\]
of commutative DG-rings is commutative.
\end{fact}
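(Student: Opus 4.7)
The plan is to derive the asserted commutative square as the inverse limit, over $n \in \mathbb{N}$, of a compatible family of commutative squares of DG-rings at each truncation level. First I would observe that base-changing the hypothesized square of $A^0$-module maps along $-\otimes_{A^0} A$, and using the canonical identifications $A^0/\a^n \otimes_{A^0} A = A/\a^n A$ and $A^0/\a^n \otimes_{A^0} B \cong B/\a^n B = B/\c^n B$ (and analogously with $\b$ in place of $\a$ and $\d$ in place of $\c$), produces for each $n \ge 1$ a commutative square
\[
\xymatrix{
A/\a^n A \ar[r]\ar[d] & A/\b^n A \ar[d]\\
B/\c^n B \ar[r] & B/\d^n B
}
\]
whose vertices are commutative DG-rings and whose arrows are morphisms of commutative DG-rings, where the last statement invokes Fact \ref{rem:comp-of-dg}.

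Next I would check that these squares assemble into an inverse system of commutative squares of commutative DG-rings indexed by $n$. Concretely, the transition maps $A/\a^{n+1} A \to A/\a^n A$, $A/\b^{n+1} A \to A/\b^n A$, $B/\c^{n+1} B \to B/\c^n B$ and $B/\d^{n+1} B \to B/\d^n B$ commute both with the horizontal reductions and with the vertical base-change maps induced by $f$, by functoriality of the quotient construction and of $-\otimes_{A^0} A$.

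Finally, I would pass to $\varprojlim_n$. Since an inverse limit of commutative squares is itself commutative, the resulting square commutes. Unwinding the definitions recalled in Facts \ref{rem:comp-of-dg} and \ref{rem:lambdamn}, its vertices are exactly $\Lambda_\a(A)$, $\Lambda_\b(A)$, $\Lambda_\c(B)$, $\Lambda_\d(B)$; its horizontal arrows are the maps $\Lambda_{\a,\b}$ and $\Lambda_{\b,\d}$, each defined as the inverse limit of the levelwise reduction maps; and its vertical arrows are $\Lambda_\a(f)$ and $\Lambda_\b(f)$, each defined as the inverse limit of the levelwise base-change maps. This matches the target square on the nose. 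There is no substantive obstacle here: the entire content of the fact is that the formation of $\Lambda$ at each finite truncation is functorial in the pair (ideal, DG-ring), and that inverse limits of commutative diagrams are commutative.
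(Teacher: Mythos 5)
The paper states this as an unnumbered Fact with no accompanying proof---it is treated as self-evident, the point being simply that an inverse limit of commutative squares is commutative. Your proposal correctly identifies and spells out exactly this: build a commuting square of DG-rings at each level $n$, observe these form an inverse system, and pass to $\varprojlim$ using Fact~\ref{rem:comp-of-dg} to read off $\Lambda_{\a}(A)$, $\Lambda_{\b}(A)$, $\Lambda_{\c}(B)$, $\Lambda_{\d}(B)$ as the vertices, $\Lambda_{\a,\b}$, $\Lambda_{\c,\d}$ as the horizontal limits, and $\Lambda_{\a}(f)$, $\Lambda_{\b}(f)$ as the vertical limits. That is the content, and you have it.

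One small imprecision worth correcting: you say you obtain the level-$n$ square by ``base-changing the hypothesized square along $-\otimes_{A^0} A$,'' but applying $-\otimes_{A^0} A$ to the bottom row gives $(A^0/\a^n \otimes_{A^0} B^0)\otimes_{A^0} A$, which is not $B/\c^n B$. What you actually want is to apply $-\otimes_{A^0} A$ to the top row and $-\otimes_{B^0} B$ (equivalently $-\otimes_{A^0} B$, after the identification $A^0/\a^n\otimes_{A^0} B^0 \cong B^0/\c^n$) to the bottom row, with vertical maps induced by the $A^0$-linear map $f: A \to B$. Since this is not the application of a single functor, the cleanest way to see the level-$n$ square commutes is just bifunctoriality of $\otimes_{A^0}$: all four vertices have the form $A^0/\mathfrak{e}^n \otimes_{A^0} X$ with $\mathfrak{e} \in \{\a,\b\}$ and $X \in \{A,B\}$, the horizontal arrows are $(\text{reduction})\otimes 1_X$, the vertical arrows are $1\otimes f$, and these commute tautologically. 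Your subsequent identifications show you understand this; only the wording of the base-change step needs adjusting. With that fix, the argument is complete and matches what a written-out version of the paper's implicit justification would say.
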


\begin{fact}
For any $(A,\bar{\a}) \in \cdgcont[/\k]$,
let $\opn{Lift}(A,\bar{\a})$ be the set of finitely generated ideals $\a \subseteq A^0$
such that $\a \cdot \mrm{H}^0(A) = \bar{\a}$.
The inclusion relation makes $\opn{Lift}(A,\bar{\a})$ into a partially ordered set. 
Moreover, it is clear that this partially ordered set is a directed set.
If $\mathbf{x}, \mathbf{y} \in \opn{Lift}(A,\bar{\a})$, 
and $\mathbf{y}$ contains $\mathbf{x}$, we write $\mathbf{x} \le \mathbf{y}$.
If $f:(A,\bar{\a}) \to (B,\bar{\b})$ is a morphism in $\cdgcont[/\k]$,
and $\mathbf{x} \in \opn{Lift}(A,\bar{\a})$,
we denote by $f(\mathbf{x})$ the ideal $f(\mathbf{x}) \cdot B^0$.
Hence,
\[
f(\mathbf{x}) \in \opn{Lift}(B,\mrm{H}^0(f)(\bar{\a})\cdot \mrm{H}^0(B)).
\]
\end{fact}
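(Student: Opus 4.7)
The assertion in the final fact breaks into three components: that $\opn{Lift}(A,\bar{\a})$ is a poset under inclusion (immediate from the axioms of a partial order), that this poset is directed, and that for a morphism $f:(A,\bar{\a})\to (B,\bar{\b})$ in $\cdgcont[/\k]$ and any $\mathbf{x}\in\opn{Lift}(A,\bar{\a})$ the pushforward ideal $f(\mathbf{x})\cdot B^0$ belongs to $\opn{Lift}(B,\mrm{H}^0(f)(\bar{\a})\cdot\mrm{H}^0(B))$. My plan is to dispatch both nontrivial parts by direct calculation, the main tool in each case being the compatibility identity $\pi_B\circ f=\mrm{H}^0(f)\circ\pi_A$ for the canonical projections $\pi_A:A^0\to\mrm{H}^0(A)$ and $\pi_B:B^0\to\mrm{H}^0(B)$.

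For directedness, given $\mathbf{x},\mathbf{y}\in\opn{Lift}(A,\bar{\a})$ I would propose the upper bound $\mathbf{z}:=\mathbf{x}+\mathbf{y}\subseteq A^0$. A sum of two finitely generated ideals is again finitely generated, and $\mathbf{z}$ clearly contains both $\mathbf{x}$ and $\mathbf{y}$; since $\pi_A$ is a ring homomorphism,
\[
\mathbf{z}\cdot\mrm{H}^0(A)=\mathbf{x}\cdot\mrm{H}^0(A)+\mathbf{y}\cdot\mrm{H}^0(A)=\bar{\a}+\bar{\a}=\bar{\a},
\]
which shows $\mathbf{z}\in\opn{Lift}(A,\bar{\a})$, as required.

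For the compatibility under $f$, I would fix a finite generating set $x_1,\dots,x_n$ of $\mathbf{x}$, so that $f(\mathbf{x})\cdot B^0$ is the ideal of $B^0$ generated by $f(x_1),\dots,f(x_n)$; in particular it is finitely generated. Applying $\pi_B$ yields
\[
\pi_B(f(x_i))=\mrm{H}^0(f)(\pi_A(x_i)),
\]
and since $\pi_A(x_1),\dots,\pi_A(x_n)$ generate $\bar{\a}$ in $\mrm{H}^0(A)$, the images $\mrm{H}^0(f)(\pi_A(x_i))$ generate $\mrm{H}^0(f)(\bar{\a})\cdot\mrm{H}^0(B)$ in $\mrm{H}^0(B)$. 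Hence $(f(\mathbf{x})\cdot B^0)\cdot\mrm{H}^0(B)=\mrm{H}^0(f)(\bar{\a})\cdot\mrm{H}^0(B)$, establishing the final containment.

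I do not foresee a serious obstacle here: the whole fact is a bookkeeping exercise about ideals in $A^0$ lifting a prescribed finitely generated ideal of $\mrm{H}^0(A)$, and about how such data behaves under pushforward by a morphism of non-positive commutative DG-rings. The only care needed is to keep track of the distinction between an ideal $\mathbf{x}\subseteq A^0$ and its extension $\mathbf{x}\cdot\mrm{H}^0(A)\subseteq\mrm{H}^0(A)$, and this is handled uniformly by the naturality of the projections $\pi_A$ and $\pi_B$.
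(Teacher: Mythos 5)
Your proposal is correct and matches what the paper leaves implicit (the paper records this as a "Fact" with no written proof, calling the directedness "clear"): the upper bound $\mathbf{x}+\mathbf{y}$ for directedness, and the compatibility $\pi_B\circ f^0=\mrm{H}^0(f)\circ\pi_A$ applied to a finite generating set of $\mathbf{x}$ for the pushforward claim, are exactly the intended arguments, and both are carried out correctly.
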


\begin{fact}
Given $(A,\bar{\a}) \in \cdgcont[/\k]$,
we define $\Lambda(A,\bar{\a})$ as follows:
for any $\mathbf{x} \in \opn{Lift}(A,\bar{\a})$,
denote by $\Lambda_{\mathbf{x}}(A)$ the $\mathbf{x}$-adic completion of $A$.
By (\ref{rem:comp-of-dg}), this is a commutative DG-ring.
Given $\mathbf{x} \le \mathbf{y} \in \opn{Lift}(A,\bar{\a})$,
by (\ref{rem:lambdamn}) we have a DG-ring homomorphism $\Lambda_{\mathbf{x},\mathbf{y}}:\Lambda_{\mathbf{x}}(A) \to\Lambda_{\mathbf{y}}(A)$. 
Given $\mathbf{x} \le \mathbf{y} \le \mathbf{z} \in \opn{Lift}(A,\bar{\a})$,
by (\ref{eqn:lambdaABC}) we have that 
$\Lambda_{\mathbf{y},\mathbf{z}} \circ \Lambda_{\mathbf{x},\mathbf{y}} =\Lambda_{\mathbf{x},\mathbf{z}}$.
It follows that 
\[
\{\Lambda_{\mathbf{x}}(A)\}_{\mathbf{x} \in \opn{Lift}(A,\bar{\a})}
\]
is a directed system of commutative DG-rings.
Let us denote its colimit by $\Lambda(A,\bar{\a})$,
and the canonical morphism $\Lambda_{\mathbf{x}}(A) \to \Lambda(A,\bar{\a})$ by $\alpha_{\mathbf{x}}$.
\end{fact}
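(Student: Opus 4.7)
The plan is to verify that $\{\Lambda_{\mathbf{x}}(A)\}_{\mathbf{x}}$ together with the maps $\Lambda_{\mathbf{x},\mathbf{y}}$ assembles into a filtered diagram in $\cdg_{\k}$, and then to take its colimit in that category. Three items need to be checked: that each vertex $\Lambda_{\mathbf{x}}(A)$ is a commutative DG-ring, that each edge $\Lambda_{\mathbf{x},\mathbf{y}}$ is a morphism in $\cdg_{\k}$, and that the cocycle identity $\Lambda_{\mathbf{y},\mathbf{z}}\circ \Lambda_{\mathbf{x},\mathbf{y}} = \Lambda_{\mathbf{x},\mathbf{z}}$ holds whenever $\mathbf{x}\le \mathbf{y}\le \mathbf{z}$. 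These are handed to us by Fact \ref{rem:comp-of-dg}, Fact \ref{rem:lambdamn} and equation (\ref{eqn:lambdaABC}) respectively, so for each of them the argument is a straight citation.

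The one small supplementary observation is that $\opn{Lift}(A,\bar{\a})$ is indeed directed: given $\mathbf{x},\mathbf{y} \in \opn{Lift}(A,\bar{\a})$, the ideal $\mathbf{x} + \mathbf{y}\subseteq A^0$ is finitely generated and satisfies $(\mathbf{x}+\mathbf{y})\cdot \mrm{H}^0(A) = \bar{\a}$, so it dominates both in $\opn{Lift}(A,\bar{\a})$. Having assembled the filtered diagram, I would then define $\Lambda(A,\bar{\a})$ as its colimit in $\cdg_{\k}$. Such colimits exist and are computed degreewise as filtered colimits of abelian groups; the differential, the multiplication, graded commutativity, and non-positivity are all preserved under filtered colimits of abelian groups, and the canonical structure maps $\alpha_{\mathbf{x}}:\Lambda_{\mathbf{x}}(A) \to \Lambda(A,\bar{\a})$ come for free from the universal property.

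There is no serious obstacle here: the statement is essentially a bookkeeping step that records the previously established facts in a unified form. The only mild care needed is to observe that, since the transition maps $\Lambda_{\mathbf{x},\mathbf{y}}$ are honest morphisms of commutative DG-rings rather than of plain graded $\k$-modules, the colimit taken in $\cdg_{\k}$ coincides with the colimit of underlying graded $\k$-modules, and so automatically inherits a commutative DG-ring structure compatible with all of the $\alpha_{\mathbf{x}}$.
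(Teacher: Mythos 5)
Your proposal is correct and matches the paper's approach: the statement is purely a bookkeeping step that assembles the earlier facts (\ref{rem:comp-of-dg}), (\ref{rem:lambdamn}), (\ref{eqn:lambdaABC}) into a directed system and takes its colimit. Your supplementary check that $\opn{Lift}(A,\bar{\a})$ is directed via $\mathbf{x}+\mathbf{y}$ is exactly what the paper means when it earlier declares directedness to be ``clear,'' and your remark on filtered colimits in $\cdg_{\k}$ being computed degreewise is standard and implicit in the paper.
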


\begin{fact}\label{fact:mapFromAtoLA}
For any $(A,\bar{\a}) \in \cdgcont[/\k]$,
and any $\mathbf{x} \in \opn{Lift}(A,\bar{\a})$,
there is a natural map $\tau_{\mathbf{x}}:A \to \Lambda_{\mathbf{x}}(A)$.
If $\mathbf{x} \le \mathbf{y} \in \opn{Lift}(A,\bar{\a})$,
By (\ref{eqn:AtoLamAdiag}),
the diagram
\[
\xymatrix{
A \ar[r]^{\tau_{\mathbf{x}}} \ar[d]_{\tau_{\mathbf{y}}} & \Lambda_{\mathbf{x}}(A)\ar[ld]^{\Lambda_{\mathbf{x},\mathbf{y}}}\\
\Lambda_{\mathbf{y}}(A) &
}
\]
is commutative. 
Hence, there is a unique map of DG-rings $\tau_{\bar{\a}}:A \to \Lambda(A,\bar{\a})$,
such that for any $\mathbf{x} \in \opn{Lift}(A,\bar{\a})$,
the diagram
\[
\xymatrix{
A \ar[r]^{\tau_{\mathbf{x}}} \ar[d]_{\tau_{\bar{\a}}} & \Lambda_{\mathbf{x}}(A)\ar[ld]^{\alpha_{\mathbf{x}}}\\
\Lambda(A,\bar{\a}) &
}
\]
is commutative. 
Using the map $\tau_{\bar{\a}}$, 
we give $\Lambda(A,\bar{\a})$ the structure of an element of $\cdgcont[/\k]$
by declaring its ideal of definition to be the finitely generated ideal
\begin{equation}\label{eqn:idealOfDef}
\mrm{H}^0(\tau_{\bar{\a}})(\bar{\a})  \cdot \mrm{H}^0(\Lambda(A,\bar{\a})) \subseteq \mrm{H}^0(\Lambda(A,\bar{\a})).
\end{equation}
\end{fact}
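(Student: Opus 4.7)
The plan is to construct each map by universal properties of the inverse and filtered limits involved, and to reduce the required compatibilities to commutative diagrams at the level of the finite approximations $A^0/\mathbf{x}^n \otimes_{A^0} A$. First, for each $\mathbf{x} \in \opn{Lift}(A,\bar{\a})$, I would construct the map $\tau_{\mathbf{x}}: A \to \Lambda_{\mathbf{x}}(A)$ as follows. For every $n \geq 1$, the quotient $A^0 \to A^0/\mathbf{x}^n$ induces, by base change along $A^0 \to A$, a morphism of commutative DG-rings $A \to A^0/\mathbf{x}^n \otimes_{A^0} A$ (the target being a commutative DG-ring by Fact (\ref{rem:comp-of-dg})). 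These maps are compatible with the surjections $A^0/\mathbf{x}^{n+1} \to A^0/\mathbf{x}^n$, so the universal property of inverse limits in commutative DG-rings assembles them into a DG-ring homomorphism into $\varprojlim_n A^0/\mathbf{x}^n \otimes_{A^0} A = \Lambda_{\mathbf{x}}(A)$.

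Next, for $\mathbf{x} \leq \mathbf{y}$ in $\opn{Lift}(A,\bar{\a})$, the identity $\tau_{\mathbf{y}} = \Lambda_{\mathbf{x},\mathbf{y}} \circ \tau_{\mathbf{x}}$ reduces to the commutativity of each finite-level triangle $A \to A^0/\mathbf{x}^n \otimes_{A^0} A \to A^0/\mathbf{y}^n \otimes_{A^0} A$, which in turn is diagram (\ref{eqn:AtoLamAdiag}) applied in the ring $A^0$ and tensored with $A$ over $A^0$. With this in hand, and using that $\{\Lambda_{\mathbf{x}}(A)\}_{\mathbf{x}}$ is a directed system in commutative DG-rings with colimit $\Lambda(A,\bar{\a})$ equipped with structure maps $\alpha_{\mathbf{x}}$, I would define $\tau_{\bar{\a}} := \alpha_{\mathbf{x}} \circ \tau_{\mathbf{x}}$ for any chosen $\mathbf{x}$. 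The cocone identity $\alpha_{\mathbf{y}} \circ \Lambda_{\mathbf{x},\mathbf{y}} = \alpha_{\mathbf{x}}$ combined with the compatibility just shown ensures this is independent of $\mathbf{x}$ when $\mathbf{x} \leq \mathbf{y}$; for incomparable $\mathbf{x}$ and $\mathbf{y}$ one reduces to a common upper bound by directedness of $\opn{Lift}(A,\bar{\a})$. The same cocone identity yields both the asserted uniqueness and the commutativity of the colimit triangle.

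Finally, to endow $\Lambda(A,\bar{\a})$ with the structure of an object of $\cdgcont[/\k]$, I would observe that the ideal in (\ref{eqn:idealOfDef}) is generated by the images under $\mrm{H}^0(\tau_{\bar{\a}})$ of any finite generating set of $\bar{\a}$, and is therefore finitely generated. The only place requiring any care is the interplay between the inverse limits defining each $\Lambda_{\mathbf{x}}(A)$ and the filtered colimit defining $\Lambda(A,\bar{\a})$; but since both are formed in the category of commutative DG-rings and all the finite-level compatibilities are already furnished by Facts (\ref{rem:lambdamn}) and (\ref{fact:diagMixAB}), the whole argument is a diagram chase rather than a genuine obstacle.
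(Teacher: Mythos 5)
Your proposal is correct and follows essentially the same line the paper takes (the paper presents this as an unproved Fact, with the argument embedded in the statement itself). You construct $\tau_{\mathbf{x}}$ by assembling the finite-level base-change maps $A \to A^0/\mathbf{x}^n \otimes_{A^0} A$ via the universal property of the inverse limit, reduce the $\mathbf{x}\le\mathbf{y}$ compatibility to (\ref{eqn:AtoLamAdiag}), set $\tau_{\bar{\a}} := \alpha_{\mathbf{x}}\circ\tau_{\mathbf{x}}$, and use directedness to see this is independent of $\mathbf{x}$; the observation that the ideal in (\ref{eqn:idealOfDef}) is finitely generated because it is generated by images of a finite generating set of $\bar{\a}$ is exactly what is needed. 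One small remark on wording: you attribute the uniqueness of $\tau_{\bar{\a}}$ to ``the cocone identity,'' but a filtered colimit's universal property governs maps \emph{out of} the colimit, not into it; the actual reason for uniqueness is simply that commutativity of the triangle for any one $\mathbf{x}$ forces $\tau_{\bar{\a}} = \alpha_{\mathbf{x}}\circ\tau_{\mathbf{x}}$, a tautology once the cocone of maps $\alpha_{\mathbf{x}}\circ\tau_{\mathbf{x}}$ has been shown to be constant. This does not affect the correctness of your argument.
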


\begin{fact}\label{fact:beforeMap}
Given a map $f:(A,\bar{\a}) \to (B,\bar{\b}) \in \cdgcont[/\k]$,
we wish to define a map $\Lambda(f):\Lambda(A,\bar{\a}) \to \Lambda(B,\bar{\b})$.
In order to do this, we first define for any $\mathbf{x} \in \opn{Lift}(A,\bar{\a})$,
a map $\beta^f_{\mathbf{x}}:\Lambda_{\mathbf{x}}(A) \to \Lambda(B,\bar{\b})$ as follows:
denote by $\bar{\c}$ the finitely generated ideal 
\[
\mrm{H}^0(f)(\bar{\a}) \cdot \mrm{H}^0(B) \subseteq \mrm{H}^0(B).
\]
Then it is clear that $f(\mathbf{x}) \in \opn{Lift}(B,\bar{\c})$.
Choose some $\mathbf{y} \in \opn{Lift}(B,\bar{\b})$ such that $f(\mathbf{x}) \le \mathbf{y}$, 
and let $\beta^f_{\mathbf{x}}$ be the composition
\[
\Lambda_{\mathbf{x}}(A) \to \Lambda_{f(\mathbf{x})}(B) \to \Lambda_{\mathbf{y}}(B) \to \Lambda(B,\bar{\b})
\]
Here, the first map is of the form (\ref{eqn:LambdaDirect}),
the second map is of the form (\ref{eqn:LambdaAtoB}),
and the third map is $\alpha_{\mathbf{y}}$.
\end{fact}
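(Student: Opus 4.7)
The only real content requiring verification in this construction is that the composition defining $\beta^f_{\mathbf{x}}$ is independent of the auxiliary choice of $\mathbf{y} \in \opn{Lift}(B,\bar{\b})$ with $f(\mathbf{x}) \le \mathbf{y}$; every other arrow in the composition is canonical once $\mathbf{y}$ is fixed. As a preliminary I would first observe that such a $\mathbf{y}$ always exists: since $\bar{\c} = \mrm{H}^0(f)(\bar{\a}) \cdot \mrm{H}^0(B) \subseteq \bar{\b}$, any finite generating set of the ideal $f(\mathbf{x}) \subseteq B^0$ can be enlarged by adjoining finitely many lifts of generators of $\bar{\b}$ to produce a lift of $\bar{\b}$ containing $f(\mathbf{x})$.

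To verify independence, suppose $\mathbf{y}, \mathbf{y}' \in \opn{Lift}(B,\bar{\b})$ both dominate $f(\mathbf{x})$. By directedness of $\opn{Lift}(B,\bar{\b})$ (concretely, taking $\mathbf{z} := \mathbf{y} + \mathbf{y}'$), pick $\mathbf{z} \in \opn{Lift}(B,\bar{\b})$ with $\mathbf{y},\mathbf{y}' \le \mathbf{z}$; automatically $f(\mathbf{x}) \le \mathbf{z}$. The transitivity relation (\ref{eqn:lambdaABC}) then gives
\[
\Lambda_{\mathbf{y},\mathbf{z}} \circ \Lambda_{f(\mathbf{x}),\mathbf{y}} \;=\; \Lambda_{f(\mathbf{x}),\mathbf{z}} \;=\; \Lambda_{\mathbf{y}',\mathbf{z}} \circ \Lambda_{f(\mathbf{x}),\mathbf{y}'},
\]
while the cocone identities defining $\Lambda(B,\bar{\b})$ as a colimit supply $\alpha_{\mathbf{z}} \circ \Lambda_{\mathbf{y},\mathbf{z}} = \alpha_{\mathbf{y}}$ and $\alpha_{\mathbf{z}} \circ \Lambda_{\mathbf{y}',\mathbf{z}} = \alpha_{\mathbf{y}'}$. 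Combining these yields
\[
\alpha_{\mathbf{y}} \circ \Lambda_{f(\mathbf{x}),\mathbf{y}} \;=\; \alpha_{\mathbf{z}} \circ \Lambda_{f(\mathbf{x}),\mathbf{z}} \;=\; \alpha_{\mathbf{y}'} \circ \Lambda_{f(\mathbf{x}),\mathbf{y}'}.
\]
Precomposing with the canonical map $\varphi : \Lambda_{\mathbf{x}}(A) \to \Lambda_{f(\mathbf{x})}(B)$ supplied by (\ref{eqn:LambdaDirect}), I conclude that both candidate compositions for $\beta^f_{\mathbf{x}}$ agree with $\alpha_{\mathbf{z}} \circ \Lambda_{f(\mathbf{x}),\mathbf{z}} \circ \varphi$, establishing well-definedness.

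I do not anticipate any serious obstacle: the compatibility data have all been prepared in (\ref{rem:lambdamn}), (\ref{fact:diagMixAB}), and (\ref{fact:mapFromAtoLA}), and the whole argument is a routine cocone chase. The one subtle point worth flagging is that the inequality $f(\mathbf{x}) \le \mathbf{y}$ is being used across different lift sets (with $f(\mathbf{x}) \in \opn{Lift}(B,\bar{\c})$ and $\mathbf{y} \in \opn{Lift}(B,\bar{\b})$); this should be interpreted unambiguously as inclusion of finitely generated ideals inside $B^0$. With that reading, the maps $\Lambda_{f(\mathbf{x}),\mathbf{y}}$ and $\Lambda_{f(\mathbf{x}),\mathbf{z}}$ are of the form (\ref{eqn:LambdaAtoB}), and the argument goes through verbatim. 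The same pattern will then immediately yield the compatibility $\beta^f_{\mathbf{x}'} \circ \Lambda_{\mathbf{x},\mathbf{x}'} = \beta^f_{\mathbf{x}}$ for $\mathbf{x} \le \mathbf{x}'$, via (\ref{fact:diagMixAB}), so that the family $\{\beta^f_{\mathbf{x}}\}$ descends to a well-defined map $\Lambda(f) : \Lambda(A,\bar{\a}) \to \Lambda(B,\bar{\b})$.
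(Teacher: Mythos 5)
Your proposal correctly identifies the proof obligation implicit in this construction (independence of the auxiliary lift $\mathbf{y}$), and your argument — take two candidate lifts, find a common upper bound via directedness of $\opn{Lift}(B,\bar{\b})$, apply the transitivity relation (\ref{eqn:lambdaABC}) together with the cocone identities for the colimit defining $\Lambda(B,\bar{\b})$ — is essentially the same as the paper's proof in Lemma \ref{lem:betaInd}. Your preliminary observation about existence of a suitable $\mathbf{y}$, your clarification of how $f(\mathbf{x}) \le \mathbf{y}$ is interpreted across the two lift posets, and your closing remark that the same diagram chase gives compatibility with the transition maps (the content of Lemma \ref{lem:beta-are-comp}) are all sound and consistent with the paper.
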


\begin{lem}\label{lem:betaInd}
The map $\beta^f_{\mathbf{x}}:\Lambda_{\mathbf{x}}(A) \to \Lambda(B,\bar{\b})$ defined above is independent of the chosen $\mathbf{y} \in \opn{Lift}(B,\bar{\b})$.
\end{lem}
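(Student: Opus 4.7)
The plan is to show that any two permissible choices of $\mathbf{y}$ yield the same composite map, by reducing to the case where one lift contains the other and then invoking the universal property of the colimit defining $\Lambda(B,\bar{\b})$.

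First I would unfold the setup. Suppose $\mathbf{y}_1, \mathbf{y}_2 \in \opn{Lift}(B,\bar{\b})$ both satisfy $f(\mathbf{x}) \le \mathbf{y}_i$, and let $\beta^f_{\mathbf{x},i}$ denote the corresponding composite
\[
\Lambda_{\mathbf{x}}(A) \xrightarrow{\Lambda_{\mathbf{x}}(f)} \Lambda_{f(\mathbf{x})}(B) \xrightarrow{\Lambda_{f(\mathbf{x}),\mathbf{y}_i}} \Lambda_{\mathbf{y}_i}(B) \xrightarrow{\alpha_{\mathbf{y}_i}} \Lambda(B,\bar{\b}).
\]
Since $\opn{Lift}(B,\bar{\b})$ is a directed set, I can pick some $\mathbf{y}_3 \in \opn{Lift}(B,\bar{\b})$ with $\mathbf{y}_1,\mathbf{y}_2 \le \mathbf{y}_3$; in particular $f(\mathbf{x}) \le \mathbf{y}_3$. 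By symmetry it suffices to prove that $\beta^f_{\mathbf{x},1} = \beta^f_{\mathbf{x},3}$.

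The key inputs are equation (\ref{eqn:lambdaABC}), which gives the cocycle identity
\[
\Lambda_{f(\mathbf{x}),\mathbf{y}_3} = \Lambda_{\mathbf{y}_1,\mathbf{y}_3} \circ \Lambda_{f(\mathbf{x}),\mathbf{y}_1},
\]
and the fact that $\Lambda(B,\bar{\b})$ was defined as the colimit of $\{\Lambda_{\mathbf{y}}(B)\}_{\mathbf{y} \in \opn{Lift}(B,\bar{\b})}$ with respect to the transition maps $\Lambda_{-,-}$, whence the cocone identity $\alpha_{\mathbf{y}_3} \circ \Lambda_{\mathbf{y}_1,\mathbf{y}_3} = \alpha_{\mathbf{y}_1}$. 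Combining these two,
\[
\beta^f_{\mathbf{x},3} = \alpha_{\mathbf{y}_3} \circ \Lambda_{f(\mathbf{x}),\mathbf{y}_3} \circ \Lambda_{\mathbf{x}}(f) = \alpha_{\mathbf{y}_3} \circ \Lambda_{\mathbf{y}_1,\mathbf{y}_3} \circ \Lambda_{f(\mathbf{x}),\mathbf{y}_1} \circ \Lambda_{\mathbf{x}}(f) = \alpha_{\mathbf{y}_1} \circ \Lambda_{f(\mathbf{x}),\mathbf{y}_1} \circ \Lambda_{\mathbf{x}}(f) = \beta^f_{\mathbf{x},1},
\]
and the analogous computation yields $\beta^f_{\mathbf{x},3} = \beta^f_{\mathbf{x},2}$, so $\beta^f_{\mathbf{x},1} = \beta^f_{\mathbf{x},2}$.

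I do not anticipate any real obstacle: everything reduces to the two compatibility identities above, both of which are formal (one from (\ref{eqn:lambdaABC}), the other from the construction of $\Lambda(B,\bar{\b})$ as a colimit). The only mild subtlety is to make sure that all three of $\mathbf{y}_1$, $\mathbf{y}_2$, $\mathbf{y}_3$ lie in $\opn{Lift}(B,\bar{\b})$ rather than the smaller set $\opn{Lift}(B,\bar{\c})$ where $\bar{\c} = \mrm{H}^0(f)(\bar{\a})\cdot \mrm{H}^0(B)$, so that $\alpha_{\mathbf{y}_i}$ genuinely targets $\Lambda(B,\bar{\b})$; this is built into the hypothesis that $f(\mathbf{x}) \le \mathbf{y}_i$ in $\opn{Lift}(B,\bar{\b})$.
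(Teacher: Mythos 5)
Your proof is correct and follows essentially the same route as the paper: pick a common upper bound of the two candidate lifts in the directed set $\opn{Lift}(B,\bar{\b})$, then use the transitivity identity (\ref{eqn:lambdaABC}) together with the cocone compatibility of the $\alpha_{\mathbf{y}}$ to show each composite agrees with the one factoring through the upper bound. The only notational difference is that the paper names the two original choices $\mathbf{y},\mathbf{z}$ and the upper bound $\mathbf{w}$, whereas you use $\mathbf{y}_1,\mathbf{y}_2,\mathbf{y}_3$; the argument is the same.
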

\begin{proof}
Let $\mathbf{z} \in \opn{Lift}(B,\bar{\b})$ such that $f(\mathbf{x}) \le \mathbf{z}$.
We must show that the composition of
\[
\Lambda_{f(\mathbf{x})}(B) \to \Lambda_{\mathbf{y}}(B) \to \Lambda(B,\bar{\b})
\]
is equal to the composition of 
\[
\Lambda_{f(\mathbf{x})}(B) \to \Lambda_{\mathbf{z}}(B) \to \Lambda(B,\bar{\b})
\]
To see this, choose some $\mathbf{w} \in \opn{Lift}(B,\bar{\b})$ such that $\mathbf{y} \le \mathbf{w}$ and $\mathbf{z} \le \mathbf{w}$.
Then, on the one hand, by (\ref{eqn:lambdaABC}) we have a commutative diagram
\[
\xymatrix{
\Lambda_{f(\mathbf{x})}(B) \ar[r]\ar[d] & \Lambda_{\mathbf{y}}(B) \ar[d]\\
\Lambda_{\mathbf{z}}(B) \ar[r]          & \Lambda_{\mathbf{w}}(B)
}
\]
while on the other hand, by definition of $\Lambda(B,\bar{\b})$, we have a commutative diagram
\[
\xymatrix{
\Lambda_{\mathbf{y}}(B) \ar[r]\ar[rd] & \Lambda_{\mathbf{w}}(B) \ar[d] & \Lambda_{\mathbf{z}}(B) \ar[l]\ar[ld] \\
& \Lambda(B,\bar{\b}) &
}
\]
Hence, both compositions are equal to the composition
\[
\Lambda_{f(\mathbf{x})}(B) \to \Lambda_{\mathbf{w}}(B) \to \Lambda(B,\bar{\b})
\]
so in particular they are equal to each other.
\end{proof}

\begin{lem}\label{lem:beta-are-comp}
Given a map $f:(A,\bar{\a}) \to (B,\bar{\b}) \in \cdgcont[/\k]$,
and given $\mathbf{x} \le \mathbf{y} \in \opn{Lift}(A,\bar{\a})$,
the diagram
\[
\xymatrix{
\Lambda_{\mathbf{x}}(A) \ar[d]_{\beta^f_{\mathbf{x}}} \ar[r]^{\Lambda_{\mathbf{x},\mathbf{y}}} & \Lambda_{\mathbf{y}}(A) \ar[ld]^{\beta^f_{\mathbf{y}}}\\
\Lambda(B,\bar{\b}) &
}
\]
is commutative.
\end{lem}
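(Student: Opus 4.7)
The plan is to exploit the independence result of Lemma \ref{lem:betaInd} in order to compute both $\beta^f_{\mathbf{x}}$ and $\beta^f_{\mathbf{y}}$ via a single, common auxiliary element of $\opn{Lift}(B,\bar{\b})$. Since $\mathbf{x} \le \mathbf{y}$ we have $f(\mathbf{x}) \le f(\mathbf{y})$ in $\opn{Lift}(B, \mrm{H}^0(f)(\bar{\a})\cdot \mrm{H}^0(B))$, so I would pick any $\mathbf{z} \in \opn{Lift}(B,\bar{\b})$ with $f(\mathbf{y}) \le \mathbf{z}$ (which exists because $\opn{Lift}(B,\bar{\b})$ is directed and $f(\mathbf{y})$ is a lift of a finitely generated sub-ideal of $\bar{\b}$). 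By Lemma \ref{lem:betaInd}, we may then compute $\beta^f_{\mathbf{x}}$ as the composition $\Lambda_{\mathbf{x}}(A) \to \Lambda_{f(\mathbf{x})}(B) \to \Lambda_{\mathbf{z}}(B) \xrightarrow{\alpha_{\mathbf{z}}} \Lambda(B,\bar{\b})$, and $\beta^f_{\mathbf{y}}$ as the analogous composition with $\mathbf{x}$ replaced by $\mathbf{y}$.

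The rest is a three-square diagram chase. I would assemble the diagram
\[
\xymatrix{
\Lambda_{\mathbf{x}}(A) \ar[r] \ar[d]_{\Lambda_{\mathbf{x},\mathbf{y}}} & \Lambda_{f(\mathbf{x})}(B) \ar[r]^{\Lambda_{f(\mathbf{x}),\mathbf{z}}} \ar[d]_{\Lambda_{f(\mathbf{x}),f(\mathbf{y})}} & \Lambda_{\mathbf{z}}(B) \ar[d]^{=} \ar[r]^{\alpha_{\mathbf{z}}} & \Lambda(B,\bar{\b}) \ar[d]^{=}\\
\Lambda_{\mathbf{y}}(A) \ar[r] & \Lambda_{f(\mathbf{y})}(B) \ar[r]_{\Lambda_{f(\mathbf{y}),\mathbf{z}}} & \Lambda_{\mathbf{z}}(B) \ar[r]_{\alpha_{\mathbf{z}}} & \Lambda(B,\bar{\b})
}
\]
and verify that each square commutes separately. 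The left square, which mixes the two compatibility results, commutes by Fact \ref{fact:diagMixAB} applied to $f: A \to B$ with the pair $\mathbf{x} \le \mathbf{y}$ in $A^0$ (note that $f(\mathbf{x})$ and $f(\mathbf{y})$ are precisely the extensions of $\mathbf{x}$ and $\mathbf{y}$ to $B^0$). The middle square commutes by the transitivity relation \eqref{eqn:lambdaABC}, since $f(\mathbf{x}) \le f(\mathbf{y}) \le \mathbf{z}$ gives $\Lambda_{f(\mathbf{x}),\mathbf{z}} = \Lambda_{f(\mathbf{y}),\mathbf{z}} \circ \Lambda_{f(\mathbf{x}),f(\mathbf{y})}$. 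The right square commutes trivially.

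Chasing the outer rectangle of this diagram, the upper path is $\alpha_{\mathbf{z}} \circ \Lambda_{f(\mathbf{x}),\mathbf{z}} \circ (\Lambda_{\mathbf{x}}(A) \to \Lambda_{f(\mathbf{x})}(B)) = \beta^f_{\mathbf{x}}$, while the lower path is $\alpha_{\mathbf{z}} \circ \Lambda_{f(\mathbf{y}),\mathbf{z}} \circ (\Lambda_{\mathbf{y}}(A) \to \Lambda_{f(\mathbf{y})}(B)) \circ \Lambda_{\mathbf{x},\mathbf{y}} = \beta^f_{\mathbf{y}} \circ \Lambda_{\mathbf{x},\mathbf{y}}$. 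Hence $\beta^f_{\mathbf{x}} = \beta^f_{\mathbf{y}} \circ \Lambda_{\mathbf{x},\mathbf{y}}$, which is precisely the commutativity of the triangle in the statement.

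There is essentially no obstacle here beyond unwinding the definitions; the only mild subtlety is making sure one chooses the witnessing $\mathbf{z}$ after both $f(\mathbf{x})$ and $f(\mathbf{y})$, and that one is allowed to do so by Lemma \ref{lem:betaInd} when computing $\beta^f_{\mathbf{x}}$ (the original definition only required a witness above $f(\mathbf{x})$, but the lemma frees us to use any larger one). Once the common choice is made, Fact \ref{fact:diagMixAB} and \eqref{eqn:lambdaABC} handle everything.
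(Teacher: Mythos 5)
Your proof is correct and follows essentially the same route as the paper: reduce to a common witness via Lemma \ref{lem:betaInd}, then chase a diagram assembled from Fact \ref{fact:diagMixAB} and the transitivity relation \eqref{eqn:lambdaABC}. The only difference is a minor streamlining: you choose a single $\mathbf{z}$ above $f(\mathbf{y})$ (hence automatically above $f(\mathbf{x})$), whereas the paper first picks $\mathbf{z}$ above $f(\mathbf{x})$ and then a further $\mathbf{w}$ above both $\mathbf{z}$ and $f(\mathbf{y})$; your single choice makes the right-hand column of the diagram collapse to an identity, which is slightly cleaner but not a substantively different argument.
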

\begin{proof}
Choose some $\mathbf{z} \in \opn{Lift}(B,\bar{\b})$ such that $f(\mathbf{x}) \le \mathbf{z}$, and some  $\mathbf{w} \in \opn{Lift}(B,\bar{\b})$ such that $f(\mathbf{y}) \le \mathbf{w}$ and moreover $\mathbf{z} \le \mathbf{w}$. Then it follows from (\ref{eqn:lambdaABC}) and (\ref{fact:diagMixAB}) that there is a commutative diagram
\[
\xymatrixrowsep{1pc}
\xymatrix{
\Lambda_{\mathbf{x}}(A) \ar[r] \ar[dd]_{\Lambda_{\mathbf{x},\mathbf{y}}} & \Lambda_{f(\mathbf{x})}(B) \ar[r] \ar[dd] & 
\Lambda_{\mathbf{z}}(B) \ar[dd] \ar[rd] \\
& & & \Lambda(B,\bar{\b}) \\
\Lambda_{\mathbf{y}}(A) \ar[r]        & \Lambda_{f(\mathbf{y})}(B) \ar[r]        &
\Lambda_{\mathbf{w}}(B) \ar[ur] &
}
\] 
By Lemma \ref{lem:betaInd}, the top horizontal composition is equal to $\beta^f_{\mathbf{x}}$,
while the bottom horizontal composition is equal to $\beta^f_{\mathbf{y}}$. Hence, commutativity of this diagram implies the result.
\end{proof}

\begin{fact}
Given a map $f:(A,\bar{\a}) \to (B,\bar{\b}) \in \cdgcont[/\k]$,
continuing (\ref{fact:beforeMap}), we now define a map $\Lambda(f):\Lambda(A,\bar{\a}) \to \Lambda(B,\bar{\b})$ as follows: for any $\mathbf{x} \in \opn{Lift}(A,\bar{\a})$,
we have a map $\beta^f_{\mathbf{x}}:\Lambda_{\mathbf{x}}(A) \to \Lambda(B,\bar{\b})$.
By Lemma \ref{lem:beta-are-comp}, 
these maps are compatible with the transition maps $\Lambda_{\mathbf{x},\mathbf{y}}$.
Hence, by the universal property of colimits, 
there is a unique map of commutative DG-rings $\Lambda(f):\Lambda(A,\bar{\a}) \to \Lambda(B,\bar{\b})$,
such that for each $\mathbf{x} \in \opn{Lift}(A,\bar{\a})$,
the diagram
\begin{equation}\label{eqn:diag-of-lambda}
\xymatrixcolsep{3pc}
\xymatrixrowsep{3pc}
\xymatrix{
\Lambda_{\mathbf{x}}(A) \ar[r]^{\alpha_{\mathbf{x}}}\ar[d]_{\beta^f_{\mathbf{x}}} & \Lambda(A,\bar{\a}) \ar[ld]^{\Lambda(f)}\\
\Lambda(B,\bar{\b}) &
}
\end{equation}
is commutative. 
\end{fact}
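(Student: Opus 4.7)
The plan is to invoke the universal property of the directed colimit $\Lambda(A,\bar{\a}) = \varinjlim_{\mathbf{x} \in \opn{Lift}(A,\bar{\a})} \Lambda_{\mathbf{x}}(A)$ in the category of commutative DG-rings that was used to define $\Lambda(A,\bar{\a})$ itself.

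First, I would observe that the family $\{\beta^f_{\mathbf{x}} : \Lambda_{\mathbf{x}}(A) \to \Lambda(B,\bar{\b})\}_{\mathbf{x} \in \opn{Lift}(A,\bar{\a})}$ constructed in (\ref{fact:beforeMap}) and shown to be well-defined by Lemma \ref{lem:betaInd} assembles into a cocone over the directed diagram $\{\Lambda_{\mathbf{x}}(A),\ \Lambda_{\mathbf{x},\mathbf{y}}\}$. The required compatibility identities $\beta^f_{\mathbf{y}} \circ \Lambda_{\mathbf{x},\mathbf{y}} = \beta^f_{\mathbf{x}}$ for $\mathbf{x} \le \mathbf{y}$ are precisely the content of Lemma \ref{lem:beta-are-comp}.

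Next, since $\Lambda(A,\bar{\a})$ is by definition the colimit of this directed system in the category of commutative DG-rings, with structure maps $\alpha_{\mathbf{x}}$, its universal property yields a unique DG-ring morphism $\Lambda(f):\Lambda(A,\bar{\a}) \to \Lambda(B,\bar{\b})$ satisfying $\Lambda(f) \circ \alpha_{\mathbf{x}} = \beta^f_{\mathbf{x}}$ for every $\mathbf{x} \in \opn{Lift}(A,\bar{\a})$, which is exactly the commutativity of diagram (\ref{eqn:diag-of-lambda}). Uniqueness is immediate from the same universal property, since the $\alpha_{\mathbf{x}}$ jointly generate $\Lambda(A,\bar{\a})$ as a cocone.

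There is essentially no obstacle once Lemma \ref{lem:beta-are-comp} is in hand; the only point worth noting is that the colimit is taken inside commutative DG-rings rather than merely in graded modules. Filtered colimits in commutative DG-rings do exist and are computed degreewise as filtered colimits of abelian groups, with the multiplication, unit, and differential induced termwise, so this subtlety causes no trouble and the universal property applies as formulated.
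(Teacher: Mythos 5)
Your proposal is correct and matches the paper's own argument: the paper likewise invokes Lemma \ref{lem:beta-are-comp} to show the $\beta^f_{\mathbf{x}}$ form a compatible cocone and then applies the universal property of the directed colimit defining $\Lambda(A,\bar{\a})$ to obtain the unique $\Lambda(f)$ making (\ref{eqn:diag-of-lambda}) commute. Your closing remark about filtered colimits in commutative DG-rings being computed degreewise is a reasonable clarification but not a point the paper dwells on.
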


\begin{prop}\label{prop:tauIsNatural}
Given a map $f:(A,\bar{\a}) \to (B,\bar{\b}) \in \cdgcont[/\k]$,
the diagram
\[
\xymatrixcolsep{3pc}
\xymatrixrowsep{3pc}
\xymatrix{
A \ar[r]^{\tau_{\bar{\a}}} \ar[d]_f & \Lambda(A,\bar{\a}) \ar[d]^{\Lambda(f)}\\
B \ar[r]_{\tau_{\bar{\b}}} & \Lambda(B,\bar{\b})
}
\]
is commutative.
\end{prop}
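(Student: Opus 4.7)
The plan is to pick any lift $\mathbf{x} \in \opn{Lift}(A,\bar\a)$, decompose both sides of the diagram through the level of a single $\Lambda_{\mathbf{x}}(A)$, and then reduce to naturality statements for the ordinary adic completion functor of commutative DG-rings that were already recorded in the preceding facts. In more detail, by the construction of $\tau_{\bar{\a}}$ in Fact \ref{fact:mapFromAtoLA}, we have $\tau_{\bar{\a}} = \alpha_{\mathbf{x}} \circ \tau_{\mathbf{x}}$. Applying $\Lambda(f)$ and using the defining commutative square (\ref{eqn:diag-of-lambda}) of $\Lambda(f)$, we obtain
\[
\Lambda(f)\circ\tau_{\bar\a} \;=\; \Lambda(f)\circ\alpha_{\mathbf{x}}\circ\tau_{\mathbf{x}} \;=\; \beta^f_{\mathbf{x}}\circ\tau_{\mathbf{x}}.
\]

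Next, I would unpack $\beta^f_{\mathbf{x}}$ using its construction in Fact \ref{fact:beforeMap}: choose any $\mathbf{y} \in \opn{Lift}(B,\bar\b)$ with $f(\mathbf{x}) \le \mathbf{y}$, so that $\beta^f_{\mathbf{x}}$ is the composition
\[
\Lambda_{\mathbf{x}}(A) \xrightarrow{\Lambda_{\mathbf{x}}(f)} \Lambda_{f(\mathbf{x})}(B) \xrightarrow{\Lambda_{f(\mathbf{x}),\mathbf{y}}} \Lambda_{\mathbf{y}}(B) \xrightarrow{\alpha_{\mathbf{y}}} \Lambda(B,\bar\b).
\]
Precomposing with $\tau_{\mathbf{x}}:A \to \Lambda_{\mathbf{x}}(A)$, the naturality of adic completion (which is a direct consequence of the level-$n$ commutative squares built from $f$ in Fact \ref{rem:comp-of-dg}) gives
\[
\Lambda_{\mathbf{x}}(f)\circ\tau_{\mathbf{x}} \;=\; \tau_{f(\mathbf{x})}\circ f,
\]
and the triangle (\ref{eqn:AtoLamAdiag}) applied to the inclusion $f(\mathbf{x}) \le \mathbf{y}$ inside $B^0$ gives
\[
\Lambda_{f(\mathbf{x}),\mathbf{y}}\circ\tau_{f(\mathbf{x})} \;=\; \tau_{\mathbf{y}}.
\]

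Combining these two identities yields
\[
\beta^f_{\mathbf{x}}\circ\tau_{\mathbf{x}} \;=\; \alpha_{\mathbf{y}}\circ\Lambda_{f(\mathbf{x}),\mathbf{y}}\circ\Lambda_{\mathbf{x}}(f)\circ\tau_{\mathbf{x}} \;=\; \alpha_{\mathbf{y}}\circ\Lambda_{f(\mathbf{x}),\mathbf{y}}\circ\tau_{f(\mathbf{x})}\circ f \;=\; \alpha_{\mathbf{y}}\circ\tau_{\mathbf{y}}\circ f,
\]
and since $\alpha_{\mathbf{y}}\circ\tau_{\mathbf{y}} = \tau_{\bar{\b}}$ by the defining property of $\tau_{\bar{\b}}$ in Fact \ref{fact:mapFromAtoLA}, the chain ends with $\tau_{\bar{\b}}\circ f$, as required.

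There is no real obstacle here; everything is a diagram chase reducing the statement to the two naturality squares for ordinary adic completion (naturality of $\tau_{\mathbf{x}}$ with respect to the DG-ring map $f$, and compatibility of $\tau$ with the transition maps $\Lambda_{\mathbf{x},\mathbf{y}}$). The slight subtlety is purely bookkeeping: the choice of the intermediate lift $\mathbf{y} \in \opn{Lift}(B,\bar{\b})$ is irrelevant by Lemma \ref{lem:betaInd}, so any convenient choice with $f(\mathbf{x}) \le \mathbf{y}$ does the job.
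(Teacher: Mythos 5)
Your proof is correct and takes essentially the same approach as the paper: both arguments fix a single lift $\mathbf{x} \in \opn{Lift}(A,\bar{\a})$, invoke the naturality square $\Lambda_{\mathbf{x}}(f)\circ\tau_{\mathbf{x}} = \tau_{f(\mathbf{x})}\circ f$ for ordinary adic completion, and then combine it with the defining properties of $\beta^f_{\mathbf{x}}$, $\Lambda(f)$, and the $\tau$'s to close the diagram. Your write-up is merely more explicit in unwinding $\beta^f_{\mathbf{x}}$ as the three-step composite and chasing the equalities algebraically rather than graphically.
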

\begin{proof}
Take some $\mathbf{x} \in \opn{Lift}(A,\bar{\a})$.
The fact that the map from a complex to its adic completion is natural implies that the diagram
\[
\xymatrixcolsep{3pc}
\xymatrixrowsep{3pc}
\xymatrix{
A \ar[r]^{\tau_{\mathbf{x}}}\ar[d]_{f} & \Lambda_{\mathbf{x}}(A) \ar[d]^{\Lambda_{\mathbf{x}}(f)}\\
B \ar[r]_{\tau_{f(\mathbf{x})}} & \Lambda_{f(\mathbf{x})}(B)
}
\]
is commutative. Take some $\mathbf{y} \in \opn{Lift}(B,\bar{\b})$,
such that $f(\mathbf{x}) \le \mathbf{y}$.
By definition $\beta^f_{\mathbf{x}}$ and of $\Lambda(f)$, the rightmost vertical map in the above diagram fits into the commutative diagram
\[
\xymatrixcolsep{4pc}
\xymatrixrowsep{3pc}
\xymatrix{
\Lambda_{\mathbf{x}}(A) \ar[d]_{\Lambda_{\mathbf{x}}(f)} \ar[rr]^{\alpha_{\mathbf{x}}} \ar[rrd]^{\beta^f_{\mathbf{x}}} & & \Lambda(A,\bar{\a}) \ar[d]^{\Lambda(f)}\\
\Lambda_{f(\mathbf{x})}(B) \ar[r] & \Lambda_{\mathbf{y}}(B) \ar[r] & \Lambda(B,\bar{\b})
}
\]
The result now follows from combining these two commutative diagrams.
\end{proof}

\begin{prop}
Given a map $f:(A,\bar{\a}) \to (B,\bar{\b}) \in \cdgcont[/\k]$,
denoting by $\widehat{\bar{\a}}$ (respectively $\widehat{\bar{\b}}$)
the ideal of definition of $\Lambda(A,\bar{\a})$ (resp. $\Lambda(A,\bar{\b})$) defined in (\ref{eqn:idealOfDef}),
the map $\Lambda(f):\Lambda(A,\bar{\a}) \to \Lambda(B,\bar{\b})$ 
satisfies
\[
\mrm{H}^0(\Lambda(f))(\widehat{\bar{\a}}) \subseteq \widehat{\bar{\b}}.
\]
Hence, $\Lambda(f)$ defines a morphism $(\Lambda(A,\bar{\a}),\widehat{\bar{\a}}) \to (\Lambda(B,\bar{\b}),\widehat{\bar{\b}})$ in $\cdgcont[/\k]$.
\end{prop}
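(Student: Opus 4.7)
The plan is to read off the inclusion directly from naturality of the completion map $\tau$, which was already established in Proposition \ref{prop:tauIsNatural}, together with the definition of $\widehat{\bar{\a}}$ and $\widehat{\bar{\b}}$ as the ideals generated in cohomology by the images of the original ideals of definition.

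First I would apply the functor $\mrm{H}^0$ to the commutative square from Proposition \ref{prop:tauIsNatural}, obtaining a commutative square of ordinary commutative rings
\[
\xymatrix{
\mrm{H}^0(A) \ar[r]^-{\mrm{H}^0(\tau_{\bar{\a}})} \ar[d]_{\mrm{H}^0(f)} & \mrm{H}^0(\Lambda(A,\bar{\a})) \ar[d]^{\mrm{H}^0(\Lambda(f))}\\
\mrm{H}^0(B) \ar[r]_-{\mrm{H}^0(\tau_{\bar{\b}})} & \mrm{H}^0(\Lambda(B,\bar{\b}))
}
\]
By the definition (\ref{eqn:idealOfDef}) of the ideals of definition on the completions, $\widehat{\bar{\a}}$ is generated as an ideal in $\mrm{H}^0(\Lambda(A,\bar{\a}))$ by the set $\mrm{H}^0(\tau_{\bar{\a}})(\bar{\a})$, and similarly for $\widehat{\bar{\b}}$.

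Next I would chase a generator. Pick any $a \in \bar{\a}$. Commutativity of the above square gives
\[
\mrm{H}^0(\Lambda(f))\bigl(\mrm{H}^0(\tau_{\bar{\a}})(a)\bigr) = \mrm{H}^0(\tau_{\bar{\b}})\bigl(\mrm{H}^0(f)(a)\bigr).
\]
Since $f$ is a morphism in $\cdgcont[/\k]$, we have $\mrm{H}^0(f)(a) \in \bar{\b}$, and hence the right-hand side lies in $\mrm{H}^0(\tau_{\bar{\b}})(\bar{\b}) \subseteq \widehat{\bar{\b}}$. Therefore $\mrm{H}^0(\Lambda(f))$ sends each generator of $\widehat{\bar{\a}}$ into $\widehat{\bar{\b}}$.

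Finally, since $\mrm{H}^0(\Lambda(f))$ is a ring homomorphism and $\widehat{\bar{\b}}$ is an ideal of $\mrm{H}^0(\Lambda(B,\bar{\b}))$, the image of the ideal generated by $\mrm{H}^0(\tau_{\bar{\a}})(\bar{\a})$ is contained in $\widehat{\bar{\b}}$, yielding $\mrm{H}^0(\Lambda(f))(\widehat{\bar{\a}}) \subseteq \widehat{\bar{\b}}$. This is precisely the condition for $\Lambda(f)$ to constitute a morphism $(\Lambda(A,\bar{\a}),\widehat{\bar{\a}}) \to (\Lambda(B,\bar{\b}),\widehat{\bar{\b}})$ in $\cdgcont[/\k]$. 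There is no real obstacle here, as all the work has been front-loaded into Proposition \ref{prop:tauIsNatural} and the careful definitions in (\ref{fact:mapFromAtoLA}); the proof is essentially a diagram chase on $\mrm{H}^0$ of the naturality square together with the fact that $f$ is continuous for the given ideals.
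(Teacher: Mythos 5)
Your proof is correct and follows exactly the paper's own argument: apply $\mrm{H}^0$ to the naturality square of Proposition \ref{prop:tauIsNatural}, use that $\mrm{H}^0(f)(\bar{\a}) \subseteq \bar{\b}$, and unwind the definition \eqref{eqn:idealOfDef}. You have simply spelled out the generator chase that the paper compresses into ``and this implies the result.''
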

\begin{proof}
Since $f:(A,\bar{\a}) \to (B,\bar{\b})$ is a morphism in $\cdgcont[/\k]$,
by definition we have that $\mrm{H}^0(f)(\bar{\a}) \subseteq \bar{\b}$.
Applying the functor $\mrm{H}^0$ to the commutative diagram of Proposition \ref{prop:tauIsNatural}, we see that the diagram
\[
\xymatrixcolsep{4pc}
\xymatrixrowsep{3pc}
\xymatrix{
\mrm{H}^0(A) \ar[r]^{\mrm{H}^0(\tau_{\bar{\a}})}\ar[d]_{\mrm{H}^0(f)} & \mrm{H}^0(\Lambda(A,\bar{\a})) \ar[d]^{\mrm{H}^0(\Lambda(f))} \\
\mrm{H}^0(B) \ar[r]_{\mrm{H}^0(\tau_{\bar{\b}})}  & \mrm{H}^0(\Lambda(B,\bar{\b}))
}
\]
is commutative, and this implies the result.
\end{proof}

\begin{prop}
Let $f:(A,\bar{\a}) \to (B,\bar{\b})$
and $g:(B,\bar{\b}) \to (C,\bar{\c})$
be two maps in $\cdgcont[/\k]$.
Then one has 
\[
\Lambda(g \circ f) = \Lambda(g) \circ \Lambda(f).
\]
\end{prop}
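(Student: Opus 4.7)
The plan is to exploit the fact that $\Lambda(A,\bar{\a})$ was defined as a colimit, and consequently that any two maps out of it are equal iff they agree after precomposition with each $\alpha_{\mathbf{x}} \colon \Lambda_{\mathbf{x}}(A) \to \Lambda(A,\bar{\a})$. So the first step will be to fix an arbitrary $\mathbf{x} \in \opn{Lift}(A,\bar{\a})$ and reduce the required equality $\Lambda(g\circ f) = \Lambda(g)\circ \Lambda(f)$ to
\[
\Lambda(g\circ f) \circ \alpha_{\mathbf{x}} \;=\; \Lambda(g)\circ \Lambda(f)\circ \alpha_{\mathbf{x}}.
\]
Two applications of the commutative square (\ref{eqn:diag-of-lambda}) identify the left-hand side with $\beta^{g\circ f}_{\mathbf{x}}$ and the right-hand side with $\Lambda(g)\circ \beta^f_{\mathbf{x}}$, so the whole proposition comes down to proving, for every $\mathbf{x}$, that $\beta^{g\circ f}_{\mathbf{x}} = \Lambda(g)\circ \beta^f_{\mathbf{x}}$.

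Next I would expand both sides by making compatible choices of auxiliary lifts. Pick $\mathbf{y}\in \opn{Lift}(B,\bar{\b})$ with $f(\mathbf{x})\le \mathbf{y}$ and $\mathbf{z}\in \opn{Lift}(C,\bar{\c})$ with $g(\mathbf{y})\le \mathbf{z}$; this forces $(g\circ f)(\mathbf{x})\le \mathbf{z}$, so $\mathbf{z}$ is an admissible choice in the definition of $\beta^{g\circ f}_{\mathbf{x}}$, and Lemma \ref{lem:betaInd} guarantees that the value of $\beta^{g\circ f}_{\mathbf{x}}$ is independent of this choice. Using (\ref{eqn:diag-of-lambda}) once more to rewrite $\Lambda(g)\circ \alpha_{\mathbf{y}} = \beta^g_{\mathbf{y}}$, the composite $\Lambda(g)\circ \beta^f_{\mathbf{x}}$ unfolds into the explicit chain
\[
\Lambda_{\mathbf{x}}(A) \to \Lambda_{f(\mathbf{x})}(B) \to \Lambda_{\mathbf{y}}(B) \to \Lambda_{g(\mathbf{y})}(C) \to \Lambda_{\mathbf{z}}(C) \xrightarrow{\alpha_{\mathbf{z}}} \Lambda(C,\bar{\c}),
\]
while $\beta^{g\circ f}_{\mathbf{x}}$, computed via $\mathbf{z}$, is the chain $\Lambda_{\mathbf{x}}(A) \to \Lambda_{(g\circ f)(\mathbf{x})}(C) \to \Lambda_{\mathbf{z}}(C) \xrightarrow{\alpha_{\mathbf{z}}} \Lambda(C,\bar{\c})$.

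To identify the two chains I would run a diagram chase using the compatibility facts already established: (\ref{fact:diagMixAB}) applied to $g\colon B\to C$ and the ideals $f(\mathbf{x})\le \mathbf{y}$ allows me to swap the middle square, replacing the composite $\Lambda_{f(\mathbf{x})}(B) \to \Lambda_{\mathbf{y}}(B) \to \Lambda_{g(\mathbf{y})}(C)$ by $\Lambda_{f(\mathbf{x})}(B) \to \Lambda_{g(f(\mathbf{x}))}(C) \to \Lambda_{g(\mathbf{y})}(C)$; functoriality of ordinary adic completion with respect to the composition $A\to B\to C$ (fact \ref{rem:comp-of-dg}) collapses the first two arrows into the single arrow $\Lambda_{\mathbf{x}}(g\circ f)\colon \Lambda_{\mathbf{x}}(A)\to \Lambda_{(g\circ f)(\mathbf{x})}(C)$; and (\ref{rem:lambdamn}) collapses the remaining two transition maps through $\Lambda_{g(\mathbf{y})}(C)$ into the single transition map $\Lambda_{(g\circ f)(\mathbf{x}),\mathbf{z}}$. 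What is left is precisely $\beta^{g\circ f}_{\mathbf{x}}$.

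The only genuine obstacle is the bookkeeping of lifts: one has to arrange the choices so that both $\beta^f_{\mathbf{x}}$ and $\beta^g_{\mathbf{y}}$ can be expanded using the same terminal element $\mathbf{z}$, and then invoke Lemma \ref{lem:betaInd} to justify using $\mathbf{z}$ also in the computation of $\beta^{g\circ f}_{\mathbf{x}}$. Once this is done the rest is a formal diagram chase in which every step is one of the compatibility facts (\ref{rem:comp-of-dg}), (\ref{rem:lambdamn}), (\ref{fact:diagMixAB}), so no further input is required.
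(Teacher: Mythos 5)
Your proposal is correct and follows essentially the same approach as the paper: reduce to checking equality after precomposition with $\alpha_{\mathbf{x}}$, choose lifts $\mathbf{y},\mathbf{z}$ compatibly, expand both sides into chains through $\Lambda_{\mathbf{y}}(B)$ and $\Lambda_{\mathbf{z}}(C)$, and then use Lemma \ref{lem:betaInd}, functoriality of adic completion, and the compatibility facts (\ref{rem:lambdamn}), (\ref{fact:diagMixAB}) to identify them. The paper's proof runs through exactly this sequence of steps.
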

\begin{proof}
Because the domain of these two maps is $\Lambda(A,\bar{\a})$,
by the universal property of colimits, 
to show they are equal it is enough to show that for any $\mathbf{x} \in \opn{Lift}(A,\bar{\a})$, there is an equality
\[
\Lambda(g) \circ \Lambda(f) \circ \alpha_{\mathbf{x}} = \Lambda(g\circ f)\circ \alpha_{\mathbf{x}}.
\]

Take some $\mathbf{y} \in \opn{Lift}(B,\bar{\b})$ such that $f(\mathbf{x}) \le \mathbf{y}$. Then $g(f(\mathbf{x})) \le g(\mathbf{y})$. Choose some $\mathbf{z} \in \opn{Lift}(C,\bar{\c})$ such that $g(y) \le \mathbf{z}$.

By (\ref{eqn:diag-of-lambda}), the left handside is equal to
$\Lambda(g) \circ \beta^f_{\mathbf{x}}$,
while the right handside is equal to
$\beta^{g \circ f}_{\mathbf{x}}$. 
By the definition of $\beta^f_{\mathbf{x}}$,
the map $\Lambda(g) \circ \beta^f_{\mathbf{x}}$ is equal to the composition
\[
\Lambda_{\mathbf{x}}(A) \to \Lambda_{f(\mathbf{x})}(B) \to \Lambda_{\mathbf{y}}(B) \xrightarrow{\alpha_{\mathbf{y}}} \Lambda(B,\bar{\b}) \xrightarrow{\Lambda(g)} \Lambda(C,\bar{\c})
\]
However, by (\ref{eqn:diag-of-lambda}), $\Lambda(g) \circ \alpha_{\mathbf{y}}$ is equal to $\beta^g_{\mathbf{y}}$, so $\Lambda(g) \circ \beta^f_{\mathbf{x}}$ is also equal to the composition
\begin{equation}\label{eqn:to-show-comp}
\Lambda_{\mathbf{x}}(A) \to \Lambda_{f(\mathbf{x})}(B) \to \Lambda_{\mathbf{y}}(B) \to \Lambda_{g(\mathbf{y})}(C) \to \Lambda_{\mathbf{z}}(C) \to \Lambda(C,\bar{\c})
\end{equation}
Consider the following diagram:
\[ 
\xymatrix{
\Lambda_{\mathbf{x}}(A) \ar[r] \ar[rd] & \Lambda_{g(f(\mathbf{x}))}(C) \ar[r] & \Lambda_{g(\mathbf{y})}(C)   \\
& \Lambda_{f(\mathbf{x})}(B) \ar[u] \ar[r] &  \Lambda_{\mathbf{y}}(B) \ar[u]
}
\]
The left triangle in this diagram is commutative because the $(\mathbf{x})$-adic completion is a functor. The right square in this diagram is commutative by (\ref{fact:diagMixAB}). Thus, this entire diagram is commutative,
which implies that the composition of (\ref{eqn:to-show-comp}) is equal to the composition of
\[
\Lambda_{\mathbf{x}}(A) \to \Lambda_{g(f(\mathbf{x}))}(C) \to \Lambda_{g(\mathbf{y})}(C) \to \Lambda_{\mathbf{z}}(C) \to \Lambda(C,\bar{\c})
\]
By (\ref{eqn:lambdaABC}), this is the same as the composition
\[
\Lambda_{\mathbf{x}}(A) \to \Lambda_{g(f(\mathbf{x}))}(C) \to \Lambda_{\mathbf{z}}(C) \to \Lambda(C,\bar{\c})
\]
which by definition is equal to $\beta^{g\circ f}_{\mathbf{x}}$. This proves the result.
\end{proof}

To summarize the results of this section, we have proved:
\begin{thm}
Let $\k$ be a commutative noetherian ring.
Then the operation
\[
(A,\bar{\a}) \mapsto (\Lambda(A,\bar{\a}),\widehat{\bar{\a}})
\]
defines a functor 
\[
\Lambda: \cdgcont[/\k] \to \cdgcont[/\k],
\]
and the collection of maps $\tau_{\bar{\a}}:A \to \Lambda(A,\bar{\a})$
define a natural transformation
\[
\tau: 1_{\cdgcont[/\k]} \to \Lambda.
\]
If $(A,\a) \in \cdgcont[/\k]$ is an ordinary commutative $\k$-algebra,
then $\Lambda(A,\a)$ is the ordinary $\a$-adic completion of $A$,
$\widehat{\a}$ is its ideal of definition,
and $\tau_{\a}:A\to \Lambda(A,\a)$ is the canonical map from the ring $A$ to its $\a$-adic completion.
\end{thm}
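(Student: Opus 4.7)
The plan is to assemble the theorem from the lemmas and propositions already established in the section. The statement has four independent components: functoriality of $\Lambda$ on objects, functoriality on morphisms (identity and composition), naturality of $\tau$, and the comparison with ordinary completion in the classical case.

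For the object part, given $(A,\bar{\a}) \in \cdgcont[/\k]$, the construction in (\ref{fact:mapFromAtoLA}) produces a commutative DG-ring $\Lambda(A,\bar{\a})$ together with a map $\tau_{\bar{\a}}:A \to \Lambda(A,\bar{\a})$, and (\ref{eqn:idealOfDef}) equips it with a finitely generated ideal $\widehat{\bar{\a}}\subseteq \mrm{H}^0(\Lambda(A,\bar{\a}))$; hence the pair belongs to $\cdgcont[/\k]$. For the morphism part, the proposition preceding Proposition \ref{prop:tauIsNatural} produces the DG-ring map $\Lambda(f)$ via the universal property of the directed colimit over $\opn{Lift}(A,\bar{\a})$, and the following proposition shows that $\mrm{H}^0(\Lambda(f))(\widehat{\bar{\a}}) \subseteq \widehat{\bar{\b}}$, so $\Lambda(f)$ is indeed a morphism in $\cdgcont[/\k]$. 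Composition $\Lambda(g\circ f) = \Lambda(g)\circ\Lambda(f)$ is the content of the last proposition proven before the theorem. What remains for functoriality is $\Lambda(1_{(A,\bar{\a})}) = 1_{\Lambda(A,\bar{\a})}$, which I would verify by observing that for every $\mathbf{x}\in\opn{Lift}(A,\bar{\a})$ one has $\beta^{1_A}_{\mathbf{x}} = \alpha_{\mathbf{x}}$ (taking $\mathbf{y}=\mathbf{x}$ in (\ref{fact:beforeMap}) and using that $\Lambda_{\mathbf{x},\mathbf{x}}$ is the identity), and then invoking the uniqueness clause in the defining diagram (\ref{eqn:diag-of-lambda}).

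Naturality of $\tau$ is exactly Proposition \ref{prop:tauIsNatural}, so nothing new is needed. The final clause, comparing $\Lambda(A,\a)$ with the ordinary $\a$-adic completion when $A = A^0$ is an ordinary $\k$-algebra, is handled as follows. Under this hypothesis the natural map $A^0 \to \mrm{H}^0(A)$ is the identity, so a finitely generated ideal $\mathbf{x}\subseteq A^0$ belongs to $\opn{Lift}(A,\a)$ if and only if $\mathbf{x}=\a$. Therefore the directed system $\{\Lambda_{\mathbf{x}}(A)\}_{\mathbf{x}\in\opn{Lift}(A,\a)}$ has a single element, its colimit is $\Lambda_{\a}(A)$ in the classical sense, $\tau_{\a}$ reduces to the canonical map $A \to \Lambda_{\a}(A)$, and the ideal of definition (\ref{eqn:idealOfDef}) is the image of $\a$ in $\Lambda_{\a}(A)$ times $\Lambda_{\a}(A)$, i.e.\ the usual ideal of definition.

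The proof is essentially bookkeeping, but the one point that warrants care is identity preservation, which does not appear as a stand-alone lemma and must be extracted from the uniqueness in the universal property of (\ref{eqn:diag-of-lambda}); all other ingredients have already been isolated as individual results in the section.
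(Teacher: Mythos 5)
Your assembly is correct and mirrors the paper, which presents this theorem purely as a summary of the preceding propositions without offering a separate proof. The one point you rightly flag as needing its own argument, namely $\Lambda(1_{(A,\bar{\a})}) = 1_{\Lambda(A,\bar{\a})}$, is handled cleanly by your observation that $\beta^{1_A}_{\mathbf{x}} = \alpha_{\mathbf{x}}$ combined with the uniqueness clause in the universal property of (\ref{eqn:diag-of-lambda}).
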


\section{The non-abelian derived functor of adic completion}

In this section we perform the main construction of this paper: a non-abelian derived adic
completion functor. By a non-abelian derived functor we mean a derived functor in the sense of homotopy theory, defined over a category which is not abelian, namely, the category $\cdgcont[/\k]$. We shall freely use the homotopical methods introduced in Section \ref{sec:prel}.

\begin{prop}\label{prop:semi-free-all-wpr}
Let $\k$ be a commutative noetherian ring, 
and let $A = \k[X_i]_{i \in I}$ be a polynomial ring in possibly infinitely many variables over $\k$. Let $\a \subseteq A$ be a finitely generated ideal.
Then $\a$ is weakly proregular.
\end{prop}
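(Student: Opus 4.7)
The strategy is to reduce to the noetherian case via a finite-variable subring, and then to propagate weak proregularity along the flat extension to the full polynomial ring.

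First I would choose a finite sequence $\mathbf{a} = (a_1,\dots,a_n)$ in $A$ that generates $\a$. Since each $a_i$ is a polynomial in the variables $\{X_i\}_{i\in I}$, only finitely many variables appear in the $a_i$'s. Let $J\subseteq I$ be a finite subset indexing all variables that appear in some $a_i$, and set $A_0 := \k[X_j]_{j\in J}$. Then $A_0$ is a noetherian commutative ring (finitely generated over the noetherian ring $\k$), the sequence $\mathbf{a}$ lies in $A_0$, and the inclusion $f:A_0\inj A$ makes $A$ into a free (hence flat) $A_0$-module, because $A = A_0[X_i]_{i\in I\setminus J}$.

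Next, since $A_0$ is noetherian, every finite sequence in $A_0$ is weakly proregular (this is essentially the content of \cite[Corollary 4.26]{PSY1} combined with the fact that in noetherian rings, $\opn{R}\Gamma_\a$ may be computed by the \v{C}ech/infinite Koszul complex). In particular the sequence $\mathbf{a}$ is weakly proregular as a sequence in $A_0$.

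The heart of the argument, and the step that deserves the most care, is to show that weak proregularity is preserved when passing from $A_0$ up to the flat extension $A$. For any injective $A$-module $I$, restriction of scalars along the flat map $f:A_0\to A$ yields an injective $A_0$-module (a standard consequence of Baer's criterion, using that $-\otimes_{A_0}A$ is exact on ideals of $A_0$ and that $\opn{Hom}_{A_0}(-,I) = \opn{Hom}_A(-\otimes_{A_0}A,I)$). By weak proregularity of $\mathbf{a}$ in $A_0$, we have
\[
H^k\bigl(\opn{K}^{\vee}_{\infty}(A_0;\mathbf{a})\otimes_{A_0} I\bigr) = 0 \quad \text{for all } k>0.
\]
On the other hand, by the base change property of the infinite dual Koszul complex,
\[
\opn{K}^{\vee}_{\infty}(A;f(\mathbf{a}))\otimes_A I \;\cong\; \opn{K}^{\vee}_{\infty}(A_0;\mathbf{a})\otimes_{A_0}A\otimes_A I \;\cong\; \opn{K}^{\vee}_{\infty}(A_0;\mathbf{a})\otimes_{A_0} I,
\]
so its positive cohomologies also vanish. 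Hence $f(\mathbf{a})$ is weakly proregular in $A$, and by \cite[Proposition 1.2]{Sc} weak proregularity depends only on the ideal generated, so $\a = f(\mathbf{a})\cdot A$ is weakly proregular, as required.

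The main obstacle is the flat base change step — specifically, the observation that injectivity of modules is preserved under restriction of scalars along a flat ring homomorphism. Everything else is essentially bookkeeping: the finiteness of generators gives the reduction to a noetherian subring, and the compatibility of $\opn{K}^{\vee}_{\infty}$ with base change is immediate from its definition as a tensor product of two-term complexes.
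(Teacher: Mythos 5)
Your proof is correct and follows essentially the same strategy as the paper: reduce to a finite-variable noetherian subring $A_0 = \k[X_j]_{j\in J}$, invoke weak proregularity there, and propagate along the flat inclusion $A_0 \hookrightarrow A$. The only difference is that the paper cites the flat base change result directly (as \cite[Example 3.0(B)]{AJL1}), whereas you unfold the argument — restriction of an injective $A$-module along the flat map is injective over $A_0$, plus base change for $\opn{K}^{\vee}_{\infty}$ — which is a perfectly valid and self-contained way to reach the same conclusion.
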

\begin{proof}
Let $a_1,\dots,a_n$ be a finite sequence of elements in $A$ which generates the ideal $\a$. Each of these elements is a polynomial over $\k$ in finitely many variables.
Hence, there is a finite set of variables $J \subseteq I$,
such that for each $1 \le i \le n$, $a_i \in \k[X_i]_{i \in J}$.
The ring $B = \k[X_i]_{i \in J}$, being a polynomial ring in finitely many variables over the noetherian ring $\k$ is noetherian. 
Hence, the ideal $(a_1,\dots,a_n)\cdot B \subseteq B$ is weakly proregular.
Since the inclusion map $B \to A$ is flat,
we deduce from \cite[Example 3.0(B)]{AJL1} that the ideal $\a$ is also weakly proregular.
\end{proof}

\begin{lem}\label{lem:sfr-is-wpr}
Let $\k$ be a commutative noetherian ring, 
let $(A,\bar{\a}) \in \cdgcont[/\k]$,
and suppose that $A$ is semi-free over $\k$.
Then for any $\mathbf{x} \in \opn{Lift}(A,\bar{\a})$,
given a finite sequence $\mathbf{a}$ of elements of $A^0$ that generates the ideal $\mathbf{x}$,
we have that $(1_A,A,\mathbf{a})$ is a weakly proregular resolution of $(A,\bar{\a})$.
\end{lem}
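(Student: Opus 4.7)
The plan is to verify the three conditions appearing in Definition \ref{def:wpr}(2) directly. Two of them are essentially free: the identity $1_A : A \to A$ is trivially a quasi-isomorphism, and the equality $\pi_A(\mathbf{a}) \cdot \mrm{H}^0(A) = \bar{\a}$ is immediate from the assumption that $\mathbf{a}$ generates $\mathbf{x}$ as an ideal of $A^0$, together with the defining property of $\opn{Lift}(A, \bar{\a})$, namely $\mathbf{x} \cdot \mrm{H}^0(A) = \bar{\a}$. The real content of the lemma therefore lies in showing that $(A, \mathbf{a})$ is a weakly proregular DG-ring in the sense of Definition \ref{def:wpr}(1).

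To this end, I would first unpack what semi-freeness of $A$ over $\k$ gives: an isomorphism $A \cong \k[X]$ of graded commutative rings for some graded set $X$, where non-positivity of $A$ forces $X$ to have no elements of positive degree. This immediately identifies $A^0$ with the ordinary polynomial ring $\k[X^0]$ over the noetherian ring $\k$ in (possibly infinitely many) variables. Moreover, $A$ is free as a graded $A^0$-module on the supercommutative monomials in the negative-degree variables of $X$, so that as a complex of $A^0$-modules $A$ is a bounded-above complex of free $A^0$-modules.

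For the K-flatness of $A$ over $A^0$, I would invoke the standard fact that any bounded-above complex of flat modules is K-flat: for any acyclic $C \in \cat{D}(A^0)$ and any integer $m$, the cohomology $\mrm{H}^m(A \otimes_{A^0} C)$ is computed from $\tau^{\geq -N} A \otimes_{A^0} C$ for $N$ sufficiently large, and the latter vanishes since a bounded complex of flat modules is K-flat. For the weak proregularity of the sequence $\mathbf{a}$ in $A^0$, I would apply Proposition \ref{prop:semi-free-all-wpr}: the finitely generated ideal $\mathbf{x} \subseteq A^0$ is weakly proregular because $A^0$ is a polynomial ring over a noetherian ring, and hence the generating sequence $\mathbf{a}$ is weakly proregular as well.

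The only substantive step is the correct decoding of semi-freeness of the DG-\emph{ring} $A$ into a concrete description of $A^0$ and of $A$ as a complex over $A^0$; once this is in place, both remaining conditions reduce either to a standard truncation argument or to a direct citation of Proposition \ref{prop:semi-free-all-wpr}.
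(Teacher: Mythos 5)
Your proof takes essentially the same route as the paper's: identify $A^0$ with a polynomial ring over $\k$, invoke Proposition~\ref{prop:semi-free-all-wpr} for weak proregularity of $\mathbf{x}$, and verify that $A$ is K-flat over $A^0$; the paper simply asserts the last two facts in a single sentence, while you unwind them.

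The one place your argument needs adjustment is the K-flatness step. Since $C$ is an unbounded acyclic complex, for a fixed $m$ the degree-$m$ term of the total complex $A \otimes_{A^0} C$ is $\bigoplus_{i\le 0} A^i \otimes_{A^0} C^{m-i}$, which receives contributions from \emph{every} $A^i$; so it is not true that $\mrm{H}^m(A\otimes_{A^0} C)$ coincides with $\mrm{H}^m(\tau^{\geq -N}A \otimes_{A^0} C)$ for $N$ sufficiently large, and in any case the smart truncation $\tau^{\geq -N}A$ need not consist of flat modules. The correct (and standard) argument is to write $A$ as the filtered union of the brutal truncations $\sigma^{\geq -N}A$ (these \emph{are} subcomplexes of $A$, since $A$ is non-positive), observe that each is a bounded complex of flat $A^0$-modules and hence K-flat, and then use that $-\otimes_{A^0} C$ and $\mrm{H}^m$ both commute with filtered colimits to get $\mrm{H}^m(A\otimes_{A^0} C) = \varinjlim_N \mrm{H}^m(\sigma^{\geq -N}A\otimes_{A^0} C) = 0$. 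This is a routine repair, and with it your proof is complete and matches the paper's in substance.
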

\begin{proof}
The fact that $A$ is semi-free over $\k$ implies that $A^0$ is a polynomial ring (in possibly infinitely many variables) over $\k$,
and that $A$ is K-flat over $A^0$.
By Proposition \ref{prop:semi-free-all-wpr}, the ideal $\mathbf{x}$ is weakly proregular.
Hence, the pair $(A,\mathbf{a})$ is a weakly proregular DG-ring,
and hence $(1_A,A,\mathbf{a})$ is a weakly proregular resolution of $(A,\bar{\a})$
\end{proof}

\begin{lem}\label{lem:generalmnlemma}
Let $A$ be a commutative DG-ring,
let $\mathbf{a},\mathbf{b}$ be two finite sequences of elements of $A^0$,
and suppose that $(A,\mathbf{a})$ and $(A,\mathbf{b})$ are weakly proregular DG-rings.
Assume that 
\[
\sqrt{\mathbf{a} \cdot \mrm{H}^0(A)} = \sqrt{\mathbf{b} \cdot \mrm{H}^0(A)}.
\]
Then for any $M \in \opn{DGMod}(A)$,
the natural map
\[
M \to \opn{Hom}_{A^0}(\opn{Tel}(A^0;\mathbf{a}),M)
\]
in $\opn{DGMod}(A)$ is a quasi-isomorphism if and only if the natural map
\[
M \to \opn{Hom}_{A^0}(\opn{Tel}(A^0;\mathbf{b}),M)
\]
in $\opn{DGMod}(A)$ is a quasi-isomorphism.
\end{lem}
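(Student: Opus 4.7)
The plan is to identify the natural map appearing in the statement with the unit map $\tau^{P}_{M} : M \to \LL{\a}^{P}(M)$ of the derived completion functor for a suitable weakly proregular resolution $P$, and then appeal to Corollary \ref{cor:sqrt} to conclude.

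First, I would observe that since $(A,\mathbf{a})$ is a weakly proregular DG-ring, the triple $P = (1_{A},A,\mathbf{a})$ is a weakly proregular resolution of $(A,\bar{\a})$, where $\bar{\a} := \mathbf{a}\cdot\mrm{H}^{0}(A)$; likewise $Q = (1_{A},A,\mathbf{b})$ is a weakly proregular resolution of $(A,\bar{\b})$, with $\bar{\b} := \mathbf{b}\cdot\mrm{H}^{0}(A)$. By Proposition \ref{prop:form}, applied with $B = A$ and $f = 1_{A}$, there are canonical isomorphisms of functors
\[
\LL{\a}^{P}(-) \cong \opn{Hom}_{A^{0}}(\opn{Tel}(A^{0};\mathbf{a}),-), \quad \LL{\a}^{Q}(-) \cong \opn{Hom}_{A^{0}}(\opn{Tel}(A^{0};\mathbf{b}),-),
\]
furnished by the $B$-linear maps $\opn{tel}_{\mathbf{a},-}$ and $\opn{tel}_{\mathbf{b},-}$ of (\ref{eqn:lambdaToTelescope}). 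Under these identifications, the natural maps in the statement correspond, respectively, to the unit maps $\tau^{P}_{M}$ and $\tau^{Q}_{M}$, so the condition that the map $M \to \opn{Hom}_{A^{0}}(\opn{Tel}(A^{0};\mathbf{a}),M)$ be a quasi-isomorphism is exactly the condition that $M \in \cat{D}(A)^{P}_{\bar{\a}\opn{-com}}$, and similarly for $\mathbf{b}$.

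Next, by the hypothesis $\sqrt{\bar{\a}} = \sqrt{\bar{\b}}$ and Corollary \ref{cor:sqrt}, there is a natural isomorphism $\LL{\a} \cong \LL{\b}$ of functors $\cat{D}(A) \to \cat{D}(A)$. By Corollary \ref{cor:uniq}(2) combined with the uniqueness of adjoints, this isomorphism, together with the independence of weakly proregular resolution provided by Corollary \ref{cor:uniq}, is compatible with the unit maps of the two adjunctions. Assembling all of this, we obtain a commutative diagram in $\cat{D}(A)$
\[
\xymatrix{
 & \opn{Hom}_{A^{0}}(\opn{Tel}(A^{0};\mathbf{a}),M) \ar[d]^{\cong} \\
M \ar[ru]^-{} \ar[r]_-{} & \opn{Hom}_{A^{0}}(\opn{Tel}(A^{0};\mathbf{b}),M)
}
\]
in which the two slanted/horizontal maps are the natural maps of the statement and the vertical map is an isomorphism. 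Hence one is a quasi-isomorphism if and only if the other is, which is what we wanted.

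The main technical point to be checked is the last compatibility: that the isomorphism of functors produced by Corollary \ref{cor:sqrt} together with Corollary \ref{cor:uniq} intertwines the two unit maps $\tau^{P}_{M}$ and $\tau^{Q}_{M}$, rather than merely identifying the two target objects. This should be read off from the construction of $\LL{\a}$ via Theorem \ref{thm:uniq} and the corresponding uniqueness-of-adjoint statement in Corollary \ref{cor:uniq}(2), since the unit is determined by the adjunction. Once this naturality is in place, no further computation is required.
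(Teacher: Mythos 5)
Your strategy differs from the paper's and has a genuine gap at exactly the step you flag as ``the main technical point to be checked.'' You want to show that the natural isomorphism $\LL{\a}\cong\LL{\b}$ coming from Corollary \ref{cor:sqrt} intertwines the two natural maps $\tau^{P}_{M}$ and $\tau^{Q}_{M}$. This would indeed close the argument, but you cannot cite Corollary \ref{cor:uniq}(2) for it: that corollary concerns two weakly proregular resolutions $P,Q$ of the \emph{same} pair $(A,\bar{\a})$, whereas your $P=(1_A,A,\mathbf{a})$ and $Q=(1_A,A,\mathbf{b})$ are resolutions of \emph{different} pairs $(A,\bar{\a})$ and $(A,\bar{\b})$ --- only their radicals agree. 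Moreover, even within its scope, Corollary \ref{cor:uniq}(2) expresses compatibility with the adjunction isomorphisms $\psi^P,\psi^Q$, not with the natural transformation $\tau$; extracting the compatibility you need would require an additional argument relating $\tau$ to the adjunction unit and showing it is preserved under the isomorphism of Corollary \ref{cor:sqrt}, none of which is spelled out.

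The paper avoids the unit-compatibility question entirely by a different reduction that exploits idempotence. After forgetting down to $\opn{DGMod}(A^0)$, it invokes \cite[Corollary 5.25]{PSY1} to say that the natural map $M\to\opn{Hom}_{A^0}(\opn{Tel}(A^0;\mathbf{a}),M)$ being a quasi-isomorphism is equivalent to $M$ being cohomologically $\a$-adically complete, and then uses the idempotence of $\mrm{L}\Lambda_{\a}$ and $\mrm{L}\Lambda_{\b}$ (\cite[Proposition 7.10]{PSY1}) to replace ``the canonical map $\tau_M$ is an isomorphism'' by ``$M$ is abstractly isomorphic to something in the essential image of $\mrm{L}\Lambda$.'' Since Proposition \ref{prop:form} and Corollary \ref{cor:sqrt} give \emph{some} isomorphism $\opn{Hom}_{A^0}(\opn{Tel}(A^0;\mathbf{a}),M)\cong\opn{Hom}_{A^0}(\opn{Tel}(A^0;\mathbf{b}),M)$ in $\cat{D}(A^0)$, the two essential-image conditions coincide, and no compatibility between the units is ever needed. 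If you want to repair your version, you should either supply an analogue of Corollary \ref{cor:uniq}(2) for ideals with equal radical that explicitly tracks the natural transformations $\tau$, or adopt the idempotence reduction as the paper does, which makes the stronger compatibility claim unnecessary.
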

\begin{proof}
We may apply the forgetful functor $\opn{DGMod}(A) \to \opn{DGMod}(A^0)$,
and it is enough to check this assertion in $\opn{DGMod}(A^0)$.
Letting $\a := \mathbf{a}\cdot A^0$, $\b := \mathbf{b} \cdot A^0$,
by \cite[Corollary 5.25]{PSY1}, the above assertion is equivalent to showing that 
$M$ is cohomologically $\a$-adically complete if and only if $M$ is cohomologically $\b$-adically complete.
Since $\mrm{L}\Lambda_{\a}$ and $\mrm{L}\Lambda_{\b}$ are idempotent (\cite[Proposition 7.10]{PSY1}),
this is equivalent to showing that there is some isomorphism
\[
\opn{Hom}_{A^0}(\opn{Tel}(A^0;\mathbf{a}),M) \cong \opn{Hom}_{A^0}(\opn{Tel}(A^0;\mathbf{b}),M)
\]
in $\cat{D}(A^0)$,
and this follows from Proposition \ref{prop:form} and Corollary \ref{cor:sqrt},
since $(1_A,A,\mathbf{a})$ is a weakly proregular resolution of $(A,\mathbf{a}\cdot \mrm{H}^0(A) )$, and $(1_A,A,\mathbf{b})$ is a weakly proregular resolution of $(A,\mathbf{b}\cdot \mrm{H}^0(A) )$.	
\end{proof}

\begin{lem}\label{lem:xy-qis}
Let $\k$ be a commutative noetherian ring,
let $A$ be a commutative DG-ring which is semi-free over $\k$,
let $\bar{\a}, \bar{\b} \subseteq \mrm{H}^0(A)$ be finitely generated ideals,
and let $\a \in \opn{Lift}(A,\bar{\a})$ and $\b \in \opn{Lift}(A,\bar{\b})$.
Assume that there is an inclusion $\a \subseteq \b$, and that 
there is an equality $\sqrt{\bar{\a}} = \sqrt{\bar{\b}}$.
Then the morphism
\[
\Lambda_{\a}(A) \xrightarrow{\Lambda_{\a,\b}} \Lambda_{\b}(A)
\]
is a quasi-isomorphism.
\end{lem}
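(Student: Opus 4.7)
My plan is to identify each ordinary completion $\Lambda_{\a}(A)$ and $\Lambda_{\b}(A)$ with its derived counterpart $\LL{\a}(A)$, $\LL{\b}(A)$ in $\cat{D}(A)$, observe that the latter two are canonically isomorphic by Corollary \ref{cor:sqrt}, and then force $\Lambda_{\a,\b}$ to coincide with this isomorphism via the universal property of derived completion.

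Pick finite sequences $\mathbf{a}, \mathbf{b}$ in $A^0$ generating $\a$ and $\b$ respectively. Since $A$ is semi-free over the noetherian ring $\k$, the ring $A^0$ is a polynomial $\k$-algebra and $A$ is K-flat over $A^0$. By Lemma \ref{lem:sfr-is-wpr}, $P_\a := (1_A, A, \mathbf{a})$ and $P_\b := (1_A, A, \mathbf{b})$ are weakly proregular resolutions of $(A,\bar{\a})$ and $(A,\bar{\b})$. Combining Proposition \ref{prop:form} with the functorial $A$-linear isomorphism (\ref{eqn:lambdaToTelescope}) applied to the K-flat DG-module $A$, I obtain natural $A$-linear isomorphisms
\[
\Lambda_{\a}(A) \cong \opn{Hom}_{A^0}(\opn{Tel}(A^0;\mathbf{a}), A) \cong \LL{\a}(A),
\]
and similarly for $\b$. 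An unwinding of the construction of $\opn{tel}_{\mathbf{a},A}$, the augmentation $u_{\mathbf{a}}: \opn{Tel}(A^0;\mathbf{a}) \to A^0$, and the natural transformation $\tau^{P_\a}$ shows that under these identifications the canonical adic completion maps $A \to \Lambda_{\a}(A)$ and $A \to \Lambda_{\b}(A)$ correspond to the derived-completion units $\tau^{P_\a}_A$ and $\tau^{P_\b}_A$ respectively.

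By Proposition \ref{prop:com-properties}, $\Lambda_\a(A)$ lies in $\cat{D}(A)_{\bar{\a}-\opn{com}}$ and $\Lambda_\b(A)$ lies in $\cat{D}(A)_{\bar{\b}-\opn{com}}$. Corollary \ref{cor:sqrt} supplies a natural isomorphism $\LL{\a} \cong \LL{\b}$ of functors on $\cat{D}(A)$, and consequently $\cat{D}(A)_{\bar{\a}-\opn{com}} = \cat{D}(A)_{\bar{\b}-\opn{com}}$; in particular each of $\Lambda_\a(A), \Lambda_\b(A)$ lies in this common subcategory. The adjunction of Theorem \ref{thm:uniq}(2), recognised as a Bousfield localisation after Corollary \ref{cor:uniq}, then yields the usual universal property: for any $\bar{\a}$-complete $Y$, every morphism $A \to Y$ in $\cat{D}(A)$ factors uniquely through $\tau^{P_\a}_A$. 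Applied to $Y = \Lambda_\b(A)$ and the morphism $\tau^{P_\b}_A$, this produces a unique $\phi \in \opn{Hom}_{\cat{D}(A)}(\Lambda_\a(A), \Lambda_\b(A))$ with $\phi \circ \tau^{P_\a}_A = \tau^{P_\b}_A$. But (\ref{eqn:AtoLamAdiag}) tells me $\Lambda_{\a,\b}$ satisfies exactly the same relation, whence $\Lambda_{\a,\b} = \phi$ in $\cat{D}(A)$. The symmetric construction produces an inverse $\psi$ to $\phi$ by the same uniqueness, so $\Lambda_{\a,\b}$ is an isomorphism in $\cat{D}(A)$, i.e.\ a quasi-isomorphism.

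I expect the main technical obstacle to be the compatibility claim ending the second paragraph: that the isomorphism $\Lambda_\a(A) \cong \LL{\a}(A)$ obtained from Proposition \ref{prop:form} and (\ref{eqn:lambdaToTelescope}) intertwines the canonical adic completion map with the derived-completion unit $\tau^{P_\a}_A$. This boils down to chasing several natural transformations built from $\opn{Tel}(A^0;\mathbf{a})$ and the augmentation $u_\mathbf{a}$, and is routine but careful bookkeeping; once it is in hand, the rest of the argument is essentially formal.
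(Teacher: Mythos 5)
Your proposal is correct, but it takes a genuinely different route from the paper. The paper's proof is computational: it writes $\mathbf{b}$ as the concatenation of $\mathbf{a}$ with a finite sequence $\mathbf{c}$, reduces the claim to showing that $\opn{Hom}_{A^0}(\opn{Tel}(A^0;\mathbf{a}),A) \to \opn{Hom}_{A^0}(\opn{Tel}(A^0;\mathbf{b}),A)$ is a quasi-isomorphism, and then closes the argument with the $\opn{Hom}$--tensor adjunction for telescope complexes together with Lemma \ref{lem:generalmnlemma}. Your proof instead promotes the statement to the derived category, identifies both ordinary completions with derived completions, invokes Corollary \ref{cor:sqrt}, and pins down $\Lambda_{\a,\b}$ via the universal property of $\LL{\a}$ as a reflection onto complete objects.

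Two remarks on the bookkeeping you defer. First, the compatibility you flag at the end --- that the adic-completion map $A \to \Lambda_{\a}(A)$ corresponds to the unit $\tau^{P_\a}_A$ --- is in fact essentially automatic, and you can avoid the detour through $\opn{Hom}_{A^0}(\opn{Tel}(A^0;\mathbf{a}),A)$: since $A$ is K-projective over itself, one may take $A$ to be its own K-projective resolution in the definition of $\LL{\a}^{P_\a}(A)$, which literally yields $\LL{\a}^{P_\a}(A) = \Lambda_{\a}(A)$, with $\tau^{P_\a}_A: A \to \Lambda_{\a}(A)$ being the adic completion map by construction of the unit. Second, the universal property you use --- that for $\bar{\a}$-complete $Y$, precomposition with $\tau^{P_\a}_A$ gives a bijection $\opn{Hom}_{\cat{D}(A)}(\LL{\a}(A),Y) \cong \opn{Hom}_{\cat{D}(A)}(A,Y)$ --- is not stated explicitly in the paper: the remark after Corollary \ref{cor:uniq} only records the Bousfield localization on the torsion side. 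You should spell out the dual statement, which follows from Proposition \ref{prop:GM}, Proposition \ref{prop:LR} (and its dual $\RG{\a}\LL{\a} \cong \RG{\a}$), and Theorem \ref{thm:uniq}(2), but does require a short diagram chase to see that the resulting bijection is indeed precomposition with $\tau$. With these two points made explicit, your argument is complete; it is more conceptual than the paper's, at the cost of invoking more of the machinery from Section 2.
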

\begin{proof}
Let us choose a finite sequence $\mathbf{a}$ of elements of $A^0$ that generate $\a$,
and then extend it to a finite sequence $\mathbf{b}$ of elements of $A^0$ that generate $\b$. The construction of the natural morphisms
\[
\opn{Hom}_{A^0}(\opn{Tel}(A^0;\mathbf{a}),A) \to \Lambda_{\a}(A), \quad \opn{Hom}_{A^0}(\opn{Tel}(A^0;\mathbf{b}),A) \to \Lambda_{\b}(A)
\]
in \cite[Definition 5.16]{PSY1} shows that the diagram
\[
\xymatrix{
\opn{Hom}_{A^0}(\opn{Tel}(A^0;\mathbf{a}),A) \ar[r]\ar[d]_{\varphi} & \Lambda_{\a}(A) \ar[d]^{\Lambda_{\a,\b}} \\
\opn{Hom}_{A^0}(\opn{Tel}(A^0;\mathbf{b}),A) \ar[r]       & \Lambda_{\b}(A)
}
\]
is commutative. 
Here, the map $\varphi$ is induced from the map
$\opn{Tel}(A^0;\mathbf{a}) \to \opn{Tel}(A^0;\mathbf{b})$,
and that map is in turn induced from the inclusion $\mathbf{a} \subseteq \mathbf{b}$,
and the map $\opn{Tel}(A^0;\mathbf{a}) \to A^0$ from \cite[Equation (5.6)]{PSY1}.
Since by Lemma \ref{lem:sfr-is-wpr}, 
the sequences $\mathbf{a}$ and $\mathbf{b}$ are weakly proregular,
and since $A$ is K-flat over $A^0$, it follows from \cite[Theorem 5.21]{PSY1} that the two horizontal maps in the above diagram are quasi-isomorphisms. 
It is thus enough to show that the map $\varphi$ is a quasi-isomorphism.
Let us denote by $\mathbf{c}$ the finite sequence of elements of $A^0$ such that $\mathbf{b}$ is obtained from concatenating $\mathbf{c}$ at the end of $\mathbf{a}$.
Since $A$ is semi-free over $\k$, $\mathbf{c}$ is also weakly proregular.
Setting $X = \opn{Hom}_{A^0}(\opn{Tel}(A^0;\mathbf{a}),A)$,
we thus need to show that the map $X \to \opn{Hom}_{A^0}(\opn{Tel}(A^0;\mathbf{c}),X)$
is a quasi-isomorphism. This map fits into the commutative diagram
\[
\xymatrix{
X \ar[r] \ar[d] & \opn{Hom}_{A^0}(\opn{Tel}(A^0;\mathbf{c}),X) \ar[ld]\\
\opn{Hom}_{A^0}(\opn{Tel}(A^0;\mathbf{b}),X) &
}
\]
Since $X$ is cohomologically $\bar{\a}$-adically complete,
by Lemma \ref{lem:generalmnlemma}, $X$ is also cohomologically $\bar{\b}$-adically complete, and the the vertical map in this diagram is commutative.
By adjunction,
\[
\opn{Hom}_{A^0}(\opn{Tel}(A^0;\mathbf{c}),X) = \opn{Hom}_{A^0}(\opn{Tel}(A^0;\mathbf{b}),A),
\]
and the latter is cohomologically $\bar{\b}$-adically complete. 
Hence, all maps in the above diagram are quasi-isomorphisms, which proves the claim.
\end{proof}

\begin{lem}\label{lem:lamfisqis}
Let $\k$ be a commutative noetherian ring,
let $f:(A,\bar{\a}) \to (B,\bar{\b})$ be a weak equivalence in $\cdgcont[/\k]$,
let $\a \in \opn{Lift}(A,\bar{\a})$,
and let $\b = f(\mathbf{x})\cdot B^0 \in \opn{Lift}(B,\bar{\b})$.
Let $\mathbf{a}$ be a finite sequence of elements of $A^0$ that generates $\a$,
and let $\mathbf{b} = f(\mathbf{a})$. 
Suppose $(A,\mathbf{a})$ and $(B,\mathbf{b})$ are weakly proregular DG-rings. 
Then the morphism
\[
\Lambda_{\a}(A) \xrightarrow{\Lambda_{\a}(f)} \Lambda_{\b}(B)
\]
is a quasi-isomorphism.
\end{lem}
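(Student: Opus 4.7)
The plan is to reduce the statement to a Hom-statement involving the telescope complex, which is K-projective, and then use the K-projectivity together with the fact that $f\colon A\to B$ is a quasi-isomorphism. The two ingredients that make this work are already in the paper: the quasi-isomorphism (\ref{eqn:lambdaToTelescope}) identifying $\Lambda_{(-)}$ with $\opn{Hom}_{A^0}(\opn{Tel},-)$ on K-flat DG-modules, and the telescope base change formula (\ref{eqn:TelBaseChange}) $\opn{Tel}(A^0;\mathbf{a})\otimes_{A^0}B^0\cong\opn{Tel}(B^0;\mathbf{b})$.

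First I would note that, by assumption, both $(A,\mathbf{a})$ and $(B,\mathbf{b})$ are weakly proregular DG-rings; in particular $A$ is K-flat over $A^0$ and $B$ is K-flat over $B^0$, so the quasi-isomorphism (\ref{eqn:lambdaToTelescope}) applies and gives natural $A$-linear and $B$-linear quasi-isomorphisms
\[
\opn{tel}_{\mathbf{a},A}\colon \opn{Hom}_{A^0}(\opn{Tel}(A^0;\mathbf{a}),A)\xrightarrow{\simeq}\Lambda_{\a}(A),
\qquad
\opn{tel}_{\mathbf{b},B}\colon \opn{Hom}_{B^0}(\opn{Tel}(B^0;\mathbf{b}),B)\xrightarrow{\simeq}\Lambda_{\b}(B).
\]
Using (\ref{eqn:TelBaseChange}) and the Hom-tensor adjunction for the ring map $A^0\to B^0$, I would identify
\[
\opn{Hom}_{B^0}(\opn{Tel}(B^0;\mathbf{b}),B)\;\cong\;\opn{Hom}_{B^0}\bigl(\opn{Tel}(A^0;\mathbf{a})\otimes_{A^0}B^0,\,B\bigr)\;\cong\;\opn{Hom}_{A^0}(\opn{Tel}(A^0;\mathbf{a}),B).
\]

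Next I would assemble the commutative square
\[
\xymatrix{
\opn{Hom}_{A^0}(\opn{Tel}(A^0;\mathbf{a}),A)\ar[r]^-{\opn{tel}_{\mathbf{a},A}}\ar[d]_{\opn{Hom}(1,f)}
 & \Lambda_{\a}(A)\ar[d]^{\Lambda_{\a}(f)} \\
\opn{Hom}_{A^0}(\opn{Tel}(A^0;\mathbf{a}),B)\ar[r]_-{\simeq} & \Lambda_{\b}(B)
}
\]
where the bottom horizontal map is the composition of the adjunction isomorphism above with $\opn{tel}_{\mathbf{b},B}$. Commutativity is a direct consequence of the naturality of $\opn{tel}_{-,-}$ in the module argument, compatibly with the ring homomorphism $f\colon A\to B$; this is the routine step, essentially unwinding the construction in \cite[Definition 5.16]{PSY1}. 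The horizontal maps are quasi-isomorphisms by the two paragraphs above.

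It therefore suffices to prove that the left vertical map $\opn{Hom}_{A^0}(\opn{Tel}(A^0;\mathbf{a}),f)$ is a quasi-isomorphism. This is where the K-projectivity of the telescope complex enters: $\opn{Tel}(A^0;\mathbf{a})$ is a bounded complex of free $A^0$-modules (see \cite[Section 5]{PSY1}), hence K-projective over $A^0$, and $f\colon A\to B$ is a quasi-isomorphism by hypothesis, so $\opn{Hom}_{A^0}(\opn{Tel}(A^0;\mathbf{a}),f)$ is a quasi-isomorphism. The main (in fact only) obstacle I anticipate is verifying the commutativity of the square above at the level of DG-modules, but that follows from the explicit formula for $\opn{tel}_{\mathbf{a},-}$ together with the fact that the natural map $\opn{Tel}(A^0;\mathbf{a})\to\opn{Tel}(B^0;\mathbf{b})$ is obtained by applying $-\otimes_{A^0}B^0$. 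Combining the commutative diagram with this last observation proves the lemma.
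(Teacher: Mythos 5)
Your proposal is correct and follows essentially the same route as the paper's proof: both reduce to the telescope complex via the K-flatness hypothesis and the quasi-isomorphism (\ref{eqn:lambdaToTelescope}), use the base change formula (\ref{eqn:TelBaseChange}) together with Hom-tensor adjunction along $A^0 \to B^0$ to identify the bottom row, and finish by observing that $\opn{Tel}(A^0;\mathbf{a})$ is K-projective so $\opn{Hom}_{A^0}(\opn{Tel}(A^0;\mathbf{a}),f)$ is a quasi-isomorphism. The paper phrases the adjunction step as the equality $\opn{Rest}_{f^0}(\beta)=\alpha$ rather than spelling it out, but the content is the same.
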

\begin{proof}
By \cite[Definition 5.16]{PSY1},
there is a commutative diagram in $\opn{DGMod}(A^0)$:
\[
\xymatrix{
\opn{Hom}_{A^0}(\opn{Tel}(A^0;\mathbf{a}),A) \ar[r]\ar[d] & \Lambda_{\a}(A)\ar[d]^{\Lambda_{\a}(f)}\\
\opn{Hom}_{A^0}(\opn{Tel}(A^0;\mathbf{a}),B) \ar[r]_{\alpha} & \Lambda_{\a}(B) = \Lambda_{\b}(B)
}
\]
Moreover, since $A$ is K-flat over $A^0$, 
by \cite[Corollary 5.23]{PSY1}, the top horizontal map is a quasi-isomorphism.
Since $A \to B$ is a quasi-isomorphism, 
and since $\opn{Tel}(A^0;\mathbf{a})$ is K-projective over $A^0$,
the left vertical map is a quasi-isomorphism.
Similarly, there is a quasi-isomorphism
\[
\beta: \opn{Hom}_{B^0}(\opn{Tel}(B^0;\mathbf{b}),B) \to \Lambda_{\b}(B)
\]
in $\opn{DGMod}(B^0)$.
Letting $\opn{Rest}_{f^0}:\opn{DGMod}(B^0) \to \opn{DGMod}(A^0)$ be the forgetful functor along the map $f^0:A^0 \to B^0$,
the fact that $\opn{Tel}(A^0;\mathbf{a}) \otimes_{A^0} B^0 = \opn{Tel}(B^0;\mathbf{b})$ and the construction of $\beta$ in \cite{PSY1},
implies that $\opn{Rest}_{f^0}(\beta) = \alpha$, 
so that $\alpha$ is also a quasi-isomorphism. 
Hence, $\Lambda_{\a}(f)$ is a quasi-isomorphism.
\end{proof}

The next lemma is the key to deriving the functor $\Lambda$.
\begin{lem}\label{lem:key-der}
Let $\k$ be a commutative noetherian ring,
let $f:(C_1,\bar{\a}) \to (C_2,\bar{\b})$ be a weak equivalence in $\cdgcont[/\k]$,
and suppose that $C_1,C_2$ are semi-free over $\k$.
Then the morphism
\[
\Lambda(f):\Lambda(C_1,\bar{\a}) \to \Lambda(C_2,\bar{\b})
\]
is also a weak equivalence in $\cdgcont[/\k]$,
\end{lem}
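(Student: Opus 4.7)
The goal is to verify the two conditions defining a weak equivalence in $\cdgcont[/\k]$: that $\Lambda(f)$ is a quasi-isomorphism, and that $\mrm{H}^0(\Lambda(f))$ is adic with respect to the ideals of definition from (\ref{eqn:idealOfDef}). The plan is to reduce the first assertion to a single commutative square to which Lemma \ref{lem:lamfisqis} applies, exploiting the fact that under the semi-free hypothesis the colimit defining $\Lambda$ is locally constant up to quasi-isomorphism. First I would show that for each $i \in \{1,2\}$ and each relevant $\mathbf{x}$, the structure map $\alpha_{\mathbf{x}} : \Lambda_{\mathbf{x}}(C_i) \to \Lambda(C_i, \cdot)$ is already a quasi-isomorphism: any two lifts of a fixed ideal have identical images in $\mrm{H}^0(C_i)$, hence identical radicals, so Lemma \ref{lem:xy-qis} applies and every transition map $\Lambda_{\mathbf{x},\mathbf{x}'}$ in the relevant directed system is a quasi-isomorphism. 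Since cohomology commutes with filtered colimits, the maps into the colimit are quasi-isomorphisms as well.

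Next I would fix any $\mathbf{x} \in \opn{Lift}(C_1,\bar{\a})$ and set $\mathbf{y} := f(\mathbf{x}) \cdot C_2^0$. The adicness built into the definition of a weak equivalence gives $\mrm{H}^0(f)(\bar{\a}) \cdot \mrm{H}^0(C_2) = \bar{\b}$, so $\mathbf{y} \in \opn{Lift}(C_2,\bar{\b})$. Picking a finite generating sequence $\mathbf{a}$ for $\mathbf{x}$ and setting $\mathbf{b} := f(\mathbf{a})$, Lemma \ref{lem:sfr-is-wpr} verifies the weak proregularity assumptions of Lemma \ref{lem:lamfisqis} (using semi-freeness of both $C_1$ and $C_2$), and the latter yields that $\Lambda_{\mathbf{x}}(f) : \Lambda_{\mathbf{x}}(C_1) \to \Lambda_{\mathbf{y}}(C_2)$ is a quasi-isomorphism. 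Taking $\mathbf{y}$ itself as the auxiliary upper bound in the construction of $\beta^f_{\mathbf{x}}$ makes the intermediate map $\Lambda_{f(\mathbf{x}),\mathbf{y}}$ the identity, so the defining diagram (\ref{eqn:diag-of-lambda}) of $\Lambda(f)$ produces the commutative square
\[
\xymatrix{
\Lambda_{\mathbf{x}}(C_1) \ar[r]^{\alpha_{\mathbf{x}}} \ar[d]_{\Lambda_{\mathbf{x}}(f)} & \Lambda(C_1,\bar{\a}) \ar[d]^{\Lambda(f)} \\
\Lambda_{\mathbf{y}}(C_2) \ar[r]_{\alpha_{\mathbf{y}}} & \Lambda(C_2,\bar{\b})
}
\]
in which three of the four maps are quasi-isomorphisms; two-out-of-three then delivers the quasi-isomorphism $\Lambda(f)$.

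Finally, the adic condition on $\mrm{H}^0(\Lambda(f))$ follows from applying $\mrm{H}^0$ to the naturality square of Proposition \ref{prop:tauIsNatural}: since by (\ref{eqn:idealOfDef}) the ideal of definition on $\Lambda(C_i,-)$ is generated by the image of $\mrm{H}^0(\tau)$, commutativity of that square together with $\mrm{H}^0(f)(\bar{\a}) \cdot \mrm{H}^0(C_2) = \bar{\b}$ yields the equality $\mrm{H}^0(\Lambda(f))(\widehat{\bar{\a}}) \cdot \mrm{H}^0(\Lambda(C_2,\bar{\b})) = \widehat{\bar{\b}}$. The main obstacle I foresee is purely combinatorial, namely arranging the defining colimit description of $\Lambda(f)$ so that the left vertical map of the square above is literally $\Lambda_{\mathbf{x}}(f)$ on the nose; choosing the tightest possible upper bound $\mathbf{y} = f(\mathbf{x}) \cdot C_2^0$ removes this friction and lets Lemma \ref{lem:lamfisqis} apply directly, at which point all that remains is bookkeeping with the universal property of colimits.
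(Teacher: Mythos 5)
Your proof is correct and follows essentially the same route as the paper: you show the colimit structure maps $\alpha_{\mathbf{x}}$ are quasi-isomorphisms via Lemmas \ref{lem:sfr-is-wpr} and \ref{lem:xy-qis}, then apply Lemma \ref{lem:lamfisqis} and two-out-of-three to a commutative square comparing $\Lambda_{\mathbf{x}}(f)$ with $\Lambda(f)$. Your choice $\mathbf{y} = f(\mathbf{x})\cdot C_2^0$ is a minor streamlining (the paper uses an arbitrary upper bound and applies Lemma \ref{lem:xy-qis} once more to handle the transition map $\Lambda_{f(\mathbf{x}),\mathbf{y}}$), and your explicit check via Proposition \ref{prop:tauIsNatural} that $\mrm{H}^0(\Lambda(f))$ is adic is a welcome completion of a point the paper's proof leaves implicit.
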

\begin{proof}
Given any $\mathbf{x} \le \mathbf{y} \in \opn{Lift}(C_1,\bar{\a})$,
it follows from Lemma \ref{lem:sfr-is-wpr} and Lemma \ref{lem:xy-qis}
that the map $\Lambda_{\mathbf{x},\mathbf{y}}:\Lambda_{\mathbf{x}}(C_1) \to \Lambda_{\mathbf{y}}(C_1)$ is a quasi-isomorphism.
In other words, in the directed system
\[
\{\mrm{H}(\Lambda_{\mathbf{x}}(C_1))\}_{\mathbf{x} \in \opn{Lift}(C_1,\bar{\a})}
\]
all the transition maps are isomorphisms. 
Hence, for each $\mathbf{x} \in \opn{Lift}(C_1,\bar{\a})$,
the map 
\begin{equation}\label{eqn:trans-are-iso}
\mrm{H}(\alpha_{\mathbf{x}}):\mrm{H}(\Lambda_{\mathbf{x}}(C_1)) \to \mrm{H}(\Lambda_{\a}(C_1))
\end{equation}
is an isomorphism. 
In other words, the map $\alpha_{\mathbf{x}}$ is a quasi-isomorphism.
Similarly, for any $\mathbf{x} \le \mathbf{y} \in \opn{Lift}(C_2,\bar{\b})$,
the maps $\Lambda_{\mathbf{x},\mathbf{y}}:\Lambda_{\mathbf{x}}(C_2) \to \Lambda_{\mathbf{y}}(C_2)$ and $\alpha_{\mathbf{y}}:\Lambda_{\mathbf{y}}(C_2) \to \Lambda(C_2,\bar{\b})$ are quasi-isomorphisms.

Choose some $\mathbf{x} \in \opn{Lift}(C_1,\bar{\a})$,
and let $\mathbf{y} \in \opn{Lift}(C_2,\bar{\b})$ be such that $f(\mathbf{x}) \le \mathbf{y}$. 
By definition, the map $\Lambda(f)$ fits into the commutative diagram
\[
\xymatrixcolsep{4pc}
\xymatrixrowsep{3pc}
\xymatrix{
\Lambda_{\mathbf{x}}(C_1) \ar[d]_{\Lambda_{\mathbf{x}}(f)} \ar[rr]^{\alpha_{\mathbf{x}}}  & & \Lambda(C_1,\bar{\a}) \ar[d]^{\Lambda(f)}\\
\Lambda_{f(\mathbf{x})}(C_2) \ar[r]_{\Lambda_{f(\mathbf{x}),\mathbf{y}}} & \Lambda_{\mathbf{y}}(C_2) \ar[r]_{\alpha_{\mathbf{y}}} & \Lambda(C_2,\bar{\b})
}
\]
We have seen that $\alpha_{\mathbf{x}}$ and $\alpha_{\mathbf{y}}$ are quasi-isomorphism.
The map $\Lambda_{\mathbf{x}}(f)$ is a quasi-isomorphism by Lemma \ref{lem:lamfisqis},
and the map $\Lambda_{f(\mathbf{x}),\mathbf{y}}$ is a quasi-isomorphism by Lemma \ref{lem:xy-qis}.
Hence, the map $\Lambda(f)$ is also a quasi-isomorphism.
\end{proof}

\begin{thm}\label{thm:main}
Let $\k$ be a commutative noetherian ring,
and let $Q_{\k}:\cdgcont[/\k] \to \ho(\cdgcont[/\k])$ be the localization functor.
There is a functor
\[
\mrm{L}\Lambda:\ho(\cdgcont[/\k]) \to \ho(\cdgcont[/\k]),
\]
a natural transformation
\[
\Tau:1_{\ho(\cdgcont[/\k])} \to \mrm{L}\Lambda,
\]
and a natural transformation
\[
\eta: \mrm{L}\Lambda\circ Q_{\k} \to Q_{\k} \circ \Lambda
\]
which satisfy the following properties:
\begin{enumerate}
\item For any $(A,\bar{\a}) \in \cdgcont[/\k]$,
and any weakly proregular resolution 
\[
P = (g,P_A,\mathbf{a})
\] 
of $(A,\bar{\a})$, there is an isomorphism $\varphi_P: \Lambda_{\mathbf{a}}(P) \to \mrm{L}\Lambda(A,\bar{\a})$ in $\ho(\cdgcont[/\k])$ making the diagram
\[
\xymatrix{
P_A \ar[r]^g \ar[d]_{\tau_{\mathbf{a}}} & A\ar[d]^{\Tau_A}\\
\Lambda_{\mathbf{a}}(P_A) \ar[r]_{\varphi_P} & \mrm{L}\Lambda(A,\bar{\a})
}
\]
in $\ho(\cdgcont[/\k])$ commutative.

\item Given a morphism $f:(A,\bar{\a}) \to (B,\bar{\b})$ in $\cdgcont[/\k]$,
and weakly proregular resolutions $P = (g,P_A,\mathbf{a})$, $P' = (h,P_B,\mathbf{b})$ of $(A,\bar{\a})$ and $(B,\bar{\b})$ respectively,
for any map $P_f:P_A \to P_B$ in $\cdg_{\k}$ such that $P_f(\mathbf{a}) \subseteq \mathbf{b}$, and such that the diagram
\[
\xymatrix{
P_A \ar[r]^{P_f}\ar[d]_{g} & P_B \ar[d]^{h}\\
A \ar[r]_f & B
}
\]
is commutative, 
the diagram 
\begin{equation}\label{eqn:P2}
\xymatrixcolsep{4pc}
\xymatrix{
\Lambda_{\mathbf{a}}(P_A) \ar[r]^{\Lambda(P_f)} \ar[d]_{\varphi_P} & \Lambda_{\mathbf{b}}(P_B)\ar[d]^{\varphi_{P'}}\\
\mrm{L}\Lambda(A,\bar{\a}) \ar[r]_{\mrm{L}\Lambda(f)} & \mrm{L}\Lambda(B,\bar{\b})
}
\end{equation}
in $\ho(\cdgcont[/\k])$ is also commutative.
\end{enumerate}

\end{thm}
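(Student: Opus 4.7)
The plan is to define $\mrm{L}\Lambda$ by applying the classical completion functor $\Lambda$ from the previous section to the functorial semi-free resolution supplied by \eqref{eqn:liftedSF}. For each $(A,\bar{\a}) \in \cdgcont[/\k]$, write $(R_A,\bar{\a}_{R_A}) = \opn{SF}_{\k}(A,\bar{\a})$ and set
\[
\mrm{L}\Lambda(A,\bar{\a}) := Q_{\k}\bigl(\Lambda(R_A,\bar{\a}_{R_A})\bigr).
\]
Since $R_A$ is semi-free over $\k$, Lemma \ref{lem:sfr-is-wpr} makes $(1_{R_A},R_A,\mathbf{a})$ a weakly proregular resolution of $(A,\bar{\a})$ for any finite sequence $\mathbf{a}$ in $R_A^0$ lifting a generating set of $\bar{\a}_{R_A}$, and the proof of Lemma \ref{lem:key-der} (see \eqref{eqn:trans-are-iso}) shows that the canonical map $\Lambda_{\mathbf{a}}(R_A) \to \Lambda(R_A,\bar{\a}_{R_A})$ is a quasi-isomorphism, so the two models may be used interchangeably. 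The transformation $\Tau_A$ is defined as $Q_{\k}(\tau_{\bar{\a}_{R_A}})$ precomposed with the inverse of $Q_{\k}(R_A \xrightarrow{\sim} A)$, and $\eta_{(A,\bar{\a})}$ is the image in $\ho(\cdgcont[/\k])$ of the natural map $\Lambda(R_A,\bar{\a}_{R_A}) \to \Lambda(A,\bar{\a})$ obtained by applying $\Lambda$ to the resolution $R_A \to A$.

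To extend to morphisms, given $f:(A,\bar{\a}) \to (B,\bar{\b})$ in $\cdgcont[/\k]$, apply Corollary \ref{cor:lift-hom}(2) to lift $f$ to a $\k$-linear DG-ring map $R_f: R_A \to R_B$, unique up to left homotopy over $\k$. Since $\mrm{H}^0(R_A) \iso \mrm{H}^0(A)$ and $\mrm{H}^0(R_B) \iso \mrm{H}^0(B)$ identify $\bar{\a}_{R_A}$ with $\bar{\a}$ and $\bar{\b}_{R_B}$ with $\bar{\b}$, the lift $R_f$ is automatically a morphism in $\cdgcont[/\k]$, and we set $\mrm{L}\Lambda(f) := Q_{\k}(\Lambda(R_f))$. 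Well-definedness is the main technical hurdle at this step: if $R_f, R_f'$ are two such lifts, Proposition \ref{prop:lift-homot}(2) constructs a left homotopy through a factorization $R_A \otimes_{\k} R_A \xrightarrow{\alpha} C \xrightarrow{\beta} R_A$ with $C$ semi-free over $\k$ and $\beta$ a quasi-isomorphism; equipping $C$ with a compatible ideal of definition turns this into a diagram in $\cdgcont[/\k]$, and applying Lemma \ref{lem:key-der} to $\beta$ together with the argument of Proposition \ref{prop:homot-become-equal} yields the equality $Q_{\k}(\Lambda(R_f)) = Q_{\k}(\Lambda(R_f'))$. Functoriality $\mrm{L}\Lambda(g\circ f) = \mrm{L}\Lambda(g)\circ\mrm{L}\Lambda(f)$ and naturality of $\Tau$ follow immediately, since $R_g \circ R_f$ is itself a valid lift of $g \circ f$.

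The heart of the theorem is property (1), stated for an \emph{arbitrary} weakly proregular resolution $P = (g,P_A,\mathbf{a})$ of $(A,\bar{\a})$ in which $P_A$ need only be K-flat over $P_A^0$. The strategy is to interpose a semi-free resolution: factor $\k \to P_A$ as $\k \to \widetilde{P} \xrightarrow{\sim} P_A$ and, by the second half of Proposition \ref{prop:res-exists}, lift the sequence $\mathbf{a}$ to a sequence $\widetilde{\mathbf{a}}$ in $\widetilde{P}^0$. Lemma \ref{lem:lamfisqis} then provides a quasi-isomorphism $\Lambda_{\widetilde{\mathbf{a}}}(\widetilde{P}) \xrightarrow{\sim} \Lambda_{\mathbf{a}}(P_A)$, while on the other end $\widetilde{P}$ and $R_A$ are both semi-free $\k$-resolutions of $A$, so Corollary \ref{cor:lift-hom}(1) supplies a $\k$-linear quasi-isomorphism connecting them, and Lemma \ref{lem:key-der} (refined via \eqref{eqn:trans-are-iso} and Lemma \ref{lem:xy-qis} so as to accommodate specific lifts of the ideal) shows that this induces a quasi-isomorphism $\Lambda_{\widetilde{\mathbf{a}}}(\widetilde{P}) \simeq \mrm{L}\Lambda(A,\bar{\a})$. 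Composing the two halves yields $\varphi_P$, with compatibility against $\Tau_A$ inherited from the naturality of the classical $\tau$ established in Proposition \ref{prop:tauIsNatural}. Property (2) is then a direct application of the well-definedness argument above to the composite zigzag. The main obstacle I anticipate is the bookkeeping required to keep the various lifts of $\bar{\a}$ and $\bar{\b}$ compatible along each zigzag -- one repeatedly enlarges sequences or passes to radicals via Lemma \ref{lem:xy-qis} -- and this is precisely where the strong form of Proposition \ref{prop:res-exists}, which lets us prescribe how a generating sequence lifts through any semi-free resolution, becomes indispensable.
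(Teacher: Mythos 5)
Your proposal is correct and follows essentially the same strategy as the paper: define $\mrm{L}\Lambda$ by applying $\Lambda$ to the semi-free replacement functor $\opn{SF}_{\k}$, use Lemma \ref{lem:key-der} together with \eqref{eqn:trans-are-iso} to see that weak equivalences are preserved and that a single lift $\Lambda_{\mathbf{a}'}$ computes the colimit, and prove property (1) for a general weakly proregular resolution $P$ by interposing a semi-free resolution of $P_A$ and invoking Lemmas \ref{lem:lamfisqis} and \ref{lem:xy-qis}. The only presentational difference is that you make explicit the left-homotopy verification that $Q_{\k}(\Lambda(R_f))$ is independent of the chosen lift $R_f$ (a point the paper leaves implicit inside the construction of the functor $\opn{SF}_{\k}$), and you cite Proposition \ref{prop:res-exists} for what is really just the surjectivity $\widetilde{P}^0 \twoheadrightarrow P_A^0$ of a semi-free resolution in degree $0$; both are harmless.
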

\begin{proof}
Given $(A,\bar{\a}) \in \cdgcont[/\k]$,
define
\[
\mrm{L}\Lambda(A,\bar{\a}) := \Lambda(\opn{SF}_{\k}(A,\bar{\a})).
\]
Here $\opn{SF}_{\k}$ is the functor from (\ref{eqn:liftedSF}).
Given a morphism $f:(A,\bar{\a}) \to (B,\bar{\b})$ in $\cdgcont[/\k]$,
let $\mrm{L}\Lambda(f) := \Lambda(\opn{SF}_{\k}(f))$. 
These definitions give rise to a functor $\mrm{L}\Lambda:\cdgcont[/\k] \to \ho(\cdgcont[/\k])$. 
If $f:(A,\bar{\a}) \to (B,\bar{\b})$ is a weak equivalence, 
it follows from Lemma \ref{lem:key-der} that $\mrm{L}\Lambda(f)$ is also a weak equivalence. Hence, there is an induced functor 
\[
\ho(\cdgcont[/\k]) \to \ho(\cdgcont[/\k]),
\]
which we also denote by $\mrm{L}\Lambda$.

Applying the functor $\Lambda$ to the natural transformation $\opn{SF}_{\k} \to Q_{\k}$ induces the natural transformation
$\eta: \mrm{L}\Lambda\circ Q_{\k} \to Q_{\k} \circ \Lambda$.

To define $\Tau$, note first that since $\opn{SF}_{\k}$ preserves weak equivalences,
it can be lifted to a functor $\opn{SF}_{\k} : \ho(\cdgcont[/\k]) \to \ho(\cdgcont[/\k])$,
and a natural isomorphism $\iota: \opn{SF}_{\k} \to 1_{\ho(\cdgcont[/\k])}$.
The arrow $\Tau_{(A,\bar{\a})}:A \to \mrm{L}\Lambda(A,\bar{\a})$ is now the unique dotted map making the diagram
\[
\xymatrix{
\opn{SF}(A,\bar{\a}) \ar[r]_{\iota_{(A,\bar{\a})}}^{\cong} \ar[d]_{\tau} & (A,\bar{\a}) \ar@{-->}[ld]\\
\mrm{L}\Lambda(A,\bar{\a}) = \Lambda(\opn{SF}(A,\bar{\a})) &
}
\]
in $\ho(\cdgcont[/\k])$ commutative. 
Since $\iota^{-1}$ and $\tau$ are natural morphisms, 
we deduce that $\Tau:1_{\ho(\cdgcont[/\k])} \to \mrm{L}\Lambda$ is also a natural morphism.
\noindent
It remains to verify that $\mrm{L}\Lambda$ and $\Tau$ satisfy the properties (1) and (2) above:
\begin{enumerate}[wide, labelwidth=!, labelindent=0pt]
\item Given $(A,\bar{\a}) \in \cdgcont[/\k]$,
and given a weakly proregular resolution $P = (g,P_A,\mathbf{a})$
of $(A,\bar{\a})$, the semi-free resolution functor induces a commutative diagram in $\ho(\cdgcont[/\k])$:
\[
\xymatrixcolsep{4pc}
\xymatrix{
(P_A,\bar{\a}) \ar[r]^g & (A,\bar{\a})\\
\opn{SF}_{\k}(P_A,\bar{\a}) \ar[u]\ar[r]_{\opn{SF}_{\k}(g)} & \opn{SF}_{\k}(A,\bar{\a}) \ar[u]
}
\]
in which all maps are isomorphisms,
and the vertical maps are surjective. 
In particular, since the map 
\begin{equation}\label{eqn:sf-map}
{SF}_{\k}(P_A)^0 \to P_A^0
\end{equation}
is surjective, we can choose a finite sequence of elements $\mathbf{a}'$ of ${SF}_{\k}(P_A)^0$, such that its image under the map (\ref{eqn:sf-map}) is equal to $\mathbf{a} \subseteq P_A^0$. As $(\opn{SF}_{\k}(P_A),\mathbf{a}')$ and $(P_A,\mathbf{a})$ are both weakly proregular DG-rings and the map $\opn{SF}_{\k}(P_A) \to P_A$ is a quasi-isomorphism,
it follows from Lemma \ref{lem:lamfisqis} that the map
$\Lambda_{\mathbf{a}'}(\opn{SF}_{\k}(P_A)) \to \Lambda_{\mathbf{a}}(P_A)$ is a quasi-isomorphism. 
Let $\mathbf{x}$ be the ideal in ${SF}_{\k}(P_A)^0$ generated by $\mathbf{a}'$.
Then $\mathbf{x} \in \opn{Lift}({SF}_{\k}(P_A),\bar{\a})$.
By definition,
\[
\Lambda_{\mathbf{a}'}(\opn{SF}_{\k}(P_A)) = \Lambda_{\mathbf{x}}(\opn{SF}_{\k}(P_A)).
\]
It follows from (\ref{eqn:trans-are-iso}) that the canonical map
\[
\Lambda_{\mathbf{x}}(\opn{SF}_{\k}(P_A)) \to \Lambda(\opn{SF}_{\k}(P_A),\bar{\a}) = \mrm{L}\Lambda(P_A,\bar{\a})
\]
is an isomorphism. We thus have isomorphisms
\[
\Lambda_{\mathbf{a}}(P_A) \leftarrow \Lambda_{\mathbf{x}}(\opn{SF}_{\k}(P_A)) \to \mrm{L}\Lambda(P_A,\bar{\a}) \xrightarrow{\mrm{L}\Lambda(g)} \mrm{L}\Lambda(A,\bar{\a})
\]
So we obtain an isomorphism
$\varphi_P: \Lambda_{\mathbf{a}}(P) \to \mrm{L}\Lambda(A,\bar{\a})$ in $\ho(\cdgcont[/\k])$.
Since $\tau$ and $\Tau$ are natural transformations,
it is clear that the diagram
\[
\xymatrix{
P_A \ar[r]^g \ar[d]_{\tau_{\mathbf{a}}} & A\ar[d]^{\Tau_A}\\
\Lambda_{\mathbf{a}}(P_A) \ar[r]_{\varphi_P} & \mrm{L}\Lambda(A,\bar{\a})
}
\]
commutes.
\item Consider a morphism $f:(A,\bar{\a}) \to (B,\bar{\b})$ in $\cdgcont[/\k]$,
and weakly proregular resolutions $P = (g,P_A,\mathbf{a})$, $P' = (h,P_B,\mathbf{b})$ of $(A,\bar{\a})$ and $(B,\bar{\b})$ respectively as in (2) in the statement of the theorem.
Let $\mathbf{x}$ be an ideal in $\opn{SF}_{\k}(P_A)^0$ and $\mathbf{a}'$ a sequence of elements of $\opn{SF}_{\k}(P_A)^0$ that generates it, as in the proof of (1).
We know that there is a map $\opn{SF}_{\k}(P_f)$ making the diagram
\begin{equation}\label{eqn:PFuncDiag}
\xymatrixcolsep{4pc}
\xymatrix{
P_A \ar[r]^{P_f} & P_B\\
\opn{SF}_{\k}(P_A) \ar[u] \ar[r]_{\opn{SF}_{\k}(P_f)} & \opn{SF}_{\k}(P_B) \ar[u]
}
\end{equation}
A-priori, the map $\opn{SF}_{\k}(P_f)$ is only defined in the homotopy category $\ho(\cdgcont[/\k])$. However, the construction of this map shows that it is (the image under the localization map of ) an honest map of commutative DG-rings.
In particular, it makes sense to look at the sequence $\mathbf{b}' = \opn{SF}_{\k}(P_f)(\mathbf{a}')$, and the ideal $\mathbf{y}$ in $\opn{SF}_{\k}(P_B)^0$ generated by it.
Commutativity of (\ref{eqn:PFuncDiag}) implies that $h(\mathbf{b}') = \mathbf{b}$.

Functoriality of $\Lambda$ now implies that there is a commutative diagram 
\[
\xymatrixcolsep{4pc}
\xymatrix{
\Lambda_{\mathbf{a}}(P_A) \ar[d]_{\Lambda(P_f)} & \Lambda_{\mathbf{x}}(\opn{SF}_{\k}(P_A)) \ar[l]\ar[r] \ar[d]^{\Lambda(\opn{SF}_{\k}(P_f))} & \mrm{L}\Lambda(P_A,\bar{\a}) \ar[r]^{\mrm{L}\Lambda(g)} \ar[d]_{\mrm{L}\Lambda(P_f)} & \mrm{L}\Lambda(A,\bar{\a}) \ar[d]^{\mrm{L}\Lambda(f)} \\
\Lambda_{\mathbf{b}}(P_B) & \Lambda_{\mathbf{y}}(\opn{SF}_{\k}(P_B)) \ar[l]\ar[r] & \mrm{L}\Lambda(P_B,\bar{\b}) \ar[r]^{\mrm{L}\Lambda(h)} & \mrm{L}\Lambda(B,\bar{\b})
}
\]
in which all horizontal maps are isomorphisms.
Since the horizontal compositions are by definition $\varphi_P$ and $\varphi_{P'}$,
this establishes the commutativity of (\ref{eqn:P2}).
\end{enumerate}
\end{proof}

\begin{cor}
Let $A$ be a commutative ring,
and let $\a\subseteq A$ be a weakly proregular ideal.
\begin{enumerate}
\item There is a commutative diagram
\[
\xymatrix{
& A\ar[ld]_{\tau^A_{\a}}\ar[rd]^{\Tau_{(A,\a)}} &\\
\Lambda_{\a}(A) \ar[rr] & & \mrm{L}\Lambda(A,\a)
}
\]
in $\ho(\cdg)$, in which the horizontal map is an isomorphism.
\item If $B$ is another commutative ring,
$\b\subseteq B$ is a weakly proregular ideal,
$f:A \to B$ is a ring map, 
and $f(\a)\subseteq B = \b$,
then there is a commutative diagram
\[
\xymatrixcolsep{4pc}
\xymatrix{
\Lambda_{\a}(A) \ar[r]^{\widehat{f}}\ar[d] & \Lambda_{\b}(B)\ar[d]\\
\mrm{L}\Lambda(A,\a) \ar[r]_{\mrm{L}\Lambda(f)} & \mrm{L}\Lambda(B,\b)
}
\]
in $\ho(\cdg)$, in which the vertical maps are isomorphisms.
\end{enumerate}
\end{cor}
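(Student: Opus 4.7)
The plan is to deduce both parts directly from Theorem \ref{thm:main} by exhibiting explicit weakly proregular resolutions for the pair(s) in question. Since $A$ is an ordinary commutative ring concentrated in degree $0$, we have $A^0 = A$, so $A$ is trivially K-flat over $A^0$. Choosing a finite sequence $\mathbf{a} = (a_1,\dots,a_n)$ in $A$ whose members generate $\a$, the weak proregularity of $\a$ means that $\mathbf{a}$ is a weakly proregular sequence, so $(A,\mathbf{a})$ is a weakly proregular DG-ring (in the sense of Definition \ref{def:wpr}(1)) and hence $P := (1_A, A, \mathbf{a})$ is a weakly proregular resolution of $(A,\bar{\a})$.

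For part (1), I would simply apply Theorem \ref{thm:main}(1) to this resolution $P$. The resulting commutative diagram has the form
\[
\xymatrix{
A \ar[r]^{1_A} \ar[d]_{\tau_{\mathbf{a}}} & A \ar[d]^{\Tau_{(A,\a)}}\\
\Lambda_{\mathbf{a}}(A) \ar[r]_{\varphi_P} & \mrm{L}\Lambda(A,\a)
}
\]
with $\varphi_P$ an isomorphism in $\ho(\cdg)$. Since $\Lambda_{\mathbf{a}}(A) = \Lambda_{\a}(A)$ and $\tau_{\mathbf{a}} = \tau^A_{\a}$ by construction, this is precisely the diagram appearing in (1), with horizontal arrow $\varphi_P$ the required isomorphism.

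For part (2), I would set $\mathbf{b} := f(\mathbf{a})$. Because $f(\a)\cdot B = \b$, the sequence $\mathbf{b}$ generates $\b$, and the weak proregularity of $\b$ gives that $P' := (1_B, B, \mathbf{b})$ is a weakly proregular resolution of $(B,\bar{\b})$. Taking $P_f := f : A \to B$ as the lift between resolutions, the hypothesis $P_f(\mathbf{a}) = \mathbf{b} \subseteq \mathbf{b}$ and the trivial commutativity of the compatibility square are both immediate. Applying Theorem \ref{thm:main}(2) yields the commutative diagram
\[
\xymatrix{
\Lambda_{\mathbf{a}}(A) \ar[r]^{\Lambda(f)} \ar[d]_{\varphi_P} & \Lambda_{\mathbf{b}}(B) \ar[d]^{\varphi_{P'}}\\
\mrm{L}\Lambda(A,\a) \ar[r]_{\mrm{L}\Lambda(f)} & \mrm{L}\Lambda(B,\b)
}
\]
in $\ho(\cdg)$, in which $\varphi_P$ and $\varphi_{P'}$ are isomorphisms by part (1). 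Identifying $\Lambda_{\mathbf{a}}(A) = \Lambda_{\a}(A)$, $\Lambda_{\mathbf{b}}(B) = \Lambda_{\b}(B)$, and observing that the horizontal map $\Lambda(f)$ between ordinary $\a$-adic and $\b$-adic completions of rings is exactly the classical continuous extension $\widehat{f}$, this is the desired diagram.

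There is no substantive obstacle here: the corollary is essentially a reading of Theorem \ref{thm:main} in the special case where the pairs admit tautological weakly proregular resolutions, the only point requiring a moment of care being the identifications of $\Lambda_{\mathbf{a}}(A)$ with $\Lambda_{\a}(A)$ and of $\Lambda(f)$ with $\widehat{f}$ under the functor $\Lambda$ of Section 3 restricted to the subcategory of ordinary rings.
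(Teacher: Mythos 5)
Your proof is correct and takes essentially the same approach as the paper, which simply records the observation that $(A,\a)$ (resp.\ $(B,\b)$) is a weakly proregular resolution of itself, so that both parts of the corollary are read off from Theorem \ref{thm:main}. Your more detailed write-out—including the identification $\Lambda_{\mathbf{a}}(A)=\Lambda_{\a}(A)$, the choice $\mathbf{b}=f(\mathbf{a})$ generating $\b$, and the identification of $\Lambda(P_f)$ with the classical completion map $\widehat{f}$—is exactly the unpacking the paper's one-line proof implicitly relies on.
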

\begin{proof}
This is because $(A,\a)$ (respectively $(B,\b)$) is a weakly proregular resolution of itself.
\end{proof}

\begin{dfn}
Let $\k$ be a noetherian ring.
Given a commutative DG-ring $A$ over $\k$,
and a finitely generated ideal $\bar{\a}\subseteq \mrm{H}^0(A)$,
we say that $A$ is cohomologically $\bar{\a}$-adically complete (or derived $\bar{\a}$-adically complete) if the map
\[
\Tau_{(A,\bar{\a})} : A \to \mrm{L}\Lambda(A,\bar{\a})
\]
in $\ho(\cdgcont[/\k])$ is an isomorphism.
\end{dfn}

\begin{prop}
For any $(A,\bar{\a})$ in $\cdgcont[/\k]$,
the morphism
$\Tau_{\mrm{L}\Lambda(A,\bar{\a})}$ is an isomorphism.
\end{prop}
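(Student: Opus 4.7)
The plan is to reduce the statement, via Theorem \ref{thm:main}(1), to showing that a specific classical completion map $\tau_{\mathbf{b}} : R \to \Lambda_{\mathbf{b}}(R)$ is a weak equivalence in $\cdgcont[/\k]$, and then to establish this by combining the base-change property of the telescope complex with the idempotency of classical derived completion on modules.

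First I will pick, by Proposition \ref{prop:res-exists}, a weakly proregular resolution $P = (g,P_A,\mathbf{a})$ of $(A,\bar{\a})$ with $P_A$ semi-free over $\k$, and set $B := \Lambda_{\mathbf{a}}(P_A)$. By Theorem \ref{thm:main}(1) there is an isomorphism $\varphi_P : B \iso \mrm{L}\Lambda(A,\bar{\a})$ in $\ho(\cdgcont[/\k])$, so by naturality of $\Tau$ applied to $\varphi_P$ it suffices to prove that $\Tau_B$ is an isomorphism. Next I take a semi-free resolution $h : R \to B$ over $\k$ and lift the sequence $\tau_{\mathbf{a}}(\mathbf{a}) \subseteq B^0$ (whose image in $\mrm{H}^0(B)$ generates $\widehat{\bar{\a}}$) to a finite sequence $\mathbf{b} \subseteq R^0$, using the surjectivity of $h^0$. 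By Lemma \ref{lem:sfr-is-wpr}, $(h,R,\mathbf{b})$ is a weakly proregular resolution of $(B,\widehat{\bar{\a}})$, and a second application of Theorem \ref{thm:main}(1) identifies $\Tau_B$, up to the weak equivalence $h$ and the isomorphism $\varphi_{P'} : \Lambda_{\mathbf{b}}(R) \iso \mrm{L}\Lambda(B,\widehat{\bar{\a}})$, with the classical completion map $\tau_{\mathbf{b}} : R \to \Lambda_{\mathbf{b}}(R)$. Since a quasi-isomorphism $\tau_{\mathbf{b}}$ automatically induces on $\mrm{H}^0$ a ring isomorphism sending $\bar{\mathbf{b}}$ bijectively onto $\widehat{\bar{\mathbf{b}}}$, the proof is now reduced to showing that $\tau_{\mathbf{b}}$ is a quasi-isomorphism.

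Proposition \ref{prop:form} applied to $(R,\mathbf{b})$ shows that $\tau_{\mathbf{b}}$ represents the derived completion unit $R \to \mrm{L}\Lambda_{\mathbf{b} R^0}(R)$ in $\cat{D}(R^0)$; thus it is a quasi-isomorphism iff $R$ is cohomologically $\mathbf{b}$-adically complete in $\cat{D}(R^0)$. Since $h$ is an $R^0$-linear quasi-isomorphism, this transfers to the same condition for $B$ in place of $R$. The base-change identity
\[
\opn{Tel}(B^0;\tau_{\mathbf{a}}(\mathbf{a})) \cong \opn{Tel}(R^0;\mathbf{b}) \otimes_{R^0} B^0 \cong \opn{Tel}(P_A^0;\mathbf{a}) \otimes_{P_A^0} B^0
\]
coming from (\ref{eqn:TelBaseChange}), combined with Hom-tensor adjunction, then shows that cohomological $\mathbf{b}$-completeness of $B$ in $\cat{D}(R^0)$ coincides with cohomological $\mathbf{a}$-completeness of $B$ in $\cat{D}(P_A^0)$: both conditions amount to the natural map $B \to \mrm{R}\opn{Hom}_{B^0}(\opn{Tel}(B^0;\tau_{\mathbf{a}}(\mathbf{a})), B)$ being a quasi-isomorphism. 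Finally, Proposition \ref{prop:form} applied to $(P_A,\mathbf{a})$ gives $B = \Lambda_{\mathbf{a}}(P_A) \cong \mrm{L}\Lambda_{\mathbf{a}}(P_A)$ in $\cat{D}(P_A^0)$, and the idempotency of classical derived completion (\cite[Corollary 3.6]{Ye1}) yields $\mrm{L}\Lambda_{\mathbf{a}}(B) \cong \mrm{L}\Lambda_{\mathbf{a}}(\mrm{L}\Lambda_{\mathbf{a}}(P_A)) \cong B$, establishing the required completeness.

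The main technical obstacle will be this chain of change-of-rings identifications: none of $R^0$, $B^0$, or $P_A^0$ need be noetherian, $\widehat{\bar{\a}}$ need not be weakly proregular in $B^0$, and flatness arguments are unavailable, so cohomological completeness must be transported faithfully between the three derived categories via the robust base-change behavior of the telescope complex. Once this bridge is in place, the argument collapses cleanly to the idempotency of the classical derived completion functor on modules, which is the only external input required.
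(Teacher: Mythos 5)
Your proof is correct, but it follows a genuinely different route than the paper's in the crucial ``transfer of completeness'' step. Both arguments reduce, via Theorem \ref{thm:main}(1), to showing that a semi-free model $R$ of the completion $B = \Lambda_{\mathbf{a}}(P_A)$ satisfies $\tau_{\mathbf{b}}:R \to \Lambda_{\mathbf{b}}(R)$ is a quasi-isomorphism, and both ultimately rest on the cohomological completeness of $B \cong \mrm{L}\Lambda_{\mathbf{a}}(P_A)$ in $\cat{D}(P_A^0)$, i.e.\ on idempotency of $\mrm{L}\Lambda$. But the paper sets everything up over $\k[x_1,\dots,x_n]$, constructs an explicit comparison map $\alpha:P \to Q$ by the lifting property, and invokes \cite[Theorem 6.5]{PSY1} to transport cohomological completeness from $\cat{D}(P^0)$ to $\cat{D}(Q^0)$ along $\alpha^0$. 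You instead work over $\k$ throughout and identify cohomological $\mathbf{b}$-completeness of $B$ in $\cat{D}(R^0)$ with cohomological $\mathbf{a}$-completeness of $B$ in $\cat{D}(P_A^0)$ by base-changing both telescope complexes to $B^0$ and applying Hom-tensor adjunction; this sidesteps both the explicit lift $\alpha$ and the citation of \cite[Theorem 6.5]{PSY1}, making the argument arguably more self-contained. The only actual glitch is a misattribution: \cite[Corollary 3.6]{Ye1} asserts idempotency of the \emph{underived} functor $\Lambda_{\a}$; what your final step actually needs is idempotency of $\mrm{L}\Lambda_{\a}$, i.e.\ that the unit $\mrm{L}\Lambda_{\a}(M) \to \mrm{L}\Lambda_{\a}(\mrm{L}\Lambda_{\a}(M))$ is an isomorphism, and for this the paper cites \cite[Proposition 7.10]{PSY1} (the MGM equivalence). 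With that reference corrected, the argument is complete.
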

\begin{proof}
Choose some $a_1,\dots,a_n \in A^0$,
such that $(a_1,\dots,a_n)\cdot \mrm{H}^0(A) = \bar{\a}$, 
and consider $A$ as a DG-ring over $\k[x_1,\dots,x_n]$ by setting $x_i \mapsto a_i$.
Factor the structure map $\k[x_1,\dots,x_n] \to A$ as
$\k[x_1,\dots,x_n] \to P \to A$ such that $P \to A$ is surjective quasi-isomorphism and $P$ semi-free commutative DG-ring over $\k[x_1,\dots,x_n]$.
Similarly, factor the structure map $\k[x_1,\dots,x_n] \to \widehat{P} := \Lambda_{(x_1,\dots,x_n)}(P)$,
as $\k[x_1,\dots,x_n] \to Q \to \widehat{P}$, where $Q$ is semi-free a commutative DG-ring over $\k[x_1,\dots,x_n]$, and $Q \to \widehat{P}$ is a surjective quasi-isomorphism.
By Theorem \ref{thm:main},
the map $\Tau_{\mrm{L}\Lambda(A,\bar{\a})}$ is an isomorphism if and only if the map
\[
Q \to \Lambda_{(x_1,\dots,x_n)}(Q)
\]
is a quasi-isomorphism.
By Corollay \ref{cor:lift-hom},
there is a map $\alpha:P \to Q$ making the diagram
\[
\xymatrix{
& P\ar[ld]_{\alpha}\ar[d]\\
Q \ar[r] & \widehat{P}
}
\]
in $\cat{D}(\k[x_1,\dots,x_n])$ commutative.
Let $\b$ be the ideal in $P^0$ generated by the image of $(x_1,\dots,x_n)$.
Using the map $\alpha$, 
we see that $Q \cong \Lambda_{\b}(P)$ in $\cat{D}(P^0)$,
and since $P$ is K-flat over $P^0$,
we deduce that $Q$ is cohomologically $\b$-adically complete.
Let $\c$ be the ideal in $Q^0$ generated by the image of $(x_1,\dots,x_n)$,
and noting that $\c = \alpha(\b)\cdot Q^0$,
we deduce by \cite[Theorem 6.5]{PSY1}
that $Q$ is cohomologically $\c$-adically complete.
Since $Q$ is K-flat over $Q^0$,
this implies that the completion map $Q \to \Lambda_{\c}(Q)$ is a quasi-isomorphism,
and this completes the proof.
\end{proof}

\begin{prop}\label{prop:com-is-dercom}
Let $A$ be a commutative ring,
let $\a\subseteq A$ be a finitely generated ideal,
and assume that $A$ is $\a$-adically complete.
Then $A$ is cohomologically $\a$-adically complete.
\end{prop}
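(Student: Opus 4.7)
The plan is to reduce, via a specific weakly proregular resolution, to an explicit computation with telescope complexes that uses only classical $\a$-adic completeness of $A$.

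First, pick generators $\mathbf{a}=(a_1,\dots,a_n)$ of $\a$ and, following Proposition~\ref{prop:res-exists}, factor the ring map $\mathbb{Z}[x_1,\dots,x_n]\to A$, $x_i\mapsto a_i$, as a semi-free extension $\mathbb{Z}[\mathbf{x}]\to B$ followed by a surjective quasi-isomorphism $f:B\to A$; let $\mathbf{b}\subseteq B^0$ be the image of $\mathbf{x}$. Since $B$ is semi-free, $B^0$ is a polynomial ring over $\mathbb{Z}[\mathbf{x}]$ and $\mathbf{b}$ is a regular (hence weakly proregular) sequence in $B^0$, so $P:=(f,B,\mathbf{b})$ is a weakly proregular resolution of $(A,\a)$. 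Theorem~\ref{thm:main}(1) then yields an isomorphism $\varphi_P:\Lambda_{\mathbf{b}}(B)\iso \mrm{L}\Lambda(A,\a)$ in $\ho(\cdgcont[/\k])$ that fits into a commutative square with $f$, $\Tau_{(A,\a)}$, and the canonical map $\tau_{\mathbf{b}}:B\to\Lambda_{\mathbf{b}}(B)$. Since $f$ and $\varphi_P$ are weak equivalences, the claim reduces to showing that $\tau_{\mathbf{b}}$ is a weak equivalence in $\cdgcont[/\k]$.

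Next I check both halves of this reduction. The $\mrm{H}^0$-adic condition is immediate: as $\mfrak{b}\cdot A=\a$ and $A$ is $\a$-adically complete, $\mrm{H}^0(\Lambda_{\mathbf{b}}(B))=\Lambda_{\mfrak{b}}(A)=A$ and $\mrm{H}^0(\tau_{\mathbf{b}})=\opn{id}_A$. For the underlying quasi-isomorphism, K-flatness of $B$ over $B^0$ combined with weak proregularity of $\mathbf{b}$ and (\ref{eqn:lambdaToTelescope}) (\cite[Corollary 5.23]{PSY1}) identifies $\Lambda_{\mathbf{b}}(B)\simeq \mrm{L}\Lambda_{\mfrak{b}}^{B^0}(B)$ in $\cat{D}(B^0)$, which, using the quasi-isomorphism $f$, is further quasi-isomorphic to $\mrm{L}\Lambda_{\mfrak{b}}^{B^0}(A)$. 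Hence it suffices to show the natural map $A\to \mrm{L}\Lambda_{\mfrak{b}}^{B^0}(A)$ is a quasi-isomorphism in $\cat{D}(B^0)$.

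This final step is the main obstacle. Using Proposition~\ref{prop:form}, the base change identity (\ref{eqn:TelBaseChange}), and hom--tensor adjunction, I rewrite $\mrm{L}\Lambda_{\mfrak{b}}^{B^0}(A) \simeq \opn{Hom}_A(\opn{Tel}(A;\mathbf{a}),A)$. Since $\opn{Tel}(A;\mathbf{a})=\bigotimes_i \opn{Tel}(A;a_i)$ with each factor K-projective, iterated hom--tensor reduces the claim to the single-variable statement $\opn{Hom}_A(\opn{Tel}(A;a_i),A)\simeq A$ for each $i$, which in turn is equivalent to $\mrm{R}\opn{Hom}_A(A[a_i^{-1}],A)=0$ (via the triangle $\opn{K}^{\vee}_{\infty}(A;a_i)\to A\to A[a_i^{-1}]$). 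Writing $A[a_i^{-1}]=\varinjlim(A\xrightarrow{\cdot a_i}A\xrightarrow{\cdot a_i}\cdots)$, this last vanishing becomes the vanishing of $\varprojlim$ and $\mrm{R}^1\varprojlim$ of the inverse system $\cdots\xleftarrow{\cdot a_i}A\xleftarrow{\cdot a_i}A$; I verify both directly from the hypothesis: $\varprojlim=0$ because any compatible sequence $(x_n)$ satisfies $x_n\in\bigcap_k a_i^k A\subseteq\bigcap_k\a^k=0$ by $\a$-adic separatedness of $A$, and $\mrm{R}^1\varprojlim=0$ because the equation $x_n-a_i x_{n+1}=y_n$ is solved by the series $x_n=\sum_{k\ge 0}a_i^k y_{n+k}$, which converges in $A$ by $\a$-adic completeness.
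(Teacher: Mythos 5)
Your proof is correct and reaches the same first reduction as the paper — pass to a weakly proregular resolution $(f,B,\mathbf{b})$ via polynomial variables adjoined for the generators of $\a$, then invoke Theorem~\ref{thm:main} to reduce to showing $\tau_{\mathbf{b}}:B\to\Lambda_{\mathbf{b}}(B)$ is a quasi-isomorphism — but from there the two arguments diverge. The paper observes that, since $C$ is K-flat over the noetherian ring $\k[x_1,\dots,x_n]$, the quasi-isomorphism criterion is equivalent to $A$ being cohomologically $(x_1,\dots,x_n)$-adically complete as a complex of $\k[x_1,\dots,x_n]$-modules, and then cites \cite[Theorem~1.21]{PSY2}, which handles the noetherian case. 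You instead unwind the derived completion to $\opn{Hom}_A(\opn{Tel}(A;\mathbf{a}),A)$, factor the telescope, and reduce to $\mrm{R}\opn{Hom}_A(A[a_i^{-1}],A)=0$, which you verify by hand: the $\varprojlim$ of $\cdots\xrightarrow{a_i}A\xrightarrow{a_i}A$ dies by $\a$-adic separatedness, and the $\mrm{R}^1\varprojlim$ dies by the explicit geometric-series solution using $\a$-adic convergence. That computation is the elementary content that \cite[Theorem~1.21]{PSY2} packages at a higher level, so your route is more self-contained at the cost of a few extra lines. Two small cosmetic points: you should adjoin the variables over $\k$ rather than $\mathbb{Z}$ (the resolution must land in $\cdg_{\k}$ for Theorem~\ref{thm:main} to apply as stated, since its proof invokes $\opn{SF}_{\k}(P_A)$); and the assertion ``$\mrm{H}^0(\Lambda_{\mathbf{b}}(B))=\Lambda_{\b}(A)$'' is not immediate as stated, since adic completion need not commute with cokernels — but this is harmless because once $\tau_{\mathbf{b}}$ is shown to be a quasi-isomorphism the $\mrm{H}^0$-adic condition follows for free, so it is only an expository ordering issue, not a gap.
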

\begin{proof}
Assume $\a=(a_1,\dots,a_n)$, 
and use these elements to give $A$ the structure of a $\k[x_1,\dots,x_n]$-algebra.
Let $\k[x_1,\dots,x_n] \to C \to A$ be a factorization of the structure map,
such that $C$ is a semi-free over $\k[x_1,\dots,x_n]$, and the map $C \to A$ is a quasi-isomorphism.
By Theorem \ref{thm:main}, is enough to show that the map $C \to \Lambda_{(x_1,\dots,x_n)}(C)$ is a quasi-isomorphism.
Since $C$ is K-flat over $\k[x_1,\dots,x_n]$, 
this is the case if and only if $C$ is cohomologically $(x_1,\dots,x_n)$-adically complete in $\cat{D}(\k[x_1,\dots,x_n])$
if and only if $A$ is cohomologically $(x_1,\dots,x_n)$-adically complete in $\cat{D}(\k[x_1,\dots,x_n])$.
The fact that $A$ is $\a$-adically complete implies that it is $(x_1,\dots,x_n)$-adically complete,
and since $\k[x_1,\dots,x_n]$ is a noetherian ring, 
by \cite[Theorem 1.21]{PSY2}, $A$ is cohomologically $(x_1,\dots,x_n)$-adically complete,
proving the claim.
\end{proof}

\begin{prop}\label{prop:lambda_of_wpr}
Let $\k$ be a noetherian ring, 
and let $(A,\bar{\a}) \in \cdgcont[/\k]$.
Assume that as an ideal of the ring $\mrm{H}^0(A)$,
the ideal $\bar{\a}$ is weakly proregular.
Then there is a $\k$-algebra isomorphism
\[
\mrm{H}^0(\mrm{L}\Lambda(A,\bar{\a})) \cong \Lambda_{\bar{\a}}(\mrm{H}^0(A)).
\]
\end{prop}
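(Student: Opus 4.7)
The plan is to combine Theorem~\ref{thm:main}(1) with a Milnor-sequence computation of $\mrm{H}^0$ of an explicit inverse limit. By Proposition~\ref{prop:res-exists}, choose a weakly proregular resolution $P = (g, B, \mathbf{b})$ of $(A,\bar{\a})$; inspecting the construction we may arrange that $B$ is a semi-free resolution of $\k[x_1,\ldots,x_r] \to A$ (for some lift of generators of $\bar{\a}$) with $\mathbf{b}$ the image of $(x_1,\ldots,x_r)$ in $B^0$. Then $B^0$ is a polynomial $\k$-algebra in possibly infinitely many variables, $\mathbf{b}$ is a regular sequence in $B^0$, and $B$ is K-flat over $B^0$. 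Theorem~\ref{thm:main}(1) supplies a $\k$-algebra isomorphism $\mrm{H}^0(\mrm{L}\Lambda(A,\bar{\a})) \cong \mrm{H}^0(\Lambda_{\b}(B))$, with $\b := \mathbf{b}\cdot B^0$, so it remains to identify the right-hand side with $\Lambda_{\bar{\a}}(\bar{A})$.

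Since the bracket powers $\b^{[n]} := (b_1^n,\ldots,b_r^n)$ are cofinal with $\{\b^n\}_n$ in $B^0$, one has $\Lambda_{\b}(B) = \varprojlim_n B/\b^{[n]}B$ as a DG-ring over $\k$. The Milnor short exact sequence for inverse limits yields
\[
0 \to \varprojlim\nolimits^1_n \mrm{H}^{-1}(B/\b^{[n]}B) \to \mrm{H}^0(\Lambda_{\b}(B)) \to \varprojlim\nolimits_n \mrm{H}^0(B/\b^{[n]}B) \to 0,
\]
whose right-hand arrow is $\k$-algebra linear. K-flatness of $B$ over $B^0$ gives $\mrm{H}^0(B/\b^{[n]}B) = \bar{A}/\bar{\mathbf{a}}^{[n]}\bar{A}$, whose inverse limit is $\Lambda_{\bar{\a}}(\bar{A})$ by cofinality again (now in $\bar{A}$).

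The main obstacle, and the step where weak proregularity of $\bar{\a}$ in $\bar{A}$ enters, is the vanishing $\varprojlim^1_n \mrm{H}^{-1}(B/\b^{[n]}B) = 0$. Since $\mathbf{b}^{[n]}$ is regular in $B^0$, the Koszul complex $\opn{K}_\bullet(B^0;\mathbf{b}^{[n]})$ is a free resolution of $B^0/\b^{[n]}$; the bicomplex $\opn{K}_\bullet(B^0;\mathbf{b}^{[n]}) \otimes_{B^0} B$ computes $B/\b^{[n]}B$, and its Koszul-filtration spectral sequence converges strongly (the filtration is finite). Non-positivity of $B$ restricts the contributions to $\mrm{H}^{-1}$ to the bidegrees $(-1,0)$ and $(0,-1)$, with only one potentially non-trivial higher differential
\[
d_2 : \mrm{H}_2\bigl(\opn{K}_\bullet(\bar{A};\bar{\mathbf{a}}^{[n]})\bigr) \to \mrm{H}^{-1}(A)/\bar{\mathbf{a}}^{[n]}\mrm{H}^{-1}(A),
\]
producing a natural short exact sequence
\[
0 \to \mrm{H}_1\bigl(\opn{K}_\bullet(\bar{A};\bar{\mathbf{a}}^{[n]})\bigr) \to \mrm{H}^{-1}(B/\b^{[n]}B) \to \bigl(\mrm{H}^{-1}(A)/\bar{\mathbf{a}}^{[n]}\mrm{H}^{-1}(A)\bigr)\big/\opn{im}(d_2) \to 0.
\]
By Schenzel's characterization of weak proregularity (\cite{Sc,PSY1}), the pro-system $\{\mrm{H}_i(\opn{K}_\bullet(\bar{A};\bar{\mathbf{a}}^{[n]}))\}_n$ is pro-zero for every $i\ge 1$, so the first term of the sequence has vanishing $\varprojlim^1$; the third term is a quotient of $\{\mrm{H}^{-1}(A)/\bar{\mathbf{a}}^{[n]}\mrm{H}^{-1}(A)\}_n$, a tower with surjective transition maps, hence Mittag-Leffler, and also has vanishing $\varprojlim^1$. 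The long exact sequence of $\varprojlim$ then forces the middle $\varprojlim^1$ to vanish, yielding the desired $\k$-algebra isomorphism.
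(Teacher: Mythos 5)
Your proof is correct, but it takes a genuinely different route from the paper's. The paper argues by \emph{functoriality}: applying Theorem~\ref{thm:main}(2) to the surjection $A \to \mrm{H}^0(A)$, it reduces to comparing the truncated projective resolutions $C_A^{-1}\to C_A^0$ and $C_{\bar A}^{-1}\to C_{\bar A}^0$ of the $\k[x_1,\dots,x_n]$-module $\bar{A}$; since $\mrm{H}^0$ of the completion of a semi-free resolution depends only on that truncation, both give the classical left-derived functor $\mrm{L}^0\Lambda_{(x_1,\dots,x_n)}(\bar A)$, which is identified with $\Lambda_{\bar{\a}}(\bar A)$ using the hypothesis that $\bar{\a}$ is weakly proregular in $\bar A$ (via $\mrm{L}\Lambda(\bar A,\bar{\a}) \cong \Lambda_{\bar{\a}}(\bar A)$). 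Your proof instead fixes a single weakly proregular resolution and computes $\mrm{H}^0(\Lambda_{\b}(B))$ directly: the Milnor sequence isolates the failure to a $\varprojlim^1$ of $\mrm{H}^{-1}(B/\b^{[n]}B)$, and the Koszul-filtration spectral sequence, together with Schenzel's pro-zero characterization of weak proregularity, kills it. What the paper's approach buys is brevity and a reuse of the functoriality machinery already built; it never sees the $\varprojlim^1$ at all. What your approach buys is a more explicit, hands-on picture of exactly where the weak proregularity hypothesis is consumed (the pro-vanishing of the Koszul homologies $\mrm{H}_i(\opn{K}_\bullet(\bar A;\bar{\mathbf{a}}^{[n]}))$ for $i\ge 1$), at the cost of a more delicate spectral-sequence bookkeeping and the need to check the degree-wise Mittag-Leffler hypothesis for the Milnor sequence. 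Two small remarks: the identification $\mrm{H}^0(B/\b^{[n]}B)\cong\bar A/\bar{\mathbf{a}}^{[n]}\bar A$ is just right-exactness of $-\otimes_{B^0}B^0/\b^{[n]}$ together with $\mrm{H}^0(B)\cong\bar A$ and requires no K-flatness; and one should note that the short exact sequence you extract from the spectral sequence is natural in $n$ (the Koszul transition maps induce a map of spectral sequences), which is needed before you can take $\varprojlim^1$ of it.
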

\begin{proof}
Let $\bar{\mathbf{a}} = (\bar{a}_1,\dots,\bar{a}_n)$ be a finite set of elements of $\mrm{H}^0(A)$ that generates $\bar{\a}$,
and let $\mathbf{a} = (a_1,\dots,a_n)$ be some lifts of these elements to $A^0$.
Using these sets of elements, 
we give $A$ and $\mrm{H}^0(A)$ a $\k[x_1,\dots,x_n]$ structure.
It follows that the map $A \to \mrm{H}^0(A)$ is $\k[x_1,\dots,x_n]$-linear.
Let $C_A \to A$ and $C_{\bar{A}} \to \mrm{H}^0(A)$ be $\k[x_1,\dots,x_n]$-linear surjective quasi-isomorphisms such that $C_A, C_{\bar{A}}$ are semi-free over $\k[x_1,\dots,x_n]$,
and such that there is a commutative diagram
\begin{equation}\label{eqn:diag-of-bara}
\xymatrix{
A \ar[r] & \mrm{H}^0(A)\\
C_A\ar[u]\ar[r]_f & C_{\bar{A}}\ar[u]
}
\end{equation}
in $\cdg_{\k[x_1,\dots,x_n]}$.
By Theorem \ref{thm:main}, there is a commutative diagram
\[
\xymatrix{
\widehat{C}_A \ar[r]^{\widehat{f}} \ar[d] & \widehat{C}_{\bar{A}} \ar[d]\\
\mrm{L}\Lambda(A,\bar{\a}) \ar[r] & \mrm{L}\Lambda(\mrm{H}^0(A),\bar{\a})
}
\]
in $\ho(\cdg)$, such that the vertical maps are isomorphisms.
Since $\bar{\a} \subseteq \mrm{H}^0(A)$ is weakly proregular,
we have that 
\[
\mrm{L}\Lambda(\mrm{H}^0(A),\bar{\a}) \cong \Lambda_{\bar{\a}}(\mrm{H}^0(A)).
\]
Hence, 
\begin{equation}\label{eqn:H0func}
\mrm{H}^0(\widehat{C}_{\bar{A}}) \cong \Lambda_{\bar{\a}}(\mrm{H}^0(A)).
\end{equation}
We may extract the following commutative diagram with exact rows from (\ref{eqn:diag-of-bara}):
\[
\xymatrix{
C_A^{-1} \ar[r]\ar[d] & C_A^0\ar[r]\ar[d] & \mrm{H}^0(A) \ar[d] \ar[r] & 0\\
C_{\bar{A}}^{-1} \ar[r] & C_{\bar{A}}^0\ar[r] & \mrm{H}^0(A) \ar[r] & 0
}
\]
Considered over the noetherian ring $\k[x_1,\dots,x_n]$,
we see that
\[
\xymatrix{
C_A^{-1} \ar[r]\ar[d] & C_A^0\ar[r]\ar[d] & 0\\
C_{\bar{A}}^{-1} \ar[r] & C_{\bar{A}}^0\ar[r] &  0
}
\]
are both truncations of projective resolutions of  the $\k[x_1,\dots,x_n]$-module $\mrm{H}^0(A)$.
Hence, applying the functor $\Lambda_{(x_1,\dots,x_n)}$ to this diagram and taking cohomology,
at both rows we must obtain the classical $0$-th derived functor $\mrm{L}^0\Lambda_{(x_1,\dots,x_n)}(\bar{A})$,
which by (\ref{eqn:H0func}) is isomorphic to $\Lambda_{\bar{\a}}(\mrm{H}^0(A))$.
Thus, we deduce that 
\[
\mrm{H}^0(\widehat{C}_A) \cong \Lambda_{\bar{\a}}(\mrm{H}^0(A)),
\]
so the isomorphism $\widehat{C}_A \to \mrm{L}\Lambda(A,\bar{\a})$ implies the required result.
\end{proof}

The functor $\mrm{R}(-)_*$ was defined in (\ref{eqn:Rf}).
\begin{rem}
Given a commutative ring $A$, and a finitely generated ideal $\a\subseteq A$, 
the canonical map $A \to \Lambda_{\a}(A)$ induces a forgetful functor $\cat{D}(\Lambda_{\a}(A)) \to \cat{D}(A)$.
Similarly, for $(A,\bar{\a}) \in \ho(\cdgcont)$,
we have seen that there is a map $\Tau_A:(A,\bar{\a}) \to \mrm{L}\Lambda(A,\bar{\a})$.
Using it, we obtain an analogue "forgetful functor"
\[
\mrm{R}(\Tau_A)_* : \cat{D}(\mrm{L}\Lambda(A,\bar{\a})) \to \cat{D}(A).
\]
\end{rem}

Given a DG-ring $A$, we finish this section with the next proposition, which describes the structure of its derived completion as a DG-module over $A$.

\begin{prop}
For any $(A,\bar{\a}) \in \ho(\cdgcont)$,
there is an isomorphism
\[
\mrm{R}(\Tau_A)_*(\mrm{L}\Lambda(A,\bar{\a})) \cong \mrm{L}\Lambda_{\bar{\a}}(A)
\]
in $\cat{D}(A)$.
\end{prop}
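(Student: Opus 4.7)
The plan is to choose a weakly proregular resolution $(g, P_A, \mathbf{a})$ of $(A, \bar{\a})$ (whose existence is Proposition \ref{prop:res-exists}) and to identify both sides of the desired isomorphism with the underlying DG-module of $\Lambda_{\mathbf{a}}(P_A)$, transported from $\cat{D}(P_A)$ to $\cat{D}(A)$ along $g$. The bridge between the two sides is Theorem \ref{thm:main}(1), which supplies a concrete model for $\mrm{L}\Lambda(A,\bar{\a})$ using exactly the data of a weakly proregular resolution.

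First, I would unwind the right hand side. Setting $\b := \mathbf{a} \cdot P_A^0$, Definition \ref{dfn:RGP} gives
\[
\mrm{L}\Lambda_{\bar{\a}}(A) \;=\; \mrm{R}g^{\flat}\bigl(\mrm{L}\Lambda_{\b}(\mrm{R}g_*(A))\bigr).
\]
Since $g$ is a quasi-isomorphism, $\mrm{R}g_*(A) \cong P_A$ in $\cat{D}(P_A)$, and by Proposition \ref{prop:form} together with K-flatness of $P_A$ over $P_A^0$, one has $\mrm{L}\Lambda_{\b}(P_A) \cong \opn{Hom}_{P_A^0}(\opn{Tel}(P_A^0;\mathbf{a}), P_A) \cong \Lambda_{\mathbf{a}}(P_A)$ in $\cat{D}(P_A)$ (the second isomorphism is (\ref{eqn:lambdaToTelescope})). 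Hence $\mrm{L}\Lambda_{\bar{\a}}(A) \cong \mrm{R}g^{\flat}(\Lambda_{\mathbf{a}}(P_A))$, and because $g$ is a quasi-isomorphism, $\mrm{R}g^{\flat}$ and $\mrm{L}g^*$ are both inverse to the equivalence $\mrm{R}g_*$ in $\ho(\cat{Cat})$, so are canonically isomorphic as functors $\cat{D}(P_A) \to \cat{D}(A)$.

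Next, I would unwind the left hand side using Theorem \ref{thm:main}(1), which furnishes an isomorphism $\varphi_P:\Lambda_{\mathbf{a}}(P_A) \iso \mrm{L}\Lambda(A,\bar{\a})$ in $\ho(\cdgcont[/\k])$ sitting inside a commutative square
\[
\xymatrix{
P_A \ar[r]^g \ar[d]_{\tau_{\mathbf{a}}} & A \ar[d]^{\Tau_A}\\
\Lambda_{\mathbf{a}}(P_A) \ar[r]_-{\varphi_P} & \mrm{L}\Lambda(A,\bar{\a}).
}
\]
Applying the contravariant functor $\mrm{R}(-)_*$ of (\ref{eqn:Rf}) and evaluating on the tautological object $\mrm{L}\Lambda(A,\bar{\a}) \in \cat{D}(\mrm{L}\Lambda(A,\bar{\a}))$ yields a natural isomorphism in $\cat{D}(P_A)$:
\[
\mrm{R}g_*\bigl(\mrm{R}(\Tau_A)_*(\mrm{L}\Lambda(A,\bar{\a}))\bigr) \;\cong\; \mrm{R}(\tau_{\mathbf{a}})_*(\Lambda_{\mathbf{a}}(P_A)) \;=\; \Lambda_{\mathbf{a}}(P_A).
\]
Transporting back along the quasi-inverse $\mrm{L}g^*$ gives $\mrm{R}(\Tau_A)_*(\mrm{L}\Lambda(A,\bar{\a})) \cong \mrm{L}g^*(\Lambda_{\mathbf{a}}(P_A))$ in $\cat{D}(A)$, matching the formula just obtained for $\mrm{L}\Lambda_{\bar{\a}}(A)$.

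The main obstacle is not analytic but bookkeeping: the isomorphism $\varphi_P$ lives in $\ho(\cdgcont[/\k])$, and we must transport module structures along it. The cleanest way is to pass through the functors $\mrm{L}(-)^*, \mrm{R}(-)_*, \mrm{R}(-)^{\flat}$ of (\ref{eqn:Rf}), which descend to $\ho(\cdg)$ and become mutually inverse equivalences on quasi-isomorphisms; once this formalism is in place the identification is a diagram chase. The substantive inputs are Proposition \ref{prop:form} (producing the explicit representative $\Lambda_{\mathbf{a}}(P_A)$ for derived completion as a DG-module) and Theorem \ref{thm:main}(1) (producing the same representative for the derived completion functor on DG-rings, compatibly with $\Tau_A$).
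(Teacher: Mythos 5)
Your proof is correct and takes essentially the same route as the paper's: choose a weakly proregular resolution $P = (g, P_A, \mathbf{a})$, use Theorem \ref{thm:main}(1) to identify the underlying $A$-DG-module of $\mrm{L}\Lambda(A,\bar\a)$ with $\mrm{L}g^*(\Lambda_{\mathbf{a}}(P_A))$, and use Proposition \ref{prop:form} to identify $\mrm{L}\Lambda_{\bar\a}(A)$ with $\mrm{R}g^{\flat}(\Lambda_{\mathbf{a}}(P_A))$, concluding via $\mrm{L}g^* \cong \mrm{R}g^{\flat}$. The paper's proof compresses exactly this into the chain $\mrm{R}(\Tau_A)_*(\mrm{L}\Lambda(A,\bar\a)) \cong \widehat{P}_A \otimes^{\mrm{L}}_{P_A} A \cong \mrm{L}\Lambda^P_{\bar\a}(A) \cong \mrm{L}\Lambda_{\bar\a}(A)$; you have simply spelled out the bookkeeping it leaves implicit.
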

\begin{proof}
Let $P = (f,P_A,\mathbf{a})$ be a weakly proregular resolution of $(A,\bar{\a})$.
Then
\[
\mrm{R}(\Tau_A)_*(\mrm{L}\Lambda(A,\bar{\a})) \cong \widehat{P}_A\otimes^{\mrm{L}}_{P_A} A \cong \mrm{L}\Lambda_{\bar{\a}}^{P}(A) \cong \mrm{L}\Lambda_{\bar{\a}}(A).
\]
\end{proof}

\section{The functors $\mrm{R}\widehat{\Gamma}_{\bar{\a}}, \mrm{L}\widehat{\Lambda}_{\bar{\a}}$}\label{sec:RRLL}

Given a commutative ring $A$, a finitely generated ideal $\a\subseteq A$,
and an $A$-module $M$,
recall that the modules $\Gamma_{\a}(M)$, 
$\Lambda_{\a}(M)$ carry naturally the structure of $\Lambda_{\a}(A)$-modules. 
We denote these $\Lambda_{\a}(A)$-modules by $\widehat{\Gamma}_{\a}(M)$ 
and $\widehat{\Lambda}_{\a}(M)$. 
These are additive functors 
\[
\widehat{\Gamma}_{\a}(-), \widehat{\Lambda}_{\a}(-) : \opn{Mod} (A) \to \opn{Mod} (\Lambda_{\a}(A)).
\]
They have derived functors
\[
\mrm{R}\widehat{\Gamma}_{\a}(-), \mrm{L}\widehat{\Lambda}_{\a}(-) : \cat{D}(A) \to \cat{D}(\Lambda_{\a}(A)).
\]
If $F:\cat{D}(\Lambda_{\a}(A)) \to \cat{D}(A)$ is the forgetful functor,
it is clear that 
\begin{equation}\label{eqn:RWForget}
F \circ \mrm{R}\widehat{\Gamma}_{\a}(-) \cong \mrm{R}\Gamma_{\a}(-), \quad F \circ \mrm{L}\widehat{\Lambda}_{\a}(-) \cong \mrm{L}\Lambda_{\a}(-).
\end{equation}

We now define analogue functors in $\ho(\cdgcont)$.
We will make use of the functors $\mrm{R}(-)_*, \mrm{L}(-)^*$ and $\mrm{R}(-)^{\flat}$ which were defined in (\ref{eqn:Rf}).
We begin by imitating Definition \ref{dfn:RGP}.

\begin{dfn}
Let $(A,\bar{\a}) \in \cdgcont$,
and let $P = (f,B,\mathbf{b})$ be a weakly proregular resolution of $(A,\bar{\a})$.
Let $\b$ be the ideal in $B^0$ generated by $\mathbf{b}$,
and let $\varphi_P: \Lambda_{\b}(P) \iso \mrm{L}\Lambda(A,\bar{\a})$ be the isomorphism from Theorem \ref{thm:main}.
The operations 
\[
\widehat{\Gamma}_{\b}(-) := \varinjlim_n \opn{Hom}_{B^0}(B^0/{\b}^n,-), \quad \widehat{\Lambda}_{\b}(-) := \varprojlim_n B^0/{\b}^n \otimes_{B^0} -
\]
define additive functors $\opn{DGMod}(B) \to \opn{DGMod}(\Lambda_{\b}(B))$.
Let
\[
\mrm{R}\widehat{\Gamma}_{\b}, \mrm{L}\widehat{\Lambda}_{\b} : \cat{D}(B) \to \cat{D}(\Lambda_{\b}(B))
\]
be their derived functors, and set
\[
\mrm{R}\widehat{\Gamma}_{\bar{\a}}^P(-) :=  \mrm{L}(\varphi_P)^* \mrm{R}\widehat{\Gamma}_{\b}(\mrm{R}f_*(-))
\]
and
\[
\mrm{L}\widehat{\Lambda}_{\bar{\a}}^P(-) :=  \mrm{R}(\varphi_P)^{\flat} \mrm{L}\widehat{\Lambda}_{\b}(\mrm{R}f_*(-)).
\]
These are functors $\cat{D}(A) \to \cat{D}(\mrm{L}\Lambda(A,\bar{\a}))$.
\end{dfn}

\begin{lem}\label{lem:RWLW}
Let $(B,\mathbf{b})$ be a weakly proregular DG-ring.
Then there are isomorphisms
\[
\mrm{R}\widehat{\Gamma}_{\b}(-) \cong \Lambda_{\b}(B) \otimes^{\mrm{L}}_B \mrm{R}\Gamma_{\b}(-)
\]
and
\[
\mrm{L}\widehat{\Lambda}_{\b}(-) \cong \mrm{R}\opn{Hom}_B(\Lambda_{\b}(B),\mrm{L}\Lambda_{\b}(-))
\]
of functors
\[
\cat{D}(B) \to \cat{D}(\Lambda_{\b}(B)).
\]
\end{lem}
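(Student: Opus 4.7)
The strategy for both isomorphisms is to mimic the proof of Proposition \ref{prop:form}, adapted to keep track of the $\Lambda_{\b}(B)$-linear structure, and then reduce to the ring-theoretic identifications of \cite[Theorems 3.2, 3.6]{Sh1} applied to $(B^0,\b B^0)$. Fix $M \in \cat{D}(B)$ and let $I$ be a K-injective resolution and $P$ a K-projective resolution over $B$. Because $B$ is K-flat over $B^0$, $I$ remains K-injective and $P$ remains K-flat over $B^0$.

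For the first isomorphism, the DG-module $\Gamma_{\b}(I)$ represents $\mrm{R}\widehat{\Gamma}_{\b}(M)$ and carries a natural $\Lambda_{\b}(B)$-module structure, extending its $B$-action via the $\b$-torsion property of each element. The multiplication map $\mu\colon \Lambda_{\b}(B) \otimes_B \Gamma_{\b}(I) \to \Gamma_{\b}(I)$ in $\opn{DGMod}(\Lambda_{\b}(B))$, combined with a K-flat resolution of $\Gamma_{\b}(I)$ over $B$, yields the natural morphism $\psi_M\colon \Lambda_{\b}(B) \otimes^{\mrm{L}}_B \mrm{R}\Gamma_{\b}(M) \to \mrm{R}\widehat{\Gamma}_{\b}(M)$ in $\cat{D}(\Lambda_{\b}(B))$. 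To verify that $\psi_M$ is a quasi-isomorphism, I would use that the forgetful functor to $\cat{D}(B^0)$ is conservative, and then use Proposition \ref{prop:form} together with the base-change formula (\ref{eqn:TelBaseChange}) to identify both sides in $\cat{D}(B^0)$ with complexes of the form $\Lambda_{\b}(B^0) \otimes_{B^0} \opn{Tel}(B^0;\mathbf{b}) \otimes_{B^0} P$, at which point the comparison follows by forgetting the $\Lambda_{\b}(B^0)$-structure in \cite[Theorem 3.2]{Sh1}. The second isomorphism is proved dually: represent $\mrm{L}\widehat{\Lambda}_{\b}(M) = \Lambda_{\b}(P)$ in $\opn{DGMod}(\Lambda_{\b}(B))$, construct the natural morphism $\mrm{L}\widehat{\Lambda}_{\b}(M)\to \mrm{R}\opn{Hom}_B(\Lambda_{\b}(B),\mrm{L}\Lambda_{\b}(M))$ using adjunction from the $\Lambda_{\b}(B)$-action on the telescope model of $\mrm{L}\Lambda_{\b}(M)$, and verify that it is a quasi-isomorphism by reducing to \cite[Theorem 3.6]{Sh1} in $\cat{D}(B^0)$.

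I expect the main obstacle to be the $\cat{D}(B^0)$-level comparison between tensoring (respectively, $\mrm{R}\opn{Hom}$-ing) with $\Lambda_{\b}(B)$ over $B$ and with $\Lambda_{\b}(B^0)$ over $B^0$, when applied to an $\mrm{R}\Gamma_{\b}$- or $\mrm{L}\Lambda_{\b}$-type complex; in general these operations differ, and one needs to use the specific structure of the complex. The plan to overcome this is to use the K-flatness of $B$ over $B^0$, the K-projectivity of $\opn{Tel}(B^0;\mathbf{b})$ over $B^0$, and the base-change identity $\opn{Tel}(B;\mathbf{b}) = B\otimes_{B^0}\opn{Tel}(B^0;\mathbf{b})$, which together reconcile the $B^0$- and $B$-linear perspectives on the derived torsion and completion functors at the level of explicit telescope models.
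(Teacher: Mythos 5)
The paper's own proof of this lemma is a one-liner: it says the argument is \emph{identical} to that of \cite[Theorems 3.2, 3.6]{Sh1}, using the $B$-linear quasi-isomorphisms $v_{\mathbf{b},I}$ from (\ref{eqn:gammaToKoszul}) and $\opn{tel}_{\mathbf{b},P}$ from (\ref{eqn:lambdaToTelescope}). Your proposal is a reasonable unpacking of what such a proof must look like, and the broad strategy — build an explicit comparison morphism in $\cat{D}(\Lambda_{\b}(B))$ from the $\Lambda_{\b}(B)$-action on $\Gamma_{\b}(I)$ (resp.\ $\Lambda_{\b}(P)$), then verify it is a quasi-isomorphism by applying a conservative forgetful functor and feeding in the telescope models — is the right one and lines up with the paper.

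However, there is an imprecision in the verification step that you should not gloss over. You propose to forget all the way to $\cat{D}(B^0)$ and there ``identify both sides with complexes of the form $\Lambda_{\b}(B^0) \otimes_{B^0} \opn{Tel}(B^0;\mathbf{b}) \otimes_{B^0} P$.'' This isn't literally what happens: after forgetting, the right-hand side $\widehat{\Gamma}_{\b}(I)$ is simply $\Gamma_{\b}(I)\cong\opn{Tel}(B^0;\mathbf{b})\otimes_{B^0} P$ with no $\Lambda_{\b}(B^0)$ factor, while the left-hand side is $\Lambda_{\b}(B)\otimes_B(\opn{Tel}(B;\mathbf{b})\otimes_B P)$, which involves $\Lambda_{\b}(B)$ tensored over $B$, not $\Lambda_{\b}(B^0)$ over $B^0$. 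Since $\Lambda_{\b}(B)$ and $\Lambda_{\b}(B^0)\otimes_{B^0}B$ need not agree (adic completion does not commute with base change), the two expressions are not matched up by K-flatness and telescope base change alone. What actually makes the comparison map a quasi-isomorphism is the torsion/completion interaction — concretely, that for a cohomologically $\bar\b$-torsion $N\in\cat{D}(B)$ the canonical map $N\to\Lambda_{\b}(B)\otimes^{\mrm{L}}_B N$ is an isomorphism, which reduces via the telescope to $\mrm{R}\Gamma_{\b}(\Lambda_{\b}(B))\cong\mrm{R}\Gamma_{\b}(B)$; this is the genuine content borrowed from the weak proregularity hypothesis (the MGM-type identity). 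The cleaner organization, mirroring Proposition~\ref{prop:form}, is to forget only to $\cat{D}(B)$, where the comparison map becomes $\Lambda_{\b}(B)\otimes^{\mrm{L}}_B\mrm{R}\Gamma_{\b}(M)\to\mrm{R}\Gamma_{\b}(M)$, reduce the quasi-isomorphism claim to the displayed identity, and only then drop to $\opn{DGMod}(B^0)$ to compute with $\opn{Tel}(B^0;\mathbf{b})$. The same remark applies, dually, to the second isomorphism.
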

\begin{proof}
Identical to the proof of \cite[Theorems 3.2, 3.6]{Sh1},
using (\ref{eqn:gammaToKoszul}) and (\ref{eqn:lambdaToTelescope}).
\end{proof}

\begin{prop}\label{prop:wideis}
Let $(A,\bar{\a}) \in \cdgcont$,
and let $P = (f,B,\mathbf{b})$ be a weakly proregular resolution of $(A,\bar{\a})$.
Then there are isomorphisms
\[
\mrm{R}\widehat{\Gamma}_{\bar{\a}}^P(-) \cong \mrm{L}(\Tau_A)^*(\mrm{R}\Gamma^P_{\bar{\a}}(-))
\]
and
\[
\mrm{L}\widehat{\Lambda}_{\bar{\a}}^P(-) \cong \mrm{R}(\Tau_A)^{\flat}(\mrm{L}\Lambda^P_{\bar{\a}}(-)).
\]
of functors $\cat{D}(A) \to \cat{D}(\mrm{L}\Lambda(A,\bar{\a}))$.
In particular, if $Q$ is another weakly proregular resolution of $(A,\bar{\a})$,
there are natural isomorphisms
\[
\mrm{R}\widehat{\Gamma}_{\bar{\a}}^P(-) \cong \mrm{R}\widehat{\Gamma}_{\bar{\a}}^Q(-)
\]
and
\[
\mrm{L}\widehat{\Lambda}_{\bar{\a}}^P(-) \cong \mrm{L}\widehat{\Lambda}_{\bar{\a}}^Q(-).
\]
\end{prop}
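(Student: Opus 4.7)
The plan is to combine Lemma~\ref{lem:RWLW} with the functoriality of the assignments $\mrm{L}(-)^*$ and $\mrm{R}(-)^{\flat}$ applied to the commutative square
\[
\xymatrix{
B \ar[r]^{f} \ar[d]_{\tau_{\mathbf{b}}} & A \ar[d]^{\Tau_A} \\
\Lambda_{\mathbf{b}}(B) \ar[r]_{\varphi_P} & \mrm{L}\Lambda(A,\bar{\a})
}
\]
in $\ho(\cdgcont)$, whose commutativity is part of Theorem~\ref{thm:main}(1). Unwinding the definitions of both sides, the proof reduces to chasing this square together with the descriptions of $\mrm{R}\widehat{\Gamma}_{\b}$ and $\mrm{L}\widehat{\Lambda}_{\b}$ furnished by Lemma~\ref{lem:RWLW}.

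For the first isomorphism, I would first rewrite the content of Lemma~\ref{lem:RWLW} in the form
\[
\mrm{R}\widehat{\Gamma}_{\b}(-) \cong \mrm{L}(\tau_{\mathbf{b}})^*(\mrm{R}\Gamma_{\b}(-)),
\]
using that $\mrm{L}(\tau_{\mathbf{b}})^*$ is by construction $\Lambda_{\b}(B) \otimes^{\mrm{L}}_B -$. Composing with $\mrm{L}(\varphi_P)^*$ on the left and with $\mrm{R}f_*$ on the right, and then invoking the functoriality identity $\mrm{L}(\varphi_P)^* \circ \mrm{L}(\tau_{\mathbf{b}})^* \cong \mrm{L}(\varphi_P \circ \tau_{\mathbf{b}})^*$ together with the commutativity relation $\varphi_P \circ \tau_{\mathbf{b}} = \Tau_A \circ f$ and $\mrm{L}(\Tau_A \circ f)^* \cong \mrm{L}(\Tau_A)^* \circ \mrm{L}f^*$, I arrive at the chain
\[
\mrm{R}\widehat{\Gamma}^P_{\bar{\a}}(-) = \mrm{L}(\varphi_P)^* \circ \mrm{R}\widehat{\Gamma}_{\b} \circ \mrm{R}f_*(-) \cong \mrm{L}(\Tau_A)^* \circ \mrm{L}f^* \circ \mrm{R}\Gamma_{\b} \circ \mrm{R}f_*(-) = \mrm{L}(\Tau_A)^*(\mrm{R}\Gamma^P_{\bar{\a}}(-)),
\]
which is the first claimed isomorphism.

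The second isomorphism is handled dually. Lemma~\ref{lem:RWLW} together with the identity $\mrm{R}\opn{Hom}_B(\Lambda_{\b}(B),-) = \mrm{R}(\tau_{\mathbf{b}})^{\flat}(-)$ gives
\[
\mrm{L}\widehat{\Lambda}_{\b}(-) \cong \mrm{R}(\tau_{\mathbf{b}})^{\flat}(\mrm{L}\Lambda_{\b}(-)).
\]
Composing with $\mrm{R}(\varphi_P)^{\flat}$ and $\mrm{R}f_*$, and using $\mrm{R}(\varphi_P)^{\flat} \circ \mrm{R}(\tau_{\mathbf{b}})^{\flat} \cong \mrm{R}(\varphi_P \circ \tau_{\mathbf{b}})^{\flat} = \mrm{R}(\Tau_A \circ f)^{\flat} \cong \mrm{R}(\Tau_A)^{\flat} \circ \mrm{R}f^{\flat}$ produces the desired identification of $\mrm{L}\widehat{\Lambda}^P_{\bar{\a}}$ with $\mrm{R}(\Tau_A)^{\flat}(\mrm{L}\Lambda^P_{\bar{\a}}(-))$.

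For the independence of the resolution, Corollary~\ref{cor:uniq} already supplies canonical natural isomorphisms $\mrm{R}\Gamma^P_{\bar{\a}} \iso \mrm{R}\Gamma^Q_{\bar{\a}}$ and $\mrm{L}\Lambda^P_{\bar{\a}} \iso \mrm{L}\Lambda^Q_{\bar{\a}}$. Applying $\mrm{L}(\Tau_A)^*$ and $\mrm{R}(\Tau_A)^{\flat}$ respectively, and chaining with the first two parts of the proposition, yields the asserted natural isomorphisms $\mrm{R}\widehat{\Gamma}^P_{\bar{\a}} \iso \mrm{R}\widehat{\Gamma}^Q_{\bar{\a}}$ and $\mrm{L}\widehat{\Lambda}^P_{\bar{\a}} \iso \mrm{L}\widehat{\Lambda}^Q_{\bar{\a}}$. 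The only subtle point in this plan is that $\varphi_P$ and $\Tau_A$ are morphisms in $\ho(\cdgcont)$ rather than honest DG-ring maps, so the functoriality identities above must be interpreted at the level of the induced functors $\mrm{L}(-)^*,\mrm{R}(-)^{\flat}:\ho(\cdg) \to \ho(\cat{Cat})$ from Section~\ref{sec:prel}; once this is granted, the required identifications hold automatically since $\ho(\cat{Cat})$ identifies naturally equivalent functors.
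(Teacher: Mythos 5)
Your proposal is correct and is essentially the same argument the paper gives, only with the details spelled out: the paper's proof says exactly that the first two isomorphisms follow from Lemma~\ref{lem:RWLW} together with functoriality of $\mrm{L}(-)^*$ and $\mrm{R}(-)^{\flat}$, and that the resolution-independence follows from Corollary~\ref{cor:uniq}. Your explicit chain of identifications via the commutative square $\Tau_A \circ f = \varphi_P \circ \tau_{\mathbf{b}}$ from Theorem~\ref{thm:main}(1) is precisely the functoriality the paper invokes, and your final caveat about interpreting everything in $\ho(\cat{Cat})$ is the right way to make the terse statement rigorous.
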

\begin{proof}
The first claim follows from Lemma \ref{lem:RWLW} and functoriality of of $\mrm{L}(-)^*$ and $\mrm{R}(-)^{\flat}$.
The second claim now follows from Corollary \ref{cor:uniq}.
\end{proof}
In view of this result, we denote this functors simply by
\[
\mrm{R}\widehat{\Gamma}_{\bar{\a}}, \mrm{L}\widehat{\Lambda}_{\bar{\a}} : \cat{D}(A) \to \cat{D}(\mrm{L}\Lambda(A,\bar{\a})).
\]
The next result is a version of (\ref{eqn:RWForget}) in this context.

\begin{prop}
For any $(A,\bar{\a}) \in \cdgcont$,
there are isomorphisms
\[
\mrm{R}(\Tau_A)^* \mrm{R}\widehat{\Gamma}_{\bar{\a}} \cong \mrm{R}\Gamma_{\bar{\a}}
\]
and
\[
\mrm{R}(\Tau_A)^* \mrm{L}\widehat{\Lambda}_{\bar{\a}} \cong \mrm{L}\Lambda_{\bar{\a}}
\]
of functors $\cat{D}(A) \to \cat{D}(A)$.
\end{prop}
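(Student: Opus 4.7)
The plan is to reduce to the classical analogue (\ref{eqn:RWForget}) by choosing a weakly proregular resolution and exploiting the compatibility square from Theorem \ref{thm:main}(1). (I read the functor in the statement as the forgetful $\mrm{R}(\Tau_A)_*$ introduced in the preceding remark, which is the direct analogue of the functor $Q$ in (\ref{eqn:RWForget}).)

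Fix a weakly proregular resolution $P = (f,B,\mathbf{b})$ of $(A,\bar{\a})$ and set $\b = \mathbf{b}\cdot B^0$. Theorem \ref{thm:main}(1) provides a commutative square
\[
\xymatrix{
B \ar[r]^f \ar[d]_{\tau_{\mathbf{b}}} & A \ar[d]^{\Tau_A}\\
\Lambda_{\b}(B) \ar[r]_{\varphi_P} & \mrm{L}\Lambda(A,\bar{\a})
}
\]
in $\ho(\cdgcont)$, in which both $f$ and $\varphi_P$ are quasi-isomorphisms. Functoriality of $\mrm{R}(-)_*$ yields the identity $\mrm{R}f_* \circ \mrm{R}(\Tau_A)_* \cong \mrm{R}(\tau_{\mathbf{b}})_* \circ \mrm{R}(\varphi_P)_*$, and since $f$ and $\varphi_P$ are quasi-isomorphisms, the three functors $\mrm{R}f_*$, $\mrm{L}f^*$, $\mrm{R}f^{\flat}$ (respectively $\mrm{R}(\varphi_P)_*$, $\mrm{L}(\varphi_P)^*$, $\mrm{R}(\varphi_P)^{\flat}$) are mutually quasi-inverse equivalences of triangulated categories.

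Since $\mrm{R}f_*$ is an equivalence, it is enough to verify both isomorphisms of the proposition after composition with $\mrm{R}f_*$. For the first, unfolding the definition of $\mrm{R}\widehat{\Gamma}_{\bar{\a}}^P$ and using the compatibility above gives
\[
\mrm{R}f_*\,\mrm{R}(\Tau_A)_*\,\mrm{R}\widehat{\Gamma}_{\bar{\a}}^P(M) \cong \mrm{R}(\tau_{\mathbf{b}})_*\,\mrm{R}(\varphi_P)_*\,\mrm{L}(\varphi_P)^*\,\mrm{R}\widehat{\Gamma}_{\b}(\mrm{R}f_*(M)) \cong \mrm{R}(\tau_{\mathbf{b}})_*\,\mrm{R}\widehat{\Gamma}_{\b}(\mrm{R}f_*(M)),
\]
the second step because $\mrm{L}(\varphi_P)^*$ is quasi-inverse to $\mrm{R}(\varphi_P)_*$. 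Likewise, by the definition of $\mrm{R}\Gamma_{\bar{\a}}^P$ and the fact that $\mrm{R}f_*\,\mrm{L}f^* \cong \mrm{id}$,
\[
\mrm{R}f_*\,\mrm{R}\Gamma_{\bar{\a}}^P(M) \cong \mrm{R}\Gamma_{\b}(\mrm{R}f_*(M)).
\]
So the first isomorphism reduces to $\mrm{R}(\tau_{\mathbf{b}})_*\,\mrm{R}\widehat{\Gamma}_{\b} \cong \mrm{R}\Gamma_{\b}$, the DG-ring version of (\ref{eqn:RWForget}); this holds tautologically at the level of DG-modules, since $\widehat{\Gamma}_{\b}(N) = \Gamma_{\b}(N)$ as DG-modules over $B$ (only the $\Lambda_{\b}(B)$-action is additional structure), and the identification persists on the derived level via K-injective resolution over $B$.

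The second isomorphism is proved by the identical argument with $\mrm{L}(\varphi_P)^*$ and $\mrm{L}f^*$ replaced by $\mrm{R}(\varphi_P)^{\flat}$ and $\mrm{R}f^{\flat}$ throughout, and with the parallel DG-ring identity $\mrm{R}(\tau_{\mathbf{b}})_*\,\mrm{L}\widehat{\Lambda}_{\b} \cong \mrm{L}\Lambda_{\b}$ used at the final step. No step is a real obstacle; the only minor point requiring care is the tautological DG-version of (\ref{eqn:RWForget}), which is immediate from the explicit definitions of these derived functors.
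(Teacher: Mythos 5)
Your proposal is correct and follows the paper's own argument: fix a weakly proregular resolution $P = (f,B,\mathbf{b})$, invoke the commutative square from Theorem \ref{thm:main}(1), use functoriality of $\mrm{R}(-)_*$ and the fact that $f$ and $\varphi_P$ become isomorphisms, and reduce to the tautological identity $\mrm{R}(\tau_{\mathbf{b}})_*\,\mrm{R}\widehat{\Gamma}_{\b} \cong \mrm{R}\Gamma_{\b}$ (and its completion analogue). Your reading of $\mrm{R}(\Tau_A)^*$ as the forgetful functor $\mrm{R}(\Tau_A)_*$ from the preceding remark is the intended one, and the only cosmetic difference from the paper is that you cancel by composing with $\mrm{R}f_*$ rather than writing $f^{-1}$ explicitly in $\ho(\cdg)$.
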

\begin{proof}
Let $P = (f,B,\mathbf{b})$ be a weakly proregular resolution of $(A,\bar{\a})$.
By Theorem \ref{thm:main}, we have a commutative diagram
\[
\xymatrix{
P \ar[r]^f\ar[d]_{\tau_P} & A\ar[d]^{\Tau_A}\\
\Lambda_{\b}(P) \ar[r]_{\varphi_P} & \Lambda(A,\bar{\a})
}
\]
in $\ho(\cdg)$ in which the horizontal maps are isomorphisms.
Hence, by functoriality of $\mrm{R}(-)_*$, 
we have natural isomorphisms
\[
\mrm{R}(\Tau_A)^* \mrm{R}\widehat{\Gamma}^P_{\bar{\a}} \cong 
\mrm{R}(\tau_P \circ f^{-1})_* \mrm{R}(\varphi_P)_* \mrm{L}(\varphi_P)^* \mrm{R}\widehat{\Gamma}_{\b}(\mrm{R}f_*(-))
\]
Since $\phi_P$ is an isomorphism, 
we have an isomorphism of functors
\[
\mrm{R}(\phi_P)_* \mrm{L}(\phi_P)^* (-) \cong 1_{\Lambda_{\b}(P)},
\]
so that
\[
\mrm{R}(\Tau_A)^* \mrm{R}\widehat{\Gamma}^P_{\bar{\a}} \cong 
\mrm{R} (f^{-1})_* \mrm{R}(\tau_P)_* \mrm{R}\widehat{\Gamma}_{\b}(\mrm{R}f_*(-)) \cong
\mrm{R} (f^{-1})_* \mrm{R}\Gamma_{\b}(\mrm{R}f_*(-)) \cong \mrm{R}\Gamma_{\bar{\a}}(-).
\]
The second claim is proven similarly.
\end{proof}

\begin{prop}\label{prop:LWofHom}
Let $f:(A,\bar{\a}) \to (B,\bar{\b})$ be a map in $\cdgcont$ such that $\mrm{H}^0(f)(\bar{\a})\cdot \mrm{H}^0(B) = \bar{\b}$.
Then there is an isomorphism
\[
\mrm{L}\widehat{\Lambda}_{\bar{\b}} (\mrm{R}f^{\flat}(-)) \cong  \mrm{R}(\Tau_B \circ f)^{\flat}(\mrm{L}\Lambda_{\bar{\a}}(-))
\]
of functors $\cat{D}(A) \to \cat{D}(\mrm{L}\Lambda(B,\bar{\b}))$.
\end{prop}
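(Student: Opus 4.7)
The plan is to deduce the statement by combining three previously established results: Proposition~\ref{prop:wideis} (which expresses $\mrm{L}\widehat{\Lambda}$ as $\mrm{R}(\Tau)^{\flat}$ applied to $\mrm{L}\Lambda$), Proposition~\ref{prop:LLOfHom} (which commutes $\mrm{L}\Lambda$ past $\mrm{R}f^{\flat}$ precisely under the hypothesis we are assuming here), and the functoriality of $\mrm{R}(-)^{\flat}:\ho(\cdg) \to \ho(\cat{Cat})$ recorded in (\ref{eqn:Rf}), which gives $\mrm{R}(g\circ h)^{\flat} \cong \mrm{R}g^{\flat}\circ \mrm{R}h^{\flat}$ for composable morphisms in $\ho(\cdg)$.

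First, I would apply Proposition~\ref{prop:wideis} to the pair $(B,\bar{\b})$ to obtain, for every $N \in \cat{D}(B)$, a natural isomorphism
\[
\mrm{L}\widehat{\Lambda}_{\bar{\b}}(N) \cong \mrm{R}(\Tau_B)^{\flat}\bigl(\mrm{L}\Lambda_{\bar{\b}}(N)\bigr).
\]
Specialising at $N = \mrm{R}f^{\flat}(M)$ for $M \in \cat{D}(A)$ yields
\[
\mrm{L}\widehat{\Lambda}_{\bar{\b}}\bigl(\mrm{R}f^{\flat}(M)\bigr) \cong \mrm{R}(\Tau_B)^{\flat}\bigl(\mrm{L}\Lambda_{\bar{\b}}(\mrm{R}f^{\flat}(M))\bigr).
\]

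Second, since the hypothesis $\mrm{H}^0(f)(\bar{\a})\cdot \mrm{H}^0(B) = \bar{\b}$ is exactly the hypothesis of Proposition~\ref{prop:LLOfHom}, I would substitute the natural isomorphism $\mrm{L}\Lambda_{\bar{\b}}(\mrm{R}f^{\flat}(-)) \cong \mrm{R}f^{\flat}(\mrm{L}\Lambda_{\bar{\a}}(-))$ to rewrite the right-hand side as
\[
\mrm{R}(\Tau_B)^{\flat}\bigl(\mrm{R}f^{\flat}(\mrm{L}\Lambda_{\bar{\a}}(M))\bigr).
\]
Finally, using the functoriality of $\mrm{R}(-)^{\flat}$ applied to the composition $A \xrightarrow{f} B \xrightarrow{\Tau_B} \mrm{L}\Lambda(B,\bar{\b})$ in $\ho(\cdg)$, this agrees with $\mrm{R}(\Tau_B\circ f)^{\flat}(\mrm{L}\Lambda_{\bar{\a}}(M))$, as required. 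Chaining the three isomorphisms and checking naturality in $M$ delivers the statement.

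I do not anticipate a substantive obstacle: the argument is a purely formal chain of previously proven natural isomorphisms. The only mild point of care is to verify that when one models Proposition~\ref{prop:LLOfHom} via a compatible pair of weakly proregular resolutions $P = (g, P_A, \mathbf{a})$ and $Q = (h, P_B, \mathbf{b})$ fitting into a commuting square over $f$ with $\alpha(\mathbf{a}) = \mathbf{b}$, the resulting $B$-linear isomorphism is compatible with the identifications of $\mrm{L}\widehat{\Lambda}_{\bar{\b}}$ produced in Proposition~\ref{prop:wideis} from the same resolution $Q$; this compatibility is essentially tautological from the constructions. Alternatively, one can bypass this bookkeeping by invoking the uniqueness of the derived functors up to natural isomorphism established in Corollary~\ref{cor:uniq}.
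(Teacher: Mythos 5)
Your proof is correct and follows essentially the same argument as the paper's: apply Proposition~\ref{prop:wideis} to $(B,\bar{\b})$, substitute Proposition~\ref{prop:LLOfHom} using the adic hypothesis, and then collapse the composition via functoriality of $\mrm{R}(-)^{\flat}$. The paper's proof is exactly this three-step chain of natural isomorphisms.
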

\begin{proof}
Using Proposition \ref{prop:wideis},
we may write
\[
\mrm{L}\widehat{\Lambda}_{\bar{\b}} \mrm{R}f^{\flat}(-) \cong
\mrm{R}(\Tau_B)^{\flat} ( \mrm{L}\Lambda_{\bar{\b}} ( \mrm{R}f^{\flat}(-))).
\]
By Proposition \ref{prop:LLOfHom},
we have that
\[
\mrm{L}\Lambda_{\bar{\b}} ( \mrm{R}f^{\flat}(-)) \cong \mrm{R}f^{\flat}(\mrm{L}\Lambda_{\bar{\a}}(-)). 
\]
Hence,
\[
\mrm{L}\widehat{\Lambda}_{\bar{\b}} \mrm{R}f^{\flat}(-) \cong
\mrm{R}(\Tau_B)^{\flat} \circ \mrm{R}f^{\flat}(\mrm{L}\Lambda_{\bar{\a}}(-)) = \mrm{R}(\Tau_B \circ f)^{\flat}(\mrm{L}\Lambda_{\bar{\a}}(-)).
\]
\end{proof}

\begin{rem}
In less cryptic language, 
if all these morphisms in the homotopy category were honest morphisms of DG-rings, 
the above result says that there is an isomorphism
\[
\mrm{L}\widehat{\Lambda}_{\bar{\b}} \mrm{R}\opn{Hom}_A(B,-) \cong \mrm{R}\opn{Hom}_A(\widehat{B},\mrm{L}\Lambda_{\bar{\a}}(-)).
\]
\end{rem}

\begin{prop}\label{prop:ExtCompare}
Let $(A,\bar{\a}) \in \cdgcont$,
and let $f:\mrm{L}\Lambda(A,\bar{\a}) \to B$ be a map in $\cdg$.
Then there is an isomorphism
\[
\mrm{R}(f)^{\flat} (\mrm{L}\widehat{\Lambda}_{\bar{\a}}(-)) \cong \mrm{R}(\Tau_A \circ f)^{\flat}(\mrm{L}\Lambda_{\bar{\a}}(-))
\]
of functors $\cat{D}(A) \to \cat{D}(B)$.
\end{prop}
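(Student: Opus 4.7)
The plan is essentially a one-line computation, built on Proposition \ref{prop:wideis} plus the functoriality of $\mrm{R}(-)^{\flat}$ established in Section 1.2. First I would apply Proposition \ref{prop:wideis} to rewrite the derived completed functor as
\[
\mrm{L}\widehat{\Lambda}_{\bar{\a}}(-) \cong \mrm{R}(\Tau_A)^{\flat}(\mrm{L}\Lambda_{\bar{\a}}(-)),
\]
viewed as functors $\cat{D}(A) \to \cat{D}(\mrm{L}\Lambda(A,\bar{\a}))$. This is the only substantive input: it identifies $\mrm{L}\widehat{\Lambda}_{\bar{\a}}$ with a $\mrm{R}\opn{Hom}$-pullback along $\Tau_A$, and makes both sides of the proposition of the same shape.

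Applying $\mrm{R}(f)^{\flat}$ to both sides gives
\[
\mrm{R}(f)^{\flat}\bigl(\mrm{L}\widehat{\Lambda}_{\bar{\a}}(-)\bigr) \cong \mrm{R}(f)^{\flat}\bigl(\mrm{R}(\Tau_A)^{\flat}(\mrm{L}\Lambda_{\bar{\a}}(-))\bigr).
\]
Since $\mrm{R}(-)^{\flat} : \ho(\cdg) \to \ho(\cat{Cat})$ is a functor (cf.\ the construction in (\ref{eqn:Rf})), it preserves composition of DG-ring morphisms. Applying it to the chain $A \xrightarrow{\Tau_A} \mrm{L}\Lambda(A,\bar{\a}) \xrightarrow{f} B$ (after implicitly passing through the forgetful functor $\ho(\cdgcont) \to \ho(\cdg)$ to regard $\Tau_A$ as a morphism of DG-rings), we obtain a natural isomorphism of functors
\[
\mrm{R}(f)^{\flat} \circ \mrm{R}(\Tau_A)^{\flat} \cong \mrm{R}(\Tau_A \circ f)^{\flat},
\]
and combining this with the previous display proves the claim.

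There is essentially no obstacle here; the only bookkeeping point is to remember that $\Tau_A$ is a morphism in $\ho(\cdgcont)$ and must first be pushed forward to $\ho(\cdg)$ before being composed with $f$, but since $\mrm{R}(-)^{\flat}$ factors through $\ho(\cdg)$ this causes no issue. In short, the proposition is a formal consequence of Proposition \ref{prop:wideis} and the transitivity of derived $\mrm{R}\opn{Hom}$-restriction along a composition of DG-ring maps.
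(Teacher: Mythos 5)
Your proof is correct and is exactly the argument the paper intends: the paper's own proof is just the sentence ``This follows immediately from Proposition \ref{prop:wideis},'' and your write-up is precisely what that one-liner leaves to the reader, namely rewriting $\mrm{L}\widehat{\Lambda}_{\bar{\a}} \cong \mrm{R}(\Tau_A)^{\flat}\circ \mrm{L}\Lambda_{\bar{\a}}$ and then using that $\mrm{R}(-)^{\flat}$ carries the composition $A \xrightarrow{\Tau_A} \mrm{L}\Lambda(A,\bar{\a}) \xrightarrow{f} B$ to the composite $\mrm{R}(f)^{\flat}\circ\mrm{R}(\Tau_A)^{\flat}$.
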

\begin{proof}
This follows immediately from Proposition \ref{prop:wideis}.
\end{proof}

\begin{rem}
Again, in less cryptic language, this result essentially says that there is a natural isomorphism
\[
\mrm{R}\opn{Hom}_{\widehat{A}}(B,\mrm{L}\widehat{\Lambda}_{\bar{\a}}(-)) \cong \mrm{R}\opn{Hom}_A(B,\mrm{L}\Lambda_{\bar{\a}}(-)),
\]
and thus, allows one to compare certain $\opn{Ext}$-modules over $A$ and $\widehat{A}$.
\end{rem}

The next results are dual to Propositions \ref{prop:LWofHom}, \ref{prop:ExtCompare}. We omit the similar proofs.

\begin{prop}\label{prop:RWofTen}
Let $f:(A,\bar{\a}) \to (B,\bar{\b})$ be a map in $\cdgcont$, such that $\mrm{H}^0(f)(\bar{\a}) \cdot \mrm{H}^0(B) = \bar{\b}$.
Then there is an isomorphism
\[
\mrm{R}\widehat{\Gamma}_{\bar{\b}} (\mrm{L}f^{*}(-)) \cong  \mrm{L}(\Tau_B \circ f)^{*}(\mrm{R}\Gamma_{\bar{\a}}(-))
\]
of functors $\cat{D}(A) \to \cat{D}(\mrm{L}\Lambda(B,\bar{\b}))$.
\end{prop}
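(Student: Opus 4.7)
The plan is to prove this proposition by the strategy that is dual to the one used in Proposition \ref{prop:LWofHom}. First I would rewrite the left hand side using Proposition \ref{prop:wideis}, obtaining
\[
\mrm{R}\widehat{\Gamma}_{\bar{\b}}(\mrm{L}f^*(-)) \cong \mrm{L}(\Tau_B)^*\bigl(\mrm{R}\Gamma_{\bar{\b}}(\mrm{L}f^*(-))\bigr).
\]
Given this, the result would follow at once once we establish the base-change identity
\[
\mrm{R}\Gamma_{\bar{\b}}(\mrm{L}f^*(-)) \cong \mrm{L}f^*(\mrm{R}\Gamma_{\bar{\a}}(-))
\]
of functors $\cat{D}(A) \to \cat{D}(B)$, since then $\mrm{L}(\Tau_B)^* \circ \mrm{L}f^* \cong \mrm{L}(\Tau_B\circ f)^*$ by functoriality of $\mrm{L}(-)^*$.

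To verify this base-change identity, I would pick weakly proregular resolutions $P = (g,P_A,\mathbf{a})$ of $(A,\bar{\a})$ and $P' = (h,P_B,\mathbf{b})$ of $(B,\bar{\b})$, together with a morphism $P_f:P_A \to P_B$ lying over $f$ such that $P_f(\mathbf{a}) = \mathbf{b}$. The existence of such a compatible choice follows from Proposition \ref{prop:res-exists} applied to a lift of $\bar{\b}$ obtained from $\mrm{H}^0(f)(\bar{\a})\cdot\mrm{H}^0(B) = \bar{\b}$, and the semi-free lifting property Proposition \ref{prop:lift-homot}. Using Proposition \ref{prop:form}, both sides can then be written explicitly in terms of the telescope complexes:
\[
\mrm{R}\Gamma_{\bar{\b}}(\mrm{L}f^*(M)) \cong \opn{Tel}(P_B^0;\mathbf{b})\otimes_{P_B^0}(P_B \otimes^{\mrm{L}}_{P_A} M),
\]
\[
\mrm{L}f^*(\mrm{R}\Gamma_{\bar{\a}}(M)) \cong P_B \otimes^{\mrm{L}}_{P_A}(\opn{Tel}(P_A^0;\mathbf{a})\otimes_{P_A^0} M).
\]
The base change property of the telescope complex (\ref{eqn:TelBaseChange}), which gives $\opn{Tel}(P_A^0;\mathbf{a})\otimes_{P_A^0} P_B^0 \cong \opn{Tel}(P_B^0;\mathbf{b})$, together with associativity of tensor product, supplies the desired isomorphism.

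The only subtle point, and the one I expect to be the main obstacle, is making the isomorphism genuinely functorial and well-defined on $\cat{D}(A)$ (independent of the chosen compatible resolutions); this is where one must appeal to the uniqueness statement of Corollary \ref{cor:uniq}, exactly as in Proposition \ref{prop:wideis}. Once this is in place, combining the two displayed isomorphisms yields
\[
\mrm{R}\widehat{\Gamma}_{\bar{\b}}(\mrm{L}f^*(-)) \cong \mrm{L}(\Tau_B)^*\mrm{L}f^*(\mrm{R}\Gamma_{\bar{\a}}(-)) \cong \mrm{L}(\Tau_B\circ f)^*(\mrm{R}\Gamma_{\bar{\a}}(-)),
\]
which is the desired isomorphism.
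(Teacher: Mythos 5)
Your approach is exactly the dual of the paper's proof of Proposition \ref{prop:LWofHom}, which is precisely what the paper intends when it writes that the proofs of Propositions \ref{prop:RWofTen} and \ref{prop:TorCompare} are ``similar'' and omits them. You correctly identify that the missing ingredient is the tensor-product analogue of Proposition \ref{prop:LLOfHom}, namely $\mrm{R}\Gamma_{\bar{\b}}(\mrm{L}f^*(-)) \cong \mrm{L}f^*(\mrm{R}\Gamma_{\bar{\a}}(-))$, and you establish it by the expected telescope base-change computation (using (\ref{eqn:TelBaseChange}) and the K-flatness of the telescope complex in place of the Hom-adjunction used in Proposition \ref{prop:LLOfHom}). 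The well-definedness concern you flag is real but is handled exactly as you say, via Corollary \ref{cor:uniq} as in Proposition \ref{prop:wideis}. This is the right proof.
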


\begin{prop}\label{prop:TorCompare}
Let $(A,\bar{\a}) \in \cdgcont$,
and let $f:\mrm{L}\Lambda(A,\bar{\a}) \to B$ be a map in $\cdg$.
Then there is an isomorphism
\[
\mrm{L}(f)^{*} (\mrm{R}\widehat{\Gamma}_{\bar{\a}}(-)) \cong \mrm{L}(\Tau_A \circ f)^{*}(\mrm{R}\Gamma_{\bar{\a}}(-))
\]
of functors $\cat{D}(A) \to \cat{D}(B)$.
\end{prop}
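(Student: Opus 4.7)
The plan is to mirror exactly the proof of Proposition \ref{prop:ExtCompare}, simply replacing the cohomological functors $(-)^{\flat}$ and $\mrm{L}\widehat{\Lambda}_{\bar{\a}}$, $\mrm{L}\Lambda_{\bar{\a}}$ by their homological duals $(-)^*$ and $\mrm{R}\widehat{\Gamma}_{\bar{\a}}$, $\mrm{R}\Gamma_{\bar{\a}}$. Everything will follow immediately from Proposition \ref{prop:wideis} together with the functoriality of the assignment $\mrm{L}(-)^*$, once one notes that $\Tau_A \circ f$ in the statement must be read as the composition $f \circ \Tau_A: A \to \mrm{L}\Lambda(A,\bar{\a}) \to B$ (this is the only composition that makes sense given the domains and codomains).

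The first step is to invoke Proposition \ref{prop:wideis}, which provides a natural isomorphism
\[
\mrm{R}\widehat{\Gamma}_{\bar{\a}}(-) \cong \mrm{L}(\Tau_A)^*(\mrm{R}\Gamma_{\bar{\a}}(-))
\]
of functors $\cat{D}(A) \to \cat{D}(\mrm{L}\Lambda(A,\bar{\a}))$. Then I would apply the functor $\mrm{L}(f)^*: \cat{D}(\mrm{L}\Lambda(A,\bar{\a})) \to \cat{D}(B)$ to both sides to obtain
\[
\mrm{L}(f)^*(\mrm{R}\widehat{\Gamma}_{\bar{\a}}(-)) \cong \mrm{L}(f)^* \circ \mrm{L}(\Tau_A)^*(\mrm{R}\Gamma_{\bar{\a}}(-)).
\]
Finally, I would appeal to the fact, recorded in (\ref{eqn:Rf}), that
\[
\mrm{L}(-)^*: \ho(\cdg) \to \ho(\cat{Cat})
\]
is a functor, which yields a canonical natural isomorphism
\[
\mrm{L}(f)^* \circ \mrm{L}(\Tau_A)^* \cong \mrm{L}(f \circ \Tau_A)^*
\]
in $\ho(\cat{Cat})$. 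Concatenating these three natural isomorphisms gives the required identification.

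Since each of the three ingredients is already available, I do not anticipate any genuine obstacle. The only subtle point is purely bookkeeping: one must be careful that the application of $\mrm{L}(f)^*$ to a natural isomorphism of functors valued in $\cat{D}(\mrm{L}\Lambda(A,\bar{\a}))$ indeed produces a natural isomorphism of functors valued in $\cat{D}(B)$, which is immediate because $\mrm{L}(f)^*$ is itself a functor (well-defined up to natural isomorphism, as $\ho(\cat{Cat})$ identifies naturally equivalent functors). This is the same minor verification implicit in the one-line proof of Proposition \ref{prop:ExtCompare}, and it is precisely the reason the author is content to omit the proof as "similar."
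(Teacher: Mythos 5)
Your proof is correct and is exactly the dual of the paper's (one-line) proof of Proposition \ref{prop:ExtCompare}: apply $\mrm{L}(f)^*$ to the isomorphism $\mrm{R}\widehat{\Gamma}_{\bar{\a}}(-) \cong \mrm{L}(\Tau_A)^*(\mrm{R}\Gamma_{\bar{\a}}(-))$ from Proposition \ref{prop:wideis}, and collapse $\mrm{L}(f)^*\circ\mrm{L}(\Tau_A)^*$ into $\mrm{L}(f\circ\Tau_A)^*$ by functoriality of $\mrm{L}(-)^*$. Your observation that the composition written as $\Tau_A\circ f$ in the statement can only mean $f\circ\Tau_A\colon A\to B$ is also correct, as the sources and targets of $\Tau_A$ and $f$ admit no other composable arrangement.
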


\begin{rem}
In case $A$ and $B$ are commutative rings,
and all ideals in question are weakly proregular, 
Propositions \ref{prop:LWofHom}, \ref{prop:ExtCompare}, \ref{prop:RWofTen} and \ref{prop:TorCompare} were essentially proved in 
\cite[Corollary 3.11]{Sh1}, \cite[Corollary 3.12]{Sh1}, \cite[Corollary 3.13]{Sh1}, \cite[Corollary 3.14]{Sh1}.
The above results show that with our notion of derived completion of DG-rings,
the weakly proregular hypothesis can be removed from the results of \cite{Sh1}.
See Section 6.1 below for more details.
\end{rem}

Given a commutative DG-ring $A$ such that $\mrm{H}^0(A)$ is a noetherian ring,
we denote by $\cat{D}_{\mrm{f}}(A)$ the full triangulated subcategory of $\cat{D}(A)$ consisting of DG-modules $M$ such that $\mrm{H}^n(M)$ is a finitely generated $\mrm{H}^0(A)$-module for all $n \in \mathbb{Z}$.

\begin{prop}\label{prop:comp-of-finite}
Let $A$ be a commutative DG-ring such that $\mrm{H}^0(A)$ is a noetherian ring,
and let $\bar{\a} \subseteq \mrm{H}^0(A)$ be an ideal.
Then for any $M \in \cat{D}_{\mrm{f}}(A)$,
we have that
\[
\mrm{L}\widehat{\Lambda}_{\bar{\a}}(M) \in \cat{D}_{\mrm{f}}(\mrm{L}\Lambda(A,\bar{\a})).
\]
\end{prop}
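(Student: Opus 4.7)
The plan is to reduce the statement to a classical computation over the noetherian ring $\bar{A}$ via a weakly proregular resolution and a truncation/induction argument. Fix a weakly proregular resolution $P = (f,B,\mathbf{b})$ of $(A,\bar{\a})$, and set $\mathbf{a} := f(\mathbf{b}) \subseteq A^0$. Since $\bar{A} = \mrm{H}^0(A)$ is noetherian, the ideal $\bar{\a}$ is weakly proregular in $\bar{A}$, so by Proposition~\ref{prop:lambda_of_wpr} the ring $\widehat{\bar{A}} := \mrm{H}^0(\mrm{L}\Lambda(A,\bar{\a})) \cong \Lambda_{\bar{\a}}(\bar{A})$ is itself noetherian. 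Moreover, the last proposition of Section~\ref{sec:RRLL} identifies $\mrm{R}(\Tau_A)_*(\mrm{L}\widehat{\Lambda}_{\bar{\a}}(M))$ with $\mrm{L}\Lambda_{\bar{\a}}(M)$, so I only need to show that each $\mrm{H}^n(\mrm{L}\Lambda_{\bar{\a}}(M))$ is finitely generated over $\widehat{\bar{A}}$ for the $\widehat{\bar{A}}$-action inherited from the DG $\mrm{L}\Lambda(A,\bar{\a})$-structure on $\mrm{L}\widehat{\Lambda}_{\bar{\a}}(M)$.

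By Proposition~\ref{prop:form}, $\mrm{L}\Lambda_{\bar{\a}}(M) \cong \opn{Hom}_{A^0}(\opn{Tel}(A^0;\mathbf{a}),M)$, and the telescope complex $\opn{Tel}(A^0;\mathbf{a})$ is bounded of length equal to the cardinality of $\mathbf{a}$. Hence $\mrm{L}\Lambda_{\bar{\a}}$ has finite cohomological amplitude, so via the standard hypercohomology spectral sequence, $\mrm{H}^n(\mrm{L}\Lambda_{\bar{\a}}(M))$ depends only on finitely many cohomologies of $M$ in a window around $n$. Using smart truncations in $\cat{D}_{\mrm{f}}(A)$, I may therefore assume $M \in \cat{D}_{\mrm{f}}^{\mrm{b}}(A)$.

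I then proceed by induction on the cohomological amplitude, using that the class of DG-modules $N$ over $\mrm{L}\Lambda(A,\bar{\a})$ whose cohomologies are finitely generated over $\widehat{\bar{A}}$ is a triangulated subcategory (this uses noetherianness of $\widehat{\bar{A}}$). The base case is $M = N$ with $N$ a finitely generated $\bar{A}$-module placed in a single degree, viewed in $\cat{D}(A)$ through the projection $\pi_A : A \to \bar{A}$. Then the $A^0$-action on $N$ factors through $\bar{A}$, so combining Proposition~\ref{prop:form}, the telescope base change~(\ref{eqn:TelBaseChange}), and the Hom-tensor adjunction gives
\[
\mrm{L}\Lambda_{\bar{\a}}(N) \cong \opn{Hom}_{A^0}(\opn{Tel}(A^0;\mathbf{a}),N) \cong \opn{Hom}_{\bar{A}}(\opn{Tel}(\bar{A};\bar{\mathbf{a}}),N) \cong \mrm{L}\Lambda_{\bar{\a}}^{\bar{A}}(N).
\]
Because $\bar{A}$ is noetherian and $N$ is finitely generated, the right-hand side equals $N \otimes_{\bar{A}} \widehat{\bar{A}}$, concentrated in degree zero and finitely generated over $\widehat{\bar{A}}$, completing the base case and hence the induction.

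The main obstacle is ensuring the two a priori distinct $\widehat{\bar{A}}$-actions on $\mrm{H}^n(\mrm{L}\Lambda_{\bar{\a}}(M))$ — the intrinsic one coming from the DG $\mrm{L}\Lambda(A,\bar{\a})$-structure on $\mrm{L}\widehat{\Lambda}_{\bar{\a}}(M)$, and the one visible through the base-change identification with $\mrm{L}\Lambda_{\bar{\a}}^{\bar{A}}(N)$ — coincide, so that finite generation verified on one side really gives it on the other. This is a naturality check involving the commutative diagrams of Section~\ref{sec:RRLL} together with the natural transformation $\Tau$, but it must be set up carefully because the comparison passes through the isomorphism $\varphi_P$ of Theorem~\ref{thm:main}.
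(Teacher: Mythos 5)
Your proof follows essentially the same route as the paper's: reduce to $\cat{D}^{\mrm{b}}_{\mrm{f}}(A)$ via the finite cohomological dimension of $\mrm{L}\widehat{\Lambda}_{\bar{\a}}$, induct on amplitude down to $\amp(M)=0$, and compute the base case when $M$ is a finitely generated $\bar{A}$-module. The paper dispatches that base case as ``clear,'' whereas you unpack it via telescope base change and carefully flag the $\widehat{\bar{A}}$-action comparison --- a legitimate subtlety the paper glosses over, though it does resolve because $\mrm{L}\widehat{\Lambda}_{\bar{\a}}$ is by construction a lift of the analytic completion, which on a finitely generated $\bar{A}$-module over the noetherian ring $\bar{A}$ gives $N\otimes_{\bar{A}}\widehat{\bar{A}}$ with its natural $\widehat{\bar{A}}$-structure.
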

\begin{proof}
Since $\mrm{H}^0(A)$ is a noetherian ring,
the ideal $\bar{\a}$ is weakly proregular. 
Hence, by Proposition \ref{prop:lambda_of_wpr},
we have that 
\[
\mrm{H}^0(\mrm{L}\Lambda(A,\bar{\a})) \cong \Lambda_{\bar{\a}}(\mrm{H}^0(A)).
\]
In particular, the ring $\mrm{H}^0(\mrm{L}\Lambda(A,\bar{\a}))$ is noetherian,
so the notation $\cat{D}_{\mrm{f}}(\mrm{L}\Lambda(A,\bar{\a}))$ makes sense.
Since the functor $\mrm{L}\widehat{\Lambda}_{\bar{\a}}(-)$ has finite cohomological dimension, it is enough to prove the claim in the case where $M \in \cat{D}^{\mrm{b}}_{\mrm{f}}(A)$. By induction on $\amp(M)$ and using truncations, we may further assume that $\amp(M) = 0$. In that case, $M$ is isomorphic to a finitely generated $\mrm{H}^0(A)$-module, in which case the assertion is clear. Hence, the result.
\end{proof}

A commutative DG-ring $A$ is called noetherian if $\mrm{H}^0(A)$ is a noetherian ring,
and for each $i<0$, the $\mrm{H}^0(A)$-module $\mrm{H}^i(A)$ is finitely generated
(the terminology in \cite{Ye2} is cohomologically pseudo-noetherian).
If moreover $\mrm{H}^0(A)$ is a local noetherian ring, 
and $\bar{\m}$ is its maximal ideal, one says that $(A,\bar{\m})$ is a local noetehrian DG-ring. Proposition \ref{prop:comp-of-finite} now implies:

\begin{cor}
Let $A$ be a commutative noetherian DG-ring,
and let $\bar{\a} \subseteq \mrm{H}^0(A)$ be an ideal.
Then $\mrm{L}\Lambda(A,\bar{\a})$ is a commutative noetherian DG-ring.
If $(A,\bar{\m})$ is a local noetherian DG-ring,
then $(\mrm{L}\Lambda(A,\bar{\m}),\widehat{\bar{\m}})$ is a local noetherian DG-ring.
\end{cor}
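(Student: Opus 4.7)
The plan is to verify the two defining properties of a noetherian commutative DG-ring for $\mrm{L}\Lambda(A,\bar{\a})$: that $\mrm{H}^0(\mrm{L}\Lambda(A,\bar{\a}))$ is a noetherian ring, and that each $\mrm{H}^i(\mrm{L}\Lambda(A,\bar{\a}))$ is finitely generated over it. These will be handled by Propositions \ref{prop:lambda_of_wpr} and \ref{prop:comp-of-finite}, respectively.

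For the $\mrm{H}^0$ step, $\mrm{H}^0(A)$ is noetherian by hypothesis, so every finitely generated ideal in it is weakly proregular. Proposition \ref{prop:lambda_of_wpr} then yields
\[
\mrm{H}^0(\mrm{L}\Lambda(A,\bar{\a})) \cong \Lambda_{\bar{\a}}(\mrm{H}^0(A)),
\]
and the adic completion of a noetherian ring at a finitely generated ideal is again noetherian. In the local case with maximal ideal $\bar{\m}$, the ring $\Lambda_{\bar{\m}}(\mrm{H}^0(A))$ is local noetherian with maximal ideal $\bar{\m}\cdot \Lambda_{\bar{\m}}(\mrm{H}^0(A))$, which coincides with the ideal of definition $\widehat{\bar{\m}}$ of $\mrm{L}\Lambda(A,\bar{\m})$ by the construction in (\ref{eqn:idealOfDef}).

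For the higher cohomology, the noetherianity hypothesis on $A$ says precisely that $A$, viewed as a DG-module over itself, belongs to $\cat{D}_{\mrm{f}}(A)$, since $A$ is non-positive, $\mrm{H}^0(A)$ is finitely generated over itself, and each $\mrm{H}^i(A)$ for $i<0$ is finitely generated over $\mrm{H}^0(A)$. Proposition \ref{prop:comp-of-finite} then gives $\mrm{L}\widehat{\Lambda}_{\bar{\a}}(A) \in \cat{D}_{\mrm{f}}(\mrm{L}\Lambda(A,\bar{\a}))$, and it remains to identify $\mrm{L}\widehat{\Lambda}_{\bar{\a}}(A) \cong \mrm{L}\Lambda(A,\bar{\a})$ in $\cat{D}(\mrm{L}\Lambda(A,\bar{\a}))$. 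To do this I pick a weakly proregular resolution $P = (f, B, \mathbf{b})$ of $(A,\bar{\a})$ so that $\varphi_P:\Lambda_{(\mathbf{b})}(B) \iso \mrm{L}\Lambda(A,\bar{\a})$ by Theorem \ref{thm:main}. Using $\mrm{R}f_*(A) \cong B$ in $\cat{D}(B)$ (since $f$ is a quasi-isomorphism), the problem reduces to showing $\mrm{L}\widehat{\Lambda}_{(\mathbf{b})}(B) \cong \Lambda_{(\mathbf{b})}(B)$ in $\cat{D}(\Lambda_{(\mathbf{b})}(B))$. The natural $\Lambda_{(\mathbf{b})}(B)$-linear map $\Lambda_{(\mathbf{b})}(B) \to \mrm{L}\widehat{\Lambda}_{(\mathbf{b})}(B)$ becomes the identity of $\mrm{L}\Lambda_{(\mathbf{b})}(B) = \Lambda_{(\mathbf{b})}(B)$ under the forgetful functor to $\cat{D}(B)$, and since that functor reflects quasi-isomorphisms it is already an isomorphism before forgetting.

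The main obstacle is this last identification, namely verifying that the $\mrm{L}\Lambda(A,\bar{\a})$-module structure on $\mrm{L}\widehat{\Lambda}_{\bar{\a}}(A)$ is the tautological self-action rather than some twist; everything else is a direct application of the machinery of Sections 3--5. Once it is in place, each $\mrm{H}^i(\mrm{L}\Lambda(A,\bar{\a}))$ is finitely generated over the noetherian ring $\mrm{H}^0(\mrm{L}\Lambda(A,\bar{\a}))$, giving the first assertion, and the local refinement follows from the identification of maximal ideals carried out in the $\mrm{H}^0$ step.
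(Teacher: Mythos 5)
Your proof is correct and is exactly the argument the paper intends (the paper gives only the one-line "Proposition~\ref{prop:comp-of-finite} now implies"): apply Proposition~\ref{prop:comp-of-finite} with $M=A$ and Proposition~\ref{prop:lambda_of_wpr} for the $\mrm{H}^0$-level statement. The one small slip is the direction of the comparison map in your final step: the natural morphism runs $\mrm{L}\widehat{\Lambda}_{(\mathbf{b})}(B)\to\widehat{\Lambda}_{(\mathbf{b})}(B)=\Lambda_{(\mathbf{b})}(B)$, induced by the K-flat resolution $P\to B$; more simply, since $B$ is already K-flat over itself one has $\mrm{L}\widehat{\Lambda}_{(\mathbf{b})}(B)=\widehat{\Lambda}_{(\mathbf{b})}(B)=\Lambda_{(\mathbf{b})}(B)$ with the tautological module structure on the nose, so no reflection-of-quasi-isomorphisms argument is needed.
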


\section{Applications}

In the previous sections, a very general theory of completion and torsion was developed.
In this final section we discuss some applications of this theory.

\subsection{Derived Hochschild cohomology and complete derived Hochschild cohomology}

Given a commutative ring $\k$,
and a commutative $\k$-algebra $A$,
we constructed in Section \ref{sec:dertenprod} the commutative DG-ring $A\otimes^{\mrm{L}}_{\k} A$, and the natural derived multiplication map $\Delta_A:A\otimes^{\mrm{L}}_{\k} A \to A$.
Suppose now that $\a \subseteq A$ is a finitely generated ideal,
and that $A$ is $\a$-adically complete,
and let
\[
\a^e := \a\otimes_{\k} A + A\otimes_{\k} \a \subseteq A\otimes_{\k} A = \mrm{H}^0(A\otimes^{\mrm{L}}_{\k} A).
\]
Applying the derived completion functor $\mrm{L}\Lambda$ to the map $\Delta_A:(A\otimes^{\mrm{L}}_{\k} A,\a^e) \to (A,\a)$, and using the fact that $A$ is $\a$-adically complete (and hence, by Proposition \ref{prop:com-is-dercom}, derived $\a$-adically complete), we obtain the derived complete multiplication map:
\[
\widehat{\Delta}_A:\mrm{L}\Lambda(A\otimes^{\mrm{L}}_{\k} A,\a^e) \to A.
\]
Let us denote $\mrm{L}\Lambda(A\otimes^{\mrm{L}}_{\k} A,\a^e)$ by $A\widehat{\otimes}^{\mrm{L}}_{\k} A$. 
Thus, it makes sense to denote the functor
\[
\widehat{\Delta}^\flat_A:\cat{D}(A\widehat{\otimes}^{\mrm{L}}_{\k} A) \to \cat{D}(A)
\]
by
\[
\mrm{R}\opn{Hom}_{A\widehat{\otimes}^{\mrm{L}}_{\k} A}(A,-),
\]
and call this functor the derived complete Hochschild cohomology functor (or derived complete Shukla cohomology).

\begin{thm}\label{thm:adicHoc}
Let $\k$ be a commutative ring,
let $A$ be a commutative $\k$-algebra,
let $\a \subseteq A$ be a finitely generated ideal,
and suppose that $A$ is $\a$-adically complete.
Let $M$ be an $\a$-adically complete $A$-module.
Then there is an isomorphism
\[
\mrm{R}\opn{Hom}_{A\otimes^{\mrm{L}}_{\k} A}(A,M) \cong
\mrm{R}\opn{Hom}_{A\widehat{\otimes}^{\mrm{L}}_{\k} A}(A,M)
\]
in $\cat{D}(A)$.
\end{thm}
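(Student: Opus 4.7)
The plan is to apply Proposition \ref{prop:ExtCompare} to $(B,\a^e)$, with $B := A\otimes^{\mrm{L}}_{\k} A$ and $\widehat{B} := A\widehat{\otimes}^{\mrm{L}}_{\k} A = \mrm{L}\Lambda(B,\a^e)$, using the map $\widehat{\Delta}_A : \widehat{B} \to A$. By the construction of $\widehat{\Delta}_A$ one has $\widehat{\Delta}_A \circ \Tau_B = \Delta_A$. Write $M_B := \mrm{R}(\Delta_A)_*(M) \in \cat{D}(B)$ and $M_{\widehat{B}} := \mrm{R}(\widehat{\Delta}_A)_*(M) \in \cat{D}(\widehat{B})$; the theorem then asks for an iso $\mrm{R}(\Delta_A)^{\flat}(M_B) \cong \mrm{R}(\widehat{\Delta}_A)^{\flat}(M_{\widehat{B}})$ in $\cat{D}(A)$, while Proposition \ref{prop:ExtCompare} supplies the iso of functors $\cat{D}(B) \to \cat{D}(A)$
\[
\mrm{R}(\widehat{\Delta}_A)^{\flat}\bigl(\mrm{L}\widehat{\Lambda}_{\a^e}(-)\bigr) \;\cong\; \mrm{R}(\Delta_A)^{\flat}\bigl(\mrm{L}\Lambda_{\a^e}(-)\bigr).
\]
Evaluating at $M_B$, the proof reduces to two identifications: $\mrm{L}\Lambda_{\a^e}(M_B) \cong M_B$ in $\cat{D}(B)$, and $\mrm{L}\widehat{\Lambda}_{\a^e}(M_B) \cong M_{\widehat{B}}$ in $\cat{D}(\widehat{B})$.

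For the first, I would argue that $M$ is cohomologically $\a$-adically complete over $A$. Fixing generators $\a = (a_1,\dots,a_n)$ and viewing $M$ as a module over the noetherian ring $\mathbb{Z}[x_1,\dots,x_n]$ via $x_i \mapsto a_i$, the $\a$-adic completeness of $M$ implies its $(x_1,\dots,x_n)$-adic completeness there; classical Greenlees--May for noetherian rings then yields cohomological $(x_1,\dots,x_n)$-completeness, and this passes to cohomological $\a$-completeness over $A$ via the telescope--$\opn{Hom}$ adjunction and the base-change identity (\ref{eqn:TelBaseChange}). Choosing a weakly proregular resolution of $(B,\a^e)$ and combining Proposition \ref{prop:form} with (\ref{eqn:TelBaseChange}) and the $\opn{Hom}$--$\otimes$ adjunction then yields a natural iso $\mrm{L}\Lambda_{\a^e}(M_B) \cong \mrm{L}\Lambda_{\a}(M) \cong M_B$ in $\cat{D}(B)$. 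For the second identification, Proposition \ref{prop:wideis} gives
\[
\mrm{L}\widehat{\Lambda}_{\a^e}(M_B) \;\cong\; \mrm{R}(\Tau_B)^{\flat}\bigl(\mrm{L}\Lambda_{\a^e}(M_B)\bigr) \;\cong\; \mrm{R}(\Tau_B)^{\flat}(M_B),
\]
and the canonical comparison with $M_{\widehat{B}}$ is the unit of the adjunction $\mrm{R}(\Tau_B)_* \dashv \mrm{R}(\Tau_B)^{\flat}$, namely $M_{\widehat{B}} \to \mrm{R}(\Tau_B)^{\flat}(\mrm{R}(\Tau_B)_*(M_{\widehat{B}})) = \mrm{R}(\Tau_B)^{\flat}(M_B)$. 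Since the forgetful $\mrm{R}(\Tau_B)_*$ is conservative (it does not alter the underlying DG-module), I would verify that the unit is iso after forgetting, i.e.\ that $M \to \mrm{R}\opn{Hom}_B(\widehat{B},M)$ is iso in $\cat{D}(B)$. Using $\widehat{B} \cong \mrm{L}\Lambda_{\a^e}(B)$ in $\cat{D}(B)$, Greenlees--May duality (Proposition \ref{prop:GM}), Proposition \ref{prop:LR} (and the dual $\mrm{R}\Gamma \circ \mrm{L}\Lambda \cong \mrm{R}\Gamma$, which follows by a parallel adjunction argument), and the cohomological completeness of $M$, this reduces to the chain
\[
\mrm{R}\opn{Hom}_B(\widehat{B},M) \cong \mrm{R}\opn{Hom}_B(\mrm{R}\Gamma_{\a^e}(\widehat{B}),\mrm{R}\Gamma_{\a^e}(M)) \cong \mrm{R}\opn{Hom}_B(\mrm{R}\Gamma_{\a^e}(B),\mrm{R}\Gamma_{\a^e}(M)) \cong \mrm{L}\Lambda_{\a^e}(M) \cong M.
\]

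The main obstacle I anticipate is the identification of $\widehat{B}$-module structures in the last step: confirming that the completion-lifted $\widehat{B}$-action on $\mrm{L}\widehat{\Lambda}_{\a^e}(M_B)$ agrees with the one on $M_{\widehat{B}}$ coming from $\widehat{\Delta}_A$, not merely at the level of underlying complexes. The conservativity of $\mrm{R}(\Tau_B)_*$ is the key tool that reduces this coherence question to the already-established isomorphism in $\cat{D}(B)$, where Greenlees--May duality applies directly.
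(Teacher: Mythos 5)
Your overall route matches the paper's: you transport the complete $\mrm{R}\opn{Hom}$ to the ordinary one through Proposition \ref{prop:wideis}/\ref{prop:ExtCompare}, then reduce to identifying $\mrm{L}\Lambda_{\a^e}(M_B)\cong M_B$ in $\cat{D}(B)$ and $\mrm{L}\widehat{\Lambda}_{\a^e}(M_B)\cong M_{\widehat{B}}$ in $\cat{D}(\widehat{B})$. These two identifications are exactly the content of the paper's Lemma \ref{lem:forAdicHoc}, which you have effectively rediscovered; the paper proves them by choosing a flat noetherian base $\k[x_1,\dots,x_n]\to B^0$, invoking \cite[Theorem 1.21]{PSY2} to see that the derived completion of a K-flat resolution of $M$ is concentrated in degree $0$ with cohomology $M$, and then upgrading the resulting $\mrm{H}^0(B)$-module isomorphism to an $\mrm{H}^0(\widehat{B})$-module isomorphism by noting that adically complete modules have a unique compatible action of the completed ring. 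Your first identification is a slightly less careful version of this same argument, so there is no real difference there.

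For the second identification, however, you take a genuinely different route — conservativity of $\mrm{R}(\Tau_B)_*$ plus a Greenlees--May chain — and this is where a gap opens. Conservativity reduces the problem to showing that the \emph{specific} map $\mrm{R}(\Tau_B)_*(\eta) : M_B \to \mrm{R}\opn{Hom}_B(\widehat{B},M_B)$ is an isomorphism in $\cat{D}(B)$; but your GM chain
\[
\mrm{R}\opn{Hom}_B(\widehat{B},M) \cong \mrm{R}\opn{Hom}_B(\mrm{R}\Gamma_{\a^e}(\widehat{B}),\mrm{R}\Gamma_{\a^e}(M)) \cong \cdots \cong \mrm{L}\Lambda_{\a^e}(M) \cong M
\]
only exhibits an \emph{abstract} isomorphism between the source and target. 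It does not verify that the composite $M_B \to \mrm{R}\opn{Hom}_B(\widehat{B},M_B) \cong M_B$ is the identity (or even an isomorphism); a priori, $\mrm{R}(\Tau_B)_*(\eta)$ could be a non-isomorphism between abstractly isomorphic objects. This compatibility — that the unit of the restriction/coinduction adjunction along $\Tau_B$ realizes the MGM equivalence on cohomologically complete objects — is precisely the kind of coherence statement the paper avoids having to prove: Lemma \ref{lem:forAdicHoc} sidesteps it entirely by showing directly that both sides are concentrated in degree $0$ and producing an $\mrm{H}^0(\widehat{B})$-linear isomorphism of those degree-$0$ modules, which for complexes concentrated in one degree automatically gives the desired isomorphism in $\cat{D}(\widehat{B})$. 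To close the gap you would either need to prove the MGM compatibility for the unit (which is not available in the paper and would take additional work), or replace the conservativity argument with the paper's degree-$0$ completeness argument.
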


Before proving this result, we shall need the next lemma which generalizes \cite[Lemma 4.2]{Sh1}:
\begin{lem}\label{lem:forAdicHoc}
Let $A$ be a commutative DG-ring,
let $\bar{\a} \subseteq \mrm{H}^0(A)$ be a finitely generated ideal,
let $B$ be a commutative ring,
and let $\varphi:\mrm{L}\Lambda(A,\bar{\a}) \to B$ be a map in $\ho(\cdg)$.
Let $\b \subseteq B$ be the finitely generated ideal generated by the image of $\widehat{\bar{\a}}$ in $B$, let $M$ be a $B$-module, and suppose that $M$ is $\b$-adically complete. Then there is an isomorphism
\[
\mrm{L}\widehat{\Lambda}_{\bar{\a}} \left(  \mrm{R}(\Tau_{\bar{\a}})_*(\mrm{R}\varphi_*(M)       ) \right) \cong \mrm{R}\varphi_*(M)   
\]
in $\cat{D}(\mrm{L}\Lambda(A,\bar{\a}))$,
and an isomorphism
\[
\mrm{L}\Lambda_{\bar{\a}} \left(  \mrm{R}(\Tau_{\bar{\a}})_*(\mrm{R}\varphi_*(M)       ) \right) \cong \mrm{R}(\Tau_{\bar{\a}})_*(\mrm{R}\varphi_*(M))
\]
in $\cat{D}(A)$.
\end{lem}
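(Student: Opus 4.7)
The plan is to fix a weakly proregular resolution $(g, P, \mathbf{a})$ of $(A,\bar{\a})$, use Theorem \ref{thm:main} to identify $\mrm{L}\Lambda(A,\bar{\a})$ with $\widehat{P} := \Lambda_{(\mathbf{a})}(P)$, and reduce both isomorphisms to statements over a finite-type polynomial ring where classical derived completion results apply. Write $\mathbf{a} = (a_1,\ldots,a_n)$, $C := \mathbb{Z}[x_1,\ldots,x_n]$, and $\mathbf{b} := \varphi(\mathbf{a}) \subset B$ (so $\b = (\mathbf{b})B$). Set $L := \mrm{R}\varphi_*(M) \in \cat{D}(\widehat{P})$, $N := \mrm{R}(\Tau_{\bar{\a}})_*(L) \in \cat{D}(A)$ (i.e.\ $M$ as an $A$-DG-module via $A \to \widehat{P} \to B$), and $M' := \mrm{R}g_*(N) \in \cat{D}(P)$.

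I would first prove the second isomorphism. By Definition \ref{dfn:RGP}, $\mrm{L}\Lambda_{\bar{\a}}(N) = \mrm{L}g^{*}(\mrm{L}\Lambda_{(\mathbf{a})}(M'))$, so since $\mrm{L}g^{*}$ inverts $\mrm{R}g_*$ it suffices to show $M' \iso \mrm{L}\Lambda_{(\mathbf{a})}(M')$ in $\cat{D}(P)$. By Proposition \ref{prop:form}, whose hypothesis holds as $(\mathbf{a}) \subset P^0$ is weakly proregular by Proposition \ref{prop:semi-free-all-wpr}, this reduces to checking that the natural map $M' \to \opn{Hom}_{P^0}(\opn{Tel}(P^0;\mathbf{a}), M')$ is a quasi-iso. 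The base change identity \eqref{eqn:TelBaseChange} gives $\opn{Tel}(P^0;\mathbf{a}) \cong \opn{Tel}(C;\mathbf{x})\otimes_C P^0$, and Hom-tensor adjunction then identifies this map naturally with the completion map $M \to \opn{Hom}_C(\opn{Tel}(C;\mathbf{x}), M) \simeq \mrm{L}\Lambda^C_{(\mathbf{x})}(M)$. Since $(\mathbf{x})^k M = \b^k M$ for all $k$, the $(\mathbf{x})$-adic and $\b$-adic topologies on $M$ coincide, so $M$ is $(\mathbf{x})$-adically complete as a $C$-module; by \cite[Theorem 1.21]{PSY2} applied over the noetherian ring $C$, $M$ is cohomologically $(\mathbf{x})$-adically complete, yielding the claim.

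For the first isomorphism, Proposition \ref{prop:wideis} combined with the second iso gives $\mrm{L}\widehat{\Lambda}_{\bar{\a}}(N) \simeq \mrm{R}(\Tau_{\bar{\a}})^{\flat}(\mrm{L}\Lambda_{\bar{\a}}(N)) \simeq \mrm{R}(\Tau_{\bar{\a}})^{\flat}(N)$. Transporting through the resolution this becomes $\mrm{R}\opn{Hom}_P(\widehat{P}, M')$, and Hom-tensor adjunction along $P \to \widehat{P}$ rewrites it as $\mrm{R}\opn{Hom}_{\widehat{P}}(\widehat{P}\otimes^{\mrm{L}}_P \widehat{P}, L)$. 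The first iso thus reduces to the claim that the multiplication map $\widehat{P}\otimes^{\mrm{L}}_P \widehat{P} \to \widehat{P}$ is a quasi-isomorphism, and this is the main obstacle. The plan to establish it: by K-flatness of $P$ over $P^0$ together with \cite[Corollary 5.23]{PSY1}, $\widehat{P}$ is quasi-isomorphic as a DG $P$-module to the K-flat complex $\opn{Hom}_{P^0}(\opn{Tel}(P^0;\mathbf{a}), P)$; since $\opn{Tel}(P^0;\mathbf{a})$ is a bounded complex of finitely generated free $P^0$-modules, Hom commutes with tensoring in the second factor, yielding $\widehat{P}\otimes^{\mrm{L}}_P \widehat{P} \simeq \opn{Hom}_{P^0}(\opn{Tel}(P^0;\mathbf{a}), \widehat{P}) \simeq \mrm{L}\Lambda_{(\mathbf{a})}(\widehat{P})$; and then idempotence of $\mrm{L}\Lambda_{(\mathbf{a})}$ (\cite[Proposition 7.10]{PSY1}) together with $\widehat{P} \simeq \mrm{L}\Lambda_{(\mathbf{a})}(P)$ yields $\mrm{L}\Lambda_{(\mathbf{a})}(\widehat{P}) \simeq \widehat{P}$. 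Putting everything together, $\mrm{L}\widehat{\Lambda}_{\bar{\a}}(N) \simeq \mrm{R}\opn{Hom}_{\widehat{P}}(\widehat{P}, L) \simeq L$, as required.
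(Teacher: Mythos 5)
Your argument for the second isomorphism is correct and is, in spirit, the one the paper leaves implicit (the paper only remarks that the second claim is ``similar, but easier''); reducing via the telescope complex and base change to the noetherian polynomial ring $C = \mathbb{Z}[x_1,\dots,x_n]$ and then invoking \cite[Theorem 1.21]{PSY2} is exactly the right move, and your observation that the $(\mathbf{x})$-adic and $\b$-adic topologies on $M$ coincide closes the argument.

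Your treatment of the first isomorphism, however, contains a genuine error. The reduction via Proposition \ref{prop:wideis} and the hom--tensor adjunction to the statement $\mrm{R}\opn{Hom}_{\widehat{P}}(\widehat{P}\otimes^{\mrm{L}}_P\widehat{P},\, L)\cong L$ is fine. But you then try to prove the much stronger claim that the multiplication map $\widehat{P}\otimes^{\mrm{L}}_P\widehat{P}\to\widehat{P}$ is a quasi-isomorphism, and this is false. Take $P=\mathbb{Z}$ and $\mathbf{a}=(p)$, so that $\widehat{P}=\mathbb{Z}_p$. Since $\mathbb{Z}_p$ is flat over $\mathbb{Z}$, $\mathbb{Z}_p\otimes^{\mrm{L}}_{\mathbb{Z}}\mathbb{Z}_p=\mathbb{Z}_p\otimes_{\mathbb{Z}}\mathbb{Z}_p$, and after applying $-\otimes_{\mathbb{Z}}\mathbb{Q}$ the multiplication map becomes a $\mathbb{Q}_p$-linear surjection from the $\mathbb{Q}_p$-vector space $\mathbb{Q}_p\otimes_{\mathbb{Q}}\mathbb{Q}_p$ (of uncountable dimension) onto $\mathbb{Q}_p$, so it has a huge kernel. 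The specific step in your chain that fails is ``since $\opn{Tel}(P^0;\mathbf{a})$ is a bounded complex of finitely generated free $P^0$-modules, Hom commutes with tensoring'': the telescope complex is bounded and free, but it is \emph{countably infinitely generated} in each degree, so $\opn{Hom}_{P^0}(\opn{Tel}(P^0;\mathbf{a}),-)$ does not commute with $-\otimes^{\mrm{L}}_P\widehat{P}$. The subsequent identification $\widehat{P}\otimes^{\mrm{L}}_P\widehat{P}\simeq\mrm{L}\Lambda_{(\mathbf{a})}(\widehat{P})$ therefore does not hold.

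The paper sidesteps this entirely: it replaces $A$ by a semi-free resolution over a noetherian base $\k$ that is \emph{flat} over $A^0$, takes a K-flat resolution $P\to K$ of the DG-module, and then applies \cite[Theorem 1.21]{PSY2} over $\k$ to conclude that $\widehat{\Lambda}_{\a}(P)$ is concentrated in degree $0$ and agrees with $M$ there. One then identifies the two sides at the level of $\mrm{H}^0$, using the $\bar{\a}$-adic completeness of these modules to upgrade the $\k$-linear isomorphism to one over $\mrm{H}^0(\mrm{L}\Lambda(A,\bar{\a}))$. If you want to salvage your reformulation, the right move is not to prove the false $\widehat{P}\otimes^{\mrm{L}}_P\widehat{P}\simeq\widehat{P}$, but to apply Greenlees--May duality (Proposition \ref{prop:GM}): since $L$ is cohomologically complete, $\mrm{R}\opn{Hom}_{\widehat{P}}(\widehat{P}\otimes^{\mrm{L}}_P\widehat{P},L)\cong \mrm{R}\opn{Hom}_{\widehat{P}}(\mrm{R}\Gamma(\widehat{P}\otimes^{\mrm{L}}_P\widehat{P}),L)$, and one can show $\mrm{R}\Gamma(\widehat{P}\otimes^{\mrm{L}}_P\widehat{P})\cong\mrm{R}\Gamma(\widehat{P})$ by an MGM-type argument; but that is a different proof than the one you wrote.
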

\begin{proof}
By replacing $A$ by a quasi-isomorphic DG-ring if necessary,
we may assume that there is a noetherian ring $\k$,
an ideal $\a\subseteq \k$, and a flat map $\k \to A^0$,
such that $\a \cdot \mrm{H}^0(A) = \bar{\a}$,
and that moreover that $A$ is K-flat over $A^0$. 
In particular, $\a\cdot A^0$ is a weakly proregular ideal.
It follows that $\mrm{L}\Lambda(A,\bar{\a}) \cong \Lambda_{\a}(A)$.
Let $N = \mrm{R}\varphi_*(M) \in \cat{D}(\mrm{L}\Lambda(A,\bar{\a}))$,
and let $K = \mrm{R}(\Tau_{\bar{\a}})_*(N) \in \cat{D}(A)$.
These are both DG-modules that are concentrated in degree $0$.
Let $P \to K$ be a K-flat resolution over $A$.
Then by definition, 
\[
\mrm{L}\widehat{\Lambda}_{\bar{\a}} \left(  \mrm{R}(\Tau_{\bar{\a}})_*(\mrm{R}\varphi_*(M)       )\right) = \widehat{\Lambda}_{\a}(P).
\]
Since $P$ is K-flat over $A$, $A$ is K-flat over $A^0$, and $A^0$ is flat over $\k$,
we see that as also as a complex of $\k$-modules, $P$ is a K-flat resolution of the $\k$-module $M$. Then, it follows from \cite[Theorem 1.21]{PSY2} that,
at least as a complex of $\k$-modules, $\widehat{\Lambda}_{\a}(P)$ is concentrated in degree $0$, and there it is isomorphic to $M$. It follows that the $A$-linear natural map $P \to \Lambda_{\a}(P)$ is an isomorphism in $\cat{D}(A)$.
Thus, considered as $\mrm{H}^0(A)$-modules,
the modules $\mrm{H}^0(N)$ and 
\[
\mrm{H}^0\left(\mrm{L}\widehat{\Lambda}_{\bar{\a}} \left(  \mrm{R}(\Tau_{\bar{\a}})_*(\mrm{R}\varphi_*(M)       )\right)\right)
\]
are isomorphic. 
Since both of these modules are $\bar{\a}$-adically complete, we deduce that they are also isomorphic over $\Lambda_{\bar{\a}}(\mrm{H}^0(A))$, 
and hence, also over $\mrm{H}^0(\Lambda_{\a}(A))$ (which in general may be different, but surjects to the ring $\Lambda_{\bar{\a}}(\mrm{H}^0(A))$).
In particular, there is an isomorphism 
\[
\mrm{L}\widehat{\Lambda}_{\bar{\a}} \left(  \mrm{R}(\Tau_{\bar{\a}})_*(\mrm{R}\varphi_*(M)       ) \right) \cong \mrm{R}\varphi_*(M)   
\]
in $\cat{D}(\mrm{L}\Lambda(A,\bar{\a}))$, as claimed.
We omit the proof of the second claim, as it is similar, but easier.
\end{proof}

We now prove Theorem \ref{thm:adicHoc}:
\begin{proof}
By definition, we have:
\[
\mrm{R}\opn{Hom}_{A\widehat{\otimes}^{\mrm{L}}_{\k} A}(A,M)
=
\mrm{R}(\widehat{\Delta}_A)^{\flat}\left(\mrm{R}(\widehat{\Delta}_A)_*(M))\right)
\]
By the first claim of Lemma \ref{lem:forAdicHoc}, we have an isomorphism
\[
\mrm{R}(\widehat{\Delta}_A)^{\flat}\left(\mrm{R}(\widehat{\Delta}_A)_*(M)\right)
\cong
\mrm{R}(\widehat{\Delta}_A)^{\flat}\left(
\mrm{L}\widehat{\Lambda}_{\a^e}\left( \mrm{R}(\Tau_{\a^e})_*(\mrm{R}(\widehat{\Delta}_A)_*(M))       \right)\right)
\]
in $\cat{D}(A)$. 
Using Lemma \ref{lem:RWLW}, we have that
\begin{eqnarray}
\mrm{R}(\widehat{\Delta}_A)^{\flat}\left(
\mrm{L}\widehat{\Lambda}_{\a^e}\left( \mrm{R}(\Tau_{\a^e})_*(\mrm{R}(\widehat{\Delta}_A)_*(M))       \right) \right) \cong\nonumber\\
\mrm{R}(\widehat{\Delta}_A))^{\flat}\left(
\mrm{R}(\Tau_{\a^e})^{\flat}\left(\mrm{L}\Lambda_{\a^e} 
(
\mrm{R}(\Tau_{\a^e})_*(\mrm{R}(\widehat{\Delta}_A)_*(M))
)
\right)
\right)\nonumber
\end{eqnarray}
By the second claim of Lemma \ref{lem:forAdicHoc},
\begin{eqnarray}
\mrm{R}(\widehat{\Delta}_A))^{\flat}\left(
\mrm{R}(\Tau_{\a^e})^{\flat}\left(\mrm{L}\Lambda_{\a^e} 
(
\mrm{R}(\Tau_{\a^e})_*(\mrm{R}(\widehat{\Delta}_A)_*(M))
)
\right)
\right)
\cong\nonumber\\
\mrm{R}(\widehat{\Delta}_A))^{\flat}\left(
\mrm{R}(\Tau_{\a^e})^{\flat}\left(
\mrm{R}(\Tau_{\a^e})_*(\mrm{R}(\widehat{\Delta}_A)_*(M))
\right)
\right)\nonumber
\end{eqnarray}
The hom-tensor adjunction 
\[
\mrm{R}\opn{Hom}_{A\widehat{\otimes}^{\mrm{L}}_{\k} A}(A,
\mrm{R}\opn{Hom}_{A\otimes^{\mrm{L}}_{\k} A}(A\widehat{\otimes}^{\mrm{L}}_{\k} A,-))
\cong
\mrm{R}\opn{Hom}_{A\otimes^{\mrm{L}}_{\k} A}(A,-)
\]
implies that
\[
\mrm{R}(\widehat{\Delta}_A))^{\flat}\left(
\mrm{R}(\Tau_{\a^e})^{\flat}\left(-\right)\right) \cong
\mrm{R}(\Delta_A)^{\flat}\left( - \right)
\]
Similarly, as the composition of the forgetful functors 
\[
\cat{D}(A) \to \cat{D}(A\widehat{\otimes}^{\mrm{L}}_{\k} A) \to \cat{D}(A\otimes^{\mrm{L}}_{\k} A) 
\]
is equal to the forgetful functor $\cat{D}(A) \to \cat{D}(A\otimes^{\mrm{L}}_{\k} A)$,
we see that
\[
\mrm{R}(\Tau_{\a^e})_*\left(\mrm{R}(\widehat{\Delta}_A)_*\left(-\right)\right) \cong \mrm{R}\Delta_A^*(-).
\]
These two facts imply that
\begin{eqnarray}
\mrm{R}(\widehat{\Delta}_A))^{\flat}\left(
\mrm{R}(\Tau_{\a^e})^{\flat}\left(
\mrm{R}(\Tau_{\a^e})_*(\mrm{R}(\widehat{\Delta}_A)_*(M))
\right)
\right) \cong\nonumber\\
\mrm{R}(\Delta_A)^{\flat}\left( 
\mrm{R}\Delta_A^*(M)
\right),\nonumber
\end{eqnarray}
and the latter is exactly
\[
\mrm{R}\opn{Hom}_{A\otimes^{\mrm{L}}_{\k} A}(A,M),
\]
which proves the claim.
\end{proof}

As an application of this result, we now deduce the finiteness of the derived Hochschild cohomology modules of affine formal schemes:

\begin{cor}
Let $\k$ be a commutative noetherian ring,
let $A$ be a noetherian $\k$-algebra,
and let $\a\subseteq A$ be an ideal such that $A$ is $\a$-adically complete,
and such that $A/\a$ is a finitely generated $\k$-algebra\footnote{This finiteness condition is an analogue of the finite type condition in the theory of formal schemes. Such a map is called formally of finite type in \cite{Ye0}, and of pseudo finite type in \cite{AJL3}.}. Let $M$ be a finitely generated $A$-module.
Then the derived Hochschild cohomology $A$-modules
\[
\operatorname{Ext}^n_{A\otimes^{\mrm{L}}_{\k} A}(A,M)
\]
are finitely generated for all $n \in \mathbb{Z}$.
\end{cor}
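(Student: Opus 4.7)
The plan is to invoke Theorem \ref{thm:adicHoc} to replace derived Hochschild cohomology by its complete analogue, and then to exhibit an explicit noetherian DG-ring model for $A\widehat{\otimes}^{\mrm{L}}_{\k}A$. Since $A$ is noetherian and $\a$-adically complete, any finitely generated $A$-module $M$ is $\a$-adically complete, so Theorem \ref{thm:adicHoc} yields $\opn{Ext}^n_{A\otimes^{\mrm{L}}_{\k}A}(A,M)\cong\opn{Ext}^n_{A\widehat{\otimes}^{\mrm{L}}_{\k}A}(A,M)$ for every $n\in\mathbb{Z}$, and it suffices to prove finiteness of the right hand side.

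Using the hypothesis that $A/\a$ is finitely generated over $\k$, choose $y_1,\ldots,y_m\in A$ whose images generate $A/\a$ over $\k$ and generators $x_1,\ldots,x_n$ of $\a$. The resulting map $\k[y_1,\ldots,y_m,x_1,\ldots,x_n]\to A$ extends, by $\a$-adic completeness of $A$ and successive approximation, to a surjective ring homomorphism $\hat{S}\surj A$, where $\hat{S}:=\k[y_1,\ldots,y_m][[x_1,\ldots,x_n]]$ is noetherian and $\k$-flat; the kernel is finitely generated since $\hat{S}$ is noetherian. Tate's construction over $\hat{S}$ then produces a quasi-isomorphism $P\iso A$ from a semi-free DG-$\hat{S}$-algebra $P$ with each $P^i$ a finitely generated free $\hat{S}$-module. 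Consequently $P$ is $\k$-flat, and $C:=P\otimes_{\k}P$ is a DG-model of $A\otimes^{\mrm{L}}_{\k}A$ with $C^0=\hat{S}\otimes_{\k}\hat{S}$; because $P$ is concentrated in non-positive degrees, each $C^i=\bigoplus_{j+j'=i}P^j\otimes_{\k}P^{j'}$ is a finite direct sum of finitely generated free modules over $C^0$, so $C$ is a bounded-above complex of free $C^0$-modules, in particular K-flat over $C^0$. The sequence $\mathbf{a}^e:=(x_1\otimes 1,\ldots,x_n\otimes 1,\,1\otimes x_1,\ldots,1\otimes x_n)$ is regular in $C^0$ (the $x_i$ form a regular sequence in the $\k$-flat ring $\hat{S}$), hence weakly proregular, so $(C,\mathbf{a}^e)$ is a weakly proregular resolution of $(A\otimes^{\mrm{L}}_{\k}A,\a^e)$.

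By Theorem \ref{thm:main}, $A\widehat{\otimes}^{\mrm{L}}_{\k}A\cong\Lambda_{\mathbf{a}^e}(C)$ in $\ho(\cdg)$. Since completion of a finitely generated free module is again finitely generated free over the completion of the base, $\Lambda_{\mathbf{a}^e}(C)^i\cong T^{k_i}$ for some finite $k_i$, where $T:=\hat{S}\widehat{\otimes}_{\k}\hat{S}\cong\k[y_1,\ldots,y_m,y_1',\ldots,y_m'][[x_1,\ldots,x_n,x_1',\ldots,x_n']]$ is a noetherian ring. Hence $A\widehat{\otimes}^{\mrm{L}}_{\k}A$ is represented by a DG-$T$-algebra whose underlying graded $T$-module is finitely generated free in each degree, so $\mrm{H}^0(A\widehat{\otimes}^{\mrm{L}}_{\k}A)$ is a noetherian quotient of $T$ and every $\mrm{H}^i(A\widehat{\otimes}^{\mrm{L}}_{\k}A)$ is finitely generated over it. Thus $A\widehat{\otimes}^{\mrm{L}}_{\k}A$ is a noetherian DG-ring in the sense of the previous corollary.

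Finally, the diagonal $\widehat{\Delta}_A$ induces a surjection $\mrm{H}^0(A\widehat{\otimes}^{\mrm{L}}_{\k}A)\surj A$, so both $A$ and $M$ lie in $\cat{D}^{\mrm{b}}_{\mrm{f}}(A\widehat{\otimes}^{\mrm{L}}_{\k}A)$. Standard finiteness of Ext over a noetherian DG-ring, obtained by constructing a semi-free resolution of $A$ over $A\widehat{\otimes}^{\mrm{L}}_{\k}A$ with finitely many generators in each sufficiently high degree, then shows that every $\opn{Ext}^n_{A\widehat{\otimes}^{\mrm{L}}_{\k}A}(A,M)$ is a finitely generated $\mrm{H}^0(A\widehat{\otimes}^{\mrm{L}}_{\k}A)$-module, and hence a finitely generated $A$-module via the surjection above. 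The main obstacle is the construction of this noetherian model: because $\k$ is not assumed to admit a polynomial-type resolution of $A$, one must resolve $A$ over the noetherian ring $\hat{S}$ rather than over $\k$, at the price that $C$ is not semi-free over $\k$ and the conclusion of Lemma \ref{lem:sfr-is-wpr} has to be replaced by a direct verification of K-flatness over $C^0$ and weak proregularity of the regular sequence $\mathbf{a}^e$.
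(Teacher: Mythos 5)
Your proposal is correct and follows essentially the same route as the paper's own proof: reduce to the complete side via Theorem \ref{thm:adicHoc} (using that a finitely generated module over the complete noetherian ring $A$ is itself $\a$-adically complete), factor $\k\to A$ through a noetherian formally smooth ring $\hat{S}=\k[y][[x]]$, take a Tate/Avramov-style resolution $P\to A$ with finitely generated free terms over $\hat{S}$, model the derived enveloping algebra by $C=P\otimes_\k P$, complete along $\mathbf{a}^e$ to get a DG-ring with degree-zero piece $\k[y,y'][[x,x']]$ and finitely generated free terms, and conclude by finiteness of $\opn{Ext}$ over a noetherian DG-ring. The only substantive difference is cosmetic: you supply an explicit justification that $\mathbf{a}^e$ is regular (hence weakly proregular) in $C^0$ and that $C$ is K-flat over $C^0$, whereas the paper asserts the weak proregularity of $(\til{A}\otimes_\k\til{A},\mathbf{b}^e)$ without spelling this out and cites \cite[Proposition 1.1]{Ye0}, \cite[Proposition 2.1.10]{Av} and \cite[Theorem 2.13]{Ye2} for the steps you argue directly; your closing remark correctly identifies that $C$ is not semi-free over $\k$, so Lemma \ref{lem:sfr-is-wpr} cannot be used as-is and the weak proregularity must be checked by hand.
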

\begin{proof}
Using Theorem \ref{thm:adicHoc}, and because of \cite[Theorem 2.13]{Ye2}, it is enough to show that the commutative DG-ring $A\widehat{\otimes}^{\mrm{L}}_{\k} A$ is noetherian.
As explained in the proof of \cite[Proposition 1.4]{Ye0}, it follows from \cite[Proposition 1.1]{Ye0} that the map $\k \to A$ factors as $\k \to \k[x_1,\dots,x_m][[y_1,\dots,y_k]] \to A$, for some $m,k \in \mathbb{N}$, such that the map 
\[
\k[x_1,\dots,x_m][[y_1,\dots,y_k]] \to A
\]
is surjective, and moreover, $(y_1,\dots,y_k)\cdot A = \a$.
Let us set $B := \k[x_1,\dots,x_m][[y_1,\dots,y_k]]$. 
Of course, $B$ is a noetherian ring, and moreover, it is flat over $\k$.
By \cite[Proposition 2.1.10]{Av}, the map $B \to A$ factors as
$B \to \til{A} \to A$,
where $\til{A}$ is a commutative DG-ring, $\til{A} \to A$ is a quasi-isomorphism,
$\til{A}^0 = B$, and for each $i<0$, $\til{A}^i$ is a finitely generated free $\til{A}^0$-module. In particular $\til{A}$ is K-flat over $B$, and hence, also over $\k$.
Hence, $A\otimes^{\mrm{L}}_{\k} A \cong \til{A} \otimes_{\k} \til{A}$.
Letting $\mathbf{b}^e = (y_1\otimes_{\k} 1, \dots y_k\otimes_{\k} 1, 1\otimes_{\k} y_1,\dots, 1\otimes_{\k} y_k)$, we see that $(\til{A}\otimes_{\k} \til{A},\mathbf{b}^e)$ is a weakly proregular DG-ring, and it follows that
\[
A\widehat{\otimes}^{\mrm{L}}_{\k} A \cong \Lambda_{\mathbf{b}^e}(\til{A}\otimes_{\k} \til{A}).
\]
The latter is easy to compute, as
\[
(\Lambda_{\mathbf{b}^e}(\til{A}\otimes_{\k} \til{A}))^0 = \k[x_1,\dots,x_{2m}][[y_1,\dots,y_{2k}]],
\]
and for each $i<0$, $(\Lambda_{\mathbf{b}^e}(\til{A}\otimes_{\k} \til{A}))^i$ is a finitely generated free $(\Lambda_{\mathbf{b}^e}(\til{A}\otimes_{\k} \til{A}))^0$-module.
Since $(\Lambda_{\mathbf{b}^e}(\til{A}\otimes_{\k} \til{A}))^0$ is noetherian, it follows that $A\widehat{\otimes}^{\mrm{L}}_{\k} A$ is a noetherian DG-ring, proving the claim.
\end{proof}

\subsection{Further applications}

Since local cohomology and adic completion are fundamental operations in commutative algebra,
we expect that, similarly, local cohomology and derived completion will have a deep theory and many applications in derived commutative algebra. Here, we give some examples of applications of this paper that we obtained since the first version of this paper was written.

\subsubsection{Endomorphism rings of indecomposable injectives}

If $A$ is a commutative noetherian ring,
the Matlis classification (\cite{Ma}) of injective modules over $A$ implies that isomorphism classes of indecomposable injective $A$-modules correspond bijectively with $\opn{Spec}(A)$.
Given $\p \in \opn{Spec}(A)$, letting $I_{\p}$ be the injective hull of $A/\p$,
Matlis proved that its endomorphism ring $\opn{Hom}_A(I_p,I_p)$ is isomorphic to $\Lambda_{\p}(A_{\p})$. 
In \cite{Sh4}, using the results of this paper, we generalized this result to derived commutative algebra: given a commutative noetherian DG-ring $A$,
for any $\bar{\p} \in \opn{Spec}(\mrm{H}^0(A))$, we constructed a DG-module $I_{\bar{\p}}$ which is an generalization of $I_{\p}$.  
Then, in \cite[Theorem 7.22]{Sh4}, we proved that its derived endomorphism DG-ring
\[
\mrm{R}\opn{Hom}_A(I_{\bar{\p}},I_{\bar{\p}})
\]
is isomorphic to the derived completion $\mrm{L}\Lambda(A_{\bar{\p}},\bar{\p})$.

\subsubsection{Injective dimension of torsion and flat dimension of completion}

It is a basic corollary of the Matlis structure theory of injectives that if $A$ is a commutative noetherian ring, $\a \subseteq A$ is an ideal,
and $I$ is an injective $A$-module, then $\Gamma_{\a}(I)$ is also an injective $A$-module.
More generally, if $M$ is a complex of $A$-modules with $\opn{inj}\dim_A(M) < \infty$,
then $\opn{inj}\dim_A(\mrm{R}\Gamma_{\a}(M)) \le \opn{inj}\dim_A(M)$.
In \cite[Theorem 3.5]{Sh35}, we generalized this fact to commutative noetherian DG-rings $A$, and ideals $\bar{\a} \subseteq \mrm{H}^0(A)$. 
A similar result (\cite[Theorem 4.3]{Sh35}) was shown there about the flat dimension of derived completion. From it, we deducd that if $A$ is a commutative noetherian DG-ring, with $\mrm{H}(A)$ bounded, then the derived completion map $\Tau_{\bar{\a}}:A \to \mrm{L}\Lambda(A,\bar{\a})$ has flat dimension $0$, an analogue of the classical fact that the adic completion of a commutative noetherian ring is flat over it.

\subsubsection{Local duality}

Grothendieck's local duality theorem is a fundamental result in the theory of local cohomology over local rings.
In \cite[Theorem 7.26]{Sh4}, using the results of this paper, 
we showed that the local duality theorem generalizes to commutative noetherian local DG-rings $(A,\bar{\m})$, with respect to the functor $\mrm{R}\Gamma_{\bar{\m}}$ from Section \ref{sec:RL}.

\subsubsection{Existence of dualizing DG-modules over complete local DG-rings}

A basic technique in commutative algebra employed in the study of local rings $(A,\m)$ is to pass to their completion $\Lambda_{\m}(A)$, which is faithfully flat over $A$, and then use the Cohen structure theorem for complete local rings, which says that they are quotients of complete regular local rings.
In particular, complete local rings posses dualizing complexes, and this fact is of great importance in their study. Given a commutative noetherian DG-ring $A$,
and given $\bar{\p} \in \opn{Spec}(\mrm{H}^0(A))$, one can localize $A$ at $\bar{\p}$ and obtain a commutative noetherian local DG-ring $(A_{\bar{\p}},\bar{\p})$.
Its derived completion $(\mrm{L}\Lambda(A_{\bar{\p}},\bar{\p}),\widehat{\bar{\p}})$ is complete local DG-ring. In \cite[Proposition 7.21]{Sh4}, we proved that complete local DG-rings have dualizing DG-modules.

\textbf{Acknowledgments.}

The author is grateful to the anonymous referee for his suggestions that helped improving this manuscript.



\begin{thebibliography}{99}

\bibitem{Av}
Avramov, L. L. (1998). Infinite free resolutions. In Six lectures on commutative algebra (pp. 1-118). Birkh\"{a}user, Basel.

\bibitem{AILN}
Avramov, L. L., Iyengar, S. B., Lipman, J., and Nayak, S. (2010). Reduction of derived Hochschild functors over commutative algebras and schemes. Advances in Mathematics, 223(2), 735-772.

\bibitem{AJL1} Alonso L., Jeremias A. and Lipman, J. (1997). Local homology and cohomology on schemes.
Ann. Sci. Ecole Norm. Sup. (Vol. 30, No. 1, pp. 1-39).

\bibitem{AJL2} Alonso L., Jeremias A. and Lipman, J. (1999). Greenlees-May duality on formal schemes. Studies in Duality on Noetherian Formal Schemes and Non-noetherian Ordinary Schemes, Contemp. Math, 244.

\bibitem{AJL3} 
Alonso L., Jeremias A. and Lipman, J. (1999).
Duality and flat base change on formal schemes. Studies in Duality on Noetherian Formal Schemes and Non-noetherian Ordinary Schemes, Contemp. Math, 244.

\bibitem{BF}
Buchweitz, R. O., and Flenner, H. (2006). Power series rings and projectivity. manuscripta mathematica, 119(1), 107-114.

\bibitem{BHV}
Barthel, T., Heard, D., and Valenzuela, G. (2015). Local duality in algebra and topology. arXiv preprint arXiv:1511.03526.

\bibitem{DG}
Dwyer, W. G., and Greenless, J. P. C. (2002). Complete modules and torsion modules. American Journal of Mathematics, 199-220.

\bibitem{DGI}
Dwyer, W. G., Greenlees, J. P. C., and Iyengar, S. B. (2006). Duality in algebra and topology. Advances in Mathematics, 200(2), 357-402.

\bibitem{DHKS}  
Dwyer, W. G., Hirschhorn, P. S., Kan, D. M and Smith, H. S. (2004).
Homotopy limit functors on model categories and homotopical categories, volume
113 of Mathematical Surveys and Monographs. American Mathematical
Society, Providence, RI.

\bibitem{GM}
Greenlees, J. P., and May, J. P. (1992). Derived functors of I-adic completion and local homology. Journal
of Algebra, 149(2), 438-453.

\bibitem{Hi}
Hirschhorn, P. S. (2009). Model categories and their localizations (No. 99). American Mathematical Soc..

\bibitem{IM}
Iacono, D., and Manetti, M. (2013). Semiregularity and obstructions of complete intersections. Advances in Mathematics, 235, 92-125.

\bibitem{Ka}
Kan, D. M. (1958). Adjoint functors. Transactions of the American Mathematical Society, 87(2), 294-329.

\bibitem{Ke}
Keller, B. (1994). Deriving DG categories. Ann. Sci. Ecole Norm. Sup.  (Vol. 27, No. 1, pp. 63-102).

\bibitem{Kr}
Krause, H. (2010). Localization theory for triangulated categories. In Triangulated categories, volume 375 of London Math. Soc. Lecture Note Ser., pages 161–235. Cambridge Univ. Press, Cambridge, 2010.

\bibitem{Lu}
Lurie, J. (2011). Derived algebraic geometry XII: proper morphisms, completions, and the Grothendieck existence theorem. Preprint available at 
http://www.math.harvard.edu/\textasciitilde lurie/

\bibitem{Ma}
Matlis, E. (1958). Injective modules over Noetherian rings. Pacific Journal of Mathematics, 8(3), 511-528.

\bibitem{Ne}
Neeman, A. (2014). Triangulated Categories.(AM-148) (Vol. 148). Princeton University Press.

\bibitem{PSY1} Porta, M., Shaul, L., and Yekutieli, A. (2014). On the homology of completion and torsion. Algebras and Representation Theory, 17(1), 31-67.

\bibitem{PSY1E} Porta, M., Shaul, L., and Yekutieli, A. (2015). Erratum to: On the Homology of Completion and Torsion. Algebras and Representation Theory, 18(5), 1401-1405.

\bibitem{PSY2} Porta, M., Shaul, L., and Yekutieli, A. (2015). Cohomologically cofinite complexes. Communications in Algebra, 43(2), 597-615.

\bibitem{Po} Positselski, L. (2016). Dedualizing complexes and MGM duality. Journal of Pure and Applied Algebra, 220(12), 3866-3909.

\bibitem{Re} Rezk, C. (1996). A model category for categories, preprint, available at
http://www.math.uiuc.edu/\textasciitilde rezk

\bibitem{SW}
Sather-Wagstaff, S., \& Wicklein, R. (2016). Extended Local Cohomology and Local Homology. Algebras and Representation Theory. Advance online publication. doi:10.1007/s10468-016-9616-5.

\bibitem{Sc}
Schenzel, P. (2003). Proregular sequences, local cohomology, and completion. Mathematica scandinavica, 92(2), 161-180.

\bibitem{Sh0}
Shaul, L. (2018). Tensor product of dualizing complexes over a field. Journal of Commutative Algebra, 10(2), 243-263.
 
\bibitem{Sh1}
Shaul, L. (2016). Hochschild cohomology commutes with adic completion. Algebra \& Number Theory 10-5, 1001-1029.

\bibitem{Sh2}
Shaul, L. (2017). The twisted inverse image pseudofunctor over commutative DG rings and perfect base change. Advances in Mathematics, 320, 279-328.

\bibitem{Sh3}
Shaul, L. (2017). Adic reduction to the diagonal and a relation between cofiniteness and derived completion. Proc. Amer. Math. Soc. 145, 5131-5143.

\bibitem{Sh35}
Shaul, L. (2018). Homological dimensions of local (co) homology over commutative DG-rings. Canadian Mathematical Bulletin 61, no. 4, 865-877.

\bibitem{Sh4}
Shaul, L. (2017). Injective DG-modules over non-positive DG-rings. arXiv preprint arXiv:1709.01479v2. To appear in Journal of Algebra.

\bibitem{Shu}
Shukla, U. (1961). Cohomologie des algebres associatives. Ann. Sci. Ecole Norm. Sup. (Vol. 78, No. 2, pp. 163-209). 

\bibitem{Sp}
Spaltenstein, N. (1988). Resolutions of unbounded complexes. Compositio Mathematica, 65(2), 121-154.

\bibitem{St}
Stanley, D. (1997). Determining Closed Model Category Structures, preprint, available at
http://hopf.math.purdue.edu/cgi-bin/generate?/Stanley/stanley1.

\bibitem{Ye0} Yekutieli, A. (1998). Smooth formal embeddings and the residue complex. Canadian Journal of Mathematics, 50(4), 863.

\bibitem{Ye1} Yekutieli, A. (2011). On flatness and completion for infinitely generated modules over Noetherian rings. Communications in Algebra, 39(11), 4221-4245.

\bibitem{Ye2} Yekutieli, A. (2013). Duality and tilting for commutative DG rings. arXiv preprint arXiv:1312.6411v4.

\bibitem{Ye3} Yekutieli, A. (2016). The squaring operation for commutative DG rings. Journal of Algebra, 449, 50-107.


\end{thebibliography}
\end{document}